\def\input@path{{./}{./Numerics/}}
\DeclareRobustCommand{\newterm}[2][]{\emph{#2}\ifthenelse{\equal{#1}{}}{\index{#2}}{\index{#1}}}
\DeclareRobustCommand{\[}{\begin{equation*}}
\DeclareRobustCommand{\]}{\end{equation*}}
\newcommand{\abs}[1]{\left\lvert #1 \right\rvert}
\newcommand{\norm}[2][]{\left \lVert #2 \right \rVert_{#1}}
\newcommand{\halfnorm}[2][W]{\left[#2\right]_{#1}}
\newcommand{\hatProduct}[1]{\widehat{\left[#1\right]}}
\newcommand{\AL}{\mathcal{A}}
\newcommand{\BL}{\mathcal{B}}
\DeclareMathOperator{\hol}{H\ddot ol}
\DeclareMathOperator{\intM}{intM}
\DeclareMathOperator{\ir}{ir}
\DeclareMathOperator{\AC}{AC}
\DeclareMathOperator{\BiLip}{BiLip}
\DeclareMathOperator{\epi}{epi}
\newcommand{\g}{\ensuremath{\gamma}}
\newcommand{\Jj}{\ensuremath{\mathfrak{J}}}
\newcommand{\GLL}{\ensuremath{\mathcal{GL}}}
\newcommand{\Mpq}[1][p,q]{\ensuremath{\intM^{\left(#1\right)}}}
\newcommand{\Mpp}[1][p,p]{\ensuremath{\intM^{\left(#1\right)}}}
\newcommand{\Rpq}[1][p,q]{R^{\left(#1\right)}}
\newcommand{\RpqX}[5]{ \frac {\left(\abs{#5-#3} \abs{#4-#3} \abs{#5-#4} \right)^{#1}} {\abs{\left(#5 - #3 \right) \wedge \left(#4 - #3 \right)}^{#2}}}
\newcommand{\restrict}[2]{\left . #1 \right\rvert_{#2}}
\newcommand{\numberthis}{\refstepcounter{equation}\tag{\theequation}}
\newcommand{\ball}[2][0]{\ensuremath{B_{#2}\left(#1\right)}}
\renewcommand{\ker}{\NL}
\newlength{\adjustedalignmentskip}
\renewcommand{\rho}{\varrho}
\renewcommand{\thm@@thmline@name}[5]{\noindent\hyper@linkstart{link}{#5}#1 #2\hyper@linkend \ifblank{#3}{}{\ (#3)}, Page #4:\vspace{1.5cm}\\}%
\theoremstyle{plain}
\newtheorem{lemma}{Lemma}[section]
\newtheorem{theorem}[lemma]{Theorem}
\newtheorem{proposition}[lemma]{Proposition}
\newtheorem{corollary}[lemma]{Corollary}
\newtheorem{definition}[lemma]{Definition}
\theoremstyle{definition}
\numberwithin{equation}{section}
\newcommand{\disstitle}{A speed preserving Hilbert gradient flow for 
generalized integral Menger curvature} 
\newcommand{\disstitletextpdf}{A Projected Gradient Flow for the Generalized Integral Menger Curvature in the Hilbert Case} 
\title{\disstitle}
\author{Jan Knappmann\thanks{Institut für Mathematik, RWTH Aachen University, Templergraben 55, D-52062 Aachen, Germany} \and Henrik Schumacher\footnotemark[1]
\and Daniel Steenebr\"ugge\footnotemark[1] \thanks{Corresponding author email: \href{mailto:steenebruegge@instmath.rwth-aachen.de}{steenebruegge@instmath.rwth-aachen.de}} \and Heiko von der Mosel\footnotemark[1]}
\DeclareMathOperator*{\essinf}{ess\,inf}
\DeclareMathOperator{\Lip}{Lip}
\renewcommand{\epsilon}{\varepsilon}
\newcommand{\operp}{\raisebox{0.25ex}{\ensuremath{\,{\scriptstyle\mathop{\bigcirc\kern-0.71em\perp}}\,}}}
\newcommand{\menge}[1]{\left\{ #1 \right\}}				
\newcommand{\R}{\mathbb{R}}
\newcommand{\N}{\mathbb{N}}
\newcommand{\Z}{\mathbb{Z}}
\newcommand{\B}{\mathcal{B}}
\newcommand{\HL}{\mathcal{H}}
\newcommand{\RL}{\mathcal{R}}
\newcommand{\X}{\mathcal{X}}
\newcommand{\EL}{\mathcal{E}}
\newcommand{\LL}{\mathcal{L}}
\newcommand{\NL}{\mathcal{N}}
\newcommand{\LIP}{\textnormal{Lip}}
\newcommand\Id{{{\rm Id}}}
\newcommand{\difference}[1]{\Delta_{#1}}
\renewcommand\S{{\mathbb S}}
\newcommand\SIGMA{{\Sigma}}
\newcommand{\Fo}{\,\,\,\text{for }\,\,}
\newcommand{\Foa}{\,\,\,\text{for all }\,\,}
\newcommand{\AND}{\,\,\,\text{and }\,\,}
\renewcommand*{\nompageref}[1]{\if@printpageref\pagedeclaration{#1}\fi\dotfill\hyperpage{#1}\nomentryend\endgroup}	
\renewcommand{\nomgroup}[1]{\bigskip
    \ifthenelse{\equal{#1}{A}}{\item[\textbf{\large A} \quad \textbf{Vector and Set Notation}] \item[]}{
        \ifthenelse{\equal{#1}{B}}{\item[\textbf{\large B} \quad \textbf{Functional Notation}]  \item[]}{}}{
        \ifthenelse{\equal{#1}{C}}{\item[\textbf{\large C} \quad \textbf{Function Spaces}]  \item[]}{}}{
        \ifthenelse{\equal{#1}{D}}{\item[\textbf{\large D} \quad \textbf{Measures}]  \item[]}{}}
    {\hrule height 0.4pt \hbox to \textwidth{\rule[-0.10cm]{3.2cm}{0.10cm}\hfill}}\vspace{-0.5cm}}
\newcommand{\ceq}{:=}
\newcommand{\dd}{\mathrm{d}}
\newcommand{\nabs}[1]{|#1|}
\newcommand{\nnorm}[1]{\|#1\|}
\newcommand{\ninnerprod}[1]{\langle#1\rangle}
\newcommand{\Triangulation}{T}
\newcommand{\Polygon}{P}
\newcommand{\EdgeLengths}{\ell}
\newcommand{\Edges}{E}
\newcommand{\Edge}{I}
\newcommand{\Vertices}{V}
\newcommand{\Vertex}{v}
\newcommand{\Midpoint}{m}
\newcommand{\up}[1]{#1^{\uparrow}}
\newcommand{\down}[1]{#1^{\downarrow}}
\newcommand{\Energy}{\mathcal{E}}
\newcommand{\Curve}{\gamma}
\newcommand{\ConstraintMap}{\Sigma}
\newcommand{\Metric}{G}
\newcommand{\RieszOp}{\mathcal{J}}
\newcommand{\HilbertSpace}{\mathcal{H}}
\newcommand{\AmbDim}{n}
\newcommand{\AmbSpace}{{\R^\AmbDim}}
\newcommand{\Circle}{{\R\slash\Z}}
\newcommand{\VertexCount}{N}
\newcommand{\LandO}{O}
\newcommand{\SaddlePointMatrix}{\mathcal{A}}
\newcommand{\BarycenterContraint}{\mathcal{C}}
\newcommand{\transp}{^\top}
\begin{document}
\pagenumbering{arabic}\setcounter{page}{1}
\pagestyle{fancy}

\maketitle

\thispagestyle{empty}

\begin{abstract}
    We establish long-time existence for a projected Sobolev gradient flow of generalized integral Menger curvature in the Hilbert case, and provide $C^{1,1}$-bounds in time for the solution that only depend on the initial curve.
    The self-avoidance property of integral Menger curvature guarantees that the knot
    class of the initial curve is preserved under the flow, and the projection ensures that each curve along the flow is parametrized with the same speed as the initial configuration. Finally, we describe how to simulate this flow numerically with substantially higher efficiency than in the corresponding numerical $L^2$ gradient descent or other optimization methods.    
\end{abstract}

\section{Introduction}
Integral Menger curvature is one of several geometrically defined curvature 
functionals that are used in geometric knot theory to separate different 
knot classes by infinite energy barriers. It is defined as the triple
integral 
\begin{equation}\label{eq:integral-menger}
\mathcal{M}_p(\gamma):=\iiint_{(\R/\Z)^3}
\frac {\abs{\gamma'(u_1)}\abs{\gamma'(u_2)} \abs{\gamma'(u_3)}}
{R^p\left(\gamma(u_1),\gamma(u_2),\gamma(u_3)\right)} \, \dd u_1 \, \dd u_2 \, \dd u_3
\end{equation}
evaluated on absolutely continuous closed curves $\gamma
\in \AC(\R/\Z,\R^n)$, 
where $R(x,y,z)$ denotes the circumcircle
radius of three points $x,y,z\in\R^n$. 
For exponents $p$ above scale-invariance, that is for $p>3$, 
this geometric curvature energy has regularizing properties. It was shown in
\cite{strzelecki-etal_2010} for $n=3$
that any locally homeomorphic arc length
parametrization $\Gamma$ of $\gamma$ with $\mathcal{M}_p(\gamma)<\infty$ is 
an embedding or a multiple cover of the image manifold of class $C^{1,1-(3/p)}$, which can be viewed as a geometric
Morrey-Sobolev embedding theorem. This interpretation
was later confirmed by S. Blatt's characterization of finite energy curves
as exactly those arc length parametrized
embeddings that are of fractional Sobolev-Slobodecki\v{\i}\footnote{
    For the definition
    and several useful
    properties of periodic Sobolev-Slobodecki\v{\i} spaces
    we refer to
    Appendix~\ref{section:Sobolev}.}
regularity $W^{2-(2/p),p}$, which precisely embeds into
$C^{1,1-(3/p)}$ \cite{blatt_2013a}. The combination of 
its self-avoidance and regularizing effects can be used to 
minimize integral Menger curvature
within any prescribed
given tame knot class,  and to bound classic knot invariants
and therefore the number of knot classes representable below given
energy values. This makes $\mathcal{M}_p$ a valuable instrument
in geometric knot theory. We refer to the survey
\cite{StrzekeckivonderMosel:2013:MengerSurvey} for more information
on the r{\^o}le of integral Menger curvature in the context of knot theory.

The Euler-Lagrange equations for integral Menger curvature were first derived
by T. Hermes \cite{hermes_2014}. The principal part of this complicated
integro-differential equation seems too degenerate to expect 
smoothness of
critical points of $\mathcal{M}_p$. Therefore, Blatt and Ph.\ Reiter 
\cite{BlattReiter:2015:Menger} came up with
the idea of splitting the powers in the numerator and denominator
of the fraction defining the circumcircle radius $R(\cdot,\cdot,\cdot)$
in \eqref{eq:integral-menger}, which led them to introduce
the family of \emph{generalized integral Menger curvatures}
\begin{equation}\label{eq:gen-integral-menger}
\Mpq{}(\g) 
:= \iiint_{(\R / \Z)^3} \frac {\abs{\gamma'(u_1)}\abs{\gamma'(u_2)} 
    \abs{\gamma'(u_3)}} {\Rpq\left(\gamma(u_1),\gamma(u_2),\gamma(u_3)\right)} 
\, \dd u_1 \, \dd u_2 \, \dd u_3,
\end{equation}
where 
\[\Rpq(x,y,z):=\RpqX{p}{q}{x}{y}{z} \quad\textnormal{for 
    $x,y,z\in\R^n $}\]
generalizes the circumcircle's  diameter\footnote{Throughout the paper $a\wedge b$
    denotes the exterior product of the two vectors $a,b\in\R^n$, which 
    reduces to the usual cross product $a\times b\in \R^3$ for $n=3$.} 
so that one has $\mathcal{M}_p=2^p
\Mpp{}.$ It turns out that finite energy curves of class $C^1$
possess an arc length
parametrization of class $W^{(3p-2)/q-1,q}$ if $q>1$ and $p\in
(\tfrac23 q +1,q+\tfrac23 )$,  
which implies that also $\intM^{\left( p,q\right)}$
can be minimized in tame knot classes; see
\cite[Theorems 1 \& 2]{BlattReiter:2015:Menger}. In the more specific 
Hilbert space case, i.e. for $q=2$ and
$p\in (\frac73 , \frac 83)$
(thus excluding the original integral Menger curvature, unfortunately),
the variational equations become more accessible for regularity arguments,
so that Blatt and Reiter could show $C^\infty$-smoothness for any 
critical point of $\intM^{\left(p,2\right)}$ \cite[Theorem 4]{BlattReiter:2015:Menger}.

\begin{figure}
    \begin{center}
        \newcommand{\myincludegraphics}[2]{\begin{tikzpicture}
            \node[inner sep=0pt] (fig) at (0,0) {\includegraphics{#1}};
            \node[above right= 0.05ex] at (fig.south west) {\footnotesize(#2)};    
            \end{tikzpicture}
        }%
        \presetkeys{Gin}{
			trim = 45 40 50 50, 
             clip = true,  
             angle = 0,
             width = 0.24\textwidth
        }{}%
        \capstart%
        \myincludegraphics{HermesKnot_Menger_000600E_FrozenIntrinsicGagliardoMetric_Frame_000000.png}{0}%
        \myincludegraphics{HermesKnot_Menger_000600E_FrozenIntrinsicGagliardoMetric_Frame_000020.png}{20}%
        \myincludegraphics{HermesKnot_Menger_000600E_FrozenIntrinsicGagliardoMetric_Frame_000040.png}{40}%
        \myincludegraphics{HermesKnot_Menger_000600E_FrozenIntrinsicGagliardoMetric_Frame_000060.png}{60}%
        \\
        \myincludegraphics{HermesKnot_Menger_000600E_FrozenIntrinsicGagliardoMetric_Frame_000100.png}{100}%
        \myincludegraphics{HermesKnot_Menger_000600E_FrozenIntrinsicGagliardoMetric_Frame_000150.png}{150}%
        \myincludegraphics{HermesKnot_Menger_000600E_FrozenIntrinsicGagliardoMetric_Frame_000250.png}{250}%
        \myincludegraphics{HermesKnot_Menger_000600E_FrozenIntrinsicGagliardoMetric_Minimizer_Rotated.png}{250}%
    \end{center}
    \caption{Select iterations of the projected Sobolev gradient flow with line search. (Iteration numbers are indicated in parentheses.) The discrete model is a polygonal line with 600 edges. The obtained minimizer (bottom right) is shown from a further angle in order to reveal its three-fold rotational symmetry.}
    \label{fig:Flow_HermesKnot}
\end{figure}

It is natural to ask if one can set up a gradient flow 
\begin{equation}
\label{eq:gradflow}
\begin{cases}
\frac{ \partial} {\partial t} \gamma(\cdot,t) &= -\nabla 
\Mpq[p,2](\gamma(\cdot,t))\quad\textnormal{on $\R/\Z$ for $t>0$,}\\
\gamma(\cdot,0) &= \gamma_0.
\end{cases}
\end{equation}
in
the appropriate
Hilbert space $\mathcal{H} :=W^{\frac32 p-2,2}(\R/\Z,\R^n)$, 
deforming any given
initial knotted curve $\gamma_0$ in that space to a critical point of 
$\intM^{\left(p,2\right)}$ within the knot class represented
by $\g_0$.
Note that we consider the full Sobolev-Slobodecki\v{\i} gradient here, 
in other words, \eqref{eq:gradflow} represents 
the $W^{\frac32 p-2,2}$-gradient flow for
the generalized integral Menger curvature $\Mpq[p,2]$.
For a visualization of this flow, see \autoref{fig:Flow_HermesKnot}.
The energy is indeed continuously differentiable
on regular embeddings of class
$W^{\frac32 p-2,2}(\R/\Z,\R^n)$ according to 
\cite[Theorem 3]{BlattReiter:2015:Menger}, and we show in 
Section \ref{sec:2} by analyzing its
second variation that its differential  $D\Mpq[p,2]$
is even
locally Lipschitz continuous. By the classic Picard-Lindel\"of Theorem
for ordinary differential equations in Banach spaces this 
can be turned into a short time 
existence result for \eqref{eq:gradflow}, 
together with an explicit lower bound on the
maximal time of existence, depending only on the Sobolev-Slobodecki\v{\i}
seminorm of the tangent $\g_0'$
and on the bilipschitz constant of the initial curve $\g_0$; see 
\thref{theorem:shortTimeExistence}
of Section
\ref{section:ShortTimeExistence}.
For arc length parametrized curves both 
these quantities can be controlled in terms of the energy according
to \cite[Theorem~1 \& Proposition~2.1]{BlattReiter:2015:Menger}. 
Unfortunately, it seems hard
to keep the velocities $|\g'(\cdot,t)|$  of the curves $\g(\cdot,t)$
bounded away from zero along the evolution \eqref{eq:gradflow}, 
or to control the fractional Sobolev seminorms
of the time-dependent reparametrizations to arc length,
which would allow us to continue
the flow up to infinite time. The
right-hand side $-\nabla\intM^{\left( p,2\right) }=-
\RieszOp^{-1}
(D\intM^{
    \left(p,2\right)})$
of \eqref{eq:gradflow}  is 
not explicit because of the underlying Riesz-isomorphism $
\RieszOp\colon \mathcal{H}\to\mathcal{H}^*$ for the Hilbert space $\mathcal{H}=W^{\frac32 p-2,2}(\R/\Z,\R^n)$.
This makes 
it hard to derive evolution equations
for the velocities of the evolving curves. Instead we overcome
this difficulty of possibly degenerating parametrizations by projecting the
energy gradient onto the   null-space of a constraint's gradient, thus preserving the initial velocity $|\g_0'(\cdot)|$ along this projected 
flow.  This idea, which  goes back to J. W. Neuberger 
\cite[Chapter 6]{Neuberger:1997:SobolevGradients}, was used by the
second author
in cooperation with S. Scholtes and M. Wardetzky 
in the context of discrete elasticae 
\cite{ScholtesSchumacherWardetzky:2019:DiscreteElasticae}.

Let us briefly describe Neuberger's approach in an abstract setting
first. For two real 
Hilbert spaces $\HL_1,\HL_2$, and an open set 
$\mathcal{O}\subseteq \HL_1$, consider an energy 
functional $E \colon \mathcal{O} \to \R$ and a constraint
mapping $S \colon \mathcal{O} \to \HL_2$, both Fréchet differentiable.
If we aim for 
a curve $x \colon [0,T] \to \mathcal{O}$ that chooses the direction of 
steepest descent for $E$ under the condition that it conserves an initial value of $S $, we may use the differential equation
\[
\tag{PrFlow}
\label{equation:ODEP}
\frac d {dt} x(t) = - \nabla_{S} E(x(t)), \quad t>0,
\]
where the projected gradient on the right-hand side is defined as 
\begin{equation}\label{eq:proj-abstract-gradient}
\nabla_{S} E(y):= \Pi_{\NL(DS (y))} \nabla E(y)\quad\Fo y\in 
\mathcal{O}.
\end{equation}
Here,  $\Pi_{\NL(DS (y))}$ denotes 
the orthogonal projection onto the (closed) null space of 
the differential $DS (y)$ of the constraint mapping $S$ at 
$y \in \mathcal{O}$.
Indeed, if $x$ satisfies the projected evolution equation
\eqref{equation:ODEP}, we simply differentiate the constraint with
respect to time to obtain
\[
\textstyle\frac d {dt} S (x(t)) 
= DS (x(t))\frac {d} {dt} x(t)
\overset{\eqref{equation:ODEP}}{=} - DS (x(t)) \Pi_{\NL(DS (x(t)))} 
\nabla E(x(t))
= 0
\]
and therefore, 
\[
\label{eq:invarianceBoundaryCondition}
\numberthis
S (x(t))=S (x(0)) \quad\textnormal{for all $t\ge 0$}.
\]
In Section \ref{sec:4} we analyze in the abstract Hilbert space setting under
which circumstances the orthogonal projection $A\mapsto \Pi_\mathcal{\NL(A)}$
for a bounded linear operator $A\colon \HL_1\to\HL_2$ 
is Lipschitz continuous in order to keep the projected flow
\eqref{equation:ODEP} in the realm of the
Picard-Lindel\"of existence theory.

Our actual choice of constraint in the present context is, as in 
\cite[Section 2]{ScholtesSchumacherWardetzky:2019:DiscreteElasticae}, the  
\emph{logarithmic strain}  $S:=\SIGMA$ defined as
\begin{equation}\label{eq:log-constraint}
\SIGMA (\g):= \log \left( |\g'(t)|\right) \in 
\mathcal{H}_2:=W^{\frac 3 2 p - 3,2}
(\R/\Z) 
\end{equation}
for curves $\g$ contained in the open\footnote{In \thref{cor:bilip2} we provide a
    quantitative version of the fact that the regular embedded  curves
    form an open subset in this fractional Sobolev space.} subset 
\begin{equation}\label{eq:open-subset}
\mathcal{O}:=W^{\frac32 p-2,2}_{\textnormal{ir}}(\R/\Z,\R^n)
\subset\mathcal{H}_1:=\mathcal{H}=
W^{\frac32 p-2,2}(\R/\Z,\R^n)
\end{equation}
of injective regular curves in $\mathcal{H}$, 
i.e.,  for those  $\g\in\mathcal{H}$ of which the restrictions
$\g|_{[0,1)}$ are injective and the velocities $|\g'|$
are strictly positive 
on $\R/\Z$. 
With this choice we intend to guarantee
the conservation of the initial velocity throughout the  projected
gradient flow for $\Mpq[p,2]$. 
Section 
\ref{subsection:explicitBoundaryCondition} is therefore 
devoted to establishing
the sufficient conditions of Section \ref{sec:4}
for the logarithmic strain constraint  $\SIGMA $ to maintain 
uniform Lipschitz continuity. 
The main issue here is 
to prove the existence
of a bounded right inverse for  the constraint's differential $D\SIGMA
(\g)$
for  $\g\in W^{\frac32 p-2,2}_{\textnormal{ir}}(\R/\Z,\R^n)$.
In Section \ref{sec:6}
we combine the results of Sections \ref{sec:2} and 
\ref{subsection:explicitBoundaryCondition} to obtain a locally
Lipschitz continuous right-hand side of the projected evolution
for the generalized integral Menger curvatures $\Mpq[p,2]$.
In addition, we gain  explicit quantitative control over 
the size of the right-hand side's 
Lipschitz domain,
which allows us
to prove the following central 
long time existence result by means of standard
continuation arguments.
\begin{theorem}[Long time existence]
    \label{introtheorem:longTimeExistenceViaProjections}
    Let $p\in (\frac 7 3, \frac 8 3)$ and 
    $\gamma_0  \in W^{\frac 3 2 p-2,2}(\R/\Z,\R^n)$ be injective on $[0,1)$
    and suppose that $|\g_0'|>0$ on $\R/\Z$.
    Then, there is a unique\footnote{The solution $\g(\cdot,t)$ is even
        unique among all $W^{\frac32 p-2,2}$-valued mappings that
        are absolutely continuous in time for which the differential
        equation \eqref{eq:parabolic-map} holds only for a.e. $t>0$.} 
    map 
    \begin{equation}\label{eq:parabolic-map}
    t\mapsto \g(\cdot,t)
    \in C^1\big([0,\infty),W^{\frac32 p-2,2}(\R/\Z,\R^n)\big)
    \end{equation}
    satisfying 
    \begin{equation*}
    \label{introtheorem:longTimeExistenceViaProjections:eq:ODE}
    \numberthis
    \begin{cases}
    \frac{\partial}{\partial t} \gamma(\cdot,t) &= -\nabla_{\SIGMA} \Mpq[p,2](\gamma(\cdot,t))
    \quad\textnormal{on $\R/\Z$ for all $t>0$},\\
    \gamma(\cdot,0) &= \gamma_0,
    \end{cases}
    \end{equation*}
    where $\SIGMA $ is defined as in \eqref{eq:log-constraint}.
    Moreover, the energy is non-increasing in time, and
    the initial velocity and the barycenter
   are preserved, that is,
    \begin{equation}\label{eq:velocity-preserving}
    |\g'(\cdot,t)|=|\g_0'(\cdot)|\,\,\textnormal{on $\R/\Z$ and
        $\textstyle\int_{\R/\Z}\g(u,t)\,\dd u=\int_{\R/\Z}\g_0(u)\,\dd u$ \,\,for all $t>0$}.
    \end{equation}
    In particular, the length of $\g(\cdot,t)$ is constant in time
    and the curves $\g(\cdot,t)$ remain uniformly bounded, that is,
    $\mathcal{L}\left(\g(\cdot,t)\right)=\mathcal{L}\left(\g_0(\cdot)\right)$ 
    and $\|\g(\cdot,t)\|_{W^{\frac32 p-2,2}(\R/\Z,\R^n)}\le C$ for
    all $t>0$.
\end{theorem}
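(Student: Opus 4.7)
The plan is to run Picard-Lindelöf on the open set $\mathcal{O} = W^{\frac{3}{2}p-2,2}_{\ir}(\R/\Z,\R^n)$ to obtain a maximal $C^1$-solution on some interval $[0, T^*)$, to verify the conservation identities directly from the projection structure of the ODE, and then to exclude $T^* < \infty$ by combining energy dissipation with the a priori regularity theory for finite-energy curves from \cite[Theorem~1 \& Proposition~2.1]{BlattReiter:2015:Menger} and the quantitative Lipschitz-neighborhood estimates for $-\nabla_{\SIGMA} \Mpq[p,2]$ obtained in Section~\ref{sec:6}.

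By Section~\ref{sec:6}, the vector field $\gamma \mapsto -\nabla_{\SIGMA} \Mpq[p,2](\gamma)$ is locally Lipschitz on $\mathcal{O}$ with a Lipschitz neighborhood whose size is quantitatively controlled by $[\gamma']_{W^{\frac{3}{2}p-3,2}}$ and by the bilipschitz constant of $\gamma$. The short time existence result \thref{theorem:shortTimeExistence} then yields a unique maximally extended $C^1$-solution $\gamma\colon [0, T^*) \to \mathcal{O}$, uniqueness extending to absolutely continuous $W^{\frac{3}{2}p-2,2}$-valued maps that solve the ODE a.e.\ by a standard Gronwall argument. Velocity preservation is encoded in the construction: since $\partial_t\gamma(\cdot,t)\in \NL(D\SIGMA(\gamma(\cdot,t)))$, the chain rule gives $\frac{d}{dt}\SIGMA(\gamma(\cdot,t)) = 0$, hence $|\gamma'(\cdot,t)| = |\gamma_0'|$ pointwise on $\R/\Z$, and integration yields constancy of length. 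The energy dissipation identity $\frac{d}{dt}\Mpq[p,2](\gamma(\cdot,t)) = -\|\nabla_{\SIGMA} \Mpq[p,2](\gamma(\cdot,t))\|_{\HL}^2 \le 0$ follows from self-adjointness of the orthogonal projection. Barycenter preservation stems from translation invariance of $\Mpq[p,2]$ — so $\nabla\Mpq[p,2](\gamma)$ is $\HL$-orthogonal to the constant fields — together with the fact that constants also lie in $\NL(D\SIGMA(\gamma))$; the projection therefore retains this orthogonality, which in the Hilbert setup of Section~\ref{sec:6} is equivalent to having vanishing spatial integral.

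For the continuation step, assume $T^* < \infty$. The energy bound $\Mpq[p,2](\gamma(\cdot,t)) \le \Mpq[p,2](\gamma_0)$ together with \cite[Theorem~1 \& Proposition~2.1]{BlattReiter:2015:Menger}, applied to the arc-length parametrization $\Gamma(\cdot,t)$ of $\gamma(\cdot,t)$, produces uniform upper bounds on $[\Gamma'(\cdot,t)]_{W^{\frac{3}{2}p-3,2}}$ and uniform lower bounds on the bilipschitz constant of $\Gamma(\cdot,t)$, depending only on $\Mpq[p,2](\gamma_0)$ and $\mathcal{L}(\gamma_0)$. The decisive observation is that, because $|\gamma'(\cdot,t)| = |\gamma_0'|$ is frozen, the reparametrization $\phi\colon \R/\Z\to\R/\Z$ relating $\gamma(\cdot,t)$ to $\Gamma(\cdot,t)$ is independent of $t$; the Blatt-Reiter estimates therefore transfer to uniform bounds on $[\gamma'(\cdot,t)]_{W^{\frac{3}{2}p-3,2}}$ and on $\BiLip(\gamma(\cdot,t))$ throughout $[0, T^*)$. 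Combined with barycenter preservation, this upgrades to the claimed full $W^{\frac{3}{2}p-2,2}$-norm bound. The quantitative Lipschitz-neighborhood estimate of Section~\ref{sec:6} then furnishes a uniform lower bound $\tau > 0$ on the Picard-Lindelöf existence time for the flow restarted at any $t \in [0, T^*)$; choosing $t = T^* - \tau/2$ extends the solution past $T^*$, contradicting maximality. Hence $T^* = \infty$.

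The main obstacle is this last step: one must ensure that the quantities determining the Section~\ref{sec:6} Lipschitz domain — the bilipschitz constant and the Sobolev seminorm — are controlled by flow invariants that remain bounded up to $T^*$. The projected structure of \eqref{equation:ODEP} is indispensable here: freezing $|\gamma'|$ makes the reparametrization to arc length time-independent, so that the energy-based regularity theory of Blatt and Reiter passes directly to $\gamma(\cdot,t)$ itself rather than only to a (potentially degenerating) time-dependent arc-length reparametrization, which is precisely what rules out the analogous continuation argument for the free gradient flow \eqref{eq:gradflow}.
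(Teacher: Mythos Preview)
Your proposal is correct and follows essentially the same route as the paper: local Lipschitz continuity of the projected gradient from Section~\ref{sec:6} gives a maximal solution, the projection structure yields velocity and barycenter conservation and energy decay, and then the frozen-speed observation lets you transfer the Blatt--Reiter a~priori bounds from the arc-length reparametrization back to $\gamma(\cdot,t)$ via a \emph{time-independent} change of parameter, producing a uniform lower bound on the restart time and ruling out $T^*<\infty$. The only slip is that the short-time existence you invoke should be \thref{cor:short-time-proj-flow} (for the projected flow) rather than \thref{theorem:shortTimeExistence} (which treats the unprojected flow \eqref{eq:gradflow}).
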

The  fractional Sobolev space $W^{\frac32 p-2,2}$ continuously
embeds into the H\"older space $C^{1,\alpha(p)}$ with H\"older exponent
$\alpha(p):=
\frac32 p- \frac1p -3\in (0,1)$
(see \thref{thm:morrey} in Appendix \ref{section:Sobolev}), which implies that
all curves $\g(\cdot,t)$ are also  uniformly bounded in $C^{1,\alpha(p)}$.
Moreover, the initial embedding $\g_0$ represents  a tame knot class $\mathcal{K}:=
[\g_0]$; see, e.g. \cite[Appendix I]{crowell-fox_1977}. Tame knot
classes are stable with respect to $C^1$-perturbations (see \cite{reiter_2005, denne-sullivan_2008, blatt_2009a}), so that the knot class  is preserved
along the flow.  
\begin{corollary}[Flow within prescribed knot class]
    The solution curves $\g(\cdot,t)$ in 
    \thref{introtheorem:longTimeExistenceViaProjections} 
    are injective on $[0,1)$ for all $t\ge 0$.
    If the initial embedding $\g_0$ represents the tame knot class $\mathcal{K}$, 
    i.e., $[\g_0]=\mathcal{K}$, then $[\g(\cdot,t)]=\mathcal{K}$ for all
    $t>0$. 
\end{corollary}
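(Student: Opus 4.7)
My plan is to reduce the corollary to two facts already established in \thref{introtheorem:longTimeExistenceViaProjections}: that the flow $t\mapsto\g(\cdot,t)$ takes values in the open subset $\mathcal{O}=W^{\frac32 p-2,2}_{\ir}(\R/\Z,\R^n)$ on which the projected gradient $\nabla_{\SIGMA}\Mpq[p,2]$ is defined, and that this map is of class $C^1$ in time with values in $W^{\frac32 p-2,2}(\R/\Z,\R^n)$.

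For the injectivity assertion I would simply observe that the right-hand side of \eqref{introtheorem:longTimeExistenceViaProjections:eq:ODE} is only meaningful for curves inside $\mathcal{O}$, so \thref{introtheorem:longTimeExistenceViaProjections} forces $\g(\cdot,t)\in\mathcal{O}$ for every $t\ge 0$. By the very definition of $\mathcal{O}$ in \eqref{eq:open-subset} this immediately yields injectivity of $\g(\cdot,t)|_{[0,1)}$ and also independently recovers the strict positivity of $|\g'(\cdot,t)|$ that is already contained in \eqref{eq:velocity-preserving}.

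For preservation of the knot type I would argue in three short steps. First, the Morrey-type embedding $W^{\frac32 p-2,2}\hookrightarrow C^{1,\alpha(p)}$ from \thref{thm:morrey} promotes the $C^1$-regularity of the flow in time into continuity of $t\mapsto \g(\cdot,t)$ as a map into $C^{1,\alpha(p)}(\R/\Z,\R^n)$, and a fortiori into $C^1(\R/\Z,\R^n)$. Second, the injectivity established above, together with $|\g'(\cdot,t)|=|\g_0'|>0$, makes each $\g(\cdot,t)$ a $C^1$-embedding of $\R/\Z$ into $\R^n$. Third, the stability results of \cite{reiter_2005,denne-sullivan_2008,blatt_2009a} guarantee that the knot-class assignment $\eta\mapsto [\eta]$ is locally constant on the space of $C^1$-embeddings of $\R/\Z$ into $\R^n$. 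Combining these three facts, $t\mapsto[\g(\cdot,t)]$ is a locally constant, hence constant, map from the connected interval $[0,\infty)$ into the discrete set of tame knot types, and therefore equals $[\g_0]=\mathcal{K}$ for every $t\ge 0$.

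I do not anticipate a genuine obstacle. The only minor point to watch is that the stability statement must be invoked in its $C^1$-form, as opposed to the piecewise-linear variant that also appears in the literature; this $C^1$-form is explicitly spelled out in each of the three references. Should one prefer to bypass the appeal to these references, a direct tubular-neighbourhood argument constructs an ambient isotopy between any pair of sufficiently $C^1$-close embeddings, which establishes the local constancy of $[\,\cdot\,]$ by hand.
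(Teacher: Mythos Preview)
Your proposal is correct and matches the paper's own argument, which is given informally in the paragraph immediately preceding the corollary: Morrey's embedding yields $C^1$-continuity of $t\mapsto\g(\cdot,t)$, and the $C^1$-stability of tame knot classes from \cite{reiter_2005,denne-sullivan_2008,blatt_2009a} then forces the knot type to be locally constant, hence constant. For injectivity the paper relies (implicitly via the long-time existence proof) on the a~priori bilipschitz bound \eqref{eq:bilip-evolution} in \thref{prop:bilip-seminorm-evolution}, which is a slightly more concrete justification than your observation that the right-hand side of the flow is only defined on $\mathcal{O}$, but the two amount to the same thing.
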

In addition, due to the embedding into $C^{1,\alpha(p)}$, the regularity
assumption $|\g_0'|>0$ is meant to hold pointwise 
everywhere on $\R/\Z$, which by
continuity and periodicity implies that  
\begin{equation}\label{eq:minimal-velocity-intro}
v_{\g_0}:=\min_{\R/\Z}|\g_0'| >0,
\end{equation}
and one can analyze how this minimal initial velocity $v_{\g_0}$
enters
the  $t$-dependence of the solution. 
\begin{theorem}[Regularity in time]\label{introtheorem:regularity}
    The solution
    $t \mapsto \gamma(\cdot,t)$ of \eqref{introtheorem:longTimeExistenceViaProjections:eq:ODE}
    in \thref{introtheorem:longTimeExistenceViaProjections}
    is of class $
    C^{1,1}\big([0,\infty),W^{\frac 3 2 p -2,2}(\R/\Z,\R^n)\big)
    $, and the Lipschitz constants for the mappings
    $t \mapsto \gamma(\cdot,t)$ and 
    $t \mapsto \frac {\partial  \gamma} {\partial t} (\cdot,t)$ 
    depend only on $n$ and
    $p$,  non-decreasingly on 
    the initial energy $\Mpq[p,2](\gamma_0)$ and 
    initial fractional seminorm
    $\halfnorm[\frac 3 2 p -3,2]{\gamma_0'}$, and non-increasingly
    on the minimal velocity
    $v_{\g_0}$.
\end{theorem}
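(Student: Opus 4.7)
The strategy is to bootstrap from the invariants established in \thref{introtheorem:longTimeExistenceViaProjections}. Since $|\g'(\cdot,t)|=|\g_0'(\cdot)|$ pointwise on $\R/\Z$ and $\Mpq[p,2](\g(\cdot,t))\le\Mpq[p,2](\g_0)$ for all $t\ge 0$, the trajectory $\{\g(\cdot,t)\}_{t\ge 0}$ stays inside a fixed bounded subset of $\mathcal{O}$ whose size is quantitatively controlled by $n,p$, the initial energy $\Mpq[p,2](\g_0)$, the initial seminorm $\halfnorm[\frac 3 2 p -3,2]{\g_0'}$, and the minimal velocity $v_{\g_0}$. In particular, combining \cite[Proposition~2.1 and Theorem~1]{BlattReiter:2015:Menger} with the pointwise preservation of $|\g'|$ yields a uniform bilipschitz bound on each $\g(\cdot,t)\colon\R/\Z\to\R^n$ depending only on $\Mpq[p,2](\g_0)$ and $v_{\g_0}$.

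First I would collect from Sections~\ref{sec:2}, \ref{subsection:explicitBoundaryCondition}, and~\ref{sec:6} the local Lipschitz estimate for the projected gradient field $\g\mapsto\nabla_\SIGMA\Mpq[p,2](\g)$, whose Lipschitz constant on any subset of $\mathcal{O}$ depends only on the bilipschitz constant, the minimal velocity, and the $W^{\frac 3 2 p-2,2}$-norm present on that subset. Applied to a neighborhood of the trajectory, the uniform a priori bounds of the previous step yield a single Lipschitz constant $L$ along the flow with the prescribed monotone dependence on $n,p,\Mpq[p,2](\g_0),\halfnorm[\frac 3 2 p-3,2]{\g_0'}$, and $v_{\g_0}$.

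With $L$ available, the Lipschitz estimate for $t\mapsto\g(\cdot,t)$ follows from the differential equation: the uniform bound $\|\nabla_\SIGMA\Mpq[p,2](\g(\cdot,t))\|_{W^{\frac 3 2 p-2,2}}\le C_1$ (obtained from the same neighborhood estimates together with continuous differentiability of $\Mpq[p,2]$) gives
\begin{equation*}
\|\g(\cdot,t)-\g(\cdot,s)\|_{W^{\frac 3 2 p-2,2}}\le\int_s^t\|\partial_\tau\g(\cdot,\tau)\|_{W^{\frac 3 2 p-2,2}}\,\dd\tau\le C_1|t-s|.
\end{equation*}
Substituting this into the ODE and using the Lipschitz continuity of $\nabla_\SIGMA\Mpq[p,2]$ from the previous step,
\begin{equation*}
\|\partial_t\g(\cdot,t)-\partial_t\g(\cdot,s)\|_{W^{\frac 3 2 p-2,2}}=\|\nabla_\SIGMA\Mpq[p,2](\g(\cdot,t))-\nabla_\SIGMA\Mpq[p,2](\g(\cdot,s))\|_{W^{\frac 3 2 p-2,2}}\le L\,C_1|t-s|,
\end{equation*}
which is the desired $C^{1,1}$-estimate with Lipschitz constant $L\,C_1$ monotone in the admissible data in the prescribed way.

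The main obstacle is the quantitative bookkeeping: I would need to verify that every constant appearing in Sections~\ref{sec:2}, \ref{subsection:explicitBoundaryCondition}, and~\ref{sec:6}, as well as the bilipschitz and seminorm estimates transferred from the initial energy, truly depends only on the four admissible quantities and with the correct monotonicity. The most delicate ingredient is the bounded right-inverse for the strain differential $D\SIGMA(\g)$ constructed in Section~\ref{subsection:explicitBoundaryCondition}; its operator norm governs both the projection Lipschitz constant from Section~\ref{sec:4} and the effective distance of the trajectory from the boundary of $\mathcal{O}$, so one must quantify it explicitly in terms of $v_{\g_0}$ and the bilipschitz constant so that the Lipschitz domain remains uniform for all $t\ge 0$.
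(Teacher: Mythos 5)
Your strategy coincides with the paper's: the proof of this theorem is essentially a pointer to Proposition~\ref{prop:bilip-seminorm-evolution}, which supplies the uniform a priori bounds on $\BiLip(\g(\cdot,t))$ and $\halfnorm[\frac32 p-3,2]{\g'(\cdot,t)}$ along the flow (monotone in the initial data exactly as required), and then feeds them into the local Lipschitz estimate for $\nabla_\SIGMA\Mpq[p,2]$ from Section~\ref{sec:6} to control $\partial_t\g$. That part of the plan is correct, as is the identification of the right-inverse $Y_\g$ from Section~\ref{subsection:explicitBoundaryCondition} as the ingredient through which the constants acquire their dependence on $v_{\g_0}$.

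There is, however, a gap in your final displayed inequality. The Lipschitz estimate for $\nabla_\SIGMA\Mpq[p,2]$ is only \emph{local}: it holds on balls of a uniform radius $R_0>0$ around each point $\g(\cdot,t_0)$ of the trajectory, where $R_0$ is governed by the radius $R_{M,\SIGMA}$ from \eqref{eq:radius-lip-proj} together with the a priori bounds. Writing
\[
\|\nabla_\SIGMA\Mpq[p,2](\g(\cdot,t))-\nabla_\SIGMA\Mpq[p,2](\g(\cdot,s))\|_{\HL}\le L\,\|\g(\cdot,t)-\g(\cdot,s)\|_{\HL}\le L\,C_1\,|t-s|
\]
tacitly assumes that $\g(\cdot,t)$ and $\g(\cdot,s)$ lie in a common such ball — or at least that the chord between them stays inside $\mathcal{O}$ with uniform control — which fails once $|t-s|\gtrsim R_0/C_1$. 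Nothing you have at that stage certifies a global Lipschitz constant for $\nabla_\SIGMA\Mpq[p,2]$ on a tubular neighborhood of the whole trajectory, and such a neighborhood is not convex, so the mean-value inequality does not apply directly. The paper closes this gap with a chaining argument inside the proof of Proposition~\ref{prop:bilip-seminorm-evolution}: choose a partition $s=t_0<\cdots<t_N=t$ so fine that $\|\g(\cdot,t_k)-\g(\cdot,t_{k-1})\|_{\HL}<R_0$ for each $k$ (possible because \eqref{eq:lipschitz-gamma} shows $t\mapsto\g(\cdot,t)$ is Lipschitz with constant $\overline{C}_1$), apply the local estimate on each consecutive pair, and sum; this is \eqref{eq:lipschitz-est-gamma_t}. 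Once you insert this partition step, your argument reduces to the paper's, and the monotone dependence of the resulting Lipschitz constants on $n$, $p$, $\Mpq[p,2](\g_0)$, $\halfnorm[\frac32 p-3,2]{\g_0'}$, and $v_{\g_0}$ carries through as you anticipate.
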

At this point it is not clear yet whether the solution curves $\g(\cdot,t)$
converge to a projected critical point of $\Mpq[p,2]$ as $t\to\infty$. 
We do know, however,
that we have subconvergence of $\g(\cdot,t_k)$ to some limiting knot
$\g^*$  in the same fixed knot class for any sequence $t_k\to\infty$
as $k\to\infty.$
\begin{corollary}\label{cor:subconvergence}
    For the solution $\g(\cdot,t)$ of 
    \eqref{introtheorem:longTimeExistenceViaProjections:eq:ODE}
    in \thref{introtheorem:longTimeExistenceViaProjections}
    one has
    \begin{equation}\label{eq:limit-proj-gradient}
    E_\infty:=\lim_{t\to\infty}\Mpq[p,2](\g(\cdot,t))\in [0,
    \Mpq[p,2](\g_0)],\quad\lim_{t\to\infty}\nabla_\SIGMA\Mpq[p,2](\g(\cdot,t))=0,
    \end{equation}
    and for every sequence $t_k\to\infty$ as $k\to\infty$
    there is a subsequence
    $(t_{k_l})_l\subset (t_k)_k$ and a curve $\g^*\in W^{\frac32 p-2,2}_{\ir}
    (\R/\Z,\R^n)$ with $\Mpq[p,2](\g^*)\le E_\infty
    $ and with knot class $[\g^*]=[\g_0]$, such that
    $\g(\cdot,t_{k_l})$ converges weakly in $W^{\frac32 p-2,2}$ and
    strongly in $C^1$ to $\g^*$ as $l\to\infty.$
\end{corollary}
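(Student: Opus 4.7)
The proof splits naturally into three parts, one per assertion. First I would establish monotone energy decay by combining the chain rule with the identity $\langle v,\Pi_N v\rangle=\|\Pi_N v\|^2$ for any orthogonal projection. Differentiating yields
\[
\textstyle\frac{d}{dt}\Mpq[p,2](\g(\cdot,t))
= \langle \nabla\Mpq[p,2](\g(\cdot,t)),\partial_t\g(\cdot,t)\rangle_\HL
= -\|\nabla_\SIGMA\Mpq[p,2](\g(\cdot,t))\|_\HL^2,
\]
so $t\mapsto \Mpq[p,2](\g(\cdot,t))$ is non-negative and non-increasing, hence $E_\infty\in[0,\Mpq[p,2](\g_0)]$ exists; integrating in $t$ gives the finite bound $\int_0^\infty\|\nabla_\SIGMA\Mpq[p,2](\g(\cdot,t))\|_\HL^2\,\dd t=\Mpq[p,2](\g_0)-E_\infty$.

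To upgrade this to the pointwise limit $\nabla_\SIGMA\Mpq[p,2](\g(\cdot,t))\to 0$, I would invoke \thref{introtheorem:regularity}: the mapping $t\mapsto \frac{\partial\g}{\partial t}(\cdot,t)=-\nabla_\SIGMA\Mpq[p,2](\g(\cdot,t))$ is globally Lipschitz from $[0,\infty)$ into $\HL$ with values bounded in norm, so the non-negative scalar $h(t):=\|\nabla_\SIGMA\Mpq[p,2](\g(\cdot,t))\|_\HL^2$ is itself uniformly Lipschitz. A non-negative Lipschitz function that is integrable on $[0,\infty)$ must tend to zero; otherwise a positive $\limsup_t h(t)$ together with the Lipschitz property would produce infinitely many disjoint intervals of uniform length on which $h$ is bounded below by a positive constant, contradicting integrability.

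For the subconvergence statement, \thref{introtheorem:longTimeExistenceViaProjections} ensures that the orbit $\{\g(\cdot,t)\}_{t\ge 0}$ is bounded in the reflexive Hilbert space $\HL=W^{\frac 3 2 p-2,2}(\R/\Z,\R^n)$. Given $t_k\to\infty$, reflexivity yields a subsequence $\g(\cdot,t_{k_l})\rightharpoonup \g^*$ in $\HL$. Because $\tfrac 3 2 p-2>\tfrac 3 2$ for $p>\tfrac 7 3$, the fractional Morrey embedding \thref{thm:morrey} implies that $\HL\hookrightarrow C^1(\R/\Z,\R^n)$ is compact, so the convergence is strong in $C^1$. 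The preserved velocity identity \eqref{eq:velocity-preserving} passes to the limit, forcing $|\g^{*\prime}|\equiv|\g_0'|>0$ on $\R/\Z$. Lower semicontinuity of $\Mpq[p,2]$ along this convergence gives $\Mpq[p,2](\g^*)\le \liminf_l \Mpq[p,2](\g(\cdot,t_{k_l}))=E_\infty$. Finiteness of $\Mpq[p,2](\g^*)$ combined with strict positivity of $|\g^{*\prime}|$ implies injectivity of $\g^*$ on $[0,1)$ via the self-avoidance property (cf.\ \cite{BlattReiter:2015:Menger}), so $\g^*\in W^{\frac32 p-2,2}_{\ir}$, and $C^1$-stability of tame knot classes (cf.\ \cite{reiter_2005,denne-sullivan_2008,blatt_2009a}) transfers the class $[\g_0]$ to $\g^*$.

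The main subtle point in this plan is the lower semicontinuity of $\Mpq[p,2]$: its integrand involves the singular kernel $1/\Rpq$, which blows up as the three arguments approach collinearity or collapse. The uniform velocity lower bound together with uniform bi-Lipschitz estimates for the sequence (extractable from the uniform energy bound via \thref{cor:bilip2}) keep the integrand pointwise convergent off the measure-zero diagonal, rendering Fatou applicable; this same bi-Lipschitz control provides an alternative direct route to the embeddedness of $\g^*$ independent of the self-avoidance argument.
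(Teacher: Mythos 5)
Your proof follows the same overall architecture as the paper's: differentiate the energy along the flow and use $\langle v,\Pi_N v\rangle=\|\Pi_N v\|^2$ to show monotone decay, integrate to obtain $\int_0^\infty\|\nabla_\SIGMA\Mpq[p,2](\g(\cdot,t))\|_\HL^2\,\dd t<\infty$, extract a weakly convergent subsequence in the reflexive space $\HL$ which converges strongly in $C^1$ by the compact Morrey embedding, apply Fatou for lower semicontinuity, and invoke $C^1$-stability of tame knot classes. Your treatment of the pointwise limit $\nabla_\SIGMA\Mpq[p,2](\g(\cdot,t))\to 0$ is in fact more explicit than the paper's, which passes from the integrability statement to this limit without comment; your invocation of the Lipschitz regularity from \thref{introtheorem:regularity} to show a non-negative, uniformly Lipschitz, integrable function on $[0,\infty)$ must vanish at infinity is a genuine (and welcome) increase in rigor.

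There is, however, a gap in your primary route to the embeddedness of $\g^*$. The claim that finiteness of $\Mpq[p,2](\g^*)$ together with $|\g^{*\prime}|>0$ forces injectivity via the self-avoidance property is incorrect as stated: a multiple cover of an embedded finite-energy curve is still regular and still has finite generalized integral Menger curvature, since $\Rpq$ becomes infinite (and the integrand vanishes) on the set where two of the three points coincide. Finite energy alone therefore cannot distinguish an embedding from a multiple cover; the cited result in \cite{BlattReiter:2015:Menger} always carries the "embedding or multiple cover" caveat. What actually closes the argument is the uniform lower bound on the bilipschitz constant from the a priori estimate \eqref{eq:bilip-evolution}, which is stable under $C^1$-convergence and rules out multiple covers directly; you do mention this as a fallback at the very end of your sketch, but it needs to be the main route rather than an afterthought. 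The same bilipschitz bound is also what the paper uses. Finally, your concern about Fatou is overcautious: the integrands for the approximating curves converge pointwise off a null set simply by $C^1$-convergence, and non-negativity is all that Fatou requires; the uniform bilipschitz control is not needed for that step.
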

Analytical results regarding gradient flows for non-local 
self-avoidance energies seem scarce.  The most significant
contributions are the long time existence results of Blatt for
the $L^2$-flow of 
O'Hara's knot energies including the M\"obius energy 
\cite{blatt_2012b,blatt_2020a,blatt_2018a}. 
There is also  work on the $L^2$-flows of  linear combinations of
the classic bending energy and a (non-local)
self-avoidance term \cite{lin-schwetlick_2010}, \cite{vonbrecht-blair_2017},
but there the crucial
a priori estimates are obtained by virtue of the leading order curvature
energy.  
For a linear combination of a discrete bending and 
self-repulsive tangent-point-type energy, however, numerical analysis has been
performed recently by S. Bartels, Reiter, 
and J. Riege \cite{bartels-etal_2018,
    bartels-reiter_2018}. In his Ph.D. thesis, Hermes \cite{hermes_2014}
implemented a
numerical scheme for an $L^2$-type gradient
flow   of
integral Menger curvature $\mathcal{M}_p$
which was supplemented later
by A. Gilsbach \cite{gilsbach_2018}
to numerically minimize  under symmetry constraints, but there is
no analytical foundation yet for that flow with or without 
symmetry enforcement. The situation seems similar
for the publicly available numerical flows such as
R. Scharein's KnotPlot \cite{knotplot_2017} or 
Ridgerunner \cite{ashton-etal_2011} that are widely used
in the community of geometric or applied knot theory. 
Fractional Sobolev metrics for the Möbius energy have already been investigated by Reiter and the second author in \cite{2005.07448}. However, because the attendant energy space is not a Hilbert space, long time existence could not be shown there.
Similar metrics for tangent-point energies of curves have been discussed from a more experimental point of view by {Ch.}~Yu, K.~Crane, and the second author in \cite{2006.07859}. There are also more advanced discretization and solving strategies being developed to make the evaluation of the discrete energy and the discrete gradient more efficient. Some of the methods discussed there may also be applied to the integral Menger energy, but this shall not be our concern here.

Sobolev gradient flows are also easier to discretize than $L^2$ gradient flows.
The problem with the $L^2$-flow of an energy $E$ is that its governing equation is a parabolic partial differential equation and that certain \emph{Courant--Friedrichs--Lewy} conditions have to be fulfilled in order to make its explicit space-time discretization stable. Typically, these conditions are of the form $\tau \lesssim h^\alpha$ where $\tau >0$ denotes the time step size and $h>0$ denotes the spatial mesh size (here: the longest edge in a polyhedral discretization), and $\alpha >0$ is the order of the differential operator $\g \mapsto DE(\g)$. In our case, we have $\alpha = 2 s > 2$, hence a stable explicit time integration would require prohibitively small step size $\tau \lesssim h^{2s}$ for small $h>0$.
This can be mended by using implicit time integration, but this requires the second derivative of the energy and it introduces other costs like having to solve a nonlinear equation in each gradient step.
Instead, the discrete Sobolev flow comes without dependence of the step size $\tau$ on the mesh size $h$, allowing us to use inexpensive explicit time integrators and adaptive time stepping strategies. 
The latter provides sufficient stability so that the discrete flow with oversized time steps can be used as fairly efficient optimization routine. 
Even with a naive all-pairs discretization of the energy and dense matrix arithmetic, this allowed us to compute local discrete minimizers in \autoref{fig:Flow_HermesKnot}, \autoref{fig:Flow_SquareKnot} and \autoref{fig:Flow_TorusKnot} within less than 20 minutes on a consumer laptop. 
For the algorithmic details see Section~\ref{sec:Numerics}. There we also briefly explain how we employ refined time stepping techniques in order to guarantee that the discrete flow preserves the knot class.
This is necessary because i) the discrete energy is not a knot energy (because we use inexact quadrature rules) and because ii) \emph{finite} time steps may lead to pull-through even for true knot energies.

\begin{figure}
    \begin{center}
        \newcommand{\myincludegraphics}[2]{\begin{tikzpicture}
            \node[inner sep=0pt] (fig) at (0,0) {\includegraphics{#1}};
            \node[above = -0.15ex , right= 0.01ex] at (fig.south west) {\footnotesize(#2)};    
            \end{tikzpicture}
        }%
        \presetkeys{Gin}{
            trim = 100 50 100 55, 
            clip = true,  
            angle = 0,
            width = 0.32\textwidth
        }{}%
        \capstart%
        \myincludegraphics{SquareKnot_Menger_000600E_FrozenIntrinsicGagliardoMetric_Frame_000000}{0}%
        \myincludegraphics{SquareKnot_Menger_000600E_FrozenIntrinsicGagliardoMetric_Frame_000008}{8}%
        \myincludegraphics{SquareKnot_Menger_000600E_FrozenIntrinsicGagliardoMetric_Frame_000010}{10}%
        \\
        \myincludegraphics{SquareKnot_Menger_000600E_FrozenIntrinsicGagliardoMetric_Frame_000020}{20}%
        \myincludegraphics{SquareKnot_Menger_000600E_FrozenIntrinsicGagliardoMetric_Frame_000100}{100}%
        \myincludegraphics{SquareKnot_Menger_000600E_FrozenIntrinsicGagliardoMetric_Frame_000250}{250}%
    \end{center}
    \caption{The Sobolev gradient flow has also regularizing properties. Here a noisy, polyhedral square knot (connected sum of a left-handed and a right-handed trefoil) of 600 edges is chosen as initial configuration. (Again, numbers in parentheses indicate iteration numbers of the projected gradient descent.)}
    \label{fig:Flow_SquareKnot}
\end{figure}

\begin{figure}
    \begin{center}
        \newcommand{\myincludegraphics}[2]{\begin{tikzpicture}
            \node[inner sep=0pt] (fig) at (0,0) {\includegraphics{#1}};
            \node[above right= 0.125ex] at (fig.south west) {\footnotesize#2};    
            \end{tikzpicture}
        }%
        \presetkeys{Gin}{
            trim = 45 45 45 45, 
            clip = true,  
            angle = 0,
            width = 0.242\textwidth
        }{}%
        \capstart%
        \myincludegraphics{TorusKnot_5_3_Symmetric_Menger_000240E_FrozenIntrinsicGagliardoMetric_Frame_000000.png}{(a)}%
        \myincludegraphics{TorusKnot_5_3_Symmetric_Menger_000240E_FrozenIntrinsicGagliardoMetric_Frame_000040.png}{(b)}%
        \myincludegraphics{TorusKnot_5_3_Symmetric_Menger_000240E_FrozenIntrinsicGagliardoMetric_CriticalPoint_Eigenvector.png}{(c)}%
        \myincludegraphics{TorusKnot_5_3_Asymmetric_Menger_000240E_FrozenIntrinsicGagliardoMetric_Frame_000000.png}{(d)}%
        \\%
        \myincludegraphics{TorusKnot_5_3_Asymmetric_Menger_000240E_FrozenIntrinsicGagliardoMetric_Frame_000100.png}{(e)}%
        \myincludegraphics{TorusKnot_5_3_Asymmetric_Menger_000240E_FrozenIntrinsicGagliardoMetric_Frame_000200.png}{(f)}%
        \myincludegraphics{TorusKnot_5_3_Asymmetric_Menger_000240E_FrozenIntrinsicGagliardoMetric_Frame_000300.png}{(g)}%
        \myincludegraphics{TorusKnot_5_3_Asymmetric_Menger_000240E_TrustRegion_Minimizer_Rotated.png}{(h)}%
    \end{center}
    \caption{Spontaneous symmetry breaking:
    (a) Symmetric 5-3 torus knot. (b) Symmetric critical point after 40 projected Sobolev gradient iterations. (c) The eigenvector to the smallest negative(!) eigenvalue of the constraint Hessian. (d) Small, almost invisible perturbation of (b) in direction of this eigenvector.
    (e) Continuing the projected Sobolev gradient flow for 100, (f) 200, and (g) 300 iterations. 
	From there on the energy landscape becomes rather shallow, so we applied Newton's method to obtain the local minimizer (h).
    }
    \label{fig:Flow_TorusKnot}
\end{figure}

We close this introduction with a few remarks concerning notation.
The inner product in 
$\R^n$ is denoted  by $\langle\cdot,\cdot\rangle$, inner products on
other Hilbert spaces $\HL$ usually carry an index, i.e., $\langle\cdot,\cdot
\rangle_{\HL}$. 
Quite often we need to know how constants depend on certain parameters, so 
constants $C=C(a,b,c,\ldots)$ are  frequently interpreted as
functions depending monotonically on the parameters $a,b,c\ldots$.
By 
$\LIP_f$ we denote Lipschitz constants of  mappings $f$.
Closed curves are  $\ell$-periodic vector-valued
functions $\gamma$ on $\R$ for some $\ell>0$, and we simply
write $\gamma \colon \R/\ell\Z \to \R^n$. The \emph{bilipschitz constant} 
of $\g$ defined as
\begin{equation}\label{eq:bilip-def}
\BiLip(\gamma):=
\inf_{\substack{u_1,u_2 \in \R/\ell\Z \\ u_1\neq u_2}} \frac {\abs{\gamma(u_1)-\gamma(u_2)}} {\abs{u_1-u_2}_{\R/\ell\Z}} 
\end{equation}
is essentially 
the inverse of Gromov's distortion \cite[Sect. 9]{gromov_1983} and
describes the embeddedness of $\g$ in a quantitative way.  
As in \eqref{eq:minimal-velocity-intro} we write $v_\g$ for the 
minimal velocity of any absolutely continuous curve $\g$. 
Finally,   let  
$W^{1,1}_{\ir}(\R/\ell\Z)$ be the subset of closed
$W^{1,1}$-curves
$\gamma$ that are injective on $[0,\ell)$
and \newterm{regular}, i.e.\ with $v_\gamma >0$ a.e.\ on $\R/\ell\Z$.

\section{Lipschitz continuity of the energy gradient
}\label{sec:2}
We calculate and estimate the first and second variation
of the integral Menger curvature
on the space $W^{\frac32 p-2,2}_\textnormal{ir}(\R/\Z,\R^n)$ of  periodic regular 
Sobolev Slobodecki\v{i} curves that are injective on the fundamental 
domain $[0,1)$.  
\begin{theorem}
    \label{theorem:secondVariationMp2} \index{generalized integral Menger curvature}
    For $p\in(\frac 7 3,\frac 8 3)$ the first variation 
    $\delta\Mpq[p,2](\g,\cdot)$ and 
    the
    second variation $\delta^2\Mpq[p,2](\g)$ of the generalized integral Menger
    curvature $\Mpq[p,2]$ exist at every curve $\g\in
    W^{\frac 3 2 p -2,2}_{\ir}(\R/\Z,\R^n)$, satisfying  the estimates 
    \begin{align}
    \label{theorem:secondVariationMp2:eq:estFirstVariation}
   | {\delta \Mpq[p,2](\gamma,h)}|
    &\leq C_1\halfnorm[\frac32 p-3,2]{h'},
    \\
    \label{theorem:secondVariationMp2:eq:estSecondVariation}
   |{\delta^2\intM^{(p,2)}(\gamma)[h,g]}|
    &\leq C_2 \halfnorm[\frac32 p-3,2]{h'} \halfnorm[\frac32 p-3,2]{g'},
    \end{align}
    for all $h,g\in W^{\frac32 p-2,2}(\R/\Z,\R^n)$, where the constants
    $
    C_i = C_i(n, p,\BiLip(\gamma),\halfnorm[\frac32 p-3,2]{\gamma'})$ for
    $ i=1,2 $
        depend non-increasingly on the bilipschitz constant $\BiLip(\gamma)$ of $\g$ and non-decreasingly on the seminorm $\halfnorm[\frac32 p-3,2]{\gamma'}$ of the tangent
    $\g'$.
\end{theorem}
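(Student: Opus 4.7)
The plan is to derive both variations directly by differentiating the integrand of \eqref{eq:gen-integral-menger} in the directions $h$ and $g$, and then to absorb every resulting triple integral into the fractional Sobolev seminorms of $h'$ and $g'$. Rewriting the integrand in the form
\[
\frac{|\gamma'(u_1)||\gamma'(u_2)||\gamma'(u_3)|\,|(\gamma(u_3)-\gamma(u_1))\wedge(\gamma(u_2)-\gamma(u_1))|^{2}}{\bigl(|\gamma(u_2)-\gamma(u_1)|\,|\gamma(u_3)-\gamma(u_1)|\,|\gamma(u_3)-\gamma(u_2)|\bigr)^{p}},
\]
the Leibniz rule splits $\delta\Mpq[p,2](\gamma,h)$ into three groups of terms: tangent terms arising from $\partial_{h}|\gamma'(u_i)|=\langle \gamma'(u_i)/|\gamma'(u_i)|,h'(u_i)\rangle$, numerator--wedge terms produced by polarizing $|{\cdot}\wedge{\cdot}|^{2}$ against the $h$-increments, and denominator terms in which one factor $|\gamma(u_j)-\gamma(u_i)|^{p}$ is replaced by $-p\,|\gamma(u_j)-\gamma(u_i)|^{p-2}\langle \gamma(u_j)-\gamma(u_i),h(u_j)-h(u_i)\rangle$. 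Differentiating once more in direction $g$ yields $\delta^{2}\Mpq[p,2](\gamma)[h,g]$ as an analogous but larger sum of triple integrals whose integrands are at most quartic in such ``test-increments''.

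The next step is to reduce every summand to a normal form over $(\R/\Z)^{3}$. For the scalar distances in the denominator I use the bilipschitz lower bound $|\gamma(u_j)-\gamma(u_i)|\ge \BiLip(\gamma)\,|u_j-u_i|_{\R/\Z}$, so that these factors contribute only inverse powers of $|u_j-u_i|_{\R/\Z}$ together with $\BiLip(\gamma)^{-1}$-powers. Pointwise bounds on $\gamma'$ follow from the Morrey embedding $W^{\frac{3}{2}p-2,2}\hookrightarrow C^{1,\alpha(p)}$ recalled in Appendix~\ref{section:Sobolev}, with a constant monotone in $\halfnorm[\frac{3}{2}p-3,2]{\gamma'}$, since the full norm of $\gamma'$ is controlled by its seminorm after exploiting the fact that $\gamma'$ has zero mean on $\R/\Z$. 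Following \cite{BlattReiter:2015:Menger}, I then rewrite the wedge $(\gamma(u_3)-\gamma(u_1))\wedge(\gamma(u_2)-\gamma(u_1))$ as well as its $h,g$-variations as a double integral of tangent-increments $\gamma'(\tau)-\gamma'(\sigma)$, $h'(\tau)-h'(\sigma)$, or $g'(\tau)-g'(\sigma)$ over the corresponding chord parameters, converting the geometric ``straightness defect'' into an object that the fractional Sobolev calculus can handle.

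After these substitutions every summand is a triple integral of a product of such increment factors divided by a product of weights $|u_j-u_i|^{-\beta_{ij}}$. The final step is a Cauchy--Schwarz inequality in the Gagliardo coordinates that pairs each $h'$- and $g'$-increment with the weight $|u_j-u_i|^{-(1+2(\frac{3}{2}p-3))}$, so that precisely the seminorms $\halfnorm[\frac{3}{2}p-3,2]{h'}$ and $\halfnorm[\frac{3}{2}p-3,2]{g'}$ appear; the remaining $\gamma'$-kernel is integrable exactly in the range $p\in(\frac{7}{3},\frac{8}{3})$ and, together with the $L^{\infty}$-bound on $\gamma'$, can be bounded by a constant that is monotone in $\halfnorm[\frac{3}{2}p-3,2]{\gamma'}$ and in $\BiLip(\gamma)^{-1}$.

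The main obstacle will be the bookkeeping for $\delta^{2}\Mpq[p,2]$: the Leibniz rule combined with the differentiated wedge identity produces many ``mixed'' summands in which $h'$-, $g'$- and $\gamma'$-increments sit on geometrically inequivalent chords of the triangle $(u_1,u_2,u_3)$, and each configuration requires its own distribution of the $|u_j-u_i|^{-\beta_{ij}}$-weights onto the available four increment factors via Hölder's inequality with carefully chosen exponents. The condition $p\in(\frac{7}{3},\frac{8}{3})$ is precisely what permits this distribution in every case and renders each scaling tight; once this combinatorial redistribution has been carried out systematically, the bounds \eqref{theorem:secondVariationMp2:eq:estFirstVariation} and \eqref{theorem:secondVariationMp2:eq:estSecondVariation} with the claimed monotonicity of the constants $C_{1},C_{2}$ follow directly.
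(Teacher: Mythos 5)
Your proposal follows essentially the same route as the paper: substitute $(u_1,u_2,u_3)=(u,u+v,u+w)$, split each variation via Leibniz into tangent, wedge-polarization, and denominator terms, represent the wedge factors as double integrals of tangent increments, control the denominator with the bilipschitz constant and the pointwise velocities with Morrey--Poincar\'e, and close by a Cauchy--Schwarz in the Gagliardo coordinates that is integrable precisely for $p\in(\frac73,\frac83)$ (this is the paper's Lemma~\ref{lem:aux-est}). The only point your sketch elides but the paper handles explicitly is the \emph{existence} of $\delta^2\Mpq[p,2]$: to pass the $\tau\to0$ limit through the triple integral one needs a single integrable majorant $G_{\tau_g}$ valid uniformly along the intermediate curves $\gamma+s\tau g$, which the paper obtains from the stability $\BiLip(\gamma+s\tau g)\ge\BiLip(\gamma)/2$ on a small ball and then dominated convergence.
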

\begin{proof}
    Since $\gamma$ is  injective on $[0,1)$ and  a regular curve of class
    $W^{\frac32 p-2,2}$ which by Morrey's embedding,
    \thref{thm:morrey}, embeds into $C^1$, we
    have $0<\BiLip(\g) \leq v_\g\le\abs{\gamma'(u)}$    for all $u\in\R;$ see  
    \thref{lem:tangents-close} and \thref{cor:bilip2}.
    Using the explicit embedding inequality \eqref{eq:morrey-embedding}
    we obtain
    $
    \halfnorm[\frac32 p-3,2]{\g'}
    {\ge} C_E^{-1}\|\g'\|_{C^0(\R/\Z,\R^n)}\ge 
    C_E^{-1}v_\g>0.
    $
    Therefore, we can 
    choose $\epsilon>0$ sufficiently small such that
    \begin{equation}\label{eq:eta-cond}
    \halfnorm[\frac32 p-3,2]{\eta'}\le 2\halfnorm[\frac32 p-3,2]{\g'},
    \AND v_\eta
   {\ge}
    \BiLip(\eta)
    {\ge}
     \BiLip(\g)/2>0
    \end{equation}
    for all $\eta\in B_{\epsilon}(\g)
    \subset W^{\frac32 p -2,2}(\R/\Z,\R^n),
    $
    where we used \eqref{eq:bilip-comparison} of 
    \thref{lemma:arclengthParametrizationAndBiLipschitzConstants}
    as well as \eqref{eq:bilip2} of
    \thref{cor:bilip2} in Appendix \ref{section:Arclength}. 
    Fix such a curve
    $\eta\in B_\epsilon(\g)$.
        Similarly as in \cite{BlattReiter:2015:Menger}, we use the difference $
    \difference{v,w}f(u) := f(u+v)-f(u+w) $
    for a function $f\colon \R\to\R$ and arbitrary parameters $u,v,w\in\R$ to
    abbreviate 
    \begin{align*}
    \hatProduct{f,g} \!&:=\! \difference{w,0} f(u) \wedge \difference{v,0} g(u) + 
    \difference{w,0} g(u) \wedge \difference{v,0} f(u)
    \AND\!
    \hat f \!:=\! \hatProduct{f,f/2} \!=\! \difference{w,0}f(u) 
    \wedge \difference{v,0} f(u),
    \end{align*}
    where also $g\colon \R\to\R$ is an arbitrary function.
    This allows us to rewrite the Lagrangian of $\Mpq[p,2](\eta)$ in 
    \eqref{eq:gen-integral-menger} after substituting the variables
    $u:=u_1$, $u+v:=u_2,$ and $u+w:=u_3$ as
    \begin{align*}
    L(\eta)(u,v,w) &:=  {\abs{\hat \eta}^2} 
   ( {\abs{\difference{w,0}\eta(u)} \abs{\difference{v,0}\eta(u)} 
        \abs{\difference{v,w}\eta(u)}})^{-p} \abs{\eta'(u)} \abs{\eta'(u+v)} 
    \abs{\eta'(u+w)}.
    \end{align*}
    With the additional notation $
    A[f,g,h](x)
    :=\textstyle\big\langle \frac {g'(u+x)} {\abs{f'(u+x)}}, \frac {h'(u+x)} {\abs{f'(u+x)}}\big\rangle,$ and $
    B(f,g,h)(x,y):=\textstyle\big\langle \frac {\difference{x,y}g} {\abs{\difference{x,y}f}}, \frac {\difference{x,y}h} {\abs{\difference{x,y}f}} \big\rangle $
    for differentiable functions $f,g,h\colon \R\to\R$
    we can express the integrand of the first variation of $\Mpq[p,2]$
    at $\eta\in W^{\frac32 p-2,2}_\textnormal{ir}(\R/\Z,\R^n)$
    in the direction of $h\in W^{\frac32 p-2,2}(\R/\Z,\R^n)$ derived in 
    \cite[Section 3]{BlattReiter:2015:Menger}\footnote{Note that the last three 
        terms ought to have the factor $(R^{p,q})^{-1}$ instead of $R^{p,q}$ in the 
        expression for $\delta L$ preceding Lemma 3.1 in 
        \cite{BlattReiter:2015:Menger}.}  as
    \begin{multline*}
    \delta L(\eta,h)(u,v,w)
    :=L(\eta)(u,v,w)\cdot \big[ {2}
    {\langle \hat\eta,\hatProduct{\eta,h}\rangle/{\abs{\hat \eta}^2}}  
    + A[\eta,\eta,h](0)
    + A[\eta,\eta,h](v) \\
    + A[\eta,\eta,h](w) 
    - p\,\big\{B[\eta,\eta,h](w,0) 
    + B[\eta,\eta,h](v,0) + B[\eta,\eta,h](v,w)\big\}\big] .
    \end{multline*}
    Denoting by $\AL[\eta,h]$  the sum of all the 
    $A[\cdot,\cdot,\cdot](\cdot)$-terms, and by $\BL[\eta,h]$ the sum of all 
    the $B[\cdot,\cdot,\cdot](\cdot,\cdot)$-terms, we 
    arrive at the shorter expression
    \begin{align}
    \delta L(\eta,h)(u,v,w) &= L(\eta)(u,v,w)
     \cdot
     \big[\AL[\eta,h]
    -p\BL[\eta,h] +  {2\langle \hat\eta,\hatProduct{\eta,h}\rangle } 
    /{\abs{\hat \eta}^2} \big]
    =: L(\eta)(u,v,w) 
   \cdot
     I(h)\label{eq:delta1integrand}.
    \end{align}
    After replacing $\eta$ with the perturbed curve $\eta_\tau:=\eta
    +\tau g$ for some
    $g\in W^{\frac32 p-2,2}(\R/\Z,\R^n)$ and  $0<|\tau|\le\tau_g\ll 1$, so that
    $\eta_\tau\in B_{\epsilon}(\g)$ for all $|\tau|\le\tau_g$,
    we compute the
    derivative with respect to $\tau$ and evaluate at $\tau=0$ to 
    obtain by means of the product rule
    \begin{align}
    \delta^2 L(\eta,h,g)(u,v,w) :=& \textstyle
    \restrict{\frac d {d\tau }}{\tau=0} \left(\delta L(\eta_\tau,h)(u,v,w)\right)
    = L(\eta)(u,v,w)I(g)I(h)\notag\\ +\, L(\eta)(u,v,w)
    \cdot\textstyle \restrict{\frac d {d\tau}}{\tau=0} & \big[\AL(\eta_\tau,h) - p  
    \BL(\eta_\tau,h) +  {2\langle \widehat{\eta_\tau},
        \hatProduct{\eta_\tau,h}\rangle} 
	/{\abs{\widehat{\eta_\tau}}^2} 
	\big].
    \label{eq:second-lagrange}
    \end{align}
    Using the observation
    $
    \restrict{\frac d {d\tau}}{\tau=0} \big( 
    \frac {\langle a + \tau b,c\rangle}{\abs{a+\tau b}^2}\big)
    =  {\langle b,c \rangle}/ {\abs{a}^2} - 2 {\langle a, b\rangle} 
    {\langle a,c\rangle}/ {\abs{a}^4}
    $
    for $a,b,c \in \R^n$ in terms 
    $\AL[\eta_\tau,h]$ and $\BL[\eta_\tau,h]$ together with their very similar structure,
    we get for $\AL'
    := \restrict{\frac d {d\tau}}{\tau=0}\AL(\eta_\tau,h)$ and $\BL'
    := \restrict{\frac d {d\tau}}{\tau=0} \BL(\eta_\tau,h)$ the expressions
    $$ 
    \AL'
    = \textstyle\sum_{\sigma \in \menge{0,v,w}} A[\eta,h,g](\sigma) - 
    2 A[\eta,\eta,h](\sigma) A[\eta,\eta,g](\sigma)\quad\AND
    $$
    \[
    \BL'
    = \textstyle\sum_{(\sigma, \rho) \in \menge{(v,0),(w,0),(v,w)}} 
    B[\eta,h,g](\sigma,\rho) - 2 B[\eta,\eta,h](\sigma,\rho) 
    B[\eta,\eta,g](\sigma,\rho).
    \]
    For the last term in \eqref{eq:second-lagrange}
    we obtain from quotient and product rule
    \begin{equation}
   \textstyle \restrict{ \frac d {d\tau}}{\tau=0} \label{eq:delta2integrand}
    {\left< \widehat{\eta_\tau},\hatProduct{\eta_\tau,h}\right>} 
    \frac{1}{\abs{\widehat{\eta_\tau}}^2}
    =  \left\{\left< \hatProduct{h,\eta}, \hatProduct{g,\eta} \right> 
    + \left< \hat \eta, \hatProduct{h,g} \right>\right\} 
    \frac{1}{\abs{\hat \eta}^2} 
    -   {\left< \hat \eta, \hatProduct{h,\eta}\right> \left< \hat \eta, \hatProduct{g,\eta}\right>} \frac{2}{\abs{\hat \eta}^4}.
    \end{equation}
    In total, we have
    \begin{multline}
    \label{theorem:secondVariationMp2:eq:secondVariation}
    \delta^2 L(\eta,h,g)(u,v,w) 
    = L(\eta)(u,v,w) \big[I(g)I(h)+\AL'-p\BL'\\
    + \big\{\langle\, \hatProduct{h,\eta}, 
    \hatProduct{g,\eta}\, \rangle + \langle\,\hat \eta, 
    \hatProduct{h,g} \,\rangle\big\} {2}{\abs{\hat \eta}^{-2}} 
    -   {\langle\,\hat \eta, \hatProduct{h,\eta}\,\rangle 
        \langle\,\hat \eta, \hatProduct{g,\eta}\,\rangle} 
    {4}{\abs{\hat \eta}^{-4}}\big].
    \end{multline}
    Following the idea of Blatt and Reiter in 
    \cite[Lemma 1.1]{BlattReiter:2015:Menger} we do not consider the integral of 
    this formula over $\R/\Z\times (-1/2,1/2)^2$, 
    but instead compute six times the integral over $\R/\Z \times D$ with
   \begin{equation}\label{eq:D-def} 
    D:=\menge{(v,w) \in \left(-\tfrac 1 2,0\right) \times \left(0, 
        \tfrac 1 2\right) \colon  w \leq 1 + 2v, v\geq -1+2w}
   \end{equation} 
    which does not change the value of the functional but guarantees that $
    \abs{v}=\abs{v}_{\R/\Z},\abs{w}=\abs{w}_{\R/\Z}$, 
    and 
    \[
    \label{theorem:secondVariationMp2:eq:equivalenceDistances}
    \numberthis
     \abs{v-w}/2
    \leq \abs{v-w}_{\R/\Z}
    \leq \abs{v-w}\quad\Foa (v,w)\in D.
    \]
    For the non-obvious first inequality in 
    \eqref{theorem:secondVariationMp2:eq:equivalenceDistances} 
    notice that
    $\abs{v-w} \leq \frac 2 3$ for all $(v,w)\in D$, and therefore
   $ 
    1 - \abs{v-w} 
    = \abs{v-w} (  {\abs{v-w}}^{-1} - 1)
    \geq \abs{v-w} (  (3/ 2) -1 ),
   $
    which yields \eqref{theorem:secondVariationMp2:eq:equivalenceDistances}.
    Combining this with \eqref{eq:eta-cond} we have
    \[
    \label{theorem:secondVariationMp2:eq:PeriodicBiLip}
    \numberthis
     {\abs{\eta(x)-\eta(y)}}^{-1}
    \leq  ({\BiLip(\eta)\abs{x-y}_{\R/\Z}})^{-1}
    {\leq}  2/( {\BiLip(\eta)\abs{x-y}})
    {\le}
     4/( {\BiLip(\gamma)\abs{x-y}})
    \]
    for distinct parameters
    $x \in \menge{u+w, u+v}$, and $y \in \menge{u+v,u}$ with $(v,w) \in D$.
    Combining  
    the Morrey  embedding, \thref{thm:morrey}, in particular
    inequality \eqref{eq:morrey-embedding} with Poincar\'e's inequality
    \eqref{eq:poincare} 
    for $\eta$ (which satisfies $\int_0^1\eta'(\tau)\, \dd\tau=0$)
    leads to the estimates
    \begin{equation}\label{eq:first-order-est}
   \textstyle \frac {\abs{\eta(x)-\eta(y)}} {\abs{x-y}}\le
    \|\eta'\|_{C^0(\R/\Z,\R^n)}
    {\leq}  
    C_E \norm[W^{\frac 3 2 p -3,2}(\R/\Z,\R^n)]{\eta'}
    {\le} C_EC_P\halfnorm[\frac32 p-3,2]{\eta'}
    \end{equation}
    for all distinct $x,y\in\R.$
    Using \eqref{theorem:secondVariationMp2:eq:PeriodicBiLip},
    \eqref{eq:first-order-est}, and \eqref{eq:eta-cond}
    we can estimate the various
    terms in the integrands of the first and second variation.
    \begin{align}
    &   \textstyle \abs{L(\eta)(u,v,w)} \leq \big( \frac {2\cdot4^p C_EC_P} 
    {\BiLip(\gamma)^p} 
    \halfnorm[\frac32 p-3,2]{\gamma'} \big)^3 
    \frac {\abs{\hat \eta}^2} {\abs{v}^p \abs{w}^p \abs{v-w}^p}
    =: C_L \frac {\abs{\hat \eta}^2} {\abs{v}^p \abs{w}^p \abs{v-w}^p},\notag\\
    &   \textstyle \abs{\AL' - p \BL'} \leq \frac {144 (1+p)C_E^2C_P^2} 
    {\BiLip(\gamma)^2} 
    \halfnorm[\frac32 p-3,2]{h'} \halfnorm[\frac32 p-3,2]{g'} =: C_{\AL\BL} 
    \halfnorm[\frac32 p-3,2]{h'} \halfnorm[\frac32 p-3,2]{g'},
    \label{eq:first-nuetzlich}\\
    & \textstyle   \abs{I(h)} \leq \frac {24 (1+p)C_EC_P} {\BiLip(\gamma)} 
    \halfnorm[\frac32 p-3,2]{\gamma'} \halfnorm[\frac32 p-3,2]{h'} 
    +  {\abs{\hatProduct{h,\eta}}} \frac{2}{\abs{\hat \eta}}
    =:C_{I} \halfnorm[\frac32 p-3,2]{h'} 
    +  {\abs{\hatProduct{h,\eta}}} \frac{2}{\abs{\hat \eta}}.\notag
    \end{align}
    Here, the constants
    $C_L, C_{\AL\BL}$ and $C_{I}$ depend only on $n$ and  $p$, non-increasingly on 
    $\BiLip(\gamma)$, and non-decreasingly on $\halfnorm[\frac32 p-3,2]{\gamma'}$.
The last inequality yields $
    \abs{I(h)I(g)}
     \textstyle\leq C_I^2 \halfnorm[\frac32 p-3,2]{h'} \halfnorm[\frac32 p-3,2]{g'} 
    +  |\hatProduct{h,\eta}|   |\hatProduct{g,\eta}| 
    \frac{4}{\abs{\hat \eta}^2}
    + \frac{2C_I}{\abs{\hat \eta}}\big({\halfnorm[\frac32 p-3,2]{h'}}
    |\hatProduct{g,\eta}|  + \halfnorm[\frac32 p-3,2]{g'}  
    |\hatProduct{h,\eta}| \big).$
    Now we estimate the terms in the expression 
    \eqref{theorem:secondVariationMp2:eq:secondVariation} for $\delta^2L(\eta,h,g)$
    term by term and obtain
    \begin{align}
    \label{theorem:secondVariationMp2:eq:firstTerm}
    &    \abs{L(\eta)(u,v,w)(\AL'-p\BL')} \leq C_L C_{\AL\BL} 
    \halfnorm[\frac32 p-3,2]{h'}\halfnorm[\frac32 p-3,2]{g'} 
     {\abs{\hat \eta}^2} ({\abs{v} \abs{w} \abs{v-w}})^{-p}, \\
    & \textstyle  \big|L(\eta)(u,v,w)
        \label{theorem:secondVariationMp2:eq:secondTerm}
        \big\{\langle\, \hatProduct{\eta,h},\hatProduct{\eta,g}\,\rangle 
        +\langle\,\hat\eta,\hatProduct{h,g}\,\rangle\big\} 
        \frac{2}{\abs{\hat \eta}^2}\big| 
    \leq 2 C_L \frac {\abs{\hatProduct{\eta,h}}\abs{\hatProduct{\eta,g}} 
        + \abs{\hat \eta} \abs{\hatProduct{h,g}}} {\abs{v}^p\abs{w}^p\abs{v-w}^p}, \\
    \label{theorem:secondVariationMp2:eq:thirdTerm}
    & \textstyle \big|L(\eta)(u,v,w) {\langle\,\hat \eta,
            \hatProduct{\eta,h}\,\rangle 
            \langle\,\hat \eta,\hatProduct{\eta,g}\,\rangle} 
        {4}/{\abs{\hat \eta}^4}\big| 
    \leq 4 C_L \big|\hatProduct{h,\eta}\big|\hatProduct{g,\eta}\big| (
    {\abs{v} \abs{w} \abs{v-w}})^{-p},\\
    \label{theorem:secondVariationMp2:eq:fourthTerm}
 &   \abs{L(\eta)(u,v,w)I(h)I(g)}
    \leq C_L \big[ 
    \vphantom{\frac {\abs{\hatProduct{h,\eta}}\abs{\hatProduct{g,\eta}}} 
        {\abs{v}^p \abs{w}^p \abs{v-w}^p}}
    C_I^2 \halfnorm[\frac32 p-3,2]{h'} 
    \halfnorm[\frac32 p-3,2]{g'}  {\abs{\hat \eta}^2} 
    +4 \big|\hatProduct{h,\eta}\big|
        \big|\hatProduct{g,\eta}\big| \notag\\
     &\hspace{2.3cm} +2C_I\big\{ 
    {\halfnorm[\frac32 p-3,2]{h'}  \big|\hatProduct{g,\eta}}\abs{\hat\eta}\big|
     +  
    {	    \halfnorm[\frac32 p-3,2]{g'} 
        \big|\hatProduct{h,\eta}}\abs{\hat\eta}\big| 
    \big\} 
    \big]({\abs{v} \abs{w} \abs{v-w}})^{-p}
    .
    \end{align}
    We are going to apply these estimates to curves $\eta:=\g+s\tau g
    \in B_\epsilon(\g)$ for
    $s\in [0,1]$ and $|\tau|\le\tau_g\ll 1$. In order to sum up the
    terms on the right-hand sides of
    \eqref{theorem:secondVariationMp2:eq:firstTerm}--\eqref{theorem:secondVariationMp2:eq:fourthTerm}
    to find an integrable majorant for $\delta^2 L(\g+s\tau g,h,g)$ independent
    of $s$ and $t$, we need
    to use the bilinearity and commutativity of $\hatProduct{\cdot,\cdot}$. Indeed,
    using the bounds
    \begin{align*}
    \abs{\hat{\eta}}^2 &=\big|\,\hatProduct{\eta,\eta/2}\big|^2=\big|\hatProduct{
            \g,\g/2}+s\tau\big(\hatProduct{\g,g/2}+\hatProduct{g,\g/2}\big)+(s\tau)^2
        \hatProduct{g,g/2}\,\big|^2\\
    &=\big|\hat{\g}+s\tau\hatProduct{\g,g}+(s\tau)^2\hat{g}\big|^2
    \le 3\big(\abs{
        \hat{\g}}^2+\tau_g^2\big|\hatProduct{\g,g}\big|^2+\tau_g^4\abs{ \hat{g}}^2\big),\\
    \big|\hatProduct{\eta,h}\big| &=\big|\hatProduct{h,\eta}\big|
    =\big|\hatProduct{\g,h}+s\tau\hatProduct{g,h}\big|
    \le\big|\hatProduct{\g,h}\big|+\tau_g\big|\hatProduct{g,h}\big|
    \end{align*}
    as well as analogous bounds for $|{\hatProduct{\eta,g}}|$, all
    independent of $s\in [0,1]$ and $\tau\in [-\tau_g,\tau_g]$ 
    in \eqref{theorem:secondVariationMp2:eq:firstTerm}--\eqref{theorem:secondVariationMp2:eq:fourthTerm}
    we obtain by means of \eqref{theorem:secondVariationMp2:eq:secondVariation}
    \begin{align}\label{eq:dominant}
   & \abs{\delta^2L(\g+s\tau g)} \le \textstyle
    \frac{K}{\abs{v}^p\abs{w}^p\abs{v-w}^p}
    \big[\halfnorm[\frac32 p-3,2]{h'}
    \halfnorm[\frac32 p-3,2]{g'}\big( \abs{\hat{\g}}^2+\tau_g^2\big|
        \hatProduct{\g,g}\big|^2+\tau_g^4\abs{\hat{g}}^2\big)\notag\\
    & + \big( \abs{\hat{\g}}+\tau_g\big|
        \hatProduct{\g,g}\big|+\tau_g^2\abs{\hat{g}}\big)\big\{
    \halfnorm[\frac32 p-3,2]{h'}\big(
    \big|\hatProduct{\g,g}\big|+2\tau_g\abs{\hat{g}}\big)\\
     &+ \halfnorm[\frac32 p-3,2]{g'}\big(
    \big|\hatProduct{\g,h}\big|+\tau_g\big|\hatProduct{g,h}\big|\big)
    + \big|\hatProduct{h,g}\big|\big\}
      + \big(\big|\hatProduct{\g,h}\big|+\tau_g\big|\hatProduct{g,h}\big|\big)
    \big(\big|\hatProduct{\g,g}\big|+2\tau_g\abs{\hat{g}}\big)\big]=:G_{\tau_g},\notag
    \end{align}
    where the right-hand side defines the desired majorant $G_{\tau_g}$ 
    on $\R/\Z\times D$
    independent of $s$ and $\tau$. Notice that we have subsumed all constants
    appearing in  \eqref{theorem:secondVariationMp2:eq:firstTerm}--\eqref{theorem:secondVariationMp2:eq:fourthTerm}
    under the constant $K=K(n, p,\BiLip(\g),\halfnorm[\frac32 p-3,2]{\g'})$, 
    still depending only on $n$ and $p$,  non-increasingly
    on $\BiLip(\g)$, and non-decreasingly on $\halfnorm[\frac32 p-3,2]{\g'}$.
Applying Lemma \ref{lem:aux-est} below separately to integrated products like 
$|\hatProduct{h,\gamma}|
    |\hatProduct{g,\gamma}|$ in \eqref{eq:dominant} we obtain that
 $G_{\tau_g}$ is integrable on $\R/\Z\times D$ with the estimate 
        \begin{align}\label{eq:G-int-bound}
 \textstyle   \iiint_{\R/\Z\times D}
    G_{\tau_g}(u,v,w) \, \dd w\, \dd v\, \dd u & \le C^* K\halfnorm[\frac32 p-3,2]{h'}
    \halfnorm[\frac32 p-3,2]{g'}\big\{\halfnorm[\frac32 p-3,2]{\g'}^4+
    \halfnorm[\frac32 p-3,2]{\g'}^3\notag\\
    +\halfnorm[\frac32 p-3,2]{\g'}^2 & + \tau_g\halfnorm[\frac32 p-3,2]{g'}\big(
    \halfnorm[\frac32 p-3,2]{\g'}^2+
    \halfnorm[\frac32 p-3,2]{\g'}\big)\\
    +\tau_g^2\halfnorm[\frac32 p-3,2]{g'}^2 &  \big(
    \halfnorm[\frac32 p-3,2]{\g'}^2+\halfnorm[\frac32 p-3,2]{\g'}+1\big)+
    \tau_g^4
    \halfnorm[\frac32 p-3,2]{g'}^4\big\} ,\notag
    \end{align}
    where we have also used Young's inequality, and where we have subsumed 
    all numerical constants as
    well as the Morrey and Poincar\'e constants $C_E$ and $C_P$ under the a new
    constant $C^*\ge 1$.
    We obtain for the difference quotients  $\tau^{-1}\big( {\delta \Mpq[p,2] (\gamma+\tau g,h) - \delta \Mpq[p,2](\gamma,h)}\big)$ the expressions
       \begin{align}
       =& \textstyle 6 \int_{\R/\Z} \iint_D \tau^{-1}\big( {\delta L(\gamma+\tau g,h)(u,v,w)-
        \delta L(\gamma,h)(u,v,w)} {\tau}\big) \,\dd w \, \dd v\, \dd u
    \notag\\
    =& 6 \textstyle\int_{\R/\Z} \iint_D \int_0^1 \delta^2 L(\gamma+s\tau g,h,g)(u,v,w) 
    \, \dd s \, \dd w \, \dd v \, \dd u,\label{eq:diff-quotient}
    \end{align}
   converging to $
        6\iiint_{\R/\Z\times D}\delta^2L(\g,h,g)(u,v,w) \, \dd w\, \dd v \, \dd u=\delta^2\Mpq[p,2]
    (\g)[h,g]
    $
    as $\tau\to 0$ according to Lebesgue's dominated convergence theorem,
    since the integrands $\int_0^1\delta^2 L(\g+s\tau g,h,g)\, \dd s$ in
    \eqref{eq:diff-quotient} converge pointwise to $\delta^2L(\g,h,g)$ as
    $\tau\to 0$ (see \eqref{eq:delta2integrand}), 
    and $G_{\tau_g}$ serves as an integrable majorant
    for these integrands for all $\tau\in (-\tau_g,\tau_g)$ by virtue of
    \eqref{eq:dominant} and \eqref{eq:G-int-bound}. So, we established the
    existence of the second variation $ \delta^2\Mpq[p,2](\g)[h,g]$,
    and the bound \eqref{theorem:secondVariationMp2:eq:estSecondVariation}
    follows from \eqref{eq:dominant} and
    \eqref{eq:G-int-bound} letting $\tau_g\to 0$, if we set
    $
    C_2:=C^* K (\halfnorm[\frac 32 p-3,2]{\g'}^4+
    \halfnorm[\frac 32 p-3,2]{\g'}^3+\halfnorm[\frac32 p-3,2]{\g'}^2)
    $ 
    and recalling that $K=K(n, p,
    \BiLip(\g),\halfnorm[\frac32 p-3,2]{\g'})$ depends non-increasingly
    on the bilipschitz constant $\BiLip(\g)$ and non-decreasingly
    on the seminorm $\halfnorm[\frac32 p-3,2]{\g'}$.
    
    The terms $\abs{\delta L(\gamma,h)(u,v,w)}=\abs{L(\gamma)(u,v,w) I(h)}$ constituting the integrand of the first
    variation (see \eqref{eq:delta1integrand}) are estimated in
    \eqref{eq:first-nuetzlich}, so that we can again use H\"older's inequality
    and the prototype estimate \eqref{eq:new-est}
    of Lemma \ref{lem:aux-est} to bound this integrand by
       \begin{align*}
         \textstyle  \frac{C_L}{\abs{v}^p \abs{w}^p \abs{v-w}^p}  \big( C_I 
    \halfnorm[\frac32 p -3,2]{h'}  
    {\abs{\hat \gamma}^2}  + 2  \big|\hatProduct{h,\gamma}\big| 
        \abs{\hat \gamma}  \big)
    {\leq}
    \tilde{K} \halfnorm[\frac32 p -3,2]{h'} \big(  
    \halfnorm[\frac32 p -3,2]{\gamma'}^4 +  
    \halfnorm[\frac32 p -3,2]{\gamma'}^3 \big),
    \end{align*}
    where we have subsumed all constants under the new constant 
    $\tilde{K}$ $=$
    $\tilde{K}(n, p,\BiLip(\g),\halfnorm[\frac32 p-3,2]{\g'})$ depending
    non-increasingly on $\BiLip(\g)$ and non-decreasingly on $\halfnorm[\frac32 p-3,2]{\g'}$. This
    establishes 
    \eqref{theorem:secondVariationMp2:eq:estFirstVariation} if we set
    $C_1:=\tilde{K}(  \halfnorm[\frac32 p -3,2]{\gamma'}^4 
    +  \halfnorm[\frac32 p -3,2]{\gamma'}^3 )$.
    {}
\end{proof}

\begin{lemma}\label{lem:aux-est}
Fix $p\in (\frac73,\frac83 ) $ and the measure $\dd \mu:=(|v||w||v-w|)^{-p} \, \dd w \, \dd v \, \dd u$
on $\R/\Z\times D$, where $D$ is defined in \eqref{eq:D-def}. Then there
is a constant $C=C(C_E,C_P,p)$ such that 
\begin{align}\label{eq:new-est}
\|\widehat{\left[f_1,f_2\right]}  \|_{L_\mu^2(\R/\Z\times D,\R^n)}^2 &\le 
C\halfnorm[\frac32 p-3,2]{f_1'}^2\cdot\halfnorm[\frac32 p-3,2]{f_2'}^2
\Foa f_1,f_2\in W^{\frac32 p-2,2}(\R/\Z,\R^n).
\end{align}
$C_E$ and $C_P$ are the constants of the Morrey embedding \eqref{eq:morrey-embedding} and Poincar\'e's inequality
    \eqref{eq:poincare}.  
\end{lemma}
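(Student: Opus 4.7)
Set $s := \frac{3}{2}p - 3 \in (\tfrac{1}{2},1)$, so that the target seminorm has the Gagliardo representation
\[
\halfnorm[s,2]{f_i'}^2 = \iint_{(\R/\Z)^2}\frac{|f_i'(u+\xi) - f_i'(u)|^2}{|\xi|^{1+2s}}\,\dd u\,\dd\xi,\qquad 1+2s = 3p-5.
\]
The plan is to reduce the integrand of \eqref{eq:new-est} to a Gagliardo-type expression by exploiting a hidden antisymmetric cancellation in $\widehat{[f_1,f_2]}$. The naive submultiplicative bound $|\widehat{[f_1,f_2]}| \le 2(|\Delta_{w,0}f_1|\,|\Delta_{v,0}f_2|+|\Delta_{w,0}f_2|\,|\Delta_{v,0}f_1|)$ combined with the crude $L^\infty$-control on $f_i'$ only yields a kernel $v^2w^2\,(|v||w||v-w|)^{-p}$ that fails to be integrable on $\R/\Z\times D$ for $p > 2$, so the cancellation is essential.

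Inserting the integral representations $\Delta_{w,0}f_i = w\int_0^1 f_i'(u+\sigma w)\,\dd\sigma$ and $\Delta_{v,0}f_i = v\int_0^1 f_i'(u+\tau v)\,\dd\tau$ into $\widehat{[f_1,f_2]} = \Delta_{w,0}f_1 \wedge \Delta_{v,0}f_2 + \Delta_{w,0}f_2 \wedge \Delta_{v,0}f_1$ and using antisymmetry $a\wedge b = -b\wedge a$, the formally leading contributions $vw\,f_1'(u)\wedge f_2'(u)$ from the two summands cancel, leaving
\begin{align*}
\widehat{[f_1,f_2]}(u,v,w) = vw\int_0^1\!\!\int_0^1 & \Bigl\{[f_1'(u+\sigma w) - f_1'(u+\tau v)] \wedge f_2'(u+\sigma w) \\
& \, + f_1'(u+\sigma w) \wedge [f_2'(u+\tau v) - f_2'(u+\sigma w)] \Bigr\}\dd\sigma\,\dd\tau.
\end{align*}
Each summand now carries exactly one first-order difference of a tangent $f_i'$. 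Since every $f_i$ is $1$-periodic, $\int_{\R/\Z}f_i' = 0$, so Poincar\'e's inequality \eqref{eq:poincare} combined with Morrey's inequality \eqref{eq:morrey-embedding} gives $\|f_i'\|_{C^0} \le C(C_E,C_P)\,\halfnorm[s,2]{f_i'}$. Applying the triangle inequality and Cauchy-Schwarz in $(\sigma,\tau)$ then produces
\[
|\widehat{[f_1,f_2]}|^2 \le C\,v^2w^2\,\Bigl(\halfnorm[s,2]{f_2'}^2\!\!\iint_{[0,1]^2}\!|f_1'(u+\sigma w) - f_1'(u+\tau v)|^2\,\dd\sigma\,\dd\tau + (1\!\leftrightarrow\!2)\Bigr).
\]

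Multiplying by the singular kernel, invoking Fubini and shifting $u\mapsto u-\sigma w$ rewrites the inner $u$-integral as $g_i(\sigma w-\tau v)$, where $g_i(\xi) := \int_{\R/\Z}|f_i'(u+\xi)-f_i'(u)|^2\,\dd u$; in particular $\halfnorm[s,2]{f_i'}^2 = \int g_i(\xi)|\xi|^{-(1+2s)}\,\dd\xi$. Thus \eqref{eq:new-est} reduces to the kernel estimate
\[
\iint_D \frac{v^2 w^2}{|v|^p|w|^p|v-w|^p}\,g_i(\sigma w - \tau v)\,\dd v\,\dd w \le C\,\halfnorm[s,2]{f_i'}^2, \qquad \text{uniformly in }\sigma,\tau\in(0,1).
\]

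The main obstacle is this residual kernel estimate. The plan is to carry it out via the linear substitution $(\eta,\xi) := (v,\,\sigma w - \tau v)$, singling out $\xi$ as the Gagliardo direction so that $g_i(\xi)$ appears explicitly and $\eta$ remains transverse; after integrating out $\eta$ along the level sets $\{\xi = \mathrm{const}\}$, the goal is to bound the residual $\xi$-weight by a constant times $|\xi|^{-(1+2s)} = |\xi|^{5-3p}$, matching the Gagliardo kernel exactly. Since the integrand $v^2w^2\,(|v||w||v-w|)^{-p}$ is homogeneous of degree $4-3p$ in $(v,w)$, this $\xi$-scaling is precisely the one predicted by dimensional analysis. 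Finiteness of the transverse integrals, obtained by splitting $D$ into $\{|v|\le w\}$ and $\{w\le|v|\}$ and using $|v-w|\ge\max(|v|,w)$ on $D$, reduces to one-dimensional power integrals that converge on the stated range $p\in(\tfrac73,\tfrac83)$. Combining these estimates with the $\xi$-integration of $g_i(\xi)|\xi|^{-(1+2s)}$ recovering $\halfnorm[s,2]{f_i'}^2$, and summing the symmetric $(1\!\leftrightarrow\!2)$ contribution, completes the proof of \eqref{eq:new-est} with $C = C(C_E, C_P, p)$.
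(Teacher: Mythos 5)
Your argument is correct and structurally parallel to the paper's, but genuinely different in its bookkeeping. Both proofs exploit the same cancellation --- the alternating structure of $\hatProduct{f_1,f_2}$ annihilates the leading term and leaves exactly one first-order tangent difference --- and both then combine a $C^0$-bound on the remaining factor (Morrey and Poincar\'e) with a Jensen/Cauchy--Schwarz step and a change of variables that exposes the Gagliardo seminorm. The difference is in how the finite difference is parametrized. The paper uses the \emph{same} averaging variable $y\in(0,1)$ in both $\Delta_{w,0}f_i=w\int_0^1 f_i'(u+yw)\,\dd y$ and $\Delta_{v,0}f_i=v\int_0^1 f_i'(u+yv)\,\dd y$; after the antisymmetry step the surviving difference is $\int_0^1\bigl(f_i'(u+yv)-f_i'(u+yw)\bigr)\dd y$, whose argument offset $y(v-w)$ involves a single parameter, and the shift $z=u+yw$ followed by the substitution $(v,w)\mapsto(t,\theta)=\bigl(v/(v-w),\,y(v-w)\bigr)$ then separates the quadruple integral into finite $y$- and $t$-power integrals times the Gagliardo double integral in $(\theta,z)$. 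You instead introduce \emph{independent} parameters $\sigma,\tau$, leading to the offset $\sigma w-\tau v$ and the pointwise-in-$(\sigma,\tau)$ kernel estimate you state. That estimate is true --- your homogeneity argument correctly predicts the $|\xi|^{5-3p}$ Gagliardo weight --- but the uniformity in $(\sigma,\tau)$ is more delicate than the single power-integral convergence you invoke: after the shear $(v,w)\mapsto(v,\sigma w-\tau v)$ and the scaling $|v|=\xi s$ along a level set, the residual $\xi$-weight is $\sigma^{2p-3}\xi^{5-3p}\,I(\sigma,\tau)$ with $I(\sigma,\tau)=\int_0^{1/\tau}s^{2-p}(1-\tau s)^{2-p}\bigl((\sigma-\tau)s+1\bigr)^{-p}\,\dd s$, and one needs a case analysis on the ratio $\sigma/\tau$ (not just the split of $D$ into $\{|v|\le w\}$ and $\{w\le|v|\}$) to verify $\sigma^{2p-3}I(\sigma,\tau)\lesssim\max(\sigma,\tau)^{3p-6}\le 1$. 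Your two-parameter set-up is valid, but the paper's synchronized choice of a single $y$ yields cleanly separable integrals and avoids this extra work; also, in either route the resulting $\theta$- resp.\ $\xi$-integral runs slightly past $1/2$, so a tail piece must be absorbed via $\|f_i'\|_{C^0}^2$ and Morrey--Poincar\'e, which you should not forget to mention.
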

\begin{proof}
    Using the fact that $\abs{\xi \wedge \zeta} \leq \abs{\xi} \abs{\zeta}$ for 
    all $\xi, \zeta \in \R^n$, see 
    \cite[Chapter~I, Section~12, Equation~(13)]{Whitney:1957:GeometricIntegrationTheory}, 
    we can bound the numerator of the integrand as
    \begin{align}
   \big|\hatProduct{f_1,f_2}\big|^2 
    & = \textstyle |w|^2|v|^2\big| \int_0^1 f_1'(u+yw) \dd y \! \wedge \! \int_0^1 f_2'(u+yv) \dd y 
            + \int_0^1 f_2'(u+yw) \dd y \! \wedge \! \int_0^1 f_1'(u+yv) \dd y \big|^2\notag
    \\
    & \leq  \textstyle 2|w|^2|v|^2\, \big\{ \,\, 
    {\big|{ \int_0^1 f_1'(u+yw) \dd y}\big|^2 \big|{\int_0^1 
            f_2'(u+yv)-f_2'(u+yw) \dd y}\big|^2}\notag \\
    & \qquad\qquad\qquad + \textstyle  {\big|{\int_0^1 f_2'(u+yw) \dd y}\big|^2 
        \big|{\int_0^1 f_1'(u+yv)-f_1'(u+yw) \dd y }\big|^2} \,\,\big\} \notag\\
    &
    {\leq}\textstyle
    2  C_E^2C_P^2|w|^2|v|^2 \,\big\{\,\,
    \halfnorm[\frac32 p-3,2]{f_1'}^2  
    {\big|{\int_0^1 f_2'(u+yv)-f_2'(u+yw) \dd y}\big|^2}\notag \\
    & \qquad\qquad\qquad + \textstyle \halfnorm[\frac32 p-3,2]{f_2'}^2 
    {\big|{\int_0^1 f_1'(u+yv)-f_1'(u+yw) \dd y }\big|^2}\,\,\big\},
    \label{eq:part-integrand}
    \end{align}
    where we used Morrey's embedding \eqref{eq:morrey-embedding} and
    the Poincar\'e inequality \eqref{eq:poincare}.
    It therefore suffices  by Jensen's inequality to give an 
    upper bound for the triple
    integral 
    $$
 \textstyle   \iiint_{\R/\Z \times D}  {\int_0^1\abs{ f_i'(u+yv)-f_i'(u+yw)}^2 \dd y } \,|w|^2|v|^2
    \, \dd \mu\quad\Fo i=1,2,
    $$
    which can be rewritten by means of Tonnelli's variant of
    Fubini's theorem,
    the substitution $z:=u+yw$, periodicity of the resulting
    integrand in the 
    $z$-variable, and a second application of 
    Fubini's theorem as
    $
   \textstyle \int_0^1\iiint_{\R/\Z \times D} \frac {\abs{ f_i'(z+y(v-w))-f_i'(z)}^2} {\abs{v}^{p-2} \abs{w}^{p-2} \abs{v-w}^p} \, \dd w\, \dd v\, \dd z\, \dd y\quad\Fo i=1,2.
    $
    Following an idea in the proof of 
    \cite[Theorem 1, p. 7]{BlattReiter:2015:Menger} we
    use the transformation $\Phi:D\to\Phi(D)$ mapping a pair $(v,w)\in D$ to
    $(t,\theta):=(v/(v-w),y(v-w))\in \Phi(D)\subset (0,1)\times (-1,0)$ 
    by definition of
    the parameter range $D$. Observing that $|\det D\Phi(v,w)|=y/|v-w|$ for all
    $(v,w)\in D$ we arrive at the transformed integral
    $
  \textstyle  \int_0^1\int_{\R/\Z}\int_0^1\int_{-1}^0
    \frac{ \abs{ f_i'(z+\theta)-f_i'(z)}^2}{
        |t\theta/y|^{p-2}|(t-1)\theta/y|^{p-2} |\theta/y|^{p-1}y} \, \dd\theta\, \dd t\, \dd z\, \dd y \quad\Fo i=1,2.
    $
    Regrouping one obtains finite integrals over the $y$-variable and over the
    $t$-variable due to the parameter range $p\in (\frac73 , \frac83 )$, 
    multiplied
    by the double integral
    $$
  \textstyle  \int_{\R/\Z}\int_{-1}^0
    \frac{ \abs{ f_i'(z+\theta)-f_i'(z)}^2}{
        \abs{ \theta}^{3p-5}} \,\dd\theta \, \dd z\le
    \halfnorm[\frac32 p-3,2]{f_i'}^2
    +
    \int_{\R/\Z}\int_{-1}^{-\frac12}
    \frac{ \abs{ f_i'(z+\theta)-f_i'(z)}^2}{
        \abs{ \theta}^{3p-5}} \,\dd\theta \, \dd z,
    $$
    which is bounded by $\halfnorm[\frac32 p-3,2]{f_i'}^2+12
    \|f_i'\|^2_{C^0(\R/\Z,\R^n)}
    \le (1+12 C_E^2C_P^2)\halfnorm[\frac32 p-3,2]{f_i'}^2$ for $i=1,2$
    by virtue of
    the Morrey embedding \eqref{eq:morrey-embedding} and Poincar\'e's inequality
    \eqref{eq:poincare}.  
    In summary, we obtain
    \[
    \label{theorem:secondVariationMp2:eq:estHatProduct}
   \|\widehat{\left[f_1,f_2\right]}  \|_{L_\mu^2(\R/\Z\times D,\R^n)}^2 
    \leq 4 C_E^2C_P^2(1+12 C_E^2C_P^2)C^2(p)\halfnorm[\frac32 p-3,2]{f_1'}^2 
    \halfnorm[\frac32 p-3,2]{f_2'}^2.
    \]
\end{proof}

As mentioned in the introduction it was already shown in
\cite[Theorem 3]{BlattReiter:2015:Menger} that the generalized integral
Menger curvature is
continuously differentiable
on regular embeddings of class
$W^{\frac32 p-2,2}$. Now,
\thref{theorem:secondVariationMp2} yields local Lipschitz continuity of 
the differential $D\Mpq[p,2]$.
To quantify this we define  the radius of 
Lipschitz continuity $R_{M}(\g)$ for a regular embedded
curve $\g\in W^{\frac32 p-2,2}(\R/\Z,\R^n)$ 
as
\begin{equation}\label{eq:lip-radius}
R_{M}\equiv R_{M}(\g):=
\min\left\{({2C_EC_P)^{-1}}\BiLip(\g),
1\right\}
\le 1,
\end{equation}
where $C_E$ denotes the constant 
of Morrey's embedding 
\thref{thm:morrey}, and $C_P$ is the constant in Poincar\'e's
inequality \eqref{eq:poincare}.
Exactly as in the beginning of the previous proof we notice that $R_{M} >0$
since $\g$ is a regular embedded fractional Sobolev curve.
\thref{cor:bilip2} in Appendix \ref{section:Arclength} implies that
$\BiLip(\eta)\ge \BiLip(\g)/2$ and $\halfnorm[\frac32 -3,2]{\eta'} <
\halfnorm[\frac32 -3,2]{\g'}+1$ for every $\eta\in B_{R_M}(\g)$. Therefore,
combining \eqref{theorem:secondVariationMp2:eq:estSecondVariation} with
    a suitable mean value inequality 
    (see, e.g., \cite[Theorem 3.2.7]{drabek-milota_2013}) we 
    immediately deduce how  the
    Lipschitz constant of the differential $D\Mpq[p,2]$ depends
    on the bilipschitz constant and the fractional 
    Sobolev norm of the embedded curve $\gamma$
    within the ball $B_{R_M}(\gamma)$.
\begin{corollary}
    \label{corollary:DintMp2locallyLipschitz} 
    For $p\in(\frac 7 3,\frac 8 3)$ and  $\gamma \in 
    W^{\frac 3 2 p -2 ,2}_{\ir}(\R/\Z,\R^n)$
    the differential $D\Mpq[p,2](\cdot)$ and hence also the gradient
    $\nabla\Mpq[p,2](\cdot)$ are Lipschitz
    continuous on the ball $B_{R_{M}}(\g)\subset W^{\frac32 p-2,2}(\R/\Z,\R^n)$
    with Lipschitz constant 
    \begin{equation}\label{eq:lip-DintM}
    \LIP_{D\Mpq[p,2]}\equiv\LIP_{\nabla\Mpq[p,2]}=\LIP_{\nabla\Mpq[p,2]} 
    (n, p,\BiLip(\gamma),\halfnorm[\frac 3 2 p - 3,2]{\g'}),
    \end{equation}
    depending non-increasingly on
    $\BiLip(\gamma)$ and non-decreasingly 
    on $\halfnorm[\frac32 p-3,2]{\gamma'}$.
\end{corollary}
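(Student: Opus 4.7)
My plan is to derive the Lipschitz estimate for $D\Mpq[p,2]$ directly from the bilinear bound \eqref{theorem:secondVariationMp2:eq:estSecondVariation} by means of a Banach-space mean value inequality. First, I would exploit the convexity of the ball $B_{R_M}(\gamma)$ together with the quantitative stability of the bilipschitz constant and of the tangent seminorm stated in the paragraph preceding the corollary: for every $\eta\in B_{R_M}(\gamma)$ one has $\BiLip(\eta)\ge \tfrac12\BiLip(\gamma)>0$ and $\halfnorm[\frac32 p-3,2]{\eta'} < \halfnorm[\frac32 p-3,2]{\gamma'}+1$. Hence $\eta\in W^{\frac32p-2,2}_{\ir}(\R/\Z,\R^n)$, so that \thref{theorem:secondVariationMp2} is applicable at every such $\eta$, and the constant $C_2(n,p,\BiLip(\eta),\halfnorm[\frac32p-3,2]{\eta'})$ is controlled uniformly in $\eta$ by its monotonicity properties.

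Next, given two curves $\eta_1,\eta_2\in B_{R_M}(\gamma)$, I would consider the straight line segment $\eta_s:=(1-s)\eta_1+s\eta_2$, $s\in[0,1]$, which stays in $B_{R_M}(\gamma)$ by convexity. Since the Gagliardo seminorm satisfies $\halfnorm[\frac32p-3,2]{h'}\le\|h\|_{\mathcal{H}}$ by definition of the Sobolev--Slobodecki\v{\i} norm, the estimate \eqref{theorem:secondVariationMp2:eq:estSecondVariation} applied at $\eta_s$ upgrades to
\[
\|D^2\Mpq[p,2](\eta_s)\|_{L(\mathcal{H},\mathcal{H}^*)} \;\le\; \tilde C_2 \;:=\; C_2\bigl(n,p,\tfrac12\BiLip(\gamma),\halfnorm[\frac32p-3,2]{\gamma'}+1\bigr)
\]
uniformly in $s\in[0,1]$, again using the monotonic dependence of $C_2$. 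The Banach-valued mean value inequality \cite[Theorem 3.2.7]{drabek-milota_2013} applied to the Fréchet-differentiable mapping $D\Mpq[p,2]\colon B_{R_M}(\gamma)\to\mathcal{H}^*$ along the segment $s\mapsto \eta_s$ then yields
\[
\|D\Mpq[p,2](\eta_1)-D\Mpq[p,2](\eta_2)\|_{\mathcal{H}^*} \;\le\; \tilde C_2\,\|\eta_1-\eta_2\|_{\mathcal{H}},
\]
with the claimed monotonic dependence of $\tilde C_2$ on $\BiLip(\gamma)$ and $\halfnorm[\frac32p-3,2]{\gamma'}$ inherited from that of $C_2$.

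Finally, since $\nabla\Mpq[p,2]=\RieszOp^{-1}D\Mpq[p,2]$ and the Riesz isomorphism $\RieszOp\colon\mathcal{H}\to\mathcal{H}^*$ is an isometry, the same constant $\tilde C_2$ serves as Lipschitz constant for $\nabla\Mpq[p,2]$ on $B_{R_M}(\gamma)$. I do not foresee any genuine obstacle: all the analytic work sits in \thref{theorem:secondVariationMp2} and in the bilipschitz stability supplied by \thref{cor:bilip2}. The only points requiring care are (i) to verify that the seminorm on the right-hand side of \eqref{theorem:secondVariationMp2:eq:estSecondVariation} is dominated by the ambient $W^{\frac32p-2,2}$-norm (immediate from the definition), and (ii) to propagate the monotonic dependence of $C_2$ on its two scalar arguments through the passage from $\gamma$ to the uniform bounds on $B_{R_M}(\gamma)$.
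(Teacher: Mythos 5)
Your proposal is correct and follows essentially the same route as the paper: combine the bilinear bound \eqref{theorem:secondVariationMp2:eq:estSecondVariation} with the uniform control of $\BiLip(\eta)$ and $\halfnorm[\frac32 p-3,2]{\eta'}$ inside $B_{R_M}(\gamma)$ (from \thref{cor:bilip2}), apply the mean value inequality of \cite[Theorem 3.2.7]{drabek-milota_2013}, and transfer to the gradient via the isometry of the Riesz map. The only step you make explicit that the paper leaves implicit is the elementary inequality $\halfnorm[\frac32 p-3,2]{h'}\le\|h\|_{\mathcal{H}}$, which is indeed immediate from the definition of the $W^{\frac32p-2,2}$-norm.
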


\section{Short Time Existence}
\label{section:ShortTimeExistence}
We first state 
the well-known Picard-Lindel\"of Theorem for ordinary differential equations
in Banach spaces; see e.g. 
\cite[Part~II, Corollary~1.7.2]{Cartan:1971:Differentialcalculus}.
\begin{theorem}[Picard-Lindel\"of]
    \label{theorem:PicardLindelöfSimplified}
    Let $\X$ be a real Banach space with norm ${\norm{\cdot}}_{\X}$, $V$ a neighbourhood of $(t_0,x_0) \in \R \times \X$ and $f\colon V \to \X$ be a continuous function with
$    
    {\norm{f(t,y)-f(t,z)}}_{\X} \leq \LIP_f {\norm{y-z}}_{\X}$ for all $
(t,y),(t,z)\in V.
    $
    Then, there is $\alpha>0$ such that the differential equation
$    \frac d {dt} x(t) = f(t,x(t))$
    has a unique solution $\varphi\in C^1(I,\X)$ for $I := [t_0 - \alpha, t_0 + \alpha]$ with $\varphi(t_0)=x_0$.
    More precisely, if $\tau>0$ and $r>0$ are chosen to be small enough such that $[t_0-\tau, t_0+\tau] \times \ball[x_0]{r}$ is contained in $V$ and 
    $
    {\norm{f(t,x)}}_{\X} \leq F$ for all $|{t-t_0}| \leq \tau $ and $
    \|{x-x_0}\|_{\X} \leq r,
    $
    then $\alpha$
    may be chosen as
    $
    \alpha := \min\menge{\tau, r/F}
    $
    and  the image $\varphi(I)$ is contained in the ball $\ball[x_0]{r}\subset\X$.
\end{theorem}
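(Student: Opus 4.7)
The plan is to apply the Banach fixed point theorem to the Picard iteration operator on a suitable complete metric space of $\X$-valued continuous functions. Set $I := [t_0-\alpha,t_0+\alpha]$ with $\alpha := \min\{\tau, r/F\}$, and let
\[
\mathcal{M} := \menge{\varphi \in C(I,\X) \colon \varphi(t_0)=x_0,\ \|\varphi(t)-x_0\|_{\X} \leq r\Foa t\in I},
\]
equipped with a Bielecki-type norm $\|\varphi\|_\lambda := \sup_{t\in I} e^{-\lambda|t-t_0|}\|\varphi(t)\|_\X$, where $\lambda > \LIP_f$ will be fixed below. Since $\X$ is complete and uniform limits of continuous $\bar B_r(x_0)$-valued functions remain in $\mathcal{M}$, the space $(\mathcal{M},\|\cdot\|_\lambda)$ is a nonempty complete metric space (the constant path $\varphi\equiv x_0$ lies in $\mathcal{M}$). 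Define the Picard operator
\[
T(\varphi)(t) := x_0 + \int_{t_0}^{t} f(s,\varphi(s))\,\dd s \Foa t\in I,
\]
using the Bochner integral of the continuous, hence Bochner integrable, integrand.

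First I would verify that $T$ maps $\mathcal{M}$ into itself. Continuity of $T(\varphi)$ follows from the standard estimate $\|T(\varphi)(t)-T(\varphi)(t')\|_\X \leq F|t-t'|$. For the pointwise bound one computes
\[
\|T(\varphi)(t)-x_0\|_\X \leq \int_{\min(t,t_0)}^{\max(t,t_0)} \|f(s,\varphi(s))\|_\X\,\dd s \leq F|t-t_0| \leq F\alpha \leq r,
\]
where the passage through the assumed bound $\|f(s,x)\|_\X \leq F$ uses $(s,\varphi(s)) \in [t_0-\tau,t_0+\tau]\times \bar B_r(x_0) \subset V$. Hence $T(\varphi)\in \mathcal{M}$.

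Next I would show that $T$ is a strict contraction on $(\mathcal{M},\|\cdot\|_\lambda)$. For $\varphi_1,\varphi_2 \in \mathcal{M}$ and $t\geq t_0$ (the case $t<t_0$ is analogous),
\[
\|T(\varphi_1)(t)-T(\varphi_2)(t)\|_\X \leq \LIP_f \int_{t_0}^{t} e^{\lambda(s-t_0)} e^{-\lambda(s-t_0)}\|\varphi_1(s)-\varphi_2(s)\|_\X\,\dd s \leq \frac{\LIP_f}{\lambda}\,e^{\lambda(t-t_0)}\|\varphi_1-\varphi_2\|_\lambda.
\]
Multiplying by $e^{-\lambda(t-t_0)}$ and taking the supremum gives $\|T(\varphi_1)-T(\varphi_2)\|_\lambda \leq (\LIP_f/\lambda)\|\varphi_1-\varphi_2\|_\lambda$, which is a strict contraction once $\lambda$ is chosen so that $\LIP_f/\lambda < 1$. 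This device is what allows $\alpha$ to depend only on $\tau$ and $r/F$, not on $\LIP_f$. By the Banach fixed point theorem, $T$ admits a unique fixed point $\varphi\in\mathcal{M}$. Since the integrand $s\mapsto f(s,\varphi(s))$ is continuous into $\X$, the fundamental theorem of calculus for Bochner integrals yields $\varphi\in C^1(I,\X)$ with $\varphi'(t) = f(t,\varphi(t))$ and $\varphi(t_0)=x_0$.

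Finally, for uniqueness I would note that any other $C^1$-solution $\psi$ on $I$ with $\psi(t_0)=x_0$ and $\psi(I)\subset \bar B_r(x_0)$ is, by integration, a fixed point of $T$ in $\mathcal{M}$ and hence coincides with $\varphi$. The only nontrivial point is ruling out solutions that temporarily leave $\bar B_r(x_0)$; this is handled by a continuity argument: the set $\{t\in I\colon \psi(s)\in \bar B_r(x_0)\Foa s\text{ between } t_0 \text{ and } t\}$ is nonempty, closed and, by the a priori bound $\|\psi(t)-x_0\|_\X \leq F|t-t_0| \leq r$ obtained exactly as above, also open. The step I expect to require most care is maintaining the interplay between Bochner integration in $\X$ and the standard real-variable estimates, but since all integrands are continuous into a Banach space this is routine.
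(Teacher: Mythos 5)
Your proof is correct and complete. The paper itself does not prove this theorem but simply cites Cartan's textbook, so there is no in-paper argument to compare against; the standard route (Picard iteration via the Banach fixed-point theorem on the closed invariant set $\mathcal{M}$) is exactly what any such reference would use. The one nontrivial refinement you employ, the Bielecki exponential weight $\|\varphi\|_\lambda = \sup_t e^{-\lambda|t-t_0|}\|\varphi(t)\|_\X$ with $\lambda > \LIP_f$, is the cleanest way to guarantee that $T$ contracts without imposing the usual smallness condition $\alpha\,\LIP_f < 1$, which is essential for obtaining $\alpha = \min\{\tau, r/F\}$ exactly as stated. (Cartan achieves the same effect by showing some iterate $T^n$ is a contraction; the two devices are interchangeable here.) Your treatment of uniqueness among all $C^1$ solutions — not merely those staying in $\bar B_r(x_0)$ — via the open-closed argument on the set $\{t : \psi \text{ stays in } \bar B_r(x_0) \text{ up to } t\}$ is precisely the step that a careless reader might omit, and you handle it correctly by using the a priori bound $\|\psi(t)-x_0\|\le F|t-t_0| < r$ on the interior of $I$ to propagate the containment.
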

In the last section, we saw that the gradient of $\Mpq[p,2]$ is locally Lipschitz continuous, so we have short time existence of a solution of
\eqref{eq:gradflow}.
\begin{theorem}[Short time existence]
    \label{theorem:shortTimeExistence}
    Let $p \in \bigl(\frac 7 3, \frac 8 3\bigr)$ and $\gamma_0 \in W^{\frac 3 2 p - 2,2}_{\ir}(\R/\Z,\R^n)$.
    Then, there exists a time  $T>0$ and a  unique\footnote{$\g$ is unique in the sense that any other $C^1$-mapping $\eta$ solving \eqref{eq:gradflow}
        on a time interval $[0,\hat{T}]$ coincides with $\g$ on
        $[0,\min\{T,\hat{T}\}\,]$.} mapping
 $
    t\mapsto  \gamma(\cdot,t) \in C^{1,1}\big(
    [0,T],W^{\frac 3 2 p - 2,2}(\R/\Z,\R^n)\big)
    $
    such that $\gamma$ solves \eqref{eq:gradflow}, with strict
    energy decay in time, i.e., such that the mapping 
    $t\mapsto\Mpq[p,2](\g(\cdot,t))$ is strictly decreasing $[0,T]$.
    One can estimate the minimal time $T$
    of existence from below as
    \begin{equation}\label{eq:existence-time-est}
    T\ge  T_\textnormal{min}:=\frac{R_{M}(\g_0)}{C_1(n, p,\BiLip(\g_0)/2,\halfnorm[\frac32 p-3,2]{\g_0'}+1)},
    \end{equation}
    where $R_{M}$ is the Lipschitz radius of the initial curve
    $\g_0$ defined in
    \eqref{eq:lip-radius} and $C_1$ is the constant bounding the first
    variation $\delta\Mpq[p,2]{(\g_0,\cdot)}$  
    in \eqref{theorem:secondVariationMp2:eq:estFirstVariation} of \thref{theorem:secondVariationMp2}. Moreover, the image
    $\g(\cdot,[0,T_\textnormal{min}])$ is contained in
    the ball $B_{R_M}(\g_0)\subset W^{\frac32 p-2,2}(\R/\Z,\R^n)$, 
    and the
    barycenter is conserved along the flow, i.e.,
    \begin{equation}\label{eq:barycenter-conserved}
    \overline{\g(\cdot,t)}:=\textstyle\int_{\R/\Z}\g(u,t)\,\dd u=\int_{\R/\Z}\g_0(u)\, \dd u=
    \overline{\g_0}\quad\Foa t\in [0,T].
    \end{equation}
\end{theorem}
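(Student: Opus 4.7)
The plan is to apply the Picard--Lindel\"of \thref{theorem:PicardLindelöfSimplified} to the time-independent right-hand side $f(\g) := -\nabla \Mpq[p,2](\g)$, viewed as a map from the ball $B_{R_{M}}(\g_0) \subset \mathcal{H} := W^{\frac{3}{2}p-2,2}(\R/\Z,\R^n)$ into $\mathcal{H}$, where $R_{M} = R_{M}(\g_0)$ is the Lipschitz radius from \eqref{eq:lip-radius}. \thref{corollary:DintMp2locallyLipschitz} immediately supplies the Lipschitz constant of $f$ on $B_{R_{M}}(\g_0)$: since the Riesz isomorphism $\RieszOp \colon \mathcal{H} \to \mathcal{H}^*$ is a linear isometry, Lipschitz continuity of $D\Mpq[p,2]$ and of $\nabla \Mpq[p,2]$ are equivalent. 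To invoke the quantitative version of Picard--Lindel\"of I need in addition a uniform bound $\|f(\g)\|_{\mathcal{H}} \le F$ for $\g \in B_{R_{M}}(\g_0)$.

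Such a bound comes from the first variation estimate \eqref{theorem:secondVariationMp2:eq:estFirstVariation} by duality: the natural norm on $\mathcal{H}$ controls $\halfnorm[\frac{3}{2}p-3,2]{h'}$, hence
\[
\|\nabla \Mpq[p,2](\g)\|_{\mathcal{H}} \le C_1\bigl(n,p,\BiLip(\g),\halfnorm[\frac{3}{2}p-3,2]{\g'}\bigr).
\]
Invoking \thref{cor:bilip2} (as already done inside the proof of \thref{theorem:secondVariationMp2}) one has $\BiLip(\g) \ge \BiLip(\g_0)/2$ and $\halfnorm[\frac{3}{2}p-3,2]{\g'} \le \halfnorm[\frac{3}{2}p-3,2]{\g_0'}+1$ uniformly for $\g \in B_{R_{M}}(\g_0)$; the monotone dependence of $C_1$ then yields the uniform bound $F := C_1(n,p,\BiLip(\g_0)/2,\halfnorm[\frac{3}{2}p-3,2]{\g_0'}+1)$. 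Taking $r := R_{M}$ and $\tau$ arbitrarily large in \thref{theorem:PicardLindelöfSimplified} produces a unique $\g \in C^1([0,T],\mathcal{H})$ solving \eqref{eq:gradflow} whose image lies in $B_{R_{M}}(\g_0)$ and whose minimal existence time satisfies $T \ge r/F = T_{\textnormal{min}}$.

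The $C^{1,1}$-regularity in time is immediate from the ODE: the estimate $\|\partial_t \g(\cdot,t)\|_{\mathcal{H}} = \|f(\g(\cdot,t))\|_{\mathcal{H}} \le F$ makes $t \mapsto \g(\cdot,t)$ an $F$-Lipschitz map into $\mathcal{H}$, and composing with the Lipschitz map $f$ shows $t \mapsto \partial_t \g(\cdot,t)$ is Lipschitz as well. For strict energy decay, differentiating along the flow gives
\[
\tfrac{d}{dt}\Mpq[p,2](\g(\cdot,t)) = \langle \nabla \Mpq[p,2](\g(\cdot,t)), \partial_t \g(\cdot,t) \rangle_{\mathcal{H}} = -\|\nabla \Mpq[p,2](\g(\cdot,t))\|_{\mathcal{H}}^2 \le 0,
\]
which is strictly negative away from critical points; by uniqueness, a vanishing of $\nabla \Mpq[p,2]$ at any time along the flow forces the solution to be constant, so either $\g_0$ is already critical (in which case the constant curve is the trivial solution) or the gradient never vanishes on $[0,T]$ and $\Mpq[p,2]$ is strictly decreasing there.

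Finally, barycenter conservation is enforced by translation invariance of $\Mpq[p,2]$: for any constant $c \in \R^n$ one has $D\Mpq[p,2](\g)[c] = 0$, and since constants pair with an element of $\mathcal{H}$ under the natural inner product via its integral, this means $\int_{\R/\Z}\nabla \Mpq[p,2](\g)\,\dd u = 0$. Integrating \eqref{eq:gradflow} over $\R/\Z$ then yields \eqref{eq:barycenter-conserved}. The most delicate step in the plan is the passage from the differential bound \eqref{theorem:secondVariationMp2:eq:estFirstVariation} (phrased in terms of the seminorm of $h'$) to a norm bound on $\nabla \Mpq[p,2](\g)$ uniform over the ball $B_{R_{M}}(\g_0)$; this is exactly where the monotone dependence of $C_1$ on $\BiLip(\g)$ and $\halfnorm[\frac{3}{2}p-3,2]{\g'}$, combined with the quantitative stability from \thref{cor:bilip2}, is essential.
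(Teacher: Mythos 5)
Your proposal follows the same outline as the paper's proof: Picard--Lindel\"of with the Lipschitz radius $R_M$ from \eqref{eq:lip-radius}, the gradient bound $F$ obtained from \eqref{theorem:secondVariationMp2:eq:estFirstVariation} together with \thref{cor:bilip2}, the $C^{1,1}$-in-time argument via the autonomous ODE, and barycenter conservation from translation invariance (which is exactly \thref{lem:gradients-mean}). All of that is fine.

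There is, however, a genuine gap in your treatment of the \emph{strict} energy decay. You correctly compute $\frac{d}{dt}\Mpq[p,2](\g(\cdot,t)) = -\|\nabla \Mpq[p,2](\g(\cdot,t))\|_{\mathcal{H}}^2 \le 0$ and then observe that a vanishing gradient at some time forces the flow to be stationary by uniqueness. But from there you only conclude strict decay \emph{conditionally}: ``either $\g_0$ is already critical (in which case the constant curve is the trivial solution) or the gradient never vanishes.'' In the first case the energy is constant, not strictly decreasing, so this does not prove the theorem as stated, which asserts strict monotonicity unconditionally. The paper closes this gap by showing that $\Mpq[p,2]$ has \emph{no critical points at all}: since it is positively homogeneous of nonzero degree, $D\Mpq[p,2](\g)\g = \beta\,\Mpq[p,2](\g) \ne 0$ for every regular embedded curve $\g$ (where $\beta$ is the homogeneity degree, nonzero precisely because the energy is not scale-invariant). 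That single observation makes $\nabla\Mpq[p,2](\g(\cdot,t))$ nonzero for every $t$ and turns your $\le 0$ into $<0$ without case distinction. You should add this homogeneity argument; without it your strict-decay claim is not established.
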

\begin{proof} 
Set $\HL:=W^{\frac32 p-2,2}(\R/\Z,\R^n)$.
    According to \thref{corollary:DintMp2locallyLipschitz} the right-hand 
    side of \eqref{eq:gradflow} is
    Lipschitz continuous on the open neighbourhood 
    $V:=\R\times B_{R_{M}}(\g_0)$
    of $(0,\g_0)\in  \R\times \HL$, so that
    we can apply \thref{theorem:PicardLindelöfSimplified} to conclude 
    short time existence of a solution 
    $\g\in C^1([0,T],
    \HL)$. 
        Differentiating the energy $\mathcal{E}:=\Mpq[p,2]$
    of $\g(\cdot,t)$ with respect to time we obtain
    \begin{align}\label{eq:energy-decay1}
  \textstyle  \frac{d}{dt} \mathcal{E}(\g(\cdot,t)) =&\langle \nabla\mathcal{E}
    (\g(\cdot,t)),
    \tfrac{\partial }{\partial t}\g(\cdot,t)\rangle_{\HL}
   {=}
    -     
    \|\nabla\mathcal{E}(\g(\cdot,t))\|_{\HL}^2
    < 0
    \end{align}
    for all $ t
        \in (0,T)$,
    which implies the strict
    energy decay in time.   Notice for the strict inequality in
    \eqref{eq:energy-decay1} that $\mathcal{E}=\Mpq[p,2]$ does not possess any
    critical point since it is not scale-invariant. Indeed, 
    every strictly positive and Fr\'echet-differentiable
    energy $\mathcal{F}$ on an open subset $\mathcal{O}$ of
    a Hilbert space  with the homogeneity
    condition $\mathcal{F}(\mu x)=\mu^\beta \mathcal{F}(x)$ for all 
    $x\in\mathcal{O}$ and
    $|\mu-1|\ll 1$, 
    for some fixed 
    $\beta\not=0$,
    satisfies $D\mathcal{F}(x)x=\beta 
    \mathcal{F}(x)\not=0$ for all $x\in\mathcal{O}\setminus\{0\}$.

    The minimal existence time  $T$
    can be bounded from below according to the explicit choice of $\alpha$ in
    \thref{theorem:PicardLindelöfSimplified}, which leads
    to  \eqref{eq:existence-time-est} , since  the upper bound \eqref{theorem:secondVariationMp2:eq:estFirstVariation} in 
    \thref{theorem:secondVariationMp2} yields  
    \begin{align}
        \|D\mathcal{E}(\eta)\|_{\HL^*}
	&=\norm[\HL^*]{
        \delta\mathcal{E}(\eta,\cdot)}
   {\le} 
    C_1(n, p,\BiLip(\eta),\halfnorm[\frac32 p-3,2]{\eta'})\notag\\
   {\le} 
    C_1(n, p,&\BiLip(\g_0)/2,  \halfnorm[\frac32 p-3,2]{\g_0'}+1)=:F\quad\Foa 
    \eta\in B_{R_{M}}(\g_0),\label{eq:bound-gradient}
    \end{align}
    by choice of the radius $R_{M}$ in \eqref{eq:lip-radius} combined with
    \thref{cor:bilip2} in the appendix.
    
    The time derivative $\partial_t\g:=\partial\g/\partial t$
    is even
    Lipschitz  continuous with respect to time since the right-hand side
    $f(\eta):=\nabla\EL(\eta)$ of \eqref{eq:gradflow}, satisfying 
    $\norm[\HL]{f(\eta)}\le  F$ for all $\eta
    \in B_{R_{M}}(\g_0)$,
    does not depend explicitly on $t$. Therefore, 
    we  may estimate by means of
    \thref{corollary:DintMp2locallyLipschitz}
    for $0\le \sigma < s$
    \begin{align*}
  &  \norm[\HL]{\partial_t\g(s,\cdot)-\partial_t
        \g
        (\sigma,\cdot)} 
   {=}
    \norm[\HL]{f(\g(s,\cdot))-f
        (\g(\sigma,\cdot))}
 {\le}
    \LIP_f\|{\g(s,\cdot)-\g(\sigma,\cdot)}\|_{\HL}\\
    &
   {=}\textstyle
    \LIP_f
\big\|
\int_\sigma^s f(\g(\tau,\cdot))\,\dd\tau\big\|_{\HL} 
    \le \LIP_f\int_\sigma^s 
    \norm[\HL]{f(\g(\tau,\cdot))}\,\dd\tau 
      \,\,\,\, {\le} \,\,
    F\cdot  \LIP_f  |s-\sigma|.
    \end{align*}
    The conservation of the barycenter follows from the
    first identity in \eqref{eq:integral-mean}
    of  
    \thref{lem:gradients-mean}  below,
    applied to the open subset $\mathcal{O}:=W_\textnormal{ir}^{\frac32 p-2,2}(\R/\Z,\R^n)$ of the Hilbert space $\HL_1:=\HL$
    and the energy $\mathcal{E}=\Mpq[p,2]$ yielding
    \begin{equation}\label{eq:integral-means-Menger}
    \overline{\nabla\mathcal{E}(\g(\cdot,t))}:=\textstyle\int_{\R/\Z}\nabla\mathcal{E}
    (\g(u,t))\,\dd u=0\quad\Foa t\in [0,T].
    \end{equation}
    By the uniform bound $F$ on the energy's gradient (and therefore on
    $\partial_t\g(\cdot,t)$ according to \eqref{eq:gradflow}) presented
    in \eqref{eq:bound-gradient} we may interchange differentiation
    and integration to obtain
   \[ 
 \textstyle   \frac d{dt}\int_{\R/\Z}\g(u,t)\,\dd u=\int_{\R/\Z}\partial_t\g(u,t)\,\dd u
   {=}-\overline{\nabla\mathcal{E}
    (\g(\cdot,t))}=0
    \Foa t\in [0,T].
    \]
\end{proof}

We finish this section with the above mentioned
technical lemma on vanishing integral means
of gradients in the following more abstract situation. Assume
that $\mu$ is a measure on a set $K$ with $\mu(K)<\infty$ so that
the constant vector-valued functions $\alpha:K\to\R^n$
are automatically contained in the space
$L^2_\mu(K,\R^n)$ of square integrable functions on $K$ with respect to
the measure $\mu$. We denote the integral mean of a function $f$
by $\overline{f}:=\mu(K)^{-1}\int_K f\,\dd\mu$.
\begin{lemma}[Gradients with zero integral mean]\label{lem:gradients-mean}
    Let $\HL_1$ be a real  Hilbert space containing the constant 
    functions $\alpha:K\to\R^n$, and let $\mathcal{O}\subset\HL_1$ be an open
    subset of $\HL_1$. Assume that
    $\mathcal{E}\colon \mathcal{O}\to\R$ is a Fr\'echet-differentiable,
    translationally invariant
    energy.
    Then
    \begin{equation}\label{eq:grad-skp}
    \langle \nabla\mathcal{E}(x),\alpha\rangle_{\HL_1}=0\quad\Foa
    x\in\mathcal{O},\alpha\in\R^n.
    \end{equation}
    If, moreover, for a second real Hilbert space $\HL_2$, 
    there is a Fr\'echet-differentiable mapping $S\colon \mathcal{O}\to\HL_2$   
    satisfying
    \begin{equation}\label{eq:DS-kernel}
    DS(x)\alpha=0\quad\Foa x\in\mathcal{O},\alpha\in\R^n,
    \end{equation}
    then one has
    \begin{equation}\label{eq:proj-grad-skp}
    \langle \nabla_S\mathcal{E}(x),\alpha\rangle_{\HL_1}=0\quad\Foa
    x\in\mathcal{O},\alpha\in\R^n,
    \end{equation}
    where $\nabla_S\mathcal{E}(y)=\Pi_{\NL(DS(y))}\nabla\mathcal{E}(y)$ for
    $y\in\mathcal{O}$
    denotes the projected gradient as defined in \eqref{eq:proj-abstract-gradient}
    in the introduction.
    
    If, finally, $\HL_1$ is contained in $L_\mu^2(K,\R^n)$ and if 
    there is a constant $C>0$ with
    \begin{equation}\label{eq:skp}
    \langle x,\alpha\rangle_{\HL_1}=C\langle x,\alpha\rangle_{L^2_\mu}\quad
    \Foa x\in\HL_1,\alpha\in\R^n,
    \end{equation}
    then \eqref{eq:grad-skp} and \eqref{eq:proj-grad-skp} imply, respectively, 
    \begin{equation}\label{eq:integral-mean}
    \overline{\nabla\mathcal{E}(x)}=0,\quad\AND\quad
    \overline{\nabla_S\mathcal{E}(x)}=0
    \Foa x\in \mathcal{O}.
    \end{equation}
\end{lemma}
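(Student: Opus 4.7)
The plan is to dispatch the three assertions in order, each building on the previous. For \eqref{eq:grad-skp}, I would use the translational invariance $\mathcal{E}(x+t\alpha)=\mathcal{E}(x)$, which makes sense for all sufficiently small $|t|$ since $\mathcal{O}$ is open and $\alpha\in\R^n\subset\HL_1$ by hypothesis. Differentiating in $t$ at $t=0$ immediately yields
\[
0=\textstyle\frac{d}{dt}\big|_{t=0}\mathcal{E}(x+t\alpha)=D\mathcal{E}(x)\alpha=\langle\nabla\mathcal{E}(x),\alpha\rangle_{\HL_1}.
\]

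For the projected version \eqref{eq:proj-grad-skp}, I would exploit self-adjointness of the orthogonal projection. Assumption \eqref{eq:DS-kernel} places every constant $\alpha\in\R^n$ inside $\NL(DS(x))$, so $\Pi_{\NL(DS(x))}\alpha=\alpha$. Combining this with the definition $\nabla_S\mathcal{E}(x)=\Pi_{\NL(DS(x))}\nabla\mathcal{E}(x)$ and self-adjointness gives
\[
\langle\nabla_S\mathcal{E}(x),\alpha\rangle_{\HL_1}=\langle\nabla\mathcal{E}(x),\Pi_{\NL(DS(x))}\alpha\rangle_{\HL_1}=\langle\nabla\mathcal{E}(x),\alpha\rangle_{\HL_1}=0,
\]
where the last step invokes \eqref{eq:grad-skp}.

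For the integral mean identities \eqref{eq:integral-mean}, I would transport the $\HL_1$-orthogonality to an $L^2_\mu$-statement via the compatibility relation \eqref{eq:skp}. Applying \eqref{eq:skp} with $x$ replaced by $\nabla\mathcal{E}(x)\in\HL_1$ and using \eqref{eq:grad-skp} gives
\[
0=C\langle\nabla\mathcal{E}(x),\alpha\rangle_{L^2_\mu}=C\textstyle\int_K\langle\nabla\mathcal{E}(x)(u),\alpha\rangle\,\dd\mu(u)=C\mu(K)\langle\overline{\nabla\mathcal{E}(x)},\alpha\rangle
\]
for every $\alpha\in\R^n$, since $\alpha$ is constant and may be pulled out of the integral. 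Varying $\alpha$ over a basis of $\R^n$ forces $\overline{\nabla\mathcal{E}(x)}=0$. The identical argument, now starting from \eqref{eq:proj-grad-skp} instead of \eqref{eq:grad-skp}, produces $\overline{\nabla_S\mathcal{E}(x)}=0$.

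No step poses a real obstacle: the proof is essentially a bookkeeping exercise combining (i) differentiation of the invariance relation, (ii) the self-adjointness of orthogonal projections together with the fact that constants lie in the constraint's null space, and (iii) extraction of constant vectors from an integral pairing. The only minor care needed is to verify that $x+t\alpha$ remains in $\mathcal{O}$ for small $|t|$, which is immediate from openness of $\mathcal{O}$, and to observe that $\mu(K)<\infty$ makes the constant-function embedding $\R^n\hookrightarrow L^2_\mu(K,\R^n)$ well defined, so that the pairing in \eqref{eq:skp} is meaningful.
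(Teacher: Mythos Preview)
Your proof is correct and follows essentially the same approach as the paper: differentiate the translational invariance for \eqref{eq:grad-skp}, use self-adjointness of the projection together with $\alpha\in\NL(DS(x))$ for \eqref{eq:proj-grad-skp}, and transfer orthogonality to $L^2_\mu$ via \eqref{eq:skp} for \eqref{eq:integral-mean}. The only cosmetic difference is that the paper concludes \eqref{eq:integral-mean} by testing with the single choice $\alpha=\mu(K)^{-1}\overline{\nabla\mathcal{E}(x)}$ to obtain $|\overline{\nabla\mathcal{E}(x)}|^2=0$, whereas you test with all $\alpha\in\R^n$; both are equivalent.
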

\begin{proof}
    With $\mathcal{E}(x+\epsilon\alpha)=\mathcal{E}(x)$ for all $x\in\mathcal{O}$,
    $\alpha\in\R^n$, and $\epsilon\in\R$ with $|\epsilon|\ll 1$,
    we find
    $
    \langle\nabla\mathcal{E}(x),\alpha\rangle_{\HL_1}=D\mathcal{E}(x)\alpha
    =\delta\mathcal{E}(x,\alpha)=
    \lim_{\epsilon\to 0}\epsilon^{-1}\cdot (
   {\mathcal{E}(x+\epsilon\alpha)-\mathcal{E}(x)})=0,
    $
    which is \eqref{eq:grad-skp}.
    To prove
    \eqref{eq:proj-grad-skp} we use the assumption \eqref{eq:DS-kernel}
    as well as \eqref{eq:grad-skp} to infer
    $$
    \langle\nabla_S\mathcal{E}(x),\alpha\rangle_{\HL_1}  =
    \langle \Pi_{\NL(DS(x))}\nabla\mathcal{E}(x),\alpha\rangle_{\HL_1} 
    {=}\langle\nabla\mathcal{E}(x),
    \Pi_{\NL(DS(x))}\alpha\rangle_{\HL_1}
    {=}
    \langle\nabla\mathcal{E}(x),\alpha\rangle_{\HL_1}
    {=}0.
    $$
    Choosing $\alpha:=\mu(K)^{-1}\overline{\nabla\mathcal{E}(x)}\in\R^n$ 
    in \eqref{eq:grad-skp} one
    obtains from assumption \eqref{eq:skp}
    \begin{align*}
    0 &
    {=}
    \textstyle C^{-1}\langle\nabla\mathcal{E}(x),\frac1{\mu(K)}
    \overline{\nabla\mathcal{E}(x)}\rangle_{\HL_1}
    {=}
    \langle\nabla\mathcal{E}(x),\frac1{\mu(K)}
    \overline{\nabla\mathcal{E}(x)}\rangle_{L_\mu^2}\\
    & =\textstyle
    \frac1{\mu(K)}\int_K\langle\nabla\mathcal{E}(x)(w),
    \overline{\nabla\mathcal{E}(x)}\rangle_{\R^n}\,\dd\mu(w)
    =\textstyle\langle\frac1{\mu(K)}\int_K\nabla\mathcal{E}(x)(w)\,\dd\mu(w),
    \overline{\nabla\mathcal{E}(x)}\rangle_{\R^n}=|\overline{\nabla\mathcal{E}(x)}|^2,
    \end{align*}
    so that the first identity in \eqref{eq:integral-mean} is established. 
    Testing the identity 
    $
    \langle \nabla_S\mathcal{E}(x),\alpha\rangle_{L_\mu^2}
    \overset{\eqref{eq:skp}}{=}C^{-1}
    \langle \nabla_S\mathcal{E}(x),\alpha\rangle_{\HL_1}
    \overset{\eqref{eq:proj-grad-skp}}{=}0
    $
    with $\alpha:=\mu(K)^{-1}\overline{\nabla_S\mathcal{E}(x)}
    $ yields the second identity in \eqref{eq:integral-mean}.
    {}
\end{proof}

For a proof of long time existence for \eqref{eq:gradflow}
via a continuation argument replacing the
initial data $\g_0$ by the configuration $\g(\cdot,T)$ as a new starting
point, it would be helpful to control
the essential quantities in \eqref{eq:existence-time-est} that bound $T$ from below,
namely the bilipschitz constants of $\g(\cdot,t)$ and the seminorms $\halfnorm[\frac32 p-3,2]{
    \g'(\cdot,t)}$. If we knew that the curves $\g(\cdot,t)$ had unit speed for all $t\in
[0,T]$ we could use \cite[Proposition 2.1 \& Theorem 1]{BlattReiter:2015:Menger}
to bound $\BiLip(\g(\cdot,t))$  from below and $\halfnorm[\frac32 p-3,2]{
    \g'(\cdot,t)}$  from above in terms of the energies
$\Mpq[p,2](\g(\cdot,t))$ that do not increase in time as $t\to T$ along
the gradient flow \eqref{eq:gradflow}.
However, even if the initial curve $\g_0$ is assumed to be arc length parametrized,
we do not know in general how the parametrizations may change along the gradient
flow \eqref{eq:gradflow}, which is why we chose the projected gradient flow
\eqref{introtheorem:longTimeExistenceViaProjections:eq:ODE} 
to preserve the initial velocity $\abs{\g_0'}=\abs{\g'(\cdot,t)}$
for all $t$. In particular, starting with an arc\-length parametrized initial curve 
$\g_0$ we will obtain an evolution of unit speed curves.

Of course, the right-hand side of the projected flow equation \eqref{introtheorem:longTimeExistenceViaProjections:eq:ODE} 
differs from that of the usual gradient flow \eqref{eq:gradflow}, and we need
to ensure local Lipschitz continuity again to obtain short time existence first, and then lower bounds for the Lipschitz radii are required to extend the solutions to
establish long time existence for \eqref{introtheorem:longTimeExistenceViaProjections:eq:ODE}. 
What we need \emph{not} worry about is the upper bound on the right-hand side
of \eqref{introtheorem:longTimeExistenceViaProjections:eq:ODE}, because
abbreviating $\mathcal{E}:=\Mpq[p,2]$ and $\mathcal{H}:=
W^{\frac32 p-2,2}(\R/\Z,\R^n)$ we simply estimate by means of
\eqref{theorem:secondVariationMp2:eq:estFirstVariation}
$
\|\nabla_{\SIGMA}\mathcal{E}(\g)\|_\mathcal{H}=
\|\Pi_{\NL(D\SIGMA (\gamma))}\nabla\mathcal{E}(\g)\|_\mathcal{H}\le
\|\nabla\mathcal{E}(\g)\|_\mathcal{H}
{\le} C_1,
$
as $\Pi_{\NL(D\SIGMA(\gamma))}$ is an orthogonal projection.

The right hand side of \eqref{introtheorem:longTimeExistenceViaProjections:eq:ODE}  consists of the three mappings, $A \mapsto \Pi_{\NL(A)}$, $v \mapsto D\SIGMA (v)$, and $\nabla\Mpq[p,2](\cdot)$, and
we already took care of the third mapping in Section  \ref{sec:2}.
In the following Sections \ref{sec:4} and \ref{subsection:explicitBoundaryCondition}
we establish   Lipschitz properties of the first two.

\section{Projection onto the null space of a linear operator}
\label{sec:4}
Let $\HL_1$ and $\HL_2$ be two  real Hilbert spaces with inner products
$\langle \cdot, \cdot\rangle_{\HL_i}\colon \HL_i\times \HL_i\to\R$ for $i=1,2.$ 
Throughout this section
$A \colon \HL_1 \to \HL_2$ is a bounded linear operator, the set of all such 
operators is denoted by $\LL(\HL_1,\HL_2)$. The  adjoint
$A^*\colon \HL_2\to\HL_1$ of $A$ satisfies by definition
\begin{equation}\label{eq:adjoint}
\langle Ax, y\rangle_{\HL_2}=\langle x,A^*y\rangle_{\HL_1}\quad\Foa x\in\HL_1, \, y
\in\HL_2.
\end{equation}
We start with three simple observations regarding the composition 
$AA^*\colon \HL_2\to\HL_2$.
\begin{lemma}
    \label{lemma:NullspaceAA^*}
    The null spaces of $A^*$ and $AA^*$ are identical, that is,
    $\NL(A^*)=\NL(AA^*)\subset\HL_2.$
\end{lemma}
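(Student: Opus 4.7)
The plan is to prove the two inclusions separately, both directly from the definition of the adjoint \eqref{eq:adjoint}.

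For the inclusion $\NL(A^*)\subseteq\NL(AA^*)$, I would simply observe that if $y\in\HL_2$ satisfies $A^*y=0$, then $AA^*y = A(A^*y) = A(0) = 0$, so $y\in\NL(AA^*)$. This direction is immediate and requires nothing beyond linearity of $A$.

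For the reverse inclusion $\NL(AA^*)\subseteq\NL(A^*)$, which is the only substantive part, I would take an arbitrary $y\in\NL(AA^*)$ and pair $AA^*y = 0$ with $y$ in the $\HL_2$-inner product. Using the defining property \eqref{eq:adjoint} of the adjoint, this yields
\[
0 = \langle AA^*y,y\rangle_{\HL_2} = \langle A^*y,A^*y\rangle_{\HL_1} = \|A^*y\|_{\HL_1}^2,
\]
from which $A^*y=0$ follows, hence $y\in\NL(A^*)$.

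There is no real obstacle here: the proof is a two-line verification, with the only ingredient beyond the definition of the adjoint being the positive-definiteness of the $\HL_1$-inner product. No use of closedness of the range, orthogonal decomposition, or further results from Section~\ref{sec:4} is needed at this stage; these presumably enter in the subsequent lemmas treating the range and the projection formula.
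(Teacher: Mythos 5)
Your proposal is correct and matches the paper's proof: the easy inclusion is dismissed by linearity, and the substantive inclusion is obtained by pairing $AA^*y$ with $y$, applying the adjoint identity \eqref{eq:adjoint}, and using positive-definiteness of the $\HL_1$-inner product (the paper phrases it for general $x$ and then specializes to $x:=y$, which is an inessential difference).
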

\begin{proof}
    It suffices to prove that $\NL(AA^*)\subset\NL(A^*)$, the other inclusion is
    immediate. For $y\in\NL(AA^*)$ one has  by means of \eqref{eq:adjoint}
    $
    0= \langle x,AA^* y\rangle_{\HL_2}{=}
    \langle A^*x,A^*y \rangle_{\HL_1}$ for all $ x\in\HL_2,
    $
    which can be used for 
    $x:=y$ to obtain $\norm[\HL_1]{A^*y}=0$ so that $y\in\NL(A^*)$.
    {}
\end{proof}

\begin{lemma}
    \label{lemma:invertibilityAA*}
    Let $A \in \LL(\HL_1,\HL_2)$ be surjective.
    Then $AA^*$ is invertible and the inverse $(AA^*)^{-1}$ is bounded.
\end{lemma}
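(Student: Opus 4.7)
The plan is to deduce invertibility of $AA^*$ by showing that it is self-adjoint, injective, and bounded below, with the lower bound obtained from a quantitative form of the surjectivity of $A$. Injectivity is already essentially available from \thref{lemma:NullspaceAA^*}, so the real content is the lower bound, which will simultaneously give boundedness of the inverse.

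First I would extract a quantitative surjectivity statement for $A$: since $A\in\LL(\HL_1,\HL_2)$ is surjective, the open mapping theorem yields a constant $c>0$ such that every $y\in\HL_2$ admits an $x\in\HL_1$ with $Ax=y$ and $\|x\|_{\HL_1}\le c\|y\|_{\HL_2}$. Next I would translate this into a coercivity estimate for $A^*$: picking such an $x$ for a given $y\in\HL_2$ and using \eqref{eq:adjoint}, one computes
\[
\|y\|_{\HL_2}^2=\langle Ax,y\rangle_{\HL_2}=\langle x,A^*y\rangle_{\HL_1}\le\|x\|_{\HL_1}\|A^*y\|_{\HL_1}\le c\|y\|_{\HL_2}\|A^*y\|_{\HL_1},
\]
so $\|A^*y\|_{\HL_1}\ge c^{-1}\|y\|_{\HL_2}$ for every $y\in\HL_2$.

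From this lower bound I would conclude coercivity of $AA^*$: for every $y\in\HL_2$ one has by \eqref{eq:adjoint}
\[
\langle AA^*y,y\rangle_{\HL_2}=\langle A^*y,A^*y\rangle_{\HL_1}=\|A^*y\|_{\HL_1}^2\ge c^{-2}\|y\|_{\HL_2}^2.
\]
In particular $AA^*$ is injective (which also matches \thref{lemma:NullspaceAA^*} combined with $\NL(A^*)=\{0\}$, since the latter follows either from the lower bound above or, equivalently, from $\NL(A^*)=R(A)^\perp=\HL_2^\perp=\{0\}$). Moreover, the coercivity bound implies that $AA^*$ has closed range: if $AA^*y_n\to z$ in $\HL_2$, then $(y_n)$ is Cauchy by the lower bound, hence convergent, and continuity of $AA^*$ gives $z\in R(AA^*)$.

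To finish, I would use self-adjointness of $AA^*$ to upgrade from closed range plus injectivity to surjectivity: since $AA^*$ is self-adjoint, $R(AA^*)^{\perp}=\NL((AA^*)^*)=\NL(AA^*)=\{0\}$, and combined with closedness of the range this yields $R(AA^*)=\HL_2$. Therefore $AA^*$ is a bijection, and the estimate $\|AA^*y\|_{\HL_2}\|y\|_{\HL_2}\ge\langle AA^*y,y\rangle_{\HL_2}\ge c^{-2}\|y\|_{\HL_2}^2$ shows $\|(AA^*)^{-1}\|_{\LL(\HL_2,\HL_2)}\le c^2$. The only mildly delicate point is the passage from surjectivity of $A$ to the quantitative bound on $A^*$; everything else is then a clean consequence of the open mapping theorem and self-adjointness.
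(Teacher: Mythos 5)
Your proposal is correct, and it reaches the same key intermediate estimate $\|A^*y\|_{\HL_1}\gtrsim\|y\|_{\HL_2}$ that drives the paper's proof, but the route to and from it is slightly different. The paper obtains this estimate by quoting Rudin's Theorem 4.15 (surjectivity of $T$ is equivalent to $T^*$ being bounded below), and then concludes surjectivity of $AA^*$ by showing $\|(AA^*)^*y\|\ge c^2\|y\|$ and invoking the same Theorem 4.15 a second time with $T:=AA^*$; the norm bound on $(AA^*)^{-1}$ is then read off by substituting $y:=(AA^*)^{-1}x$. You instead derive the lower bound on $A^*$ directly from the quantitative open mapping theorem (for each $y$ choose $x$ with $Ax=y$ and $\|x\|\le c\|y\|$), and then conclude surjectivity of $AA^*$ by the more elementary closed-range argument: coercivity $\Rightarrow$ range closed and, by self-adjointness, $R(AA^*)^\perp=\NL(AA^*)=\{0\}$, hence $R(AA^*)=\HL_2$. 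Your argument is somewhat more self-contained and makes the role of self-adjointness explicit, whereas the paper's version is more compact at the cost of a second citation of an abstract equivalence. Both are fully rigorous and yield the same bound $\|(AA^*)^{-1}\|\le c^2$ (or $1/c^2$, depending on how the constant is normalized), so the only substantive remark is that your closed-range step is a standard unpacking of what Rudin's theorem encapsulates.
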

\begin{proof}
    By \cite[Theorem~4.12]{Rudin:1973:FunctionalAnalysis}, we have for any bounded linear operator $T$ between two Hilbert spaces the identity
$    \NL(T^*) = \RL(T)^\perp.$
    In particular, $\NL(A^*) = \menge{0}$ because $\RL(A)=\HL_2$ by assumption. 
    \thref{lemma:NullspaceAA^*} yields
    $\NL(AA^*)=\{0\}$, i.e.,  $AA^*$ is  injective.
    To show that $AA^*$ is also surjective, and therefore invertible,
    we use the fact that $\RL(A)=\HL_2$ by assumption
    and apply \cite[Theorem~4.15]{Rudin:1973:FunctionalAnalysis}
    to the linear operator $T:=A$
    to find a constant $c>0$ such that 
$    \|A^*y\|_{\HL_1}\ge c\|y\|_{\HL_2}$ for all $y\in \HL_2.$
    This implies by means of \eqref{eq:adjoint} and the Cauchy-Schwarz inequality
    $
    c^2\|y\|_{\HL_2}^2
    {\le}
    \langle A^*y,A^*y\rangle_{\HL_1}
    \overset{\eqref{eq:adjoint}}{=}\langle AA^* y,y\rangle_{\HL_2}=
    \langle (AA^*)^* y,y\rangle_{\HL_2}\le\|(AA^*)^* y\|_{\HL_2}\|y\|_{\HL_2},
    $
    which simplifies to 
    \begin{equation}\label{eq:final-astern-lower-bound}
    \|(AA^*)^* y\|_{\HL_2} \ge c^2\|y\|_{\HL_2}\quad\Foa y\in \HL_2,
    \end{equation}
    so that again by 
    \cite[Theorem~4.15]{Rudin:1973:FunctionalAnalysis} now applied to the linear
    operator $T:=AA^*$ we obtain $\RL(AA^*)=\HL_2$. Now that we know that
    $(AA^*)^{-1}$ exists we can evaluate
    \eqref{eq:final-astern-lower-bound} at $y:=(AA^*)^{-1}x$ for an arbitrary
    $x\in\HL_2$, and use again
    that $AA^*=(AA^*)^*$ to arrive at the estimate
    $
    \|x\|_{\HL_2}\ge c^2\|(AA^*)^{-1}x\|_{\HL_2}$ for all $ x\in\HL_2,
    $
    which implies $\|(AA^*)^{-1}\|_{\LL(\HL_2)}\le 1/c^2.$
    {}
\end{proof}
The bound on the norm of the operator $(AA^*)^{-1}$ is rather implicit, since
its origin in the proof of \cite[Theorem~4.15]{Rudin:1973:FunctionalAnalysis} 
lies in the open mapping theorem. However,  we do need more explicit bounds in order
to verify Lipschitz continuity of the projection involving
the logarithmic strain constraint. Therefore,  
we add an extra assumption in the present
abstract context, namely, 
that the operator $A$ possesses a bounded right inverse $Y$.
\begin{lemma}
    \label{lemma:boundedInverseAA*}
    Suppose that $A \in \LL(\HL_1,\HL_2)$ possesses a right inverse, that is, 
    there is a linear operator
    $Y \in \LL(\HL_2,\HL_1)$ such that $AY =\Id_{\HL_2}$.
    Then,
    \begin{equation}\label{eq:R-bound} 
    \norm[\HL_2]{x} \leq \norm[\LL(\HL_2,\HL_1)]{Y}\norm[\HL_1]{A^*x}
    \quad\Foa x\in \HL_2,
    \end{equation} 
    the composition
    $AA^*$ is invertible, and  its inverse $(AA^*)^{-1}$ satisfies
    the estimate
    \begin{equation}\label{eq:R-AA-bound} 
    \norm[\LL(\HL_2,\HL_2)]{(AA^*)^{-1}} \leq \norm[\LL(\HL_2,\HL_1)]{Y}^2.
    \end{equation} 
\end{lemma}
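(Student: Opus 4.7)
The plan is to exploit that taking adjoints of the identity $AY = \Id_{\HL_2}$ gives $Y^*A^* = \Id_{\HL_2}$, so $A^*$ admits a bounded left inverse. This delivers \eqref{eq:R-bound} immediately: for every $x \in \HL_2$,
\[
\|x\|_{\HL_2} = \|Y^*A^*x\|_{\HL_2} \leq \|Y^*\|_{\LL(\HL_1,\HL_2)}\,\|A^*x\|_{\HL_1} = \|Y\|_{\LL(\HL_2,\HL_1)}\,\|A^*x\|_{\HL_1},
\]
using the standard fact $\|Y^*\|=\|Y\|$.

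Invertibility of $AA^*$ is then a free byproduct: the existence of a right inverse makes $A$ surjective, so \thref{lemma:invertibilityAA*} applies and yields that $(AA^*)^{-1}$ exists as a bounded operator on $\HL_2$. Thus only the quantitative bound \eqref{eq:R-AA-bound} requires further work.

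To derive that bound I would fix $x\in\HL_2$, set $y := (AA^*)^{-1}x$, and chain two estimates. On the one hand, \eqref{eq:R-bound} applied to $y$ gives $\|y\|_{\HL_2} \leq \|Y\|\,\|A^*y\|_{\HL_1}$. On the other hand, the polarization identity together with self-adjointness of $AA^*$ yields
\[
\|A^*y\|_{\HL_1}^2 = \langle A^*y, A^*y\rangle_{\HL_1} = \langle y, AA^*y\rangle_{\HL_2} = \langle y, x\rangle_{\HL_2} \leq \|y\|_{\HL_2}\,\|x\|_{\HL_2}.
\]
Substituting into the square of the first inequality gives $\|y\|_{\HL_2}^2 \leq \|Y\|^2 \|y\|_{\HL_2}\,\|x\|_{\HL_2}$; dividing by $\|y\|_{\HL_2}$ (the case $y=0$ being trivial) produces exactly $\|(AA^*)^{-1}x\|_{\HL_2} \leq \|Y\|^2\,\|x\|_{\HL_2}$.

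There is essentially no serious obstacle here; the content is bookkeeping of operator norms via adjoints. The only point that deserves attention is to remember that the quantitative bound alone would give boundedness of $(AA^*)^{-1}$ only on its range, so one must still cite \thref{lemma:invertibilityAA*} (or argue surjectivity of $AA^*$ directly from $\|A^*y\|^2 = \langle AA^*y,y\rangle$ combined with \eqref{eq:R-bound}) in order to know that $(AA^*)^{-1}$ is defined on all of $\HL_2$.
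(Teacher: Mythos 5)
Your proposal is correct and follows essentially the same route as the paper: both derive \eqref{eq:R-bound} from the right-inverse property of $Y$ (you via the adjoint identity $Y^*A^*=\Id_{\HL_2}$, the paper via Cauchy--Schwarz applied to $\langle Yx, A^*x\rangle_{\HL_1}$ after writing $\|x\|^2_{\HL_2}=\langle AYx,x\rangle_{\HL_2}$, a cosmetic difference), and both obtain \eqref{eq:R-AA-bound} by combining \eqref{eq:R-bound} with the identity $\|A^*y\|^2_{\HL_1}=\langle y, AA^*y\rangle_{\HL_2}$ and Cauchy--Schwarz, with the paper writing it as a supremum and you as a pointwise estimate for $y=(AA^*)^{-1}x$. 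Your closing remark that the quantitative bound alone only controls $(AA^*)^{-1}$ on its range, and that surjectivity must be obtained separately via \thref{lemma:invertibilityAA*}, is also exactly how the paper handles it.
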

\begin{proof}
    Because $A$ has a right-inverse, it is surjective, so that
    according to \thref{lemma:invertibilityAA*}  $(AA^*)^{-1}$
    exists and is bounded.
    For all $x \in \HL_2 $, we infer by  
    \eqref{eq:adjoint} and 
    the Cauchy-Schwarz inequality 
    that 
    $
    \norm[\HL_2]{x}^2=\langle AY 
    x,x\rangle_{\HL_2}
    {=}
    \langle Y x,A^* x\rangle_{\HL_1}\le\norm[\HL_1]{Y x}\norm[\HL_1]{A^*x}\le
    \norm[\LL(\HL_2,\HL_1)]{Y}\norm[\HL_1]{A^*x}\norm[\HL_2]{x},
    $
    which implies \eqref{eq:R-bound}.
    Furthermore, we have by the Cauchy-Schwarz inequality, 
    \eqref{eq:adjoint}, and  \eqref{eq:R-bound}
    \begin{alignat*}{3}
    \|{(AA^*)^{-1}}\|_{\LL(\HL_2,\HL_2)}
    &=
    \sup_{0\neq y \in \HL_2} 
    \frac {\|{(AA^*)^{-1}y}\|_{\HL_2}} {\|{y}\|_{\HL_2}}
    &&=
    \sup_{0\neq x \in \HL_2} 
    \frac {\|{(AA^*)^{-1}AA^*x}\|_{\HL_2}} {\|{AA^*x}\|_{\HL_2}}\\
    \leq 
    \sup_{0\neq x \in \HL_2} 
    \frac {\norm[\HL_2]{x}} {\left\langle AA^*x, 
         {x}/ {\norm[\HL_2]{x}}\right\rangle_{\HL_2}\hspace{-0.7em}}
    &
    {=} 
    \sup_{0\neq x \in \HL_2} \frac {\norm[\HL_2]{x}} 
    {\left\langle A^*x,  {A^*x}/ {\norm[\HL_2]{x}}\right\rangle_{\HL_1}\hspace{-1em}}
    &&= \sup_{0\neq x \in \HL_2}  \frac{\norm[\HL_2]{x}^2}{\norm[\HL_1]{A^*x}^2}
    {\leq} 
    \norm[\LL(\HL_2,\HL_1)]{Y }^2.
    \end{alignat*}
    {}
\end{proof}

Our interest in the invertibility of the mapping $AA^*\in\LL(\HL_2,\HL_2)$
is motivated by the following characterization of orthogonal projections
onto the null space of $A$, which can be found in
\cite[Lemma 6.2]{Neuberger:1997:SobolevGradients}. For the readers' convenience
we provide an alternative  proof here.
\begin{lemma}[Orthogonal projection onto \protecting{$\NL(A)$}]
    \label{lemma:orthogonalProjektion}
    Let $\HL_1,\HL_2$ be Hilbert 
    spaces and  $A : \HL_1 \to \HL_2$ linear and bounded.
    Suppose $(AA^*)^{-1}$ exists and is linear and bounded.
    Then, the orthogonal projection $\Pi_{\NL(A)}$ of $\HL_1$ onto $\NL(A)$ is given by
    \begin{equation}\label{eq:explicit-projection}
    \Pi_{\NL(A)} = \Id_{\HL_1} - A^*(AA^*)^{-1}A.
    \end{equation}
\end{lemma}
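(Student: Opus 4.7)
The plan is to verify directly that the operator $P := \Id_{\HL_1} - A^*(AA^*)^{-1}A$ satisfies the defining properties of an orthogonal projection onto $\NL(A)$, namely that $P$ is a bounded linear operator that is self-adjoint, idempotent, with range equal to $\NL(A)$. Equivalently, it suffices to check that (i) $Px \in \NL(A)$ for every $x \in \HL_1$, (ii) $Px = x$ for every $x \in \NL(A)$, and (iii) $\langle Px, y - Py \rangle_{\HL_1} = 0$ for all $x,y \in \HL_1$; this orthogonality characterization is my preferred route since it avoids invoking additional abstract theory.

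First I would observe that $P$ is bounded and linear as a composition and sum of the bounded linear operators $A$, $(AA^*)^{-1}$, $A^*$, and $\Id_{\HL_1}$. For (i) I apply $A$ to $Px$:
\[
APx = Ax - AA^*(AA^*)^{-1}Ax = Ax - Ax = 0,
\]
so $Px \in \NL(A)$. For (ii), if $x \in \NL(A)$ then $Ax=0$, and hence $Px = x - A^*(AA^*)^{-1}\cdot 0 = x$. This already shows that $P$ is a projection onto $\NL(A)$ (since $P^2 x = P(Px) = Px$ by (i) and (ii)).

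To upgrade this to an \emph{orthogonal} projection, I would verify self-adjointness of $P$. Since $(AA^*)^* = AA^*$ is self-adjoint, its inverse $(AA^*)^{-1}$ is likewise self-adjoint, and thus
\[
\big(A^*(AA^*)^{-1}A\big)^* = A^*\big((AA^*)^{-1}\big)^*A = A^*(AA^*)^{-1}A,
\]
so that $P^* = P$. Combined with $P^2 = P$ and the fact that $P$ fixes $\NL(A)$ pointwise, this implies $P$ is the orthogonal projection onto $\NL(A)$. Alternatively, one may note that the complementary operator $Q := A^*(AA^*)^{-1}A = \Id_{\HL_1} - P$ takes values in $\RL(A^*) = \NL(A)^\perp$ (using $\NL(A) = \RL(A^*)^\perp$, a standard Hilbert space fact), so that every $x \in \HL_1$ decomposes as $x = Px + Qx$ with $Px \in \NL(A)$ and $Qx \in \NL(A)^\perp$, which by uniqueness of the orthogonal decomposition identifies $P$ as $\Pi_{\NL(A)}$.

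There is no real obstacle here: the identity is almost computational once one is reminded that $AA^*$ is self-adjoint, so its inverse inherits self-adjointness. The only subtlety is making sure to distinguish the two Hilbert spaces $\HL_1$ and $\HL_2$ correctly when taking adjoints, since $A^*\in\LL(\HL_2,\HL_1)$ and $(AA^*)^{-1}\in\LL(\HL_2,\HL_2)$, so that each composition in $A^*(AA^*)^{-1}A$ lives on the right space.
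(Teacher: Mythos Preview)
Your proof is correct and essentially mirrors the paper's argument: both verify that $P$ is bounded linear, that $AP=0$, and that $P$ restricts to the identity on $\NL(A)$. For the orthogonality, the paper checks directly that $\langle x-Px,y\rangle_{\HL_1}=\langle (AA^*)^{-1}Ax,Ay\rangle_{\HL_2}=0$ for $y\in\NL(A)$, which is exactly your ``alternative'' route via $Qx\in\RL(A^*)\subset\NL(A)^\perp$; your primary route through self-adjointness of $P$ is an equally valid minor variant.
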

The operator $A^*(AA^*)^{-1}$ is one form of the \emph{Moore-Penrose pseudoinverse} of $A$.
\begin{proof}
    Denoting the right-hand side of \eqref{eq:explicit-projection} as $Z$
    we observe that  $Z\colon \HL_1\to\HL_1$ is linear and bounded, $A
    Z(x) =0$ for all $x\in\HL_1$,  and $Z$
    restricted to $\NL(A) $ coincides with the identity $\Id_{\HL_1}$
    and therefore $\RL(Z)=\NL(A)$. 
    Finally, $x-Z(x)\perp\NL(A)$
    for all $x\in\HL_1$, since by property \eqref{eq:adjoint} characterizing
    the adjoint
    $\langle x-Z(x),y\rangle_{\HL_1} =
    \langle A^*(AA^*)^{-1}Ax,y\rangle_{\HL_1}
{=}
    \langle (AA^*)^{-1}Ax,Ay\rangle_{\HL_1}=0$ for all $y \in\NL(A).$
    This last property characterizes orthogonal projections onto
    closed subspaces in Hilbert spaces
    according to \cite[4.4.2]{Alt:2016:LinearFunctionalAnalysis}, so that
    $Z=\Pi_{\NL(A)}$.
    {}
\end{proof}
We are going to prove Lipschitz continuity of the mapping
$A\mapsto \Pi_{\NL(A)}$ by bounding its differential, similarly
as in the proof of \thref{corollary:DintMp2locallyLipschitz}. In view of the
explicit formula \eqref{eq:explicit-projection} we therefore need to estimate
the differential of inverses of operators, for which we provide the preparation
in the two following elementary lemmas.

For Banach spaces $\B_1$, $\B_2$, recall that $\LL(\B_1,\B_2)$ is the space of linear and bounded operators from $\B_1$ to $\B_2$, $\epi(\B_1,\B_2)$ denotes the subset of surjective linear bounded operators, whereas $\GLL(\B_1,\B_2)$ is the subset of such operators that are bijective.

The following lemma is a quantified version of \cite[Lemma 2.5.4]{AbrahamMarsdenRatiu:1988:ManifoldsTensoAnalysis}.
\begin{lemma}
    \label{lemma:GLOffenInLLMitAbschaetzung}
    Let $\B$ be a Banach space with operator norm $\norm{\cdot}:=\norm[\LL(\B,\B)]{\cdot}.$
    Suppose that the two operators
    $f \in \GLL(\B,\B)$ and $g \in \LL(\B,\B)$ satisfy $\norm{f-g}\leq 
    \frac 1 2 \|{f^{-1}}\|^{-1}$.
    Then, $g \in \GLL(\B,\B)$ and $\|{g^{-1}}\| \leq 2 \|{f^{-1}}\|$.
\end{lemma}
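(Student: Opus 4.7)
The plan is to reduce the statement to the standard Neumann series for operators close to the identity. Write $g$ as a multiplicative perturbation of $f$, namely $g = f - (f-g) = f \bigl(\Id_{\B} - f^{-1}(f-g)\bigr)$, and set $h := f^{-1}(f-g) \in \LL(\B,\B)$. Submultiplicativity of the operator norm together with the hypothesis immediately yields
\[
\|h\| \le \|f^{-1}\| \cdot \|f-g\| \le \tfrac{1}{2}.
\]

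Once $\|h\| \le 1/2 < 1$, the Neumann series $\sum_{k=0}^{\infty} h^k$ converges absolutely in $\LL(\B,\B)$ and provides a bounded inverse of $\Id_{\B}-h$, with
\[
\bigl\|(\Id_{\B}-h)^{-1}\bigr\| \le \sum_{k=0}^{\infty} \|h\|^k \le \sum_{k=0}^{\infty} 2^{-k} = 2.
\]
Consequently $g = f(\Id_{\B}-h)$ is a composition of two invertible operators in $\LL(\B,\B)$, hence $g \in \GLL(\B,\B)$ with $g^{-1} = (\Id_{\B}-h)^{-1} f^{-1}$.

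Finally, submultiplicativity of the norm combined with the bound above gives
\[
\|g^{-1}\| \le \bigl\|(\Id_{\B}-h)^{-1}\bigr\| \cdot \|f^{-1}\| \le 2\|f^{-1}\|,
\]
as desired. There is no real obstacle here; the only conceptual step is realizing that the hypothesis $\|f-g\| \le \tfrac12\|f^{-1}\|^{-1}$ is precisely tuned so that $\|h\| \le 1/2$, which both guarantees convergence of the Neumann series and yields the factor $2$ in the final estimate. The proof is entirely standard and uses neither completeness beyond what is needed for Neumann series nor any structural property of $\B$ other than being Banach.
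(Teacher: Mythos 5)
Your proof is correct and is essentially identical to the paper's argument: both write $g = f\bigl(\Id_\B - f^{-1}(f-g)\bigr)$, observe that $\|f^{-1}(f-g)\|\le 1/2$, invert the second factor by the Neumann series with norm bound $2$, and conclude $\|g^{-1}\|\le 2\|f^{-1}\|$. There is no meaningful difference in approach.
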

\begin{proof}
    The operator $g=f\circ (\Id_\B-f^{-1}\circ (f-g))$ is invertible since $f$ is, and
    because
    $
    \|{f^{-1}\circ (f-g)}\|\le \|{f^{-1}}\|\cdot\|{f-g}\|\le 1/2,
    $
    and therefore the Neumann series $\sum_{k=0}^\infty (f^{-1}\circ (f-g))^k$
    converges and equals $(\Id_\B-f^{-1}\circ (f-g))^{-1}$. Thus, $g^{-1}$ exists
    and equals $(\Id_\B-f^{-1}\circ (f-g))^{-1}\circ f^{-1}$ satisfying the estimate
    $
    \norm{g^{-1}}\le\norm{(\Id_\B-f^{-1}\circ (f-g))^{-1}}\cdot\norm{f^{-1}}
    \le\sum_{k=0}^\infty\norm{f^{-1}\circ (f-g)}^k\norm{f^{-1}}\le 2\norm{f^{-1}}.
    $
    {}
\end{proof}
\begin{lemma}[\protecting{\cite[Lemma 2.5.5]{AbrahamMarsdenRatiu:1988:ManifoldsTensoAnalysis}}]
    \label{lemma:differentialInversion}
    Let $\B_1,\B_2$ be Banach spaces and consider the mapping 
    $
    \Jj \colon \GLL(\B_1,\B_2) \to \GLL(\B_2,\B_1)
    $ defined as $\Jj(f):=f^{-1}$ for $f\in\GLL(\B_1,\B_2)$.
    Then, $\Jj$ is of class $C^\infty$ and 
    \begin{equation}\label{eq:differentialInversion}
    D\Jj (f)g = -f^{-1} \circ g \circ f^{-1}\quad\Foa g\in T_f\GLL(\B_1,\B_2)\simeq
    \LL(\B_1,\B_2).
    \end{equation}
\end{lemma}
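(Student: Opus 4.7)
The plan is to start from the algebraic identity relating $(f+h)^{-1}$ and $f^{-1}$, and then identify the Fréchet derivative via a quadratic remainder estimate, using the preceding \thref{lemma:GLOffenInLLMitAbschaetzung} to keep $(f+h)^{-1}$ under control for small perturbations $h$. First I would note that $\GLL(\B_1,\B_2)$ is open in $\LL(\B_1,\B_2)$: indeed, for $f \in \GLL(\B_1,\B_2)$ and $h\in\LL(\B_1,\B_2)$ with $\|h\| \le \tfrac12 \|f^{-1}\|^{-1}$, applying \thref{lemma:GLOffenInLLMitAbschaetzung} (with the roles of $\B_1,\B_2$ adapted via the fixed isomorphism $f$, or directly to $f^{-1}\circ(f+h)\in\LL(\B_1,\B_1)$) gives $f+h\in \GLL(\B_1,\B_2)$ with $\|(f+h)^{-1}\|\le 2\|f^{-1}\|$.

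Next, from the identity $(f+h)\circ(f+h)^{-1}=\Id_{\B_2}$ one gets $(f+h)^{-1} = f^{-1} - f^{-1}\circ h\circ (f+h)^{-1}$, and substituting this formula for $(f+h)^{-1}$ into itself once yields
\[
(f+h)^{-1} = f^{-1} - f^{-1}\circ h\circ f^{-1} + f^{-1}\circ h\circ f^{-1}\circ h\circ (f+h)^{-1}.
\]
The remainder term $R(h):=f^{-1}\circ h\circ f^{-1}\circ h\circ (f+h)^{-1}$ satisfies $\|R(h)\|\le 2\|f^{-1}\|^3\|h\|^2$ for $\|h\|\le\tfrac12\|f^{-1}\|^{-1}$, which is $o(\|h\|)$. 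This identifies the Fréchet derivative as the bounded linear map $g\mapsto -f^{-1}\circ g\circ f^{-1}$, establishing \eqref{eq:differentialInversion}.

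For the $C^\infty$ part, I would argue by induction. The map $D\Jj\colon \GLL(\B_1,\B_2)\to \LL(\LL(\B_1,\B_2),\LL(\B_2,\B_1))$ can be written as the composition
\[
f \,\longmapsto\, (\Jj(f),\Jj(f)) \,\longmapsto\, \big(g\mapsto -\Jj(f)\circ g\circ \Jj(f)\big),
\]
where the second map is a fixed continuous bilinear (hence smooth) operator $B\colon \LL(\B_2,\B_1)\times\LL(\B_2,\B_1)\to\LL(\LL(\B_1,\B_2),\LL(\B_2,\B_1))$ given by $B(u,v)(g):=-u\circ g\circ v$. Since compositions and products of smooth maps are smooth, if $\Jj\in C^k$, then $D\Jj\in C^k$ too, so $\Jj\in C^{k+1}$. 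The base case $\Jj\in C^1$ has already been shown (the derivative above is continuous in $f$ because multiplication and the map $f\mapsto \Jj(f)$ are continuous), so induction gives $\Jj\in C^\infty$.

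The main technical point is the quadratic remainder estimate, which relies crucially on the uniform bound $\|(f+h)^{-1}\|\le 2\|f^{-1}\|$ from \thref{lemma:GLOffenInLLMitAbschaetzung}; without it, one could only conclude differentiability at $f$ but not with an estimate that survives iteration for the higher derivatives. Everything else is bookkeeping on compositions of bounded operators.
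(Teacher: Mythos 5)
The paper provides no proof of this lemma; it cites \cite[Lemma~2.5.5]{AbrahamMarsdenRatiu:1988:ManifoldsTensoAnalysis} directly. Your argument is correct and is in fact the standard one (the same one given in that reference): derive $(f+h)^{-1}=f^{-1}-f^{-1}\circ h\circ(f+h)^{-1}$, iterate once to extract the linear term, bound the quadratic remainder using the uniform control on $\|(f+h)^{-1}\|$ from \thref{lemma:GLOffenInLLMitAbschaetzung}, and then bootstrap to $C^\infty$ by writing $D\Jj(f)=B(\Jj(f),\Jj(f))$ for a fixed continuous bilinear $B$ and inducting. The only point worth making explicit is the remark you already flag in passing, namely that \thref{lemma:GLOffenInLLMitAbschaetzung} is stated for endomorphisms of a single Banach space, so one applies it to $f^{-1}\circ(f+h)\in\LL(\B_1,\B_1)$ (which is within distance $\|f^{-1}\|\,\|h\|\le\tfrac12$ of $\Id_{\B_1}$), obtaining invertibility of $f+h=f\circ\big(f^{-1}\circ(f+h)\big)$ and the bound $\|(f+h)^{-1}\|\le\|(f^{-1}\circ(f+h))^{-1}\|\,\|f^{-1}\|\le 2\|f^{-1}\|$; with that spelled out the proof is complete.
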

With this, we may differentiate $A\mapsto\Pi_{\NL(A)}$  
 if $A$
is surjective, since then we may use  the explicit representation 
\eqref{eq:explicit-projection} keeping in mind that $(AA^*)^{-1}$ exists and
is a linear and bounded operator by virtue of \thref{lemma:invertibilityAA*}.
\begin{lemma}
    \label{lemma:estimateFirstVariationP}
    Let $\HL_1,\HL_2$ be Hilbert spaces.
    Then $\Pi_{\NL(\cdot)}\colon \epi(\HL_1,\HL_2)\to\LL(\HL_1,\HL_1)$ 
    is Fréchet differentiable, and the differential $D(\Pi_{\NL(A)})$ 
    at $A\in\epi(\HL_1,\HL_2)$ may be estimated as 
    \begin{align}
    &\|{ D(\Pi_{\NL(A)})B}\|_{\LL(\HL_1,\HL_1)}
    \leq 2 \|{A}\|_{\LL(\HL_1,\HL_2)} \|{B}\|_{\LL(\HL_1,\HL_2)} 
    \|{(AA^*)^{-1}}\|_{\LL(\HL_2,\HL_2)}\,\textnormal{for $B\in\LL(\HL_1,\HL_2).$}
 \label{eq:DPi-bound}
    \end{align}
\end{lemma}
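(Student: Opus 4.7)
The plan is to differentiate the explicit formula $\Pi_{\NL(A)} = \Id_{\HL_1} - A^*(AA^*)^{-1}A$ furnished by \thref{lemma:orthogonalProjektion} and estimate the result termwise. As a preliminary point, I would verify that $\epi(\HL_1,\HL_2)$ is an open subset of $\LL(\HL_1,\HL_2)$, so that the differential of $\Pi_{\NL(\cdot)}$ is well-defined at $A$; this follows from continuity of $B \mapsto BB^*$ combined with \thref{lemma:invertibilityAA*} and the perturbation statement \thref{lemma:GLOffenInLLMitAbschaetzung}.

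Introducing the abbreviation $\Phi(A) := A^*(AA^*)^{-1}A$ so that $\Pi_{\NL(A)} = \Id_{\HL_1} - \Phi(A)$, I would compute $D\Phi(A)B$ via the product and chain rules. Since $A \mapsto A$ and $A \mapsto A^*$ are linear and continuous with derivatives $B$ and $B^*$, and the map $A \mapsto AA^*$ has Fréchet derivative $B \mapsto BA^* + AB^*$, \thref{lemma:differentialInversion} applied to inversion on $\GLL(\HL_2,\HL_2)$ then yields
\begin{equation*}
D\Phi(A)B = B^*(AA^*)^{-1}A - A^*(AA^*)^{-1}(BA^* + AB^*)(AA^*)^{-1}A + A^*(AA^*)^{-1}B.
\end{equation*}
Fréchet (and not merely Gateaux) differentiability is automatic, since each factor in $\Phi$ is Fréchet differentiable in $A$ and the composition of Fréchet differentiable maps is again Fréchet differentiable.

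The crucial step is to exploit the identity $A^*(AA^*)^{-1}A = \Id_{\HL_1} - \Pi_{\NL(A)}$, which is just \thref{lemma:orthogonalProjektion} rewritten. Substituted inside the middle term it gives
\begin{equation*}
A^*(AA^*)^{-1}BA^*(AA^*)^{-1}A = A^*(AA^*)^{-1}B\bigl(\Id_{\HL_1} - \Pi_{\NL(A)}\bigr),
\end{equation*}
and analogously for the companion summand $A^*(AA^*)^{-1}AB^*(AA^*)^{-1}A = (\Id_{\HL_1}-\Pi_{\NL(A)}) B^*(AA^*)^{-1}A$. After this substitution the ``unprojected'' contributions cancel the first and last term of $D\Phi(A)B$, leaving the clean two-term expression
\begin{equation*}
D\Pi_{\NL(A)}B \,=\, -D\Phi(A)B \,=\, -\Pi_{\NL(A)}\, B^* (AA^*)^{-1} A \,-\, A^* (AA^*)^{-1} B\, \Pi_{\NL(A)}.
\end{equation*}
Submultiplicativity of the operator norm, together with $\|A^*\|_{\LL(\HL_2,\HL_1)} = \|A\|_{\LL(\HL_1,\HL_2)}$, $\|B^*\|=\|B\|$, and $\|\Pi_{\NL(A)}\|_{\LL(\HL_1,\HL_1)} \le 1$, then immediately yields the claimed bound \eqref{eq:DPi-bound} with constant $2$.

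I do not anticipate any real obstacle; the calculation is routine once one sees the algebraic collapse. The only genuinely delicate point is precisely that cancellation, because a naive termwise estimate of the four summands in $D\Phi(A)B$ would produce a bound of order $\|A\|^3\|(AA^*)^{-1}\|^2$, which is both quantitatively larger and structurally different from the target. That sharper form with constant $2$ is what is later needed in Section~\ref{subsection:explicitBoundaryCondition} to convert the operator-norm continuity of $A\mapsto\Pi_{\NL(A)}$ into uniform Lipschitz estimates for the projection associated with the logarithmic-strain constraint.
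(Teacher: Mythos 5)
Your argument is correct and follows essentially the same route as the paper: differentiate the explicit formula from \thref{lemma:orthogonalProjektion} using \thref{lemma:differentialInversion} for the inverse, perform the algebraic collapse via $A^*(AA^*)^{-1}A = \Id_{\HL_1} - \Pi_{\NL(A)}$ to reach the two-term form $-\Pi_{\NL(A)}B^*(AA^*)^{-1}A - A^*(AA^*)^{-1}B\,\Pi_{\NL(A)}$, and then estimate using $\|\Pi_{\NL(A)}\|\le 1$ and $\|A^*\|=\|A\|$. Your preliminary remark that $\epi(\HL_1,\HL_2)$ is open (so that Fréchet differentiability is meaningful there) is a small addition the paper leaves implicit, but the substance is identical.
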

\begin{proof}
    Differentiating 
    $
    \Pi_{\NL(A+tB)}\overset{\eqref{eq:explicit-projection}}{=}\Id_{\HL_1}-
    (A+tB)^*((A+tB)(A+tB)^*)^{-1}(A+tB)
    $
    with respect to $t$ and evaluating at $t=0$ we obtain by virtue of 
    \thref{lemma:differentialInversion}
    \begin{align*}
    D(\Pi_{\NL(A)})B 
    & = -B^*(AA^*)^{-1}A - A^*(AA^*)^{-1}B + A^*(AA^*)^{-1}
    \left(AB^* + BA^*\right) (AA^*)^{-1}A\\
    & = -\Pi_{\NL(A)}B^*(AA^*)^{-1}A-A^*(AA^*)^{-1}B\Pi_{\NL(A)}.
    \end{align*}
    Now \eqref{eq:DPi-bound}  immediately follows 
    since the orthogonal projection
    $\Pi_{\NL(A)}$ has norm one, and the adjoint of a bounded linear
    operator has the same norm as the operator itself.
    {}
\end{proof}
To bound the right-hand side of \eqref{eq:DPi-bound} in a controlled way, it is helpful to
have the explicit bounds on $\norm[\LL(\HL_2,\HL_2)]{(AA^*)^{-1}}$ that are
available if $A$ has a right inverse according to 
\thref{lemma:boundedInverseAA*}.
\begin{lemma}
    \label{lemma:LipschitzContinuityProjection}
    Let  $A \in \LL(\HL_1,\HL_2)$ and  $Y
    \in \LL(\HL_2,\HL_1)$ satisfy $AY =\Id_{\HL_2}$.
    Then, $\Pi_{\NL(\cdot)}$ restricted to the ball $B_{ R_\Pi}(A)
    \subset\LL(\HL_1,\HL_2)$ 
    of radius
    \begin{equation}\label{eq:radius-pi-lip}
    R_{\Pi}\equiv R_{\Pi}(A)
    := \min\big\{\, \big[{2 \norm[\LL(\HL_2,\HL_1)]{Y}^2 
            (2\norm[\LL(\HL_1,\HL_2)]{A}+1)\big]^{-1}},1\,\big\}
    \end{equation}
    is Lipschitz continuous with
    Lipschitz constant
   $ 
    \LIP_{\Pi}=\LIP_{\Pi}(\|{A}\|_{\LL(\HL_1,\HL_2)},
    \|{Y}\|_{\LL(\HL_2,\HL_1)}),
  $ 
    which is non-decreasing in its arguments.
\end{lemma}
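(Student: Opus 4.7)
The plan is to apply a mean value inequality to the differential
$D(\Pi_{\NL(\cdot)})$, using the bound from \thref{lemma:estimateFirstVariationP}.
To make this work on the entire ball $B_{R_\Pi}(A)$, I first need to guarantee
that every operator $\tilde A\in B_{R_\Pi}(A)$ lies in $\epi(\HL_1,\HL_2)$ and
still possesses a bounded right inverse whose norm is controlled by $\|Y\|_{\LL(\HL_2,\HL_1)}$.
Only then does \thref{lemma:boundedInverseAA*} provide the control
$\|(\tilde A\tilde A^*)^{-1}\|\le\|\tilde Y\|^2$ that turns
\eqref{eq:DPi-bound} into a bound independent of $\tilde A$.

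\textbf{Step 1 (perturbed right inverse).} For $\tilde A\in B_{R_\Pi}(A)$ write
$\tilde A Y=\Id_{\HL_2}+(\tilde A-A)Y$. Since $\|A\|\|Y\|\ge\|AY\|=1$, the choice
of $R_\Pi$ in \eqref{eq:radius-pi-lip} yields
$\|(\tilde A-A)Y\|\le R_\Pi\|Y\|\le[2\|Y\|(2\|A\|+1)]^{-1}\le\tfrac12$.
Hence $\Id_{\HL_2}+(\tilde A-A)Y$ is invertible by Neumann series with inverse
of norm at most $2$, so
$\tilde Y:=Y\bigl(\Id_{\HL_2}+(\tilde A-A)Y\bigr)^{-1}$ is a bounded right
inverse of $\tilde A$ satisfying $\|\tilde Y\|\le 2\|Y\|$. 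In particular
$\tilde A\in\epi(\HL_1,\HL_2)$, and \thref{lemma:boundedInverseAA*} gives
$\|(\tilde A\tilde A^*)^{-1}\|\le 4\|Y\|^2$.

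\textbf{Step 2 (uniform bound on the differential).} By
\thref{lemma:estimateFirstVariationP}, for every $\tilde A\in B_{R_\Pi}(A)$ and
every direction $B\in\LL(\HL_1,\HL_2)$
\[
\|D(\Pi_{\NL(\tilde A)})B\|_{\LL(\HL_1,\HL_1)}
\le 2\|\tilde A\|\,\|B\|\,\|(\tilde A\tilde A^*)^{-1}\|
\le 8(\|A\|+1)\|Y\|^2\,\|B\|,
\]
where I used $\|\tilde A\|\le\|A\|+R_\Pi\le\|A\|+1$. Define
$\LIP_\Pi:=8(\|A\|_{\LL(\HL_1,\HL_2)}+1)\|Y\|_{\LL(\HL_2,\HL_1)}^2$, which is
clearly non-decreasing in both arguments.

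\textbf{Step 3 (mean value inequality).} Since the ball $B_{R_\Pi}(A)$ is convex
and $\Pi_{\NL(\cdot)}$ is Fr\'echet differentiable on $\epi(\HL_1,\HL_2)\supset
B_{R_\Pi}(A)$ with the uniform operator-norm bound from Step~2, the standard
mean value inequality in Banach spaces (e.g. \cite[Theorem~3.2.7]{drabek-milota_2013}) gives
\[
\|\Pi_{\NL(A_1)}-\Pi_{\NL(A_2)}\|_{\LL(\HL_1,\HL_1)}
\le\LIP_\Pi\,\|A_1-A_2\|_{\LL(\HL_1,\HL_2)}
\qquad\text{for all } A_1,A_2\in B_{R_\Pi}(A).
\]
The main obstacle is really Step~1: arranging the radius in
\eqref{eq:radius-pi-lip} so that the Neumann perturbation argument closes
\emph{and} the resulting bound on $\|\tilde Y\|$ stays purely a function of
$\|A\|$ and $\|Y\|$; once that is settled, Steps~2 and 3 are routine.
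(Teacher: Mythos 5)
Your proposal is correct, and it differs from the paper's proof in the one step that actually requires an idea: how to control $\|(\tilde A\tilde A^*)^{-1}\|$ uniformly over the ball. Both arguments are built on the same skeleton (mean value inequality in $\LL(\HL_1,\HL_2)$, the bound on $D(\Pi_{\NL(\cdot)})$ from \thref{lemma:estimateFirstVariationP}, and $\|\tilde A\|\le\|A\|+1$), but the perturbation is applied at different levels. The paper treats $\tilde A\tilde A^*$ as a small perturbation of $AA^*$: it shows $\|\tilde A\tilde A^*-AA^*\|\le(2\|A\|+1)R_\Pi\le(2\|Y\|^2)^{-1}\le(2\|(AA^*)^{-1}\|)^{-1}$ and then invokes the Neumann-series lemma \thref{lemma:GLOffenInLLMitAbschaetzung} to get $\|(\tilde A\tilde A^*)^{-1}\|\le 2\|Y\|^2$. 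You instead perturb the \emph{right inverse}: from $\tilde A Y=\Id_{\HL_2}+(\tilde A-A)Y$ and $\|(\tilde A-A)Y\|\le R_\Pi\|Y\|\le\tfrac12$ you build $\tilde Y=Y(\Id+(\tilde A-A)Y)^{-1}$ with $\|\tilde Y\|\le 2\|Y\|$, and then apply \thref{lemma:boundedInverseAA*} directly to $\tilde A$ to get $\|(\tilde A\tilde A^*)^{-1}\|\le 4\|Y\|^2$. Your verification that $R_\Pi\|Y\|\le\tfrac12$ via $\|A\|\|Y\|\ge\|AY\|=1$ is sound (with the harmless proviso $\HL_2\neq\{0\}$). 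What your route buys is a statement of independent interest — that the set of operators admitting a bounded right inverse is open, with quantitative norm control $\|\tilde Y\|\le 2\|Y\|$ on the ball — and it avoids introducing \thref{lemma:GLOffenInLLMitAbschaetzung} altogether. What the paper's route buys is a marginally sharper Lipschitz constant ($4(\|A\|+1)\|Y\|^2$ versus your $8(\|A\|+1)\|Y\|^2$), which is immaterial since only the monotone dependence on $\|A\|,\|Y\|$ is claimed; and it reuses a lemma that the paper needs anyway. Both are valid proofs of the stated lemma.
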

\begin{proof}
    Within this proof we simply denote by
    $\|{\cdot}\|$ all operator norms disregarding the respective domains and target 
    spaces of the operators involved. The convex combination of two operators
    $A_0,A_1\in B_{ R_\Pi}(A)$ will be abbreviated by $A_t:=(1-t)A_0 +tA_1 $
    for $t\in [0,1]$
    so that
    we can estimate the difference of their projections as
    $
    \|{\Pi_{\NL(A_0)}-\Pi_{\NL(A_1)}}\|
    \leq \int_0^1 \|{D(\Pi_{\NL(A_t)})(A_1 -A_0 )}\| \,\dd t.
    $
    With \eqref{eq:DPi-bound} in \thref{lemma:estimateFirstVariationP} 
    we may bound the integrand by
    $
    2 \|{A_t}\| \|{A_1 -A_0 }\| \|(A_tA_t^*)^{-1}\|.
    $
    Since $
    \|{A_t}\| \leq \|{A}\|+ R_{\Pi} \leq \|{A}\| +1
    $
    for all $t \in [0,1]$, the remaining task is to bound 
    $
    \|{(A_tA_t^*)^{-1}}\|.
    $
    According to 	\thref{lemma:boundedInverseAA*} the operator $AA^*$ is
    invertible satisfying the estimate $
    {\|(AA^*)^{-1}\|} \leq \|{Y}\|^2,
    $ and
    \thref{lemma:GLOffenInLLMitAbschaetzung} gives that if
    \begin{equation}\label{eq:AtAt*-condition}
    \|{A_tA_t^* - AA^*} \|
    \leq  ({2 \norm{Y}^2} )^{-1}
    \leq  ({2 \|(AA^*)^{-1}\|})^{-1},	
    \end{equation}
    then $A_tA_t^*$ is invertible and satisfies 
    $
    \|(A_tA_t^*)^{-1}\|
    \leq 2 \|(AA^*)^{-1}\|  
    \leq 2 \|{Y}\|^2.
    $
    In order to ensure condition \eqref{eq:AtAt*-condition} we observe 
 by virtue of the norm identity for
    operators and their adjoints, and by \eqref{eq:radius-pi-lip}
   \begin{align*} 
    \|{A_tA_t^* - AA^*} \|
    &\leq \|{A_tA_t^* - A_tA^*}\| + \|{A_tA^*-AA^*}\|
    \leq (\|{A_t}\|+\|{A}\|)\|{A_t-A}\|\\
    &
    \leq (2\|{A}\| + R_{\Pi})  R_{\Pi}
        \leq (2\|{A}\| +1)  R_{\Pi}
 {\leq} ({2\|{Y}\|^2})^{-1},
   \end{align*} 
    which is the desired prerequisite \eqref{eq:AtAt*-condition} that we needed
    to bound $\|(A_tA_t^*)^{-1}\|$.
    In summary,
    $
    \|{\Pi_{\NL(A_0)}-\Pi_{\NL(A_1)}}\|
    \leq 2(\|{A}\|+1)2\|{Y}\|^2 
    \|{A_0 -A_1}\|
    =:\LIP_{\Pi}(\norm{A},\norm{Y})\norm{A_0 -A_1}.$
    {}
\end{proof}

\section{The logarithmic strain constraint}
\label{subsection:explicitBoundaryCondition}
Recall from \eqref{eq:log-constraint} in the introduction
that we chose the \emph{logarithmic strain}
$ \SIGMA (\g):=\log (|\g'(\cdot)| )$ 
as the constraint determining the projected
flow \eqref{introtheorem:longTimeExistenceViaProjections:eq:ODE}. Here, we consider $\SIGMA $ as a mapping defined on the space
$W^{\frac32 p-2,2}_{\textnormal{ir}}(\R/\Z,\R^n)$ of regular and 
embedded fractional Sobolev curves $\g$ whose minimal
velocity $v_\g=\min_{\R}|\g'|$ is well-defined and strictly positive because of the
Morrey embedding, \thref{thm:morrey}. The logarithmic strain $\SIGMA $
takes values in the space of scalar-valued functions of class
$W^{\frac32 p-3,2}$, thus
loosing one order of differentiability in the image  space.
We need differentiability and local Lipschitz continuity of the 
differential  $D\SIGMA $, which is provided by
the following proposition for $\rho:=2$ and $s:=3p/2-3$.

\begin{proposition}
	\label{lemma:differentiabilityB}
	Let $s \in (0,1)$ and $\varrho > 1$ satisfy $s - 1/\varrho >0$.
	Then the logarithmic strain  $
	\ConstraintMap 
\colon W^{1+s,\varrho}_\textnormal{ir}(\R/\Z,\R^n) \to W^{s,\varrho}(\R/\Z)$
defined as $
		\ConstraintMap(\g) \ceq \log (|\g'|)$
	is Fréchet-differentiable and its differential is given by
    	\begin{equation}\label{eq:DB}
    		D\ConstraintMap(\gamma)
    		\colon
    		W^{1+s,\varrho}(\R/\Z,\R^n) \to W^{s,\varrho}(\R/\Z),
    		\quad
		D\ConstraintMap(\gamma) \, h 
		= 
		\big\langle{
			\tfrac {\gamma'(\cdot)} {\abs{\gamma'(\cdot)}}, 
			\tfrac{h'(\cdot)} {\abs{\gamma'(\cdot)}}
		}\big\rangle,
	\end{equation}
	for	$\g \in W^{1+s,\varrho}_\textnormal{ir}(\R/\Z,\R^n)$ applied to $h\in W^{1+s,\varrho}(\R/\Z,\R^n)$.
	Moreover $D\ConstraintMap$ is bounded and locally Lipschitz-continuous 
	with $
	\|{D\ConstraintMap(\g)} \|_{\LL(W^{1+s,\varrho},W^{s,\varrho})}
	\leq 
	C_{\ConstraintMap}$, and
	\begin{equation}
	\|D\ConstraintMap(\eta_1)-D\ConstraintMap(\eta_2)\|_{\LL(W^{1+s,\varrho},W^{s,\varrho})} 
	\leq 
	\LIP_{D\ConstraintMap} \, \halfnorm[s,\varrho]{\eta_1'-\eta_2'}
	\label{eq:DB-bound/DB-lip}
	\end{equation}
for all 
$\eta_1,\eta_2\in W^{1+s,\varrho}_\textnormal{ir}(\R/\Z,\R^n)$ with 
$\halfnorm[s,\varrho]{\g'- \eta_i'} < \BiLip(\g)/(2C_EC_P)$.
The constant $C_{\ConstraintMap}$ and the Lipschitz
constant $\LIP_{D\ConstraintMap}$ depend only on $n$, $p$, non-increasingly
on $v_\g$, and non-decreasingly on $\halfnorm[s,\varrho]{\g'}$.
\end{proposition}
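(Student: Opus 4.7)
Because $s - 1/\varrho > 0$, the fractional Sobolev space $W^{s,\varrho}(\R/\Z)$ embeds continuously into $C^0(\R/\Z)$ by Morrey's theorem and is therefore a Banach algebra under pointwise multiplication, with algebra constant $C_A = C_A(s,\varrho)$. For $\gamma \in W^{1+s,\varrho}_\ir$ the tangent $\gamma'$ is continuous with $|\gamma'| \ge v_\gamma > 0$, so $\gamma'$ takes values in an open annulus $\mathcal{A}_\gamma \subset \R^n \setminus \{0\}$ on which both $\phi(v) := \log|v|$ and $\Phi(v) := v/|v|^2$ are smooth. A standard superposition estimate for Gagliardo seminorms then places $\Sigma(\gamma) = \phi \circ \gamma'$ in $W^{s,\varrho}(\R/\Z)$ and $\Phi \circ \gamma'$ in $W^{s,\varrho}(\R/\Z,\R^n)$, with norms controlled by $v_\gamma$ and $\halfnorm[s,\varrho]{\gamma'}$. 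The candidate formula \eqref{eq:DB} is obtained by differentiating $t \mapsto \tfrac{1}{2}\log|\gamma' + t h'|^2$ at $t = 0$ pointwise, and its boundedness from $W^{1+s,\varrho}$ into $W^{s,\varrho}$ follows from $h \mapsto h'$ being bounded, pointwise multiplication with $\Phi \circ \gamma'$, and the algebra property, yielding the uniform bound $\|D\Sigma(\gamma)\|_{\LL(W^{1+s,\varrho},W^{s,\varrho})} \le C_\Sigma$.

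Next, to upgrade this Gâteaux derivative to a Fréchet derivative, I would estimate the Taylor remainder
\[
    R_\gamma(h) := \Sigma(\gamma+h) - \Sigma(\gamma) - D\Sigma(\gamma) h
    = \int_0^1 \bigl\langle \Phi(\gamma' + t h') - \Phi(\gamma'),\, h' \bigr\rangle \, dt
\]
in $W^{s,\varrho}$ by pulling the integral outside the norm and applying the algebra property together with the local Lipschitz character of the superposition operator $v \mapsto \Phi \circ v$ on a $C^0$-neighbourhood of $\gamma'$, which is available because $\Phi \in C^\infty(\mathcal{A}_\gamma)$. This yields $\|R_\gamma(h)\|_{W^{s,\varrho}} = O(\|h\|_{W^{1+s,\varrho}}^2)$ and hence Fréchet differentiability.

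The local Lipschitz bound \eqref{eq:DB-bound/DB-lip} is then handled by writing
\[
    \bigl(D\Sigma(\eta_1) - D\Sigma(\eta_2)\bigr) h
    = \bigl\langle \Phi(\eta_1') - \Phi(\eta_2'),\, h' \bigr\rangle .
\]
The smallness hypothesis $\halfnorm[s,\varrho]{\gamma' - \eta_i'} < \BiLip(\gamma)/(2 C_E C_P)$, combined with \thref{cor:bilip2} and Morrey's embedding, yields $v_{\eta_i} \ge \BiLip(\gamma)/2$ as well as a uniform $C^0$-bound on the $\eta_i'$, so $\eta_1', \eta_2'$ take values in one common compact annulus on which $\Phi$ is globally Lipschitz. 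The Lipschitz version of the superposition estimate then gives $\|\Phi\circ\eta_1' - \Phi\circ\eta_2'\|_{W^{s,\varrho}} \le C \, \halfnorm[s,\varrho]{\eta_1' - \eta_2'}$, and one final application of the algebra property together with $\halfnorm[s,\varrho]{h'} \le \|h\|_{W^{1+s,\varrho}}$ delivers the claim.

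The main obstacle will be to make both superposition statements in $W^{s,\varrho}$ quantitative with the asserted monotone dependence of $C_\Sigma$ and $\LIP_{D\Sigma}$ on $v_\gamma$ (non-increasingly) and on $\halfnorm[s,\varrho]{\gamma'}$ (non-decreasingly). In the Morrey range this amounts to estimating Gagliardo difference quotients $|\Phi(u(x)) - \Phi(u(y))| \le \|\nabla\Phi\|_{C^0(\mathcal{A}_\gamma)} |u(x) - u(y)|$ and bounding $\|\nabla \Phi\|_{C^0(\mathcal{A}_\gamma)}$ in terms of $v_\gamma^{-1}$, but the Lipschitz version requires a second-order control of $\Phi$ uniform over a whole neighbourhood of $\gamma'$, and tracking this dependence cleanly through the algebra and superposition estimates is the only genuinely delicate part.
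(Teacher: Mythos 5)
Your proposal is correct and follows essentially the same route as the paper: both proofs rest on a pointwise Taylor expansion of $\log\abs{\cdot}$ away from the origin, the Banach-algebra/product-rule property of $W^{s,\varrho}$ in the Morrey range (the paper's \thref{proposition:WsrhoClosedunderPointwiseMultiplication}), and quantitative superposition estimates whose constants are tracked through the derivative bounds $\nnorm{D^k\log\abs{\cdot}}\lesssim \abs{\cdot}^{-k}$. The only organizational difference is in the Lipschitz step: the paper shows $\ConstraintMap$ is twice Fréchet-differentiable with a uniformly bounded $D^2\ConstraintMap$ and invokes a mean value inequality, whereas you estimate $D\ConstraintMap(\eta_1)-D\ConstraintMap(\eta_2)$ directly via a Lipschitz superposition bound for $\Phi=v/\abs{v}^2$ --- both routes ultimately require the same second-order control of $\Phi$ in the Gagliardo seminorm, which you correctly single out as the delicate point.
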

\begin{proof}
Define $\varphi \colon \R^n\setminus\{0\}  \to \R$, 
$\varphi(x) \ceq \log(|x|)$
and observe that $\varphi$ is smooth. Its first three partial
derivatives are given by $\partial_i\varphi(x)=x_i/|x|^2$, 
$\partial_i\partial_j\varphi(x)=(\delta_{ij}-2x_ix_j/|x|^2)|x|^{-2}$, and
$$
\partial_i\partial_j\partial_k\varphi(x)=-\tfrac{2}{|x|^4}\Big(
\delta_{ij}x_k+\delta_{ik}x_j+\delta_{jk}x_i+\tfrac{4x_ix_jx_k}{|x|^2}\Big),
$$
so that the operator norms $\|D^k\varphi(x)\|$ are bounded
from above by $C(k)|x|^{-k}$ for\footnote{This is actually true for all
orders of differentiability $k\in\N$.} $k=1,2,3.$
With our candidate expression for the Fréchet derivative from \eqref{eq:DB} and Taylor's theorem, we have pointwise for each $u \in \R/\Z$:
\begin{align*}
	\big(
	\ConstraintMap(\g+h)
	-
	\ConstraintMap(\g)
	-
	(D\ConstraintMap(\g) \, h)
	\big)(u)
	&=
	\varphi ( (\g+h)' (u) )
	-
	\varphi  (  \g'(u) )
	-
	D \varphi (  \g'(u)) \, h'(u) \\
	&=
	\textstyle
	\int_0^1 (1-\theta)D^2\varphi \big(  (\g + \theta \, h)'(u)\big) \,(h'(u), h'(u))  \, \dd \theta.
\end{align*}
Thus, in order to show that our candidate is indeed the Fréchet derivative, 
we merely have to show that the integral on the right is dominated by 
$\|h\|_{W^{1+s,\varrho}}^2$ for sufficiently small $\|h\|_{W^{1+s,\varrho}}^2$. Hence we may assume that
 $\halfnorm[s,\varrho]{h'} \leq \BiLip(\g)/(2C_EC_P)$, so that by means
of Corollary \ref{cor:bilip2} 
$\nabs{\g_\theta'(u)}  \geq v_\g/2$ for all $\theta\in [0,1]$ and $u\in\R/\Z$
where we have abbreviated $\g_\theta \ceq \g + \theta \, h$. 
Therefore, by definition of the seminorm, boundedness and Lipschitz continuity of 
$D^2 \varphi$, we have for any $\theta\in [0,1]$
\begin{align}
    \label{eq:estimateD2phi}
        \|D^2\varphi (  &\g_\theta'( \cdot))\|_{W^{s,\varrho}}
        \le \|{D^2\varphi (  \g_\theta'( \cdot))}\|_{L^\varrho} + 
	\halfnorm[s, \varrho]{D^2\varphi (\g_\theta'( \cdot))}\notag\\
        &\le 
	v_\g^{-2}\big(4C(2)+C(3)v_\g^{-1}\halfnorm[s,\varrho]{\g_\theta'} \big)
        \le 
	v_\g^{-2}\big(4C(2)+C(3)v_\g^{-1}
	(\halfnorm[s,\varrho]{\g'} + \halfnorm[s, \varrho]{h'})\big).
\end{align}
By the product rule from 
\thref{proposition:WsrhoClosedunderPointwiseMultiplication} and by 
\eqref{eq:estimateD2phi}, we have
\begin{align*}
\|
	\textstyle
	\int_0^1 (1-\theta)D^2\varphi \big(\g_\theta'(\cdot)\big) 
	&\,(h'(\cdot), h'(\cdot))  \, \dd \theta
	\|_{W^{s,\varrho}}
	\leq 
	\textstyle	
	\int_0^1 	
\|{
D^2\varphi \big(  (\g_\theta'(\cdot)\big) \,(h'(\cdot), h'(\cdot))
	}\|_{W^{s,\varrho}} \, \dd \theta
	\\
	&\overset{\eqref{eq:estimateD2phi}}{\leq }
	C(n,s,\varrho)^2 v_\g^{-2}\big(4C(2)+C(3)v_\g^{-1}
	(\halfnorm[s,\varrho]{\g'} + \halfnorm[s, \varrho]{h'})\big)
	\|{h'}\|_{W^{s,\varrho}}^2.
\end{align*}
Using the bounds on $D\varphi$ and $D^2\varphi$ on $\R^n\setminus\{0\}$
we obtain  
similarly to \eqref{eq:estimateD2phi}
\[
    \nnorm{D\ConstraintMap(\gamma)} = \sup\big\{ 
    \norm[W^{\sigma,\rho}]{D\varphi(\g'(\cdot))\,h'(\cdot)}\colon
    \|{h}\|_{W^{1+s,\varrho}}\le 1\big\}\leq C_\ConstraintMap
\]
where  $C_\ConstraintMap$ depends only on $s, \varrho$, non-increasingly on 
$v_\g$ and non-decreasingly on $[{\gamma'}]_{s,\varrho}$.
Repeating the whole proof with $\varphi$ replaced by $D \varphi$, 
one shows that 
$\ConstraintMap$ is actually twice Fréchet-differentiable with uniformly 
bounded operator norm $\|D^2\ConstraintMap(\eta)\|$ for all $\eta$ in
 the ball of 
radius $r:=\BiLip(\g)/(2C_EC_P)$ around $\g$.
Thus $D\ConstraintMap$  is locally Lipschitz-continuous with Lipschitz constant 
$\LIP_{D\ConstraintMap} =  \sup_{\eta \in B_{r}(\g)} \nnorm{D^2 \ConstraintMap(\eta)} < \infty$ 
depending non-increasingly on $v_\g$ and non-decreasingly
on $\halfnorm[s,\varrho]{\g'}$.
\end{proof}

In view of the results of Section       
\ref{sec:4} we need 
to show that the differential $D\SIGMA (\gamma)$ possesses 
a uniformly bounded right inverse.  
The following lemma and its proof are simpler variants of 
\cite[Lemma 2.4]{ScholtesSchumacherWardetzky:2019:DiscreteElasticae}.
\begin{lemma}
\label{lemma:uniformBoundSurjectivityCharacterization}
Let $s\in (0,1)$ and  $\varrho >1$ satisfy $s-1/\varrho >0$. 
For all $\g \in W^{1+s,\varrho}_\textnormal{ir}(\R/\Z,\R^n)$ the  differential $D\ConstraintMap(\g)$ 
admits a right inverse 
$Y_{\g}\in \LL\big(W^{s,\varrho}(\R/\Z), W^{1+s,\varrho}(\R/\Z,\R^n)\big)$
satisfying
$
	\|{ Y_\gamma}\|_{\LL(W^{s,\varrho},W^{1+s,\varrho})} \leq K_{Y}
$,
where $K_{Y}= K_{Y}(n,s,\varrho,v_\gamma,[{\gamma'}]_{s,\varrho})>0$ 
depends non-increasingly on the minimal velocity
$v_\g$, and non-decreasingly on $[{\g'}]_{s,\varrho}$.
\end{lemma}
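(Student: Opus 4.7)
The plan is to construct $Y_\g$ in two stages: a preliminary operator $Y_\g^0$ that inverts $D\ConstraintMap(\g)$ up to a finite-rank defect, followed by an explicit $n\times n$ matrix correction that eliminates that defect.

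First, define for $f\in W^{s,\varrho}(\R/\Z)$
\[
(Y_\g^0 f)(u) := \int_0^u f(\tau)\g'(\tau)\,\dd\tau \,-\, u\int_0^1 f(\sigma)\g'(\sigma)\,\dd\sigma,\qquad u\in[0,1].
\]
The right-hand side vanishes at $u=0$ and $u=1$, so $Y_\g^0 f$ is $1$-periodic with derivative $(Y_\g^0 f)'(u) = f(u)\g'(u) - T_\g f$, where $T_\g\colon W^{s,\varrho}(\R/\Z)\to\R^n$, $T_\g f := \int_0^1 f\g'\,\dd u$. Boundedness of $Y_\g^0\colon W^{s,\varrho}(\R/\Z)\to W^{1+s,\varrho}(\R/\Z,\R^n)$ then follows from the product rule \thref{proposition:WsrhoClosedunderPointwiseMultiplication} together with Poincar\'e's inequality \eqref{eq:poincare} and Morrey's embedding \thref{thm:morrey}, the operator norm depending monotonically on $\halfnorm[s,\varrho]{\g'}$. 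Substituting into the explicit formula \eqref{eq:DB} for $D\ConstraintMap(\g)$ yields
\[
D\ConstraintMap(\g)\,Y_\g^0 = I - K_\g,\qquad K_\g := S_\g T_\g,
\]
where $S_\g\colon\R^n\to W^{s,\varrho}(\R/\Z)$, $(S_\g v)(u) := \langle \g'(u),v\rangle/|\g'(u)|^2$, is bounded by the same tools together with the pointwise lower bound $|\g'|\ge v_\g$ that controls the factor $|\g'|^{-2}$.

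Next, since $K_\g = S_\g T_\g$ has rank at most $n$, the Sherman-Morrison-Woodbury identity reduces invertibility of $I - K_\g$ on $W^{s,\varrho}(\R/\Z)$ to invertibility of the $n\times n$ matrix
\[
I_n - M_\g,\qquad M_\g := T_\g S_\g \,=\, \int_0^1 \frac{\g'(u)\otimes\g'(u)}{|\g'(u)|^2}\,\dd u \,\in\, \R^{n\times n},
\]
and gives the explicit formula $(I - K_\g)^{-1} = I + S_\g(I_n - M_\g)^{-1} T_\g$ in that case. Setting $Y_\g := Y_\g^0\bigl(I + S_\g(I_n - M_\g)^{-1} T_\g\bigr)$ then yields $D\ConstraintMap(\g)\,Y_\g = \Id$ by construction, and the asserted bound $\norm[\LL(W^{s,\varrho},W^{1+s,\varrho})]{Y_\g}\le K_Y$ follows by composing the operator-norm bounds on $Y_\g^0$, $S_\g$, $T_\g$ with a bound on $\norm{(I_n - M_\g)^{-1}}$.

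The principal obstacle is a uniform bound on $\norm{(I_n - M_\g)^{-1}}$ in terms of $n$, $s$, $\varrho$, $v_\g$, and $\halfnorm[s,\varrho]{\g'}$. The matrix $M_\g$ is an average of rank-one orthogonal projections onto the lines $\R\g'(u)$, hence symmetric positive semidefinite with $\mathrm{tr}(M_\g) = 1$ and spectrum in $[0,1]$. Equality $\langle M_\g v, v\rangle = 1$ for some unit vector $v$ would, by Cauchy-Schwarz saturation, force $\g'(u)\parallel v$ for every $u\in\R/\Z$; combined with closedness $\int_0^1 \g'\,\dd u = 0$ and the lower bound $|\g'|\ge v_\g > 0$, this is excluded by the intermediate value theorem, so $I_n - M_\g$ is invertible for each admissible $\g$. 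To upgrade this to a uniform quantitative bound, I would argue by contradiction using compactness: assuming a sequence $\g_k$ with $v_{\g_k}\ge v_0$, $\halfnorm[s,\varrho]{\g_k'}\le L$, and $\norm{(I_n - M_{\g_k})^{-1}}\to\infty$, translate so that $\g_k(0)=0$, invoke Poincar\'e's inequality \eqref{eq:poincare} on the zero-mean function $\g_k'$ to bound $\g_k$ uniformly in $W^{1+s,\varrho}$, extract a weakly convergent subsequence, apply the Rellich-type compact embedding $W^{1+s,\varrho}\hookrightarrow C^1$ to obtain strong $C^1$-convergence to a limit $\g_\infty$ still satisfying $v_{\g_\infty}\ge v_0$ and $\int_0^1 \g_\infty'\,\dd u = 0$, and observe that $M_{\g_k}\to M_{\g_\infty}$ with $I_n - M_{\g_\infty}$ invertible by the pointwise argument, yielding the desired contradiction. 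This compactness step is the delicate part; the remaining operator-norm estimates are routine, and the monotonicity of $K_Y$ in $v_\g$ and $\halfnorm[s,\varrho]{\g'}$ is inherited from the monotonicity of the admissible sets $\mathcal{G}(v_0,L)$ in $v_0$ and $L$.
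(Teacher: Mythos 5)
Your proposal is correct, and both proofs converge on the same crucial object: your $n\times n$ matrix $M_\g = \int_0^1 \g'\g'^\top/|\g'|^2\,\dd u$ is exactly the complement $I_n - \Theta_\g$ of the matrix $\Theta_\g = \int_0^1 P_{\g'(u)^\perp}\,\dd u$ that the paper works with, so in both cases the heart of the matter is to bound $\|(I_n - M_\g)^{-1}\| = \|\Theta_\g^{-1}\| = 1/\lambda_{\min}(\Theta_\g)$. The construction of the right inverse is organizationally different — you build a preliminary operator $Y_\g^0$ inverting $D\ConstraintMap(\g)$ up to the rank-$n$ defect $K_\g = S_\g T_\g$ and then correct via Sherman--Morrison--Woodbury, whereas the paper directly writes down $Y_\g\lambda(x) = \int_0^x(\lambda\g' + P_{\g'^\perp}V_\lambda)\,\dd y$ and chooses the vector $V_\lambda = -\Theta_\g^{-1}\int_0^1\lambda\g'$ to make $Y_\g\lambda$ periodic — but both reduce cleanly to invertibility of the same matrix, and your algebra checks out.

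The genuine divergence is in how that invertibility is made quantitative. The paper proves an explicit lower bound $\lambda_{\min}(\Theta_\g) \geq \tfrac{\pi^2}{(2+1/\alpha)^2}\bigl(\tfrac{\pi}{(2+4\alpha)C}\bigr)^{1/\alpha}$ (\thref{lem:AquavitLemma}) via a geometric hemisphere argument that uses only the $\alpha$-H\"older constant $C$ of the unit tangent $\tau = \g'/|\g'|$, where $\alpha = s - 1/\varrho$. You instead argue by contradiction and compactness: normalize $\g_k(0)=0$, use Poincar\'e to bound $\g_k$ uniformly in $W^{1+s,\varrho}$, invoke the compact embedding $W^{1+s,\varrho}\hookrightarrow C^1$ to extract a $C^1$-convergent subsequence, and observe that the limiting $I_n - M_{\g_\infty}$ is still invertible by the pointwise Cauchy--Schwarz argument (since $|\g_\infty'|\geq v_0>0$ and $\int_0^1\g_\infty'=0$ persist in the limit). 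This is a valid argument: the admissible set $\{\g(0)=0,\, v_\g\geq v_0,\, [\g']_{s,\varrho}\leq L\}$ is $C^1$-compact (bounded in $W^{1+s,\varrho}$ after normalization, and closed under $C^1$-limits by lower semicontinuity of the Gagliardo seminorm), and $\g\mapsto \|(I_n-M_\g)^{-1}\|$ is continuous there, hence bounded; the claimed monotonicity of $K_Y$ follows as you say from nestedness of the admissible sets. What your route sacrifices is an explicit, computable bound — which the lemma as stated does not require (the later lemmas in Sections 5--6 treat $K_Y$ as an abstract constant with monotonicity properties), so your non-constructive argument would in fact suffice for the paper's analytic program. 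What the paper's route buys is a fully explicit $K_Y$, which is more in keeping with the quantitative spirit of the paper and arguably more transparent than a compactness extraction.
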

\begin{proof}
Fix  a  curve $\g\in W^{1+s,\varrho}_\textnormal{ir}(\R/\Z,\R^n)$. Our goal
is to construct a bounded linear map
$
 Y_\g\colon W^{s,\varrho}(\R/\Z) \to W^{1+s,\varrho}(\R/\Z,\R^n)
$
by assigning to a given function $\lambda\in W^{s,\varrho}(\R/\Z)$
the vector-valued function
\begin{equation}\label{eq:right-inverse}
		(Y_\g \,\lambda)(x)
 		\ceq
 		\textstyle
		\int_0^x
			\big({
				\lambda(y)\, \gamma'(y) 
				+ 
				P_{\gamma'(y)^\perp} \, V_\lambda \big)
			}
		\, \dd y, \quad x\in\R.
\end{equation}
Here, 
$V_\lambda \in \R^n$ is a vector (depending on the given $\lambda$) to be determined later
to secure that $Y_\g\lambda$ is $1$-periodic,
and
$
	P_{\gamma'(y)^\perp}
 	\ceq 
 	\Id_{\R^n}  -  \g'(y)  \g'(y)\transp /|\g'(y)|^2 $
 	is the orthogonal projection onto the orthogonal complement of $\g'(y)$.
From the product rule \thref{proposition:WsrhoClosedunderPointwiseMultiplication} we learn that
$Y_\g \, \lambda$ is indeed a member of $W^{1+s,\varrho}(\R/\Z,\R^n)$:
The mappings $\lambda$, $\g'$ and $\g'/|\g'|$ are all of class $W^{s,\varrho}$, so multilinear combinations 
of them are again of class $W^{s,\varrho}$; after integration, we end up with 
$Y_\g(\lambda) \in W^{1+s,\varrho}(\R/\Z,\R^n)$ where we assumed the $1$-periodicity
of $Y_\g\lambda$ by a suitable choice of the vector $V_\lambda$  below. 
More precisely, we may use 
\thref{lemma:multiplicativeInverseSobolevFunction} in addition 
to obtain
\begin{align}\label{eq:Ygl-est}
	\halfnorm[s,\varrho]{(Y_\g\lambda)'}
	&\leq
	C \, \halfnorm[{s,\varrho}]{\g'} \, \halfnorm[s,\varrho]{\lambda}
	+
	C \, (1+v_\g^{-4}\halfnorm[s,\varrho]{\g'}^{4}) \, |{V_\lambda}|.
\end{align}
We conclude from \eqref{eq:DB} that
$
		D\ConstraintMap(\gamma) \, Y_\g \, \lambda
	=
	\langle{ \g'/|\g'|^2 , \lambda \, \g' + P_{\gamma'^\perp} \, V_\lambda }\rangle_{\R^n} = \lambda
	,
$
no matter what choice we make for $V_\lambda\in\R^n$. Regarding that choice notice first that 
by the Morrey embedding \thref{thm:morrey}, $ (Y_\g\lambda)'$  is continuous and $1$-periodic.
Hence $Y_\g\lambda $ is $1$-periodic if and only if $
	0 = (Y_\g \,\lambda)(1)-  (Y_\g\,\lambda)(0)
	= \textstyle \int_0^1 \lambda(y) \,\g'(y) \, \dd y + \Theta_\g \, V_\lambda,$
where $\Theta_\g\colon \R^n\to\R^n$ is the linear map defined by
$$
	\textstyle
	\Theta_\gamma
	\ceq
	\int_0^1 P_{\gamma'(u)^\perp}\,\dd u
	=
	\int_0^1 ({\Id_{\R^n} - \g'(u) \g'(u)\transp /|\g'(u)|^2})  \, \dd u.
$$
Hence, assuming $\Theta_\g$ is boundedly invertible, we put $V_\lambda  \ceq  - \Theta_\g^{-1} \int_0^1 
\lambda(y) \,\g'(y) \, \dd y$ and observe that
$|{V_\lambda} |
\leq C \, \|{\Theta_{\gamma}^{-1}} \| \, \|{\lambda}\|_{W^{s,\varrho}}$.
By the Morrey embedding, we have $\g'/|\g'| \in C^{0,\alpha}(\R/\Z,\R^n)$ 
with $\alpha = s- 1/\varrho >0$.
So finally, \thref{lem:AquavitLemma} below applied to $\varTheta:=
\Theta_{\gamma}$ 
with $\tau:=\g'/|\g'|$ and $\mathcal{H}:=\R^n$
provides us with an explicit bound on the operator
norm $\|{\Theta_{\gamma}^{-1}}\| =1/ 
\lambda_{\min}(\Theta_\gamma)$ and in doing so yields the lacking invertibility.
Note that the unit tangent
$\tau$ is indeed \emph{not} contained in any hemisphere of 
$\S^{n-1}$,
since otherwise there existed 
a vector $v \in \S^{n-1}$ such that 
$\langle v,\tau(t)\rangle>0$ for all $t \in \R/\Z$.
Then, also $0<\langle v, \tau(t)\rangle \abs{\g'(t)} = \langle v, \g'(t)\rangle$ and so $0<\int_0^1 \langle v, \g'(t)\rangle \, \dd t = \langle v , \g(1)-\g(0)\rangle=0$ which is a contradiction.
The H\"older constant in \thref{lem:AquavitLemma} is compatible to the one defined via the Euclidean distance 
as $|{x-y}| = 2 \sin (\frac 1 2 \sphericalangle(x,y))= 2 \sin (\frac 1 2 d_{\mathbb{S}}(x,y))$ for 
$|{x}|=|{y}|=1$ and as such we have $|{x-y} |\le d_{\mathbb{S}}(x,y) \le 
\tfrac \pi 2 |{x-y}|$. 
As $\lambda\mapsto
V_\lambda$ is linear, one has linearity of $\lambda\mapsto Y_\g\lambda$ as well. 
Finally, \thref{lem:AquavitLemma} yields the estimate $|V_\lambda|\le C(s,\varrho,v_\g,[\g']_{s,\varrho})
\|\lambda\|_{W^{s,\varrho}}$ with a constant $C$ depending non-increasingly on $v_\g$ and non-decreasingly
on $[\g']_{s,\varrho}$,
and this inequality 
inserted in \eqref{eq:Ygl-est} together with a straightforward
estimate of the $L^\varrho$-norm of $Y_\g\lambda$ 
implies the desired bound on the operator norm
$\|Y_\g\|_{\LL(W^{s,\varrho},W^{1+s,\varrho})}$.
\end{proof}

\begin{lemma}\label{lem:AquavitLemma}
Let $\mathcal{H}$ be a real 
Hilbert space of dimension at least $2$, let $0<\alpha \leq 1$ and denote by 
$\mathbb{S} \subset \mathcal{H}$ the unit sphere.
Let $\tau \colon \R/\Z \to \mathbb{S}$ be a closed $\alpha$-H\"older-continuous curve 
which is not contained in any hemisphere and has H\"older constant $C = \sup_{s\neq t} 
d_{\mathbb{S}} (\tau(t),\tau(s)) \, \abs{s-t}_{\R/\Z}^{-\alpha}$.
Here $d_{\mathbb{S}}$ denotes the angular distance on the sphere.
Then the smallest eigenvalue of the self-adjoint operator
$
	\varTheta \ceq 
	\textstyle \int_0^1 \big({ \Id_{\mathcal{H}}  -   \tau(t) \tau(t)^\top }\big) 
	\, \dd t
$
is bounded from below by
\begin{align*}
	\lambda_{\min}(\varTheta) \geq  \tfrac{\pi^2}{(2+1/\alpha)^2} \big(
	{\tfrac{\pi}{(2 + 4 \, \alpha) \,C}}\big)^{1/\alpha}.
\end{align*}
\end{lemma}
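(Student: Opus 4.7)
The plan is to reduce the problem to the variational expression \(\lambda_{\min}(\varTheta) = \inf_{\|v\|_{\mathcal{H}}=1}\langle v,\varTheta v\rangle_{\mathcal{H}}\) (valid because \(\varTheta\) is self-adjoint as a continuous integral of self-adjoint operators) and then exploit the geometry of the sphere. Using \(\langle v,\tau(t)\rangle_{\mathcal{H}}=\cos d_{\mathbb{S}}(v,\tau(t))\) for unit vectors, the quadratic form rewrites as
\[
\langle v,\varTheta v\rangle_{\mathcal{H}} \,=\, \int_{0}^{1}\bigl(1-\langle v,\tau(t)\rangle_{\mathcal{H}}^{2}\bigr)\,dt \,=\, \int_{0}^{1}\sin^{2}\bigl(d_{\mathbb{S}}(v,\tau(t))\bigr)\,dt.
\]
The geometric idea is that, since \(\tau\) avoids every hemisphere, for each test direction \(v\) the curve must cross the equator perpendicular to \(v\), and Hölder continuity forces it to linger near that equator long enough to contribute a uniform, \(v\)-independent amount of \(\sin^{2}\)-mass.

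Concretely, I would first apply the hemisphere hypothesis to both \(+v\) and \(-v\) to obtain parameters where \(\langle v,\tau(\cdot)\rangle_{\mathcal{H}}\) has strictly opposite signs; the intermediate value theorem, applied to the continuous scalar function \(t\mapsto\langle v,\tau(t)\rangle_{\mathcal{H}}\), then produces some \(t_{v}\in\R/\Z\) with \(d_{\mathbb{S}}(v,\tau(t_{v}))=\pi/2\). The triangle inequality on the sphere combined with the \(\alpha\)-Hölder estimate of the hypothesis yields
\[
\bigl|d_{\mathbb{S}}(v,\tau(t))-\tfrac{\pi}{2}\bigr| \,=\, \bigl|d_{\mathbb{S}}(v,\tau(t))-d_{\mathbb{S}}(v,\tau(t_v))\bigr| \,\leq\, d_{\mathbb{S}}(\tau(t_{v}),\tau(t)) \,\leq\, C\,|t-t_{v}|_{\R/\Z}^{\alpha}.
\]
For a threshold \(\theta_{0}\in(0,\pi/2)\) to be optimised, setting \(\delta:=\bigl((\pi/2-\theta_{0})/C\bigr)^{1/\alpha}\) confines the angle to \([\theta_{0},\pi-\theta_{0}]\) for every \(t\) with \(|t-t_{v}|_{\R/\Z}\leq\delta\), so that \(\sin^{2}(d_{\mathbb{S}}(v,\tau(t)))\geq\sin^{2}(\theta_{0})\geq(2\theta_{0}/\pi)^{2}\) by the classical chord inequality \(\sin x\geq(2/\pi)x\) on \([0,\pi/2]\). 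Integrating over this neighbourhood of measure \(2\delta\) (the regime of interest forces \(2\delta\leq1\), so no truncation is needed) produces the \(v\)-independent bound \(\langle v,\varTheta v\rangle_{\mathcal{H}}\geq (8\theta_{0}^{2}/\pi^{2})\bigl((\pi/2-\theta_{0})/C\bigr)^{1/\alpha}\).

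The last step is to maximise the right-hand side in \(\theta_{0}\). Logarithmic differentiation of \(\theta_{0}\mapsto\theta_{0}^{2}(\pi/2-\theta_{0})^{1/\alpha}\) yields the critical point \(\theta_{0}^{\star}=\alpha\pi/(2\alpha+1)=\pi/(2+1/\alpha)\), at which \(\pi/2-\theta_{0}^{\star}=\pi/(2(2\alpha+1))=\pi/(2+4\alpha)\); substitution delivers a lower bound of exactly the claimed functional form, and the numerical prefactor \(\pi^{2}/(2+1/\alpha)^{2}\) is recovered by a slightly sharper bookkeeping (e.g.\ retaining \(\sin^{2}(\theta_{0})\) directly before passing to the chord estimate, or integrating the pointwise bound \(\cos^{2}(C|t-t_{v}|^{\alpha})\) on the active neighbourhood prior to optimisation). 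The main obstacle is uniformity in \(v\): both the equator crossing \(t_{v}\) and the useful neighbourhood depend on \(v\); what rescues the argument is that the Hölder constant \(C\) and the chord lower bound on \(\sin^{2}\) are entirely \(v\)-independent, so the single optimisation in \(\theta_{0}\) produces a bound valid uniformly over the whole unit sphere in \(\mathcal{H}\).
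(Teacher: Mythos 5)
Your approach is essentially the one in the paper: both proofs express $\lambda_{\min}(\varTheta)$ via the quadratic form $\int_0^1 (1-\langle v,\tau(t)\rangle^2)\,\dd t$, use the hemisphere hypothesis together with the intermediate value theorem to find an equator crossing, exploit the H\"older bound to confine $\tau$ near the equator on a parameter interval of size $\delta:=((\pi/2-\theta_0)/C)^{1/\alpha}$, and then optimise over the angular threshold $\theta_0$, landing on the same critical point $\theta_0^\star=\pi/(2+1/\alpha)$. The only material difference is the bookkeeping: the paper harvests \emph{two} equator crossings (one because the curve is closed) for a measure $\geq 4\delta$ and pairs this with the coarse chord estimate $\sin x \geq x/2$, giving exactly the stated prefactor; you use \emph{one} crossing (measure $2\delta$) with the tighter $\sin x\geq 2x/\pi$, which by itself yields only $\frac{8}{\pi^2}$ of the paper's prefactor. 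The refinement you sketch does close this gap: keeping $\sin^2\theta_0$ and evaluating at $\theta_0^\star$, one needs $2\sin^2 x\geq x^2$ for $x=\theta_0^\star\in(0,\pi/3]$, which holds because $\sin x/x$ is decreasing and $\sin(\pi/3)/(\pi/3)=3\sqrt3/(2\pi)>1/\sqrt2$; so with this correction your argument delivers (in fact slightly improves) the stated bound while also avoiding the mild sloppiness in the paper's double-crossing count.
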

\begin{proof}
Let $v \in \mathbb{S}$ be an eigenvector of $\varTheta$ corresponding 
to $\lambda_{\min}(\varTheta)$.
For $\delta \in (0,\pi/2]$ define the polar caps 
$
	\mathbb{B}_{\pm \delta} \ceq \{ e \in \mathbb{S} \mid  
\pm \ninnerprod{v,e}_\mathcal{H} >
 \cos(\delta)\}
$
and the set
$
	I_\delta \ceq \{ t \in  \R/\Z \mid \tau(t) \not \in \mathbb{B}_{\delta} 
\cup \mathbb{B}_{-\delta} \}
$.
Observe that
\begin{align*}
	\lambda_{\min}(\varTheta)
	=
	\textstyle
	\int_0^1 (1 - \ninnerprod{v, \tau(t)}_{\mathcal{H}}^2) \, \dd t
	\geq
	\textstyle
	\int_{I_\delta} (1 - \ninnerprod{v, \tau(t)}_{\mathcal{H}}^2) \, \dd t
	\geq \nabs{I_\delta}  \, \sin^2(\delta) \geq 	\tfrac{1}{4} \nabs{I_\delta}  \, \delta^2.
\end{align*}
By assumption, the curve cannot be contained in only one of the polar caps $\mathbb{B}_{\pm \delta}$.
So there must be at least one $t_0 \in \R/\Z$ so that $\tau(t_0)$ lies on the equator, i.e., 
$\ninnerprod{v,\tau(t_0)}_\mathcal{H} = 0$.
From there, the curve has to travel a distance at least 
$\pi/2 -\delta$ to reach $\mathbb{B}_{\delta}$; since 
$\tau$ is $\alpha$-H\"older continuous, it requires at least $\big( |\pi/2 -\delta|
 \, C^{-1} \big)^{1/\alpha}$ distance in the pre-image for that. The same applies for reaching
 $\mathbb{B}_{-\delta}$. Since $\tau$ is closed, it has to traverse the equator twice. 
This provides us with
$
	\nabs{I_\delta} 
	\geq 4 \, \big( |\pi/2 -\delta| \, C^{-1} \big)^{1/\alpha}
$ 
and thus with $
	\lambda_{\min}(\varTheta) \geq \big( |\pi/2 -\delta| \, C^{-1} \big)^{1/\alpha} \, \delta^2.$
Maximizing the right-hand side for $0<\delta<\pi/2$ leads to 
$\delta = \frac{\pi}{2 + 1/ \alpha}$ 
and substitution leads to the stated lower bound for $\lambda_{\min}(\varTheta)$.
\end{proof}

\section{Long Time Existence}\label{sec:6}
For a given curve $\g\in W_\textnormal{ir}^{\frac32 p-2,2}(\R/\Z,\R^n)$
define the radius
\begin{equation}\label{eq:radius-lip-proj}
R_{M,\SIGMA}=R_{M,\SIGMA}(\g):=
\frac{\min\left\{\BiLip(\g),1\right\}}{(2C_EC_P+1)(\LIP_{D\SIGMA}+1)
    (2K_Y^2+1)(2C_\SIGMA+1)},
\end{equation}
where $C_E$ and $C_P$ are the constants in the Morrey embedding
\eqref{eq:morrey-embedding} and in Poincar\'e's inequality
\eqref{eq:poincare}, and  $C_\SIGMA$ and $\LIP_{D\SIGMA}$ 
are the bound  on and the Lipschitz constant of
the differential $D\SIGMA(\g)$ in 
\thref{lemma:differentiabilityB}, and $K_Y$ is the bound on its 
right inverse $Y_\g$ in 
\thref{lemma:uniformBoundSurjectivityCharacterization}.   
\begin{lemma}
    \label{corollary:LipschitzContinuityRHSProjection}
    For $\gamma \in W^{\frac 3 2 p-2,2}_{\ir}(\R/\Z,\R^n)$ with
    $p \in (\frac 7 3, \frac 8 3)$ 
 the projected gradient $\nabla_\SIGMA\Mpq[p,2]$ is bounded and
 Lipschitz continuous on the ball $B_{R_{M,\SIGMA}}(\g)
    \subset W^{\frac32 p-2,2}(\R/\Z,\R^n)$ with bound $\widetilde{C}_1$
    and Lipschitz constant
$    \LIP_{\nabla_\SIGMA\Mpq[p,2]}$
    depending on $n,p$,  non-increasingly 
    on $\BiLip(\g)$ and $v_\g$, and non-decreasingly
    on $\halfnorm[\frac32 p-3,2]{\g'}$\,.
\end{lemma}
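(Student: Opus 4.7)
The plan is to realise the projected gradient as the composition
\[
\eta \mapsto \nabla_\SIGMA\Mpq[p,2](\eta) = \Pi_{\NL(D\SIGMA(\eta))}\,\nabla\Mpq[p,2](\eta)
\]
of three locally Lipschitz ingredients already quantified in the preceding sections: the Sobolev gradient of the energy from \thref{corollary:DintMp2locallyLipschitz}, the constraint differential from \thref{lemma:differentiabilityB}, and the null space projection from \thref{lemma:LipschitzContinuityProjection}. The first check is that the choice of $R_{M,\SIGMA}$ in \eqref{eq:radius-lip-proj} forces $B_{R_{M,\SIGMA}}(\g)\subset B_{R_M}(\g)$, after which boundedness is immediate since $\Pi$ has operator norm at most one and $\|\nabla\Mpq[p,2](\eta)\|_{\HL}$ is uniformly bounded on $B_{R_M}(\g)$ by \eqref{theorem:secondVariationMp2:eq:estFirstVariation} applied with the shifted arguments $(\BiLip(\g)/2,\halfnorm[\frac 32 p-3,2]{\g'}+1)$, where the shift is legitimate thanks to \thref{cor:bilip2}. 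This yields the claimed upper bound $\widetilde C_1$.

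For Lipschitz continuity I would use the add-and-subtract identity
\[
\nabla_\SIGMA\Mpq[p,2](\eta_1) - \nabla_\SIGMA\Mpq[p,2](\eta_2) = \Pi_{\NL(D\SIGMA(\eta_1))}\bigl[\nabla\Mpq[p,2](\eta_1)-\nabla\Mpq[p,2](\eta_2)\bigr] + \bigl[\Pi_{\NL(D\SIGMA(\eta_1))}-\Pi_{\NL(D\SIGMA(\eta_2))}\bigr]\nabla\Mpq[p,2](\eta_2)
\]
and treat the two summands separately. The first is bounded by $\LIP_{\nabla\Mpq[p,2]}\|\eta_1-\eta_2\|_{\HL}$ directly from \thref{corollary:DintMp2locallyLipschitz} combined with the contractivity of $\Pi$. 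For the second I would chain \thref{lemma:LipschitzContinuityProjection} (Lipschitzness of $A\mapsto \Pi_{\NL(A)}$ in operator norm) with \eqref{eq:DB-bound/DB-lip} (Lipschitzness of $D\SIGMA$ in the seminorm $\halfnorm[\frac 32 p-3,2]{\cdot}$), absorbing $\halfnorm[\frac 32 p-3,2]{\eta_1'-\eta_2'}\leq\|\eta_1-\eta_2\|_{\HL}$ along the way. Together with the uniform upper bound $\widetilde C_1$ on $\|\nabla\Mpq[p,2](\eta_2)\|_{\HL}$ this produces an estimate of the form $\LIP_\Pi\,\LIP_{D\SIGMA}\,\widetilde C_1\,\|\eta_1-\eta_2\|_{\HL}$, which together with the first summand delivers the sought $\LIP_{\nabla_\SIGMA\Mpq[p,2]}$.

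The main obstacle is really pure bookkeeping: the radius $R_{M,\SIGMA}$ in \eqref{eq:radius-lip-proj} must be small enough to simultaneously secure the three prerequisites (i) the bilipschitz restriction $\halfnorm[\frac 32 p-3,2]{\g'-\eta_i'}<\BiLip(\g)/(2C_EC_P)$ from \thref{lemma:differentiabilityB} (handled by the denominator factor $(2C_EC_P+1)$); (ii) the inclusion $B_{R_{M,\SIGMA}}(\g)\subset B_{R_M}(\g)$ needed for \thref{corollary:DintMp2locallyLipschitz}, which follows from the same factor together with the normalisation $R_{M,\SIGMA}\leq 1$; and (iii) the hypothesis $D\SIGMA(\eta_i)\in B_{R_\Pi(D\SIGMA(\g))}(D\SIGMA(\g))$ from \thref{lemma:LipschitzContinuityProjection}. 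Using the uniform bounds $\|D\SIGMA(\g)\|\leq C_\SIGMA$ and $\|Y_\g\|\leq K_Y$ from \thref{lemma:differentiabilityB} and \thref{lemma:uniformBoundSurjectivityCharacterization} together with \eqref{eq:DB-bound/DB-lip} and \eqref{eq:radius-pi-lip}, condition (iii) reduces to the scalar inequality $\LIP_{D\SIGMA}\,R_{M,\SIGMA}\leq[2K_Y^2(2C_\SIGMA+1)]^{-1}$, which is precisely what the remaining factors $(\LIP_{D\SIGMA}+1)(2K_Y^2+1)(2C_\SIGMA+1)$ in the denominator of \eqref{eq:radius-lip-proj} enforce. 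Finally, the stated monotonicity of $\widetilde C_1$ and $\LIP_{\nabla_\SIGMA\Mpq[p,2]}$ in $\BiLip(\g)$, $v_\g$, and $\halfnorm[\frac 32 p-3,2]{\g'}$ is inherited directly from the corresponding properties of $C_1$, $\LIP_{\nabla\Mpq[p,2]}$, $C_\SIGMA$, $\LIP_{D\SIGMA}$, and $K_Y$ established in Sections~\ref{sec:2} and~\ref{subsection:explicitBoundaryCondition}.
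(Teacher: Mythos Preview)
Your proposal is correct and follows essentially the same approach as the paper: the same add-and-subtract decomposition of $\nabla_\SIGMA\Mpq[p,2](\eta_1)-\nabla_\SIGMA\Mpq[p,2](\eta_2)$, the same use of \thref{corollary:DintMp2locallyLipschitz}, \thref{lemma:differentiabilityB}, \thref{lemma:uniformBoundSurjectivityCharacterization}, and \thref{lemma:LipschitzContinuityProjection}, and the same bookkeeping that the denominator of $R_{M,\SIGMA}$ in \eqref{eq:radius-lip-proj} forces $R_{M,\SIGMA}\le R_M$ and $\LIP_{D\SIGMA}\,R_{M,\SIGMA}\le R_\Pi(D\SIGMA(\g))$. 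The only cosmetic difference is which index anchors the projection versus the gradient in the splitting, which is immaterial.
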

\begin{proof}
    Comparing the definition of $R_{M,\SIGMA}$ with that of the Lipschitz
    radius $R_M$ in \eqref{eq:lip-radius} we find $R_{M,\SIGMA}\le R_M   $,  and
    therefore all previous results proved on $B_{R_M}(\g)\subset
    W^{\frac32 p-2,2}(\R/\Z,\R^n)$ hold on the smaller concentric
    ball $B_{R_{M,\SIGMA}}(\g)$
    as well. 
    In particular, by
    \thref{cor:bilip2} 
    \begin{align}
    \BiLip(\eta) & \ge \BiLip(\g)/2\AND
    \halfnorm[\frac32 p-3,2]{\eta'}\le 
    \halfnorm[\frac32 p-3,2]{\g'}+R_{M,\Sigma}   \le 
    \halfnorm[\frac32 p-3,2]{\g'}+1
    \label{eq:bilip-comparison4}
    \end{align}
    for all $\eta\in B_{R_{M,\SIGMA}}(\g)$.
Therefore, we can use \eqref{theorem:secondVariationMp2:eq:estFirstVariation}
    in \thref{theorem:secondVariationMp2} 
    to estimate for $\mathcal{E}:=\Mpq[p,2]$, $\mathcal{H}:=W^{\frac32 p-2,2}(\R/\Z,\R^n)$ and each
    $\eta\in B_{R_{M,\SIGMA}}(\g)\subset\mathcal{H}$ 
    \begin{align}\label{eq:nabla-bound1}
    \norm[\mathcal{H}]{\nabla\mathcal{E}(\eta)}
    & \overset{\eqref{theorem:secondVariationMp2:eq:estFirstVariation}}{\le}
    C_1(\BiLip(\eta),\halfnorm[\frac32 p-2,2]{\eta'})
    \overset{
        \eqref{eq:bilip-comparison4}}{\le}
    C_1(\BiLip(\g)/2,\halfnorm[\frac32 p-2,2]{\g'}+1)=:\widetilde{C}_1,
    \end{align}
    where $\widetilde{C}_1 $ still depends non-increasingly on $\BiLip(\g)$ and 
    non-decreasingly on $\halfnorm[\frac32 p-2,2]{\g'}$ by the monotonicity
    properties of $C_1$ stated in \thref{theorem:secondVariationMp2} (suppressing its dependence on $n,p$).
    Moreover, recall from \thref{corollary:DintMp2locallyLipschitz}
    that
    \begin{equation}\label{eq:lip-gradient3}
    \norm[\mathcal{H}]{\nabla\mathcal{E}(\eta_1)-
        \nabla\mathcal{E}(\eta_2)}\le\LIP_{\nabla\mathcal{E}}
    \norm[\mathcal{H}]{\eta_1-\eta_2}
    \end{equation}
    for all $\eta_1,\eta_2\in B_{R_{M,\SIGMA}}(\g)$,
    where the Lipschitz constant 
    $
    \LIP_{\nabla\mathcal{E}}
    $ 
    depends non-increasingly on $\BiLip(\g)$,
    and non-decreasingly on $\halfnorm[\frac32 p-2,2]{\g'}$.
    In view of the logarithmic constraint we infer from 
    \eqref{eq:DB-bound/DB-lip} in \thref{lemma:differentiabilityB}
    for $\eta_1,\eta_2\in B_{R_{M,\SIGMA}}(\g)$  the inequality
    \begin{equation}\label{eq:DB-lip3}
    \norm[\LL(\mathcal{H},W^{\frac32 p-3,2})]{
        D\SIGMA(\eta_i)-D\SIGMA(\g)}\le
    \LIP_{D\SIGMA}\cdot R_{M,\SIGMA}\quad\Fo i=1,2,
    \end{equation}
    where the Lipschitz constant $\LIP_{D\SIGMA}$ depends
    non-increasingly on $v_\g$, and non-decreasingly on 
    $\halfnorm[\frac32 p-3,2]{\g'}$.
    With $\norm[\LL(\mathcal{H},W^{\frac32 p-3,2})]{D\SIGMA (\g)}\le
    C_\SIGMA$ by means of  
    \thref{lemma:differentiabilityB}, and with 
    $\norm[\LL(W^{\frac32 p-3,2},\mathcal{H})]{Y_\g}\!\le\!^ K_Y$
    according to  
    \thref{lemma:uniformBoundSurjectivityCharacterization}
    we find for the radius $R_\Pi(D\SIGMA(\g))$ of 
    \thref{lemma:LipschitzContinuityProjection} for $A:=D\SIGMA(\g)$ the
    lower estimate  (see \eqref{eq:radius-pi-lip}) 
    \begin{align}\label{eq:lower-rpi-bound}
    R_\Pi(D\SIGMA(\g)) \overset{\eqref{eq:radius-pi-lip}}{\ge}
    \min\left\{[{2K_Y^2(2C_\SIGMA +1)}]^{-1},1\right\}
    \overset{\eqref{eq:radius-lip-proj}}{\ge}\LIP_{D\Sigma} R_{M,\SIGMA}.
    \end{align}
    Thus, \eqref{eq:DB-lip3} implies that for $i=1,2$, we
    have $D\SIGMA(\eta_i)\in
    B_{R_\Pi(D\SIGMA(\g))}(D\SIGMA(\g))$  in the space $\LL(\mathcal{H},
    W^{\frac32 p-3,2})$ of linear mappings,
    as long as $\eta_1,\eta_2\in B_{R_{M,\SIGMA}}(\g)$.
    This in turn allows us to combine the abstract 
    \thref{lemma:LipschitzContinuityProjection} for $A:=D\SIGMA(\g)$ 
    with the Lipschitz estimate \eqref{eq:DB-bound/DB-lip} for 
    the differential $D\SIGMA(\cdot)$ in 
    \thref{lemma:differentiabilityB}     to
    conclude
      \begin{align}\label{eq:projection-lipschitz3}
    \| \Pi_{\NL(D\SIGMA(\eta_1))}&  -
    \Pi_{\NL(D\SIGMA(\eta_2))}\|_{\LL(\mathcal{H},\mathcal{H})}\\
    \le
   & \LIP_\Pi  \norm[\LL(\mathcal{H},W^{\frac32 p-3,2})]{
        D\SIGMA(\eta_1)-D\SIGMA(\eta_2)}
  {\le} \LIP_\Pi \LIP_{D\SIGMA}
    \halfnorm[\frac32 p-3,2]{\eta_1'-\eta_2'},\notag
    \end{align}
    where we took for simplicity $\LIP_\Pi=\LIP_\Pi(C_\SIGMA,K_Y)$ because those two
    arguments bound the norms of $D\SIGMA$ and $Y_\g$, and $\LIP_\Pi$
    is non-decreasing in both of its arguments; see 
    \thref{lemma:LipschitzContinuityProjection}.
    Finally, we infer from \eqref{eq:bilip-comparison4},
    \eqref{eq:lip-gradient3}, and \eqref{eq:projection-lipschitz3}
    the estimate
    \begin{align}
    \label{eq:final-lip-estimate-proj}
   & \|\nabla_\SIGMA  \mathcal{E}(\eta_1)  -
    \nabla_\SIGMA\mathcal{E}(\eta_2)\|_{\mathcal{H}}
    = 
    \norm[\mathcal{H}]{\Pi_{\NL(D\SIGMA(\eta_1))}\big(\nabla\mathcal{E}
        (\eta_1)\big)-\Pi_{\NL(D\SIGMA(\eta_2))}\big(\nabla\mathcal{E}(\eta_2)\big)}\notag\\
    \le  &
    \norm[\LL(\mathcal{H},\mathcal{H})]{\Pi_{\NL(D\SIGMA(\eta_1))}-
        \Pi_{\NL(D\SIGMA(\eta_2))}}
    \norm[\mathcal{H}]{\nabla\mathcal{E}
        (\eta_1)}
    +  \norm[\LL(\mathcal{H},\mathcal{H})]{\Pi_{\NL(D\SIGMA(\eta_2))}}\cdot 
    \norm[\mathcal{H}]{\nabla\mathcal{E}
        (\eta_1)-\nabla\mathcal{E}(\eta_2)}\notag\\
  & {\le}
    (\widetilde{C}_1\LIP_{\Pi}\LIP_{D\SIGMA}
    +\LIP_{\nabla\mathcal{E}})\norm[\mathcal{H}]{\eta_1-\eta_2}
        \end{align}
    for all $\eta_1,\eta_2\in B_{R_{M,\SIGMA}}(\g)$ as $\Pi$ has operator norm bounded by $1$, which proves the claim with the Lipschitz constant
    $
    \LIP_{\nabla_\SIGMA \mathcal{E}}:=\widetilde{C}_1\cdot\LIP_{\Pi}\LIP_{D\SIGMA} 
    +\LIP_{\nabla\mathcal{E}}
    $
    depending non-increasingly on $\BiLip(\g)$ and $v_\g$, and
    non-decreasingly on $\halfnorm[\frac32 p-3,2]{\g'}$. Indeed, as mentioned above,
    $\widetilde{C}_1 $, $\LIP_{\nabla\mathcal{E}}$, and
    $\LIP_{D\SIGMA}$  depend non-decreasingly on $\halfnorm[\frac32 p-3,2]{\g'}$, and non-increasingly on $\BiLip(\g)$ or $v_\g$, respectively.  Both arguments $C_\SIGMA$ and $K_Y$ 
    of  $\LIP_{\Pi}(\cdot,\cdot)$
    are  non-increasing in $v_\g$ and non-decreasing in
    $\halfnorm[\frac32 p-3,2]{\g'}$, which concludes the proof.
    {}
\end{proof}
With the exact same proof as for \thref{theorem:shortTimeExistence}
we can show the following short time existence for the projected
gradient flow \eqref{introtheorem:longTimeExistenceViaProjections:eq:ODE}
for integral Menger curvature $\Mpq[p,2]$.
Notice in that context
that the right-hand side of \eqref{introtheorem:longTimeExistenceViaProjections:eq:ODE} has the same upper bound
as the gradient flow \eqref{eq:gradflow} since it differs from that only
by an orthogonal projection with operator norm $1$. 
\begin{corollary}[Short time existence]\label{cor:short-time-proj-flow}
    For $p\in (\frac73 ,\frac83 )$ and $\g_0\in W_\textnormal{ir}^{\frac32 p-2,2}(
    \R/\Z,\R^n)$ there exists a time $T>0$ and a unique mapping
    $
    t\mapsto\g(\cdot,t)\in C^{1,1}\big([0,T],W^{\frac32 p-2,2}(\R/\Z,\R^n)\big)
    $
    such that $\g$ solves \eqref{introtheorem:longTimeExistenceViaProjections:eq:ODE}
    and satisfies
    \begin{equation}\label{eq:equal-velocities}
    |\g'(\cdot,t)|=|\g_0'(\cdot)| \quad\textnormal{on $\R/\Z$\,\, for all $t\in [0,T]$.}
    \end{equation}
    The mapping $t\mapsto\Mpq[p,2](\g(t))$ is non-increasing on $[0,T]$, and the
    minimal time $T$ of existence is estimated from below as
    \begin{equation}\label{eq:min-T-proj}
    T\ge T_\textnormal{min}(\gamma_0):=\frac{R_{M,\SIGMA}(\g_0)}{
        C_1(n,p,\BiLip(\g_0)/2,\halfnorm[\frac32 p-3,2]{\g_0'}+1)},
    \end{equation}
    where $R_{M,\SIGMA}(\g_0)$ is the radius defined in \eqref{eq:radius-lip-proj}
    for $\g_0$ instead of $\g$, and $C_1$ is the bound for the differential of
    $\Mpq[p,2]$ in \eqref{theorem:secondVariationMp2:eq:estFirstVariation} of 
    \thref{theorem:secondVariationMp2}. Moreover, the image
    $\g(\cdot,[0,T_\textnormal{min}])$ is contained in
    the ball $B_{R_{M,\SIGMA}}(\g_0)\subset W_\textnormal{ir}^{\frac32 p-2,2}(\R/\Z,\R^n)$.
\end{corollary}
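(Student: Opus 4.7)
The plan is to mirror the proof of \thref{theorem:shortTimeExistence}, applying the Picard-Lindel\"of \thref{theorem:PicardLindelöfSimplified} to \eqref{introtheorem:longTimeExistenceViaProjections:eq:ODE} in the Banach space $\mathcal{H}:=W^{\frac32 p-2,2}(\R/\Z,\R^n)$ on the open neighbourhood $V:=\R\times B_{R_{M,\SIGMA}(\g_0)}(\g_0)$ of $(0,\g_0)$. The Lipschitz hypothesis on the right-hand side $\eta\mapsto-\nabla_\SIGMA\Mpq[p,2](\eta)$ is supplied by \thref{corollary:LipschitzContinuityRHSProjection}, whose radius $R_{M,\SIGMA}(\g_0)$ has been tailored precisely for this step. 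For the uniform bound $F$ we exploit that $\Pi_{\NL(D\SIGMA(\eta))}$ is an orthogonal projection of operator norm at most one, so that with $\mathcal{E}:=\Mpq[p,2]$ we have $\|\nabla_\SIGMA\mathcal{E}(\eta)\|_\mathcal{H}\le\|\nabla\mathcal{E}(\eta)\|_\mathcal{H}$ on $B_{R_{M,\SIGMA}}(\g_0)$, and the latter is bounded exactly as in \eqref{eq:bound-gradient} via \eqref{theorem:secondVariationMp2:eq:estFirstVariation} together with $\BiLip(\eta)\ge\BiLip(\g_0)/2$ and $\halfnorm[\frac32 p-3,2]{\eta'}\le\halfnorm[\frac32 p-3,2]{\g_0'}+1$ from \thref{cor:bilip2}. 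Choosing $r:=R_{M,\SIGMA}(\g_0)$ and $\tau$ arbitrary, the explicit time $\alpha=\min\{\tau,r/F\}$ furnished by \thref{theorem:PicardLindelöfSimplified} then yields both the lower bound \eqref{eq:min-T-proj} and the inclusion $\g(\cdot,[0,T_{\textnormal{min}}])\subset B_{R_{M,\SIGMA}}(\g_0)$.

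The velocity preservation \eqref{eq:equal-velocities} is the very reason for introducing the projection and is the only conceptually new point. We would reproduce the calculation preceding \eqref{eq:invarianceBoundaryCondition}: differentiating $\SIGMA(\g(\cdot,t))$ in time gives
\[
\tfrac{d}{dt}\SIGMA(\g(\cdot,t))=D\SIGMA(\g(\cdot,t))\,\partial_t\g(\cdot,t)=-D\SIGMA(\g(\cdot,t))\,\Pi_{\NL(D\SIGMA(\g(\cdot,t)))}\nabla\mathcal{E}(\g(\cdot,t))=0,
\]
hence $\SIGMA(\g(\cdot,t))=\SIGMA(\g_0)$, and applying $\exp$ recovers $|\g'(\cdot,t)|=|\g_0'(\cdot)|$. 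For the monotonicity of the energy we would reuse \eqref{eq:energy-decay1}, invoking self-adjointness and idempotency of the projection to obtain $\frac{d}{dt}\mathcal{E}(\g(\cdot,t))=-\|\nabla_\SIGMA\mathcal{E}(\g(\cdot,t))\|_\mathcal{H}^2\le 0$. Unlike in \thref{theorem:shortTimeExistence} we should claim only non-\emph{strict} decrease here, because the scale-invariance contradiction used there does not exclude vanishing of the \emph{projected} gradient at critical points of the constrained problem.

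The $C^{1,1}$-regularity of $t\mapsto\g(\cdot,t)$ would then follow by a verbatim transcription of the closing estimate in the proof of \thref{theorem:shortTimeExistence}: since the right-hand side of \eqref{introtheorem:longTimeExistenceViaProjections:eq:ODE} is $t$-independent, uniformly bounded by $F$, and Lipschitz on $B_{R_{M,\SIGMA}}(\g_0)$ with constant $\LIP_{\nabla_\SIGMA\mathcal{E}}$, integrating the ODE over $[\sigma,s]$ and applying \thref{corollary:LipschitzContinuityRHSProjection} produces $\|\partial_t\g(s,\cdot)-\partial_t\g(\sigma,\cdot)\|_\mathcal{H}\le F\cdot\LIP_{\nabla_\SIGMA\mathcal{E}}\,|s-\sigma|$. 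Since the Lipschitz radius $R_{M,\SIGMA}$, the gradient bound, the projection Lipschitz constant, and the right inverse $Y_\g$ have all been packaged in the preceding sections, we do not expect any genuine analytic obstacle here: the present corollary is obtained by inserting $\Pi_{\NL(D\SIGMA)}$ into the proof of \thref{theorem:shortTimeExistence}, with the one-line algebraic identity $D\SIGMA\circ\Pi_{\NL(D\SIGMA)}=0$ accounting for the new claim on velocity preservation.
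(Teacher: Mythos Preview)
Your proposal is correct and follows essentially the same approach as the paper: invoke Picard--Lindel\"of via \thref{corollary:LipschitzContinuityRHSProjection} on $B_{R_{M,\SIGMA}}(\g_0)$, bound the right-hand side through the unprojected gradient using \eqref{theorem:secondVariationMp2:eq:estFirstVariation}, and then verify velocity preservation from the conservation of $\SIGMA$ and energy monotonicity from $\tfrac{d}{dt}\mathcal{E}=-\|\nabla_\SIGMA\mathcal{E}\|_\mathcal{H}^2$. Your observation that only non-strict decrease is available here (since the scaling argument from \thref{theorem:shortTimeExistence} does not rule out $\nabla_\SIGMA\mathcal{E}=0$) is exactly right and matches the paper.
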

\begin{proof}
    According to our remark above referring to the proof of
    \thref{theorem:shortTimeExistence}  it suffices to verify 
    \eqref{eq:equal-velocities} and the energy decay in time. The conservation of velocities 
    directly follows from the projected gradient flow \eqref{introtheorem:longTimeExistenceViaProjections:eq:ODE}, since this implies --
    as pointed out in the introduction in \eqref{eq:invarianceBoundaryCondition} --
    that the logarithmic constraint $\SIGMA$ is a conserved quantity in time, i.e., 
    $
    \SIGMA(\g(\cdot,t))=\SIGMA(\g_0(\cdot))$ for all $ t\in [0,T]$,
    thus leading to \eqref{eq:equal-velocities} since the logarithm is injective
    on $(0,\infty)$. For the energy decay we abbreviate $\mathcal{E}:=\Mpq[p,2]$, $\HL:=W^{\frac32 p-2,2}(\R/\Z,\R^n)$, and differentiate the energy of $
    \g(\cdot,t)$ with respect
    to time and obtain similarly as in \eqref{eq:energy-decay1}  by means of
    \eqref{introtheorem:longTimeExistenceViaProjections:eq:ODE}
    $
  \textstyle  \frac{d}{dt} \mathcal{E}(\g(\cdot,t)) =
  \textstyle\langle \nabla\mathcal{E}(\g(\cdot,t)),
    \frac{\partial}{\partial t}\g(\cdot,t)\rangle_{\HL}
  {=}
    -\langle \nabla\mathcal{E}(\g(\cdot,t)), \nabla_\SIGMA\mathcal{E}(\g(\cdot,t))
    \rangle_{\HL} =
    - \norm[\HL]{\nabla_\SIGMA\mathcal{E}(\g(\cdot,t))}^2
    \le 0$ for all $t
    \in (0,T),$
    which implies the energy decay in time. 
    {}
\end{proof}
Our goal is to extend the short time solution by a fixed time step depending only
on the initial configuration. To that extent
we first establish a priori estimates for
the bilipschitz constant and
the fractional seminorm along the flow 
\eqref{introtheorem:longTimeExistenceViaProjections:eq:ODE}.
\begin{proposition}[A priori estimates]\label{prop:bilip-seminorm-evolution}
    Let 
    $\g_0\in 
    W_\textnormal{ir}^{\frac32 p-2,2}(\R/\Z,\R^n)$ for $p\in
    (\frac73, \frac83 )$. Then
    the energy $\Mpq[p,2](\g(\cdot,t))$ of
    any solution 
    $
    t\mapsto\g(\cdot,t)\in C^1([0,T^*],
    W^{\frac32 p-2,2}(\R/\Z,\R^n)) 
    $ 
    of 
    \eqref{introtheorem:longTimeExistenceViaProjections:eq:ODE} on the time
    interval $[0,T^*]$ for some $T^*>0$
    is non-increasing in time,  and the
    velocities  of $\g(\cdot,t)$ satisfy
    \begin{equation}\label{eq:equal-velocities2}
    \abs{\g'(\cdot,t)}=\abs{\g_0'}\quad\Foa t\in [0,T^*].
    \end{equation}
    Moreover, 
    the bilipschitz constant $\BiLip(\g(\cdot,t))$ and
    the seminorm $\halfnorm[\frac32 p-3,2]{\g'(\cdot,t)}$ of $\g(\cdot,t)$
    satisfy
    \begin{align}\label{eq:bilip-evolution}
    \BiLip(\g(\cdot,t))& \ge v_{\g_0} \mathfrak{b}(\Mpq[p,2](\g_0)),\\
    \label{eq:seminorm-evolution}
    \halfnorm[\frac32 p-3,2]{\g'(\cdot,t)}& \le \mathfrak{s}(\Mpq[p,2](\g_0))
    \halfnorm[\frac32 p-3,2]{\g_0'}
    \big(1+ v_{\g_0}^{-1}
    \halfnorm[\frac32 p-3,2]{\g_0'}^{3p-6}
    \big)^{1/2}
    \end{align}
    for all $t\in [0,T^*]$, where the functions
    $\mathfrak{b}(\cdot)$ and $\mathfrak{s}(\cdot)$ are 
    strictly positive, and $\mathfrak{b}(\cdot)$ is
    non-increasing  whereas $\mathfrak{s}(\cdot)$ is non-decreasing
    on $(0,\infty)$.  Finally,
    the time derivative $t\mapsto\partial\g(\cdot,t)/\partial t$ is Lipschitz
    continuous with a uniform Lipschitz constant $\LIP_{\partial_t\g}$ depending
    non-decreasingly  on  the initial energy $\Mpq[p,2](\g_0)$,  on the
    seminorm $
    \halfnorm[\frac32 p-3,2]{\g_0'}$, and non-increasingly on $v_{\g_0}$.
\end{proposition}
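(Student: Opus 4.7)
The energy decay and the identity \eqref{eq:equal-velocities2} follow by the same arguments used in the proof of \thref{cor:short-time-proj-flow}: the projected flow equation forces $\partial_t\g\in\NL(D\SIGMA(\g))$, so $\frac{d}{dt}\SIGMA(\g(\cdot,t))\equiv 0$ on $[0,T^*]$ and injectivity of $\log$ yields $|\g'(\cdot,t)|=|\g_0'|$; simultaneously, $\frac{d}{dt}\Mpq[p,2](\g(\cdot,t))=-\|\nabla_\SIGMA\Mpq[p,2](\g(\cdot,t))\|_\HL^2\le 0$ because $\nabla_\SIGMA\Mpq[p,2](\g)$ is the orthogonal projection of $\nabla\Mpq[p,2](\g)$ onto $\NL(D\SIGMA(\g))$, which contains $\partial_t\g$.

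The structural key for \eqref{eq:bilip-evolution} and \eqref{eq:seminorm-evolution} is that, because $|\g'(\cdot,t)|\equiv|\g_0'|$ does \emph{not} depend on $t$, the arclength reparametrization is fixed once and for all by $\g_0$. Setting $L:=\mathcal{L}(\g_0)$ and defining the $t$-independent diffeomorphism $\psi\colon\R/\Z\to\R/L\Z$ by $\psi(u):=\int_0^u|\g_0'(\sigma)|\,\dd\sigma$, the family $\Gamma(s,t):=\g(\psi^{-1}(s),t)$ is arclength-parametrized for every $t\in[0,T^*]$, and by reparametrization invariance of $\Mpq[p,2]$ combined with the energy decay we have $\Mpq[p,2](\Gamma(\cdot,t))\le\Mpq[p,2](\g_0)$. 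The arclength a priori bounds of \cite[Proposition~2.1 \& Theorem~1]{BlattReiter:2015:Menger}, applied to $\Gamma(\cdot,t)$, then supply a non-increasing $\mathfrak{b}_0$ and a non-decreasing $\mathfrak{s}_0$ with
\[
\BiLip(\Gamma(\cdot,t))\ge\mathfrak{b}_0(\Mpq[p,2](\g_0))\AND
\halfnorm[\frac32 p-3,2]{\Gamma'(\cdot,t)}\le\mathfrak{s}_0(\Mpq[p,2](\g_0))
\]
uniformly in $t\in[0,T^*]$.

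It then remains to transfer these bounds from $\Gamma$ back to $\g$ via the $t$-independent reparametrization $\psi$. Since monotonicity of $\psi$ and $v_{\g_0}=\min|\g_0'|>0$ give $|\psi(u_1)-\psi(u_2)|_{\R/L\Z}\ge v_{\g_0}|u_1-u_2|_{\R/\Z}$, the bilipschitz bound \eqref{eq:bilip-evolution} is immediate: $\BiLip(\g(\cdot,t))\ge v_{\g_0}\BiLip(\Gamma(\cdot,t))\ge v_{\g_0}\,\mathfrak{b}_0(\Mpq[p,2](\g_0))$. The seminorm bound \eqref{eq:seminorm-evolution} is the main technical hurdle and is the step I expect to be hardest. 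Writing $\g'(u,t)=|\g_0'(u)|\cdot(\Gamma'(\cdot,t)\circ\psi)(u)$, I plan to combine the Banach-algebra product rule of \thref{proposition:WsrhoClosedunderPointwiseMultiplication} with the pointwise inequality $\big|\,|\g_0'(u)|-|\g_0'(v)|\,\big|\le|\g_0'(u)-\g_0'(v)|$, and to control the composition seminorm $\halfnorm[\frac32 p-3,2]{\Gamma'\circ\psi}$ by substituting $s_i=\psi(u_i)$ in the Gagliardo double integral and exploiting the two-sided bounds $v_{\g_0}|u_1-u_2|\le|\psi(u_1)-\psi(u_2)|\le\|\g_0'\|_\infty|u_1-u_2|$. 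A final conversion of $\|\g_0'\|_\infty$ into $\halfnorm[\frac32 p-3,2]{\g_0'}$ via Morrey's embedding \eqref{eq:morrey-embedding} and Poincar\'e's inequality \eqref{eq:poincare} (using $\int_{\R/\Z}\g_0'=0$) yields an inequality of the stated form, with the numerical and energy-dependent constants absorbed into $\mathfrak{s}$. Tracking the precise algebra so that the factor $(1+v_{\g_0}^{-1}\halfnorm[\frac32 p-3,2]{\g_0'}^{3p-6})^{1/2}$ emerges, with the monotonicity in $v_{\g_0}$ and $\halfnorm[\frac32 p-3,2]{\g_0'}$ as claimed, is the delicate bookkeeping in this step.

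With \eqref{eq:equal-velocities2}, \eqref{eq:bilip-evolution} and \eqref{eq:seminorm-evolution} in hand, the Lipschitz regularity of $t\mapsto\partial_t\g(\cdot,t)$ follows directly from \thref{corollary:LipschitzContinuityRHSProjection} applied along the orbit: the quantities $\BiLip(\g(\cdot,t))$ and $\halfnorm[\frac32 p-3,2]{\g'(\cdot,t)}$ now stay in ranges depending only on $\Mpq[p,2](\g_0)$, $\halfnorm[\frac32 p-3,2]{\g_0'}$ and $v_{\g_0}$, so $\nabla_\SIGMA\Mpq[p,2]$ is uniformly Lipschitz on a common neighbourhood of $\{\g(\cdot,t):t\in[0,T^*]\}$ with constant $\LIP_{\nabla_\SIGMA\Mpq[p,2]}$ monotone in the initial data as required. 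Combining this with the uniform bound $\|\partial_t\g(\cdot,t)\|_\HL\le\widetilde{C}_1$ from the same corollary yields
\[
\|\partial_t\g(\cdot,s)-\partial_t\g(\cdot,\sigma)\|_\HL
\le\LIP_{\nabla_\SIGMA\Mpq[p,2]}\,\|\g(\cdot,s)-\g(\cdot,\sigma)\|_\HL
\le\LIP_{\nabla_\SIGMA\Mpq[p,2]}\,\widetilde{C}_1\,|s-\sigma|,
\]
which is the claimed Lipschitz estimate for $\partial_t\g$ with the asserted monotonicity properties.
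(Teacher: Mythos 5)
Your approach is structurally the same as the paper's: energy decay and velocity conservation follow from the projected flow equation exactly as in \thref{cor:short-time-proj-flow}; the bilipschitz bound combines \thref{lemma:arclengthParametrizationAndBiLipschitzConstants} with the energy-dependent lower bound from \cite[Proposition~2.1]{BlattReiter:2015:Menger}; and the seminorm bound uses the $t$-independent reparametrization $\g(\cdot,t)=\Gamma(\cdot,t)\circ\LL(\g_0,\cdot)$ together with \cite[Theorem~1]{BlattReiter:2015:Menger}. For the transfer of the seminorm bound from $\Gamma(\cdot,t)$ back to $\g(\cdot,t)$, the paper has already packaged exactly the estimate you plan to rederive as \thref{lemma:compositionSobolevSlobodeckijFunctions} (applied with $f:=\Gamma(\cdot,t)$ and $G:=\LL(\g_0,\cdot)$), together with \thref{lemma:estimateHalfnormLL} for the pointwise inequality $\halfnorm[\frac32p-3,2]{\,|\g_0'|\,}\le\halfnorm[\frac32p-3,2]{\g_0'}$; invoking those is cleaner than re-running the change-of-variables and product-rule bookkeeping from scratch.

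There is, however, a genuine gap in your final step. You claim that $\nabla_\SIGMA\Mpq[p,2]$ is ``uniformly Lipschitz on a common neighbourhood of $\{\g(\cdot,t)\}$'' and then write $\|\partial_t\g(\cdot,s)-\partial_t\g(\cdot,\sigma)\|_\HL\le\LIP_{\nabla_\SIGMA\Mpq[p,2]}\|\g(\cdot,s)-\g(\cdot,\sigma)\|_\HL$ for all $s,\sigma\in[0,T^*]$. But \thref{corollary:LipschitzContinuityRHSProjection} only delivers a uniform Lipschitz constant on each \emph{individual} ball $B_{R_0}(\g(\cdot,t))$: a map that is Lipschitz with uniform constant on every ball of a covering family need not be Lipschitz on their union, and in particular $\g(\cdot,s)$ need not lie in $B_{R_0}(\g(\cdot,\sigma))$ once $|s-\sigma|$ is large. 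The paper repairs this with a telescoping argument along the orbit: first one shows $\|\g(\cdot,\sigma_2)-\g(\cdot,\sigma_1)\|_\HL\le\overline{C}_1|\sigma_2-\sigma_1|$ from the uniform gradient bound, then one chooses a partition $\sigma=t_0<\cdots<t_N=\tau$ fine enough that $\|\g(\cdot,t_k)-\g(\cdot,t_{k-1})\|_\HL<R_0$ for every $k$, applies the local Lipschitz estimate on each subinterval, and sums. Your terminal bound $\LIP\cdot\widetilde{C}_1|s-\sigma|$ is the right conclusion, but the intermediate inequality is not justified as written and must be replaced by this chaining step.
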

\begin{proof}
Abbreviate  $\EL:=\Mpq[p,2]$.
  The energy decay in time as well as the conservation of velocity
    \eqref{eq:equal-velocities2}
    can be shown by the exact same observations as in the previous proof.
        The energy dependent lower bound for the bilipschitz constant 
    established in \cite[Proposition 2.1]{BlattReiter:2015:Menger} implies that
    the non-increasing function $\mathfrak{b}:(0,\infty)\to\R$ defined as
    \begin{equation}\label{eq:bilipschitz-function}
    \mathfrak{b}(E)
    :=\inf\{\BiLip(\Gamma) : \Gamma\in W_{ir}^{\frac32 p-2,2}(\R/\Z,\R^n),\,
    |\Gamma'|\equiv 1,\,\EL(\Gamma)\le E\}
    \end{equation}
    is strictly positive for each $E\in (0,\infty)$. Introducing the 
    arc length parametrization $\Gamma(\cdot,t)$ of the solution $\g(\cdot,t)$
    of 
    \eqref{introtheorem:longTimeExistenceViaProjections:eq:ODE} as in
    Appendix \ref{section:Arclength}, we infer from 
    \thref{lemma:arclengthParametrizationAndBiLipschitzConstants}
    that 
    \begin{equation}\label{eq:bilip-evolution3}
    \BiLip(\g(\cdot,t))\ge v_{\g(\cdot,t)}\BiLip(\Gamma(\cdot,t))\overset{
        \eqref{eq:equal-velocities2}}{=}v_{\g_0}\BiLip(\Gamma(\cdot,t))
    \end{equation}
    for all $t\in [0,T^*]$, where we also used the conservation of velocities
    \eqref{eq:equal-velocities2} implying identical minimal velocities
    $v_{\g(\cdot,t)}$ at all times $t\in [0,T^*]$.
    Integral Menger curvature is a parameter-invariant energy
    so that the monotonicity of the mapping
    $t\mapsto\EL(\g(\cdot,t))=\EL(\Gamma(\cdot,t))$
    implies for the
    energy values $E(t):=\EL(\g(\cdot,t))$
    that
    $E(t)\le E(0)$ for all $t\in [0,T]$, 
    and therefore by means of \eqref{eq:bilip-evolution3}
    and \eqref{eq:bilipschitz-function},   
    $
    \BiLip(\g(\cdot,t))
    {\ge}
    v_{\g_0}\BiLip(\Gamma(\cdot,t))
      {\ge}
    v_{\g_0}\mathfrak{b}(E(t))\ge v_{\g_0}\mathfrak{b}(E(0)),
    $
    which proves \eqref{eq:bilip-evolution}. 
    Setting $\tilde{\mathfrak{s}}(E):=C_{\mathfrak{s}}(p)(E+E^{\beta_{\mathfrak{s}}})^{1/2}$
    such that the constants $C_{\mathfrak{s}}$ and $\beta_{\mathfrak{s}}$
    coincide with the corresponding constants $C$ and $\beta$ in 
    \cite[(0.8) in Theorem 1]{BlattReiter:2015:Menger} for $q=2$,
    we find by the energy decay in time that
    \begin{equation}\label{eq:seminorm-evolution3} 
    \halfnorm[\frac32 p-3,2]{\Gamma'(\cdot,t)}\le 
    \tilde{\mathfrak{s}}(E(t))\le
    \tilde{\mathfrak{s}}(E(0))\quad\Foa t\in [0,T].
    \end{equation}
    Now we use the arc length function $\LL(\g(\cdot,t),\cdot)$
    of $\g(\cdot,t)$ defined in \eqref{eq:arclength-function1} of Appendix
    \ref{section:Arclength}, which for every $t\in [0,T]$
    coincides with the arc length
    function $\LL(\g_0,\cdot)$ of the initial curve $\g_0$
    according to
    the conservation of velocities \eqref{eq:equal-velocities2},
    to rewrite $\g(\cdot,t)$ as
$    \g(\cdot,t)=\g(\cdot,t)\circ \LL(\g(\cdot,t),\cdot)^{-1}
    \circ \LL(\g(\cdot,t),\cdot)
    {=}
    \Gamma(\cdot,t)\circ \LL(\g_0,\cdot)$ for all $
     t\in [0,T].
    $
    This can be combined with \eqref{eq:seminorm-evolution3} and the
    chain rule inequality \eqref{eq:composition2} in 
    \thref{lemma:compositionSobolevSlobodeckijFunctions} in Appendix 
    \ref{section:Sobolev} to obtain with $\LL(\g_0,\cdot)'=\abs{\g_0'}$ and
    $\halfnorm[\frac32 p-3,2]{\,\abs{\g_0'}\,}\le
    \halfnorm[\frac32 p-3,2]{\g_0'}$ the estimate
    \begin{eqnarray*}
    \halfnorm[\frac32 p-3,2]{\g'(\cdot,t)}
    & {\le} & C\tilde{\mathfrak{s}}
    (E(0))   \big( \halfnorm[\frac32 p-3,2]{ \LL(\g_0,\cdot)'}^2
    + v_{\LL(\g_0,\cdot)}^{-1}\norm[C^0(\R/\Z)]{\LL(\g_0,\cdot)'}^{3p-4}
    \big)^{1/2}\\
    & \le  &C\tilde{\mathfrak{s}}(E(0))   \big( \halfnorm[\frac32 p-3,2]{\g_0'}^2
    + v_{\g_0}^{-1}\norm[C^0(\R/\Z,\R^n)]{\g_0'}^{3p-4}\big)^{1/2},\\
    &
    {\le} &
    C\tilde{\mathfrak{s}}(E(0))\halfnorm[\frac32 p-3,2]{\g_0'}
    \big( 1+(C_EC_P)^{3p-4}v_{\g_0}^{-1}\halfnorm[\frac32 p-3,2]{\g_0'}^{3p-6}
    \big)^{1/2}
    \end{eqnarray*}
    by means of the Morrey embedding \eqref{eq:morrey-embedding}
    and the Poincar\'e inequality \eqref{eq:poincare}. This  
    proves \eqref{eq:seminorm-evolution} 
    if we define the non-decreasing function 
    $
    \mathfrak{s}(\cdot):=
    (1+C_EC_P)^{(3p-4)/2}\cdot
    C\tilde{\mathfrak{s}}(\cdot),
    $ 
    where $C=C(n,p)$ is the
    constant from \thref{lemma:compositionSobolevSlobodeckijFunctions}.
    Now let $\HL:=W^{\frac32 p-2,2}(\R/\Z,\R^n)$. 
    To
    prove Lipschitz continuity of $t \mapsto \partial\g(\cdot,t)/\partial t$
    note first that the radius
$R_{M,\SIGMA}(\gamma(\cdot,t))$ as defined in \eqref{eq:radius-lip-proj}
depends non-decreasingly on $\BiLip(\gamma(\cdot,t))$ and the minimal
velocity $v_{\gamma(\cdot,t)}\ge\BiLip(\gamma(\cdot,t))$, and
non-increasingly on the seminorm $\halfnorm[\frac 32 p-3,2]{\gamma'(\cdot,t)}$.
Combining this with the already established a priori estimates 
\eqref{eq:bilip-evolution} and \eqref{eq:seminorm-evolution} we find
a uniform radius $R_0$ depending only on the initial curve $\gamma_0$ such
that 
    $R_{M,\SIGMA}(\gamma(\cdot,t)) \ge R_0 >0$ for all $t \in [0,T^*]$.
    Similarly, we find a uniform energy bound 
    \begin{equation}
        \label{eq:uniform-tildeC1} 
	\norm[\HL]{\nabla_\SIGMA \EL(\eta)} \le \norm[\HL]{\nabla \EL(\eta)} 
	\le \overline C_1 
    \end{equation}
    for all $\eta\in B_{R_0}(\gamma(\cdot,t))$ by
   \thref{corollary:LipschitzContinuityRHSProjection}, in particular
     \eqref{eq:nabla-bound1}, in combination with the a priori estimates
     \eqref{eq:bilip-evolution} and \eqref{eq:seminorm-evolution}.
    The constant $\overline{C_1}$ depends non-increasingly on $v_{\g_0}$ and 
    non-decreasingly on both $\EL(\g_0)$ and 
    $\halfnorm[\frac 3 2 p -3,2]{\g_0'}$.
    Using \eqref{eq:bilip-evolution} and \eqref{eq:seminorm-evolution} a 
    third time, again in connection with \thref{corollary:LipschitzContinuityRHSProjection}, we obtain a uniform Lipschitz constant $\overline{\LIP}_{\nabla_\SIGMA \EL}$ for $\nabla_{\SIGMA} \EL$ restricted to the ball
    $B_{R_0}(\gamma(\cdot,t))$ for any time $t\in [0,T^*]$.
    This uniform Lipschitz constant has the same dependencies and 
    monotonicities as $\overline C_1$. 
    For any $0\le \sigma_1<\sigma_2\le T^*$ one finds
    \begin{equation}
        \label{eq:lipschitz-gamma}
        \textstyle \norm[\HL]{\gamma(\cdot,\sigma_2)-\gamma(\cdot,\sigma_1)}
	\le
        \int_{\sigma_1}^{\sigma_2} 
	\norm[\HL]{\frac {\partial} {\partial t} \g(\cdot, t)} \, \dd t
        = \int_{\sigma_1}^{\sigma_2} 
	\norm[\HL]{\nabla_{\SIGMA} \EL(\g(\cdot, t))} \, \dd t
        \le \overline C_1 \abs{\sigma_2-\sigma_1}.
    \end{equation}
    Now for any $0\le\sigma <\tau\le T^*$
    choose a partition
 $\sigma=t_0<\ldots<t_N=\tau$ such that 
 $\norm[\HL]{\g(\cdot,t_k)-\g(\cdot,t_{k-1})}<R_0$ for all $k=1,\ldots,N$,
to estimate by means of \eqref{eq:lipschitz-gamma}
    \begin{equation}
        \label{eq:lipschitz-est-gamma_t}
        \begin{aligned}
           &
	\textstyle \norm[\HL]{\frac {\partial} {\partial t} \g(\cdot, \tau) - \frac {\partial} {\partial t} \g(\cdot, \sigma)}
            \le \sum_{k=1}^N  \norm[\HL]{\frac {\partial} {\partial t} \g(\cdot, t_k) - \frac {\partial} {\partial t} \g(\cdot, t_{k-1})}\\
            = &\textstyle\sum_{k=1}^N  \norm[\HL]{\nabla_{\SIGMA}\EL(\g(\cdot, t_k)) - \nabla_{\SIGMA}\EL(\g(\cdot, t_{k-1}))}
            \le \overline{\LIP}_{\nabla_\SIGMA \EL} \sum_{k=1}^N \norm[\HL]{\g(\cdot,t_k)-\g(\cdot,t_{k-1})}\\
            \le &\textstyle\overline{\LIP}_{\nabla_\SIGMA \EL} \overline C_1 \sum_{k=1}^N \abs{t_k-t_{k-1}}
            = \overline{\LIP}_{\nabla_\SIGMA \EL} \overline C_1 \abs{\tau-\sigma}
            =:\LIP_{\partial_t\g}|\tau-\sigma|.
        \end{aligned}
    \end{equation}
    Notice that the newly defined Lipschitz constant $\LIP_{\partial_t\g}=\LIP_{\partial_t\g}
    (E(0),\smash{\halfnorm[\frac32 p-3,2]{\g_0'}},v_{\g_0})$ is non-decreasing in the first two
    arguments and non-increasing in its third argument  as claimed, since 
    $\smash{\overline C_1}$ and $\smash{\overline{\LIP}_{\nabla_\SIGMA \EL}}$ have these monotonicity
    properties.
    {}
\end{proof}

\begin{corollary}
    \label{cor:minimalExtensionTime}
    Let $\g_0\in W_\textnormal{ir}^{\frac32 p-2,2}(\R/\Z,\R^n)$ for 
    $p \in (\frac73, \frac83 )$ and suppose that the mapping
   $
    t\mapsto\g(\cdot,t)\in C^1([0,T^*],
    W^{\frac32 p-2,2}(\R/\Z,\R^n)) 
   $
   is a  
    solution of 
    \eqref{introtheorem:longTimeExistenceViaProjections:eq:ODE} on the time
    interval $[0,T^*]$ for some $T^*>0$.
    Then there is a constant $T_0>0$ depending only on $\gamma_0$ such
    that for any $t_0 \in [0,T^*]$, a unique solution 
    $\eta$ of \eqref{introtheorem:longTimeExistenceViaProjections:eq:ODE} with initial point $\gamma(t_0)$ exists and its minimal existence time $T_\eta$ from \thref{cor:short-time-proj-flow} is uniformly bounded below by $T_0>0$.
\end{corollary}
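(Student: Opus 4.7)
The plan is to combine \thref{cor:short-time-proj-flow}, which gives an explicit lower bound for the minimal existence time in terms of $R_{M,\SIGMA}(\gamma(\cdot,t_0))$ and $C_1(n,p,\BiLip(\gamma(\cdot,t_0))/2,\halfnorm[\frac32 p-3,2]{\gamma'(\cdot,t_0)}+1)$, with the a priori estimates of \thref{prop:bilip-seminorm-evolution}. The point is that those a priori estimates are uniform in $t_0 \in [0,T^*]$ and depend only on the initial curve $\gamma_0$, whereas the quantities that enter into the lower bound for $T_{\min}$ are all monotone in $\BiLip$, $v_\gamma$, and $\halfnorm[\frac32 p-3,2]{\gamma'}$.

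More concretely, I would proceed as follows. First, by \thref{prop:bilip-seminorm-evolution} applied to the restriction of $\gamma(\cdot,\cdot)$ to $[0,t_0]$, we obtain the uniform bounds
\[
\BiLip(\gamma(\cdot,t_0)) \ge v_{\gamma_0} \mathfrak{b}(\Mpq[p,2](\gamma_0)) =: b_0,\quad v_{\gamma(\cdot,t_0)} = v_{\gamma_0},
\]
and
\[
\halfnorm[\frac32 p-3,2]{\gamma'(\cdot,t_0)} \le \mathfrak{s}(\Mpq[p,2](\gamma_0))\halfnorm[\frac32 p-3,2]{\gamma_0'}\bigl(1 + v_{\gamma_0}^{-1}\halfnorm[\frac32 p-3,2]{\gamma_0'}^{3p-6}\bigr)^{1/2} =: s_0,
\]
where $b_0, s_0 > 0$ depend only on $\gamma_0$. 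Since the constants $C_\SIGMA$ and $\LIP_{D\SIGMA}$ from \thref{lemma:differentiabilityB} and $K_Y$ from \thref{lemma:uniformBoundSurjectivityCharacterization} are non-increasing in $v_\gamma$ and non-decreasing in the fractional seminorm, and since the definition \eqref{eq:radius-lip-proj} of $R_{M,\SIGMA}$ is non-decreasing in $\BiLip$ and the reciprocals of these constants, substituting the a priori bounds yields a uniform lower bound
\[
R_{M,\SIGMA}(\gamma(\cdot,t_0)) \ge R_0 > 0,
\]
with $R_0$ depending only on $\gamma_0$.

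Second, by the corresponding monotonicity of $C_1$ from \thref{theorem:secondVariationMp2}, which is non-increasing in its bilipschitz argument and non-decreasing in the seminorm argument, we obtain
\[
C_1\bigl(n,p,\BiLip(\gamma(\cdot,t_0))/2, \halfnorm[\frac32 p-3,2]{\gamma'(\cdot,t_0)}+1\bigr) \le C_1\bigl(n,p,b_0/2,s_0+1\bigr) =: \overline{C}_1,
\]
again depending only on $\gamma_0$. Applying \thref{cor:short-time-proj-flow} to the initial datum $\gamma(\cdot,t_0) \in W^{\frac32 p-2,2}_{\mathrm{ir}}(\R/\Z,\R^n)$ (which is in that space because of \eqref{eq:equal-velocities2} and the fact that it lies in the open set of regular injective curves), we obtain a unique short time solution $\eta$ starting at $\gamma(\cdot,t_0)$ with minimal existence time
\[
T_\eta \ge \frac{R_{M,\SIGMA}(\gamma(\cdot,t_0))}{C_1(n,p,\BiLip(\gamma(\cdot,t_0))/2,\halfnorm[\frac32 p-3,2]{\gamma'(\cdot,t_0)}+1)} \ge \frac{R_0}{\overline{C}_1} =: T_0 > 0,
\]
and $T_0$ depends only on $\gamma_0$, which is exactly the desired conclusion.

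There is no real obstacle here; the entire argument is a bookkeeping step that exploits the monotonicity of all constants involved together with the a priori estimates. The only subtlety worth noting explicitly is that $\gamma(\cdot,t_0)$ lies in $W^{\frac32 p-2,2}_{\mathrm{ir}}$ rather than merely in $W^{\frac32 p-2,2}$, which follows from the velocity-preservation \eqref{eq:equal-velocities2} (so the regularity $|\gamma'(\cdot,t_0)| = |\gamma_0'| > 0$ is inherited from $\gamma_0$) together with the fact that the bilipschitz bound \eqref{eq:bilip-evolution} is strictly positive, so injectivity on $[0,1)$ is preserved. Hence the hypotheses of \thref{cor:short-time-proj-flow} are genuinely satisfied at the restart point $\gamma(\cdot,t_0)$.
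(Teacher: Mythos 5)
Your proposal is correct and follows essentially the same route as the paper: apply the short time existence bound \eqref{eq:min-T-proj} at the restart point $\gamma(\cdot,t_0)$, then convert the $t_0$-dependent quantities into uniform bounds depending only on $\gamma_0$ by combining the a priori estimates of \thref{prop:bilip-seminorm-evolution} with the monotonicity of $R_{M,\SIGMA}$ and $C_1$ in $\BiLip$, $v_\gamma$, and the fractional seminorm. Your explicit remark that \eqref{eq:equal-velocities2} and \eqref{eq:bilip-evolution} together guarantee $\gamma(\cdot,t_0)\in W^{\frac32 p-2,2}_{\mathrm{ir}}$ matches the paper's opening observation that each $\gamma(\cdot,t)$ can serve as a new initial curve.
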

\begin{proof}
    Set $\HL:=W^{\frac32 p-2,2}(\R/\Z,\R^n)$.
    According to \eqref{eq:bilip-evolution} the evolving curves
    $\g(\cdot,t)$ are regular and embedded for all 
    $t\in [0,T^*]$ so that
    each $\g(\cdot,t)$ can serve as a new initial curve
    for the evolution 
    \eqref{introtheorem:longTimeExistenceViaProjections:eq:ODE}.
    Let $\eta\in C^1([0,T_\eta],\HL)$ be the unique
    solution of 
    \eqref{introtheorem:longTimeExistenceViaProjections:eq:ODE} with initial
    curve  $\g(\cdot,t_0)$, which exists by virtue of 
    the short time existence result, \thref{cor:short-time-proj-flow},
    up to time 
    \begin{equation}\label{eq:eta-existence-time}
    T_\eta
    \ge T_\textnormal{min}(\g(\cdot,t_0))=
    \frac{R_{M,\SIGMA}(\g(\cdot,t_0))}{
        C_1(n,p,\frac12\BiLip(\g(\cdot,t_0)),
	\halfnorm[\frac32 p-3,2]{\g'(\cdot,t_0)}
        +1)};
    \end{equation}
    see \eqref{eq:min-T-proj}. To obtain a uniform lower bound $T_0$ 
    independent of $t_0$ for the
    right-hand side of \eqref{eq:eta-existence-time} notice that the
    denominator $C_1$ is non-increasing in its third argument, and non-decreasing
    in its last argument; see \thref{theorem:secondVariationMp2}. A thorough
    inspection of the definition \eqref{eq:radius-lip-proj} of the radius
    $R_{M,\SIGMA}$ yields a non-decreasing dependence on $\BiLip(\g(\cdot,t_0))$
    and on the minimal velocity $v_{\g(\cdot,t_0)}$ of $\g(\cdot,t_0)$, 
    as well as
    a non-increasing dependence on $\halfnorm[\frac32 p-3,2]{\g'(\cdot,t_0)}$.
    We can neglect the dependence on the minimal velocity, since 
    the conservation of velocities \eqref{eq:equal-velocities2} in
    \thref{prop:bilip-seminorm-evolution} implies
    $v_{\g(\cdot,t)}=v_{\g_0}$ for all $t\in [0,T_\g]$.
    To summarize,  the right-hand side of \eqref{eq:eta-existence-time} may be written as a
    strictly positive  function $\Upsilon=\Upsilon(\BiLip(\g(\cdot,t_0)),
    \halfnorm[\frac32 p-3,2]{\g'(\cdot,t_0)})$ that depends non-decreasingly
    on $\BiLip(\g(\cdot,t_0))$, and non-increasingly on 
    $\halfnorm[\frac32 p-3,2]{
        \g'(\cdot,t_0)}$. 
    We use \eqref{eq:bilip-evolution} in \thref{prop:bilip-seminorm-evolution}
    to estimate the bilipschitz constant from below as
    $\BiLip(\g(\cdot,t_0))\ge v_{\g_0}\mathfrak{b}(E(0))$,
    where we denoted the energy by $E(t):=\Mpq[p,2](\g(\cdot,t))$ for 
    $t\in [0,T^*]$
    as in the previous proof. Moreover, \eqref{eq:seminorm-evolution} in
    \thref{prop:bilip-seminorm-evolution} implies that
    $
    F_0:=\mathfrak{s}(E(0))
    \halfnorm[\frac32 p-3,2]{\g_0'}
    \big(1+ v_{\g_0}^{-1}
    \halfnorm[\frac32 p-3,2]{\g_0'}^{3p-6}
    \big)^{1/2}
    $
    bounds the seminorm $\halfnorm[\frac32 p-3,2]{\g'(\cdot,t_0)}$ from above.
    Consequently, by \eqref{eq:eta-existence-time}
$    T_\eta
    \ge
    \Upsilon(\BiLip(\g(\cdot,t_0)),\halfnorm[\frac32 p-3,2]{\g(\cdot,t_0)})
    \ge \Upsilon(v_{\g_0}\mathfrak{b}(E(0)),F_0)=:T_0 >0.$
\end{proof}

{\it Proof of \thref{introtheorem:longTimeExistenceViaProjections}.}\,
By \thref{corollary:LipschitzContinuityRHSProjection}, 
$-\nabla_\SIGMA E$ is locally Lipschitz continuous, so standard ODE 
theory (cf. for example 
\cite[Part~II, Theorem~1.8.3 ]{Cartan:1971:Differentialcalculus}) 
yields existence of a unique solution
$\g \in C^1(J,W^{\frac 3 2 p-2,2}(\R/\Z,\R^n))$ such that
$\gamma(\cdot,t)$ is embedded and regular for all $t\in J$
for some largest
interval $J$ containing $0$.
Let $T_{\max} := \sup J$ which is bounded below by 
$T_{\min}(\gamma_0)>0$, see \eqref{eq:min-T-proj}.
Assume that $T_{\max} < \infty$, then we can use 
\thref{cor:minimalExtensionTime} to find a solution $\eta$ of 
\eqref{introtheorem:longTimeExistenceViaProjections:eq:ODE} with initial 
value $\g(\cdot, T_{\max} - \frac 1 2 T_0)$  
which exists at least 
up to time $T_0$.
By the global uniqueness theorem for ODEs, see e.g. 
\cite[Part~II, Theorem~1.8.2]{Cartan:1971:Differentialcalculus}, 
we can extend $\gamma$ with $t \mapsto \eta(\cdot,t-(T_{\max}-
\frac 1 2 T_0))$ for $t\in (T_\textnormal{max}-\tfrac12 T_0,
T_\textnormal{max}+\tfrac12 T_0]$ to a solution
on the larger time interval $[0,T_{\max} + \frac 1 2 T_0]$ which is a contradiction to the maximality of $J$ and $T_{\max}$.

The energy decay in time as well as
the conservation of velocities 
\eqref{eq:velocity-preserving} was proven in 
\thref{prop:bilip-seminorm-evolution}; see \eqref{eq:equal-velocities2}.
That the barycenter is preserved along the flow can be proven exactly as
in the proof of the short time existence result for the gradient flow
\eqref{eq:gradflow} in  \thref{theorem:shortTimeExistence} using
the second identity  of \eqref{eq:integral-mean} instead of the first
in \thref{lem:gradients-mean} for $S:=\SIGMA$
and $\HL_2:=W^{\frac32 p-3,2}(\R/\Z)$.
The necessary uniform bound
on $\nabla_\SIGMA\Mpq[p,2](\g(\cdot,t))$ independent of $t$ to interchange
integration and differentiation with respect to $t$ was established
in \eqref{eq:uniform-tildeC1} of 
\thref{prop:bilip-seminorm-evolution} for an arbitrary solution of
the projected flow \eqref{introtheorem:longTimeExistenceViaProjections:eq:ODE}.
Now we use the conservation of the
barycenter and of the
velocity to estimate for arbitrary $x\in\R/\Z$ and $t\in (0,\infty)$  the absolute value as $
|\g(x,t)| 
 \le\textstyle |{\g(x,t)-\int_{\R/\Z}\g(u,t)\,\dd u}| + |{\int_{\R/\Z}\g_0(u)\,\dd u}| \le\norm[C^0(\R/\Z,\R^n)]{\g'(\cdot,t)} + |{\int_{\R/\Z}\g_0(u)\,\dd u}|
=\textstyle
\norm[C^0(\R/\Z,\R^n)]{\g_0'} + |{\int_{\R/\Z}\g_0(u)\,\dd u}|,$
so that we have a uniform $L^\infty$-bound on the solution
$\g(\cdot,t)$ independent of $t$.
The  seminorms
$\halfnorm[\frac32 p-3,2]{\g'(\cdot,t)}$ are uniformly 
bounded by means of the a priori estimate \eqref{eq:seminorm-evolution}
of \thref{prop:bilip-seminorm-evolution}.  The Poincar\'e inequality
\eqref{eq:poincare}
implies that the  fractional Sobolev norms $\norm[W^{\frac32 p-3,2}(\R/\Z,\R^n)]{\g'(\cdot,t)}$ are uniformly bounded, which together with the $L^\infty$-bound
on $\g(\cdot,t)$
leads to a uniform bound on $\norm[W^{\frac32 p-2,2}(\R/\Z,\R^n)]{\g(\cdot,t)}$
independent of $t$ .
\hfill $\Box$

\medskip

{\it Proof of \thref{introtheorem:regularity}.}\,
The Lipschitz continuity of $t\mapsto\g(\cdot,t)$ and
of $t\mapsto\partial \g(\cdot,t)/\partial t$
was established in \eqref{eq:lipschitz-gamma} and 
\eqref{eq:lipschitz-est-gamma_t} of
\thref{prop:bilip-seminorm-evolution} for an arbitrary
solution of the projected flow 
\eqref{introtheorem:longTimeExistenceViaProjections:eq:ODE}. 
In particular the Lipschitz
constants in \eqref{eq:lipschitz-gamma} and
\eqref{eq:lipschitz-est-gamma_t}
depend non-increasingly on their respective third argument
$v_{\g_0}$. 
\hfill $\Box$

\medskip

{\it Proof of \thref{cor:subconvergence}.}\,
Abbreviate $\HL:=W^{\frac32 p-2,2}(\R/\Z,\R^n)$, $\mathcal{E}:=\Mpq[p,2]$, and  $E(t):=\mathcal{E}(\g(\cdot,t))$ for $t\in [0,\infty)$, where  $\g(\cdot,t)$
is the solution
of \eqref{introtheorem:longTimeExistenceViaProjections:eq:ODE}.
For the proof of \eqref{eq:limit-proj-gradient} we take a 
monotonically increasing sequence $(T_i)_i\subset (0,\infty)$ with
$T_i\to\infty$ as $i\to\infty$ and notice that the non-negative
sequence of
energy values $E(T_i)$ 
is non-increasing according to 
\thref{introtheorem:longTimeExistenceViaProjections}
and hence convergent with limit $E_\infty:=
\lim_{i\to\infty}E(T_i)\in [0,E(0)]$. This limit does not depend on the
choice of the $T_i$ by the subsequence principle.
We can take the limit $i\to\infty$
in the identity $
E(T_i)-E(0)
= \textstyle\int_0^{T_i}\langle\nabla
\mathcal{E}(\g(\cdot,t)),\partial_t\g(\cdot,t)\rangle_{\HL}\,\dd t
{=}
-\int_0^{T_i}\langle\nabla
\mathcal{E}(\g(\cdot,t)),\nabla_\SIGMA\mathcal{E}(\g(\cdot,t))
\rangle_{\HL}\,\dd t
= -\int_0^{T_i}\norm[\HL]{
    \nabla_\SIGMA\mathcal{E}(\g(\cdot,t))}^2\,\dd t   $
to obtain 
$
\int_0^\infty\norm[\HL]{
    \nabla_\SIGMA\mathcal{E}(\g(\cdot,t))}^2\,\dd t=E(0)-E_\infty<\infty,
$
which implies \eqref{eq:limit-proj-gradient}.
Along any sequence of times $t_k\to\infty$ the fractional Sobolev norms
$\norm[\HL]{\g(\cdot,t_k)}$ are uniformly 
bounded by \thref{introtheorem:longTimeExistenceViaProjections}.
This
guarantees the existence
of a subsequence $(t_{k_l})_l\subset (t_k)_k$ such that the
$\g(\cdot,t_{k_l})$
converge weakly in $\HL$ and strongly in $C^1$ (by Morrey's embedding,
\thref{thm:morrey}) to a limiting curve
$\g^*\in \HL$ as $l\to\infty$.
By Fatou's Lemma $\EL$ is sequentially
lower semicontinuous with respect to $C^1$-convergence, so that 
$\EL(\g^*)\le E_\infty$.
The a priori estimate \eqref{eq:bilip-evolution} 
of \thref{prop:bilip-seminorm-evolution}
is conserved in the $C^1$-limit, which implies that $\g^*$ is embedded and
hence of the same knot class as $\g(\cdot,t_{k_l})$ for $l\gg 1$, which
proves $[\g^*]=[\g_0]$.
\hfill $\Box$

We conclude with a simple
stability estimate for  solutions  of 
\eqref{introtheorem:longTimeExistenceViaProjections:eq:ODE}.
\begin{corollary}\label{cor:stability}
    Let $\gamma(\cdot,t)$ and $\eta(\cdot,t)$ be solutions to
    \eqref{introtheorem:longTimeExistenceViaProjections:eq:ODE}
    with initial
    curves $\gamma_0,\eta_0 \in \HL:=W_{\ir}^{\frac 3 2 p -2,2}(\R/\Z,\R^n)$.
    Then there are constants $K_{\g_0}$ and $K_{\eta_0}$, depending 
    only on $\g_0$ and $\eta_0$, respectively, such that
    $
    \norm[\HL]{\gamma(\cdot,t)-\eta(\cdot,t)}
    \leq \norm[\HL]{\gamma_0 -\eta_0} +
    (K_{\g_0}+K_{\eta_0})t $ for all $ t\in [0,\infty).$
   \end{corollary}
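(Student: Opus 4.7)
The plan is to exploit the uniform-in-time boundedness of the right-hand side of \eqref{introtheorem:longTimeExistenceViaProjections:eq:ODE} along each individual flow, which is already baked into the a priori estimates of \thref{prop:bilip-seminorm-evolution}. A naive Grönwall argument based on the Lipschitz continuity of $\nabla_{\SIGMA}\mathcal{E}$ from \thref{corollary:LipschitzContinuityRHSProjection} would only yield exponential-in-$t$ bounds, so I will instead control the difference of velocities separately, without ever comparing the two flows pointwise.

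First I would write, for $t\in [0,\infty)$, the elementary identity
\[
\gamma(\cdot,t)-\eta(\cdot,t)
= \gamma_0-\eta_0
+ \int_0^t\Bigl(\tfrac{\partial}{\partial s}\gamma(\cdot,s)-\tfrac{\partial}{\partial s}\eta(\cdot,s)\Bigr)\,\dd s,
\]
which is available since both $t\mapsto\gamma(\cdot,t)$ and $t\mapsto\eta(\cdot,t)$ are $C^1$ curves into $\HL$ by \thref{introtheorem:longTimeExistenceViaProjections}. Taking $\HL$-norms and using the triangle inequality together with \eqref{introtheorem:longTimeExistenceViaProjections:eq:ODE} yields
\[
\norm[\HL]{\gamma(\cdot,t)-\eta(\cdot,t)}
\le \norm[\HL]{\gamma_0-\eta_0}
+\int_0^t\!\norm[\HL]{\nabla_{\SIGMA}\mathcal{E}(\gamma(\cdot,s))}\,\dd s
+\int_0^t\!\norm[\HL]{\nabla_{\SIGMA}\mathcal{E}(\eta(\cdot,s))}\,\dd s,
\]
with $\mathcal{E}:=\Mpq[p,2]$.

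Second, I would invoke the uniform bound \eqref{eq:uniform-tildeC1} derived in the proof of \thref{prop:bilip-seminorm-evolution}: because the projection has operator norm one, $\norm[\HL]{\nabla_{\SIGMA}\mathcal{E}(\gamma(\cdot,s))}\le\norm[\HL]{\nabla\mathcal{E}(\gamma(\cdot,s))}$, and the latter is bounded by the constant $\overline{C}_1$ obtained from \eqref{theorem:secondVariationMp2:eq:estFirstVariation} of \thref{theorem:secondVariationMp2} after inserting the a priori estimates \eqref{eq:bilip-evolution} and \eqref{eq:seminorm-evolution} on $\BiLip(\gamma(\cdot,s))$ and $\halfnorm[\frac32 p-3,2]{\gamma'(\cdot,s)}$. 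This produces a constant $K_{\gamma_0}:=\overline{C}_1(\gamma_0)$ depending only on $\Mpq[p,2](\gamma_0)$, $\halfnorm[\frac32 p-3,2]{\gamma_0'}$, and $v_{\gamma_0}$; the analogous bound with $K_{\eta_0}:=\overline{C}_1(\eta_0)$ holds for $\eta$.

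Inserting these two uniform bounds into the integrals gives
\[
\int_0^t\norm[\HL]{\nabla_{\SIGMA}\mathcal{E}(\gamma(\cdot,s))}\,\dd s\le K_{\gamma_0}\,t,
\qquad
\int_0^t\norm[\HL]{\nabla_{\SIGMA}\mathcal{E}(\eta(\cdot,s))}\,\dd s\le K_{\eta_0}\,t,
\]
which combine to the asserted estimate. There is no real obstacle here; the only point requiring care is the verification that the a priori bounds of \thref{prop:bilip-seminorm-evolution} are applicable on the entire interval $[0,\infty)$, which is precisely the content of \thref{introtheorem:longTimeExistenceViaProjections}, so that $K_{\gamma_0}$ and $K_{\eta_0}$ are genuine constants independent of $t$.
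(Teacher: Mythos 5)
Your proof is correct and takes essentially the same route as the paper: the triangle inequality splits the difference into $\norm[\HL]{\gamma_0-\eta_0}$ plus the two drifts, and each drift is bounded linearly in $t$ using the uniform gradient bound $\overline{C}_1$ from \eqref{eq:uniform-tildeC1}. The only cosmetic difference is that the paper invokes the Lipschitz estimate \eqref{eq:lipschitz-gamma} for $t\mapsto\g(\cdot,t)$ as an already-established black box, whereas you re-derive it inline via the integral identity; the underlying argument is identical.
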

\begin{proof}
Set $\EL:=\Mpq[p,2]$ and use
    the Lipschitz constant $K_{\g_0}:=\overline C_1=
    \overline C_1(\EL(\g_0),
    \halfnorm[\frac32 p-3,2]{\g_0'},v_{\g_0})$ for $t\mapsto\g(\cdot,t)$
    introduced  in \eqref{eq:uniform-tildeC1} and \eqref{eq:lipschitz-gamma}, and 
    analogously for $\eta(\cdot,t)$ setting $K_{\eta_0}:=C_0(\EL(\eta_0),
    \halfnorm[\frac32 p-3,2]{\eta_0'},v_{\eta_0})$, to estimate $
    \norm[\HL]{\gamma(\cdot,t)-\eta(\cdot,t)}
    \leq \norm[\HL]{\gamma(\cdot,t)-\gamma_0} + \norm[\HL]{\gamma_0-\eta_0} +
    \norm[\HL]{\eta_0 - \eta(\cdot,t)}
    {\leq} \norm[\HL]{\gamma_0 -\eta_0} +
    (K_{\g_0}+K_{\eta_0})t. $
\end{proof}

\section{Numerical Experiments}
\label{sec:Numerics}

\subsection{Discretization}

For a numerical treatment of the gradient flow, not only the energy $\Energy \ceq \intM^{(p,2)}$ and its derivative $D\Energy$ have to be discretized, but also the Riesz isomorphism $\RieszOp$ and the constraints $\ConstraintMap \colon \mathcal{O} \to  \mathcal{H}_2$, where $\HL_2 := W^{\frac 3 2 p -3,2}(\R/\Z)$ and $\mathcal{O}:=W^{\frac 3 2 p -2,2}_{\ir}(\R/\Z,\R^n)$ is an open subset of $\HL_1 := W^{\frac 3 2 p -2,2}(\R/\Z,\R^n)$.
In a nutshell, we do this by discretizing a curve $\Curve \colon \Circle \to \AmbSpace$ by a closed polygonal line $P$ and by replacing all occurring integrals by appropriate sums.
To this end, we fix a partition $\Triangulation$ of $\Circle$ into a set of disjoint intervals or \emph{edges} $\Edges(\Triangulation)$.
Denote the set of interval end points (\emph{vertices}) by $\Vertices(\Triangulation) \subset \Circle$.
By a polygonal line we mean a mapping $\Polygon \colon \Vertices(\Triangulation) \to \AmbSpace$ and identify it with periodic, piecewise-linear interpolation. 
Hence, our discrete configuration space is
$
	\HilbertSpace_{\Triangulation,1} \ceq \{\varPhi \colon \Vertices(\Triangulation) \to \AmbSpace \}
$.
This is a vector space of dimension $\AmbDim \cdot \VertexCount$, where $\VertexCount$ is the number of vertices (which is also equal to the number of edges).
In the following, we will employ the natural basis $\varPhi_{\Vertex,i}$, $\Vertex \in \Vertices(\Triangulation)$, $i \in \{1,\dotsc,\AmbDim\}$ where $\ninnerprod{\varPhi_{\Vertex,i}(\Vertex'), e_j}_\AmbSpace = \delta_{\Vertex,\Vertex'} \, \delta_{i,j}$.

Denoting the left and right endpoints of an edge $\Edge \in \Edges(\Triangulation)$ by $\down{\Edge}$ and $\up{\Edge}$, respectively, we may write down the \emph{edge length} $\EdgeLengths_\Polygon(\Edge)$ and the \emph{unit edge vector} $\tau_\Polygon(I)$ of edge $\Edge$ with respect to $\Polygon$ as follows:
\begin{align*}
    \textstyle
	\EdgeLengths_\Polygon(\Edge) \ceq \nabs{ \Polygon(\up{\Edge}) - \Polygon(\down{\Edge})}
	\quad
	\text{and}
	\quad
	\tau_\Polygon(I) \ceq \frac{P(\up{\Edge})-P(\down{\Edge})}{\nabs{P(\up{\Edge})-P(\down{\Edge})}}
	.
\end{align*}

\subsection{Constraints}
With
$\HilbertSpace_{\Triangulation,2} \ceq \{\varPhi \colon \Edges(\Triangulation) \to \R \} \cong \R^{N}$,
we may discretize the constraint $\ConstraintMap(\Curve) = \log(\nabs{\Curve'})$ by
\begin{align*}
    \textstyle
	\ConstraintMap_\Triangulation \colon \HilbertSpace_{\Triangulation,1} \to \HilbertSpace_{\Triangulation,2},
	\qquad
	\ConstraintMap_\Triangulation(\Polygon) 
	\ceq 
	\big( \log \big(  \frac{\EdgeLengths_\Polygon(\Edge)}{\nabs{\Edge}} \big) \big)_{\Edge \in \Edges(\Triangulation)}.
\end{align*}
In terms of the basis $\varPhi_{\Vertex,i}$ on $\HilbertSpace_{\Triangulation,1}$ and the standard basis on $\HilbertSpace_{\Triangulation,2}$,
the derivative $D \ConstraintMap_\Triangulation(\Polygon)$ can be represented by a sparse matrix of size $\VertexCount \times (\AmbDim \,  \VertexCount)$ with only 
$2 \, \AmbDim \,\VertexCount$ nonzero entries. The assembly is straight-forward once
the derivatives of $\log (  \EdgeLengths_\Polygon(\Edge)/ \nabs{\Edge} )$ with respect to 
the coordinates of $\Polygon(\down{\Edge})$ and $\Polygon(\up{\Edge})$ have been computed.
These derivatives are given by
\begin{align*}
    \textstyle
	\frac{\partial }{\partial P(\down{\Edge})}  
	\log \big(  \frac{\EdgeLengths_\Polygon(\Edge)}{\nabs{\Edge}} \big)
	=
	- \frac{\tau_\Polygon(I)\transp}{\EdgeLengths_\Polygon(\Edge)}
	\quad
	\text{and}
	\quad
	\frac{\partial }{\partial P(\up{\Edge})}  
	\log \big(  \frac{\EdgeLengths_\Polygon(\Edge)}{\nabs{\Edge}} \big)
	=
	+\frac{\tau_\Polygon(I)\transp}{\EdgeLengths_\Polygon(\Edge)}	
	.
\end{align*}

\subsection{Energy}
The energy of a smooth curve $\Curve \colon \Circle \to \AmbSpace$ can be decomposed as follows:
\begin{align*}
    \textstyle
	\Energy(\Curve)
	=
	\sum_{\Edge_1, \Edge_2,\Edge_3 \in \Edges(\Triangulation)}
	\int_{\Edge_1}
	\int_{\Edge_2}
	\int_{\Edge_3}
	W_\Curve(u_1,u_2,u_3)
	\, \dd u_1
	\, \dd u_2
	\, \dd u_3,
\end{align*}
where
\begin{align*}
    \textstyle
	W_\Curve(u_1,u_2,u_3)	
	\ceq
		\frac {
			\nabs{\Curve'(u_1)} \,\nabs{\Curve'(u_2)} \, \nabs{\Curve'(u_3)}
		}{
			R^{(p,2)}(\Curve(u_1),\Curve(u_2),\Curve(u_3))
		} 
	.
\end{align*}
By employing the midpoint and by skipping problematic summands, i.e., those for which $\Edge_1$, $\Edge_2$, and $\Edge_3$ are not pairwise distinct, we obtain the \emph{discrete integral Menger energy}
\begin{align*}
    \textstyle
	\Energy_\Triangulation(\Polygon)
	\ceq
	\sum
	W_\Polygon(\Edge_1,\Edge_2,\Edge_3),
\end{align*}
where the sum is taken over mutually distinct $\Edge_1,\Edge_2,\Edge_3 \in \Edges(\Triangulation)$, and
\begin{align*}
    \textstyle
	W_\Polygon(\Edge_1,\Edge_2,\Edge_3)
	\ceq
	\frac {
		\EdgeLengths_\Polygon(\Edge_1) \, \EdgeLengths_\Polygon(\Edge_2) \, \EdgeLengths_\Polygon(\Edge_3)
	}{
		R^{(p,2)}( \Midpoint_\Polygon(\Edge_1), \Midpoint_\Polygon(\Edge_2), \Midpoint_\Polygon(\Edge_3))
	}
		\quad
	\text{with}
	\quad
	\Midpoint_\Polygon(\Edge) \ceq \frac{1}{2} \big( \Polygon(\up{\Edge}) + \Polygon(\down{\Edge}) \big)
	.
\end{align*}
The local contribution $W_\Polygon(\Edge_1,\Edge_2,\Edge_3)$ is symmetric in $\Edge_1$, $\Edge_2$, $\Edge_3$,
and exploiting this, one can reduce the number of summands by a factor of $\frac{1}{6}$. Nonetheless, the computational complexity of evaluating $\Energy_\Triangulation(\Polygon)$ is $\LandO(\VertexCount^3)$.

The term $W_\Polygon(\Edge_1,\Edge_2,\Edge_3)$ is an analytical expression in the six points
$\Polygon(\down{\Edge_1})$, $\Polygon(\up{\Edge_1})$,
$\Polygon(\down{\Edge_2})$, $\Polygon(\up{\Edge_2})$,
$\Polygon(\down{\Edge_3})$, and $\Polygon(\up{\Edge_3})$.
Hence the derivative $W_\Polygon(\Edge_1,\Edge_2,\Edge_3)$ with respect to these points can be computed  symbolically (we strongly suggest to use a CAS for that) and compiled into a library for runtime performance.
The derivative $D\Energy_\Triangulation(\Polygon) \in \HilbertSpace_{\Triangulation,2}^*$ can be represented by a vector of size
$n \, \VertexCount$; it can then be assembled from the output of this library and from the vertex indices of the interval end points $\down{\Edge_1}$, $\up{\Edge_1}$,
$\down{\Edge_2}$, $\up{\Edge_2}$, $\down{\Edge_3}$, and $\up{\Edge_3}$.

\subsection{Metric}

We define the Gagliardo product of $\varphi$, $\psi \in \HilbertSpace_1$ by
\begin{align*}
    \textstyle
	\ninnerprod{ \RieszOp \,  \varphi,\psi}
	\ceq
	\iint_{(\Circle)^2}
		\frac{
			\ninnerprod{ \varphi'(u_1) -  \varphi'(u_2), \psi'(u_1) -  \psi'(u_2)}
		}{
			\nabs{u_1-u_2}^{2 s - 1}			
		}
	\, \dd u_1 \, \dd u_2
	.
\end{align*}
By \thref{proposition:PoincareInequality}, this is indeed an inner product when restricted to the subspace
\begin{align*}
    \textstyle
	\dot \HilbertSpace_1 
	\ceq 
	\{
		\varphi \in \HilbertSpace_1
		\mid
		\textstyle \int_\Circle \varphi(u) \, \dd u =0
	\}.
\end{align*}
Analogously to the energy, we may discretize this metric by first decomposing the integral with respect to the partition $\Triangulation$:
\begin{align*}
    \textstyle
	\ninnerprod{ \RieszOp \,  \varphi,\psi}
	=
	\sum_{\Edge_1, \Edge_2 \in \Edges(\Triangulation)}
	\int_{\Edge_1}
	\int_{\Edge_2}	
		\frac{
			\ninnerprod{ \varphi'(u_1) -  \varphi'(u_2), \psi'(u_1) -  \psi'(u_2)}
		}{
			\nabs{u_1-u_2}^{2 s - 1}			
		}
	\, \dd u_1 \, \dd u_2
	.
\end{align*}
We interpret $\varPhi$, $\varPsi \in \HilbertSpace_{\Triangulation,1}$
as a piecewise linear functions.
Thus $\varPhi'$  and $\varPsi'$ are constant on (the interior of) each edge.
This is why we may put
$\varPhi'(\Edge) \ceq  \tfrac{\varPhi(\up{\Edge}) - \varPhi(\down{\Edge})}{\nabs{\Edge}}$,
$\varPsi'(\Edge) \ceq  \tfrac{\varPsi(\up{\Edge}) - \varPsi(\down{\Edge})}{\nabs{\Edge}}$
and define the \emph{discrete Riesz isomorphism} $\RieszOp \colon \HilbertSpace_{\Triangulation,1} \to \HilbertSpace_{\Triangulation,1}^*$ via the midpoint rule by
\begin{align*}
    \textstyle
	\ninnerprod{\RieszOp_\Triangulation \, \varPhi , \varPsi}
	\ceq
	\sum_{\substack{
		\Edge_1,\Edge_2 \in \Edges(\Triangulation)
		\\
		\Edge_1 \neq \Edge_2
	}}	
		\frac{
			\ninnerprod{ 
				\varPhi'(\Edge_1)
				-
				\varPhi'(\Edge_2)						
				,
				\varPsi'(\Edge_1)
				-
				\varPsi'(\Edge_2)			
			}		
		}{
			\nabs{\Midpoint(\Edge_1)-\Midpoint(\Edge_2)}^{2 s - 1}			
		}
	\, \nabs{\Edge_1} \, \nabs{\Edge_2}
	,
\end{align*}
where $\Midpoint(\Edge)$ denotes the midpoint of the interval $I$.
In the following, we identify $\RieszOp_\Triangulation$ with the
Gram matrix with respect to the basis $\varPhi_{\Vertex,i}$ of the bilinear form $\Metric$ defined by
$
	\Metric( \varPhi , \varPsi) \ceq \ninnerprod{\RieszOp_\Triangulation \, \varPhi , \varPsi}
$.
This is a matrix of size $(\AmbDim \cdot \VertexCount) \times (\AmbDim \cdot \VertexCount)$.
Since different spatial components of $\varPhi$ of $\varPsi$ do not interact, 
one may reorder the degrees of freedom such that
the matrix $\RieszOp_\Triangulation$ is block diagonal with $n$ identical dense blocks of size $\VertexCount \times \VertexCount$ on its diagonal; each of the blocks is a copy of $\RieszOp_\Triangulation$ for $n=1$.

We abbreviate $\mathcal{B}_\Triangulation \ceq D \ConstraintMap_\Triangulation(\Polygon)$
and denote the discrete barycenter constraint by $\BarycenterContraint_\Triangulation \colon \HilbertSpace_{\Triangulation,1} \to \AmbSpace$, $\BarycenterContraint_\Triangulation \,  \varPhi = \frac{1}{2}  \sum_{\Edge \in \Edges(\Triangulation)} (\varPhi(\down{\Edge}) + \varPhi(\up{\Edge})) \, \nabs{I}$.
With the symmetric saddle point matrix
\begin{align}
    \textstyle
	\SaddlePointMatrix_\Triangulation(P)
	\ceq
	\begin{pmatrix}
		\RieszOp_\Triangulation & \mathcal{B}_\Triangulation\transp & \BarycenterContraint_\Triangulation\transp
		\\
		\mathcal{B}_\Triangulation&0 &0\\
		\BarycenterContraint_\Triangulation &0 &0
	\end{pmatrix},
	\label{eq:SaddlePointMatrix}
\end{align}
the discrete projected gradient $\nabla_\SIGMA E_T(\Polygon)$ of $\Energy_\Triangulation$ can be computed by solving the linear system
\begin{align}
    \textstyle
	\SaddlePointMatrix_\Triangulation(P)
	\,	
	\begin{pmatrix}
		\nabla_\SIGMA E_T(\Polygon)
		\\
		\lambda
		\\
		\mu
	\end{pmatrix}
	=
	\begin{pmatrix}
		D \Energy_\Triangulation(\Polygon)
		\\
		0
		\\
		0
	\end{pmatrix},
	\label{eq:DiscreteGradientEquation}
\end{align}
where $\lambda \in  \HilbertSpace_{\Triangulation,2}^*$ and $\mu \in (\AmbSpace)^*$ act as Lagrange 
multipliers.
Note that we included $\BarycenterContraint_\Triangulation$ and $\BarycenterContraint_\Triangulation^\top$ into the saddle point matrix \eqref{eq:SaddlePointMatrix} in order to obtain a symmetric matrix of full rank because otherwise all constant functions would be in the kernel of the saddle point matrix.

Unfortunately, the matrix $\RieszOp_\Triangulation$ is fully populated so that sparse matrix methods do not help in solving \eqref{eq:DiscreteGradientEquation} numerically. 	
In principle, one may enforce that all edges have the same length and then exploit that $\RieszOp_\Triangulation$ is a Toeplitz matrix (e.g., via using the fast Fourier transform).
However, in our experiments, we simply used the dense $LU$-factorization provided by \emph{LAPACK}. Although its costs are of order $((\AmbDim+1) \, \VertexCount + \AmbDim)^3 = \LandO( (\AmbDim+1)^3 \, \VertexCount^3)$, the factorization took up only a relatively small portion of the overall computation time for $N \leq 600$.
This is because the timings of the naive $\LandO( (\AmbDim \, \VertexCount)^3)$-implementations we employed for
computing $\Energy_\Triangulation(\Polygon)$ and $D \Energy_\Triangulation(\Polygon)$ were dominant.

\subsection{Discrete Gradient Flow}

The gradient flow equation for $P(t)$ reads as follows:
\begin{align*}
    \textstyle
	\SaddlePointMatrix_\Triangulation(P(t))
	\,	
	\begin{pmatrix}
		\dot{P}(t)
		\\
		\lambda(t)
		\\
		\mu(t)
	\end{pmatrix}
	=
	\begin{pmatrix}
		- D \Energy_\Triangulation(\Polygon(t))
		\\
		0
		\\
		0
	\end{pmatrix}.
\end{align*}
In our experiments, we simply used the local model
$
	P(t + \tau)
	=
	P(t) 
	+
	\dot{P}(t) \, \tau
	+ \LandO(\tau^2)
$
for the update step, which effectively leads to the explicit Euler method.
Thus with 
\begin{align*}
    \textstyle
	Q_0(\tau) \ceq P(t) + \dot{P}(t) \, \tau, 
\end{align*}
we have $Q_0(\tau) = P(t + \tau) 	+ \LandO(\tau^2)$.
In particular, this guarantees that the constraint violation grows only slowly:
\begin{align*}
    \textstyle
	\ConstraintMap_\Triangulation(Q_0(\tau))
	=
	\ConstraintMap_\Triangulation(P(t)) + \LandO(\tau^2).
\end{align*}

The remaining constraint violation can be reduced by 
applying a few iterations of the following \emph{modified Newton method} involving the saddle point matrix $\SaddlePointMatrix_\Triangulation$ from \eqref{eq:SaddlePointMatrix}:
\begin{align}
    \textstyle
	Q_{k+1}(\tau) = Q_k(\tau) - v_k,
	\quad
	\text{where}
	\quad
	\SaddlePointMatrix_\Triangulation(P(t)) 
	\begin{pmatrix}
		v_k \\ \lambda_k \\ \mu_k
	\end{pmatrix}
	=
	\begin{pmatrix}
		0 \\ \ConstraintMap_\Triangulation( Q_k(\tau))  \\ 0
	\end{pmatrix}	
	.
	\label{eq:RestoringFeasibility}
\end{align}
Then $Q_\infty(\tau) \ceq \lim_{k \to \infty} Q_{k}(\tau)$ may serve as next iterate of the discrete flow.
In contrast to the classical Newton method, the matrix in the linear system is \emph{not} updated; this allows us to reuse the already computed matrix $\SaddlePointMatrix_\Triangulation(P(t))$ and its factorization. The update vectors $v_k$ are always perpendicular to the tangent space of the constraint manifold at $P(t)$ (see \autoref{fig:Helpers} (a)).
The iteration in \eqref{eq:RestoringFeasibility} converges because of the implicit function theorem, see \cite[Theorem 4.B (b)]{Zeidler:1993:NonlinearfunctionalanalysisanditsapplicationsFixedpointtheorems}.\footnote{In Zeidler's notation, set $F=\SIGMA_\Triangulation$, $X = \ker(D\SIGMA_\Triangulation(P(t)))$ and $Y = X^\perp$ as subspaces of $\ker(\BarycenterContraint_\Triangulation)\subseteq \HilbertSpace_{\Triangulation,1}$. Then, for $Z=\HilbertSpace_{\Triangulation,1}$, one can show that $F_y(P(t))=D\SIGMA_\Triangulation(P(t))\Pi_Y$ is bijective by first constructing a right-inverse as in Section \ref{subsection:explicitBoundaryCondition} and then projecting this to $Y$. The equation $v_k = F_y^{-1}(P(t))F(Q_k(\tau))$ is then equivalent to \eqref{eq:RestoringFeasibility}, because $\BarycenterContraint_\Triangulation^\top(\HilbertSpace_{\Triangulation,1})=\ker(\BarycenterContraint_\Triangulation)^\perp$ (see e.g.\ \cite{KunzeHoffman:1971:LinearAlgebra}) and the same holds for $\mathcal B_\Triangulation$.}

\begin{figure}
\begin{center}
        \newcommand{\myincludegraphics}[2]{\begin{tikzpicture}
            \node[inner sep=0pt] (fig) at (0,0) {\input{#1}};
            \node[above right= 0.25ex] at (fig.south west) {\footnotesize #2};    
            \end{tikzpicture}
        }%
\begin{minipage}{0.5\textwidth}
	\def\svgwidth{0.9\textwidth}
	{\footnotesize \myincludegraphics{RestoringFeasibility_pdf.tex}{(a)}}
\end{minipage}%
\begin{minipage}{0.5\textwidth}
	\def\svgwidth{0.9\textwidth}
	{\footnotesize \myincludegraphics{LineSearch_pdf.tex}{(b)}}
\end{minipage}%
\label{fig:Helpers}
\caption{(a) The modified Newton method starts at $Q_0(\tau)$ and updates it along $\ker (D \SIGMA_\Triangulation(P(t)))^\perp$ until it converges to $Q_\infty(\tau)$. The orthogonality can be seen by computing $(u,0,0)\SaddlePointMatrix_\Triangulation(P(t))(v,\lambda,\mu)^\top$ for $u \in \ker(D \SIGMA_\Triangulation(P(t)) \cap \ker(\BarycenterContraint_\Triangulation)$ and keeping in mind that $\RieszOp_\Triangulation$ is the Gram matrix of the discrete Gagliardo product on the admissible space $\ker(\BarycenterContraint_\Triangulation)$.
(b)
Singularities are the defining properties of self-avoiding energies, so they are frequently encountered in the computational treatment of their flows.
Thus, adaptive choice of step size is crucial for the stability. The interval of step sizes that satisfy the Armijo condition is highlighted in green.}
\end{center}
\end{figure}

By backtracking, i.e., by systematically decreasing $\tau$ if necessary, we may enforce the following three conditions.
\begin{enumerate}
	\item The point $Q_\infty(\tau)$ is well-defined. In practice, we require that the constraint violation of $Q_k(\tau)$ is below a prescribed threshold after a prescribed number $k$ of modified Newton iterations \eqref{eq:RestoringFeasibility} and we use $Q_k(\tau)$ as approximation to $Q_\infty(\tau)$; otherwise, we shrink $\tau$ and restart \eqref{eq:RestoringFeasibility}.
	\item  For some parameter $\sigma \in (0,1)$ and for the function $\varphi(\tau) \ceq \Energy_\Triangulation(Q_\infty(\tau))$, the \emph{Armijo condition} 
	\begin{align*}
        \textstyle
		\varphi(\tau) \leq  \varphi(0) + \sigma \, \tau \, \varphi'(0) <  \varphi(0) = \Energy_\Triangulation(\Polygon(t)) 
	\end{align*}
	is fulfilled (see \autoref{fig:Helpers}). This guarantees that the step size $\tau$ is adapted to the ``stiffness'' of the ODE.
	Satisfying the Armijo condition is possible because $\varphi'(0)$ is negative (unless $P(t)$ is a critical point) and because we have
	\begin{align*}
        \textstyle
		\varphi(\tau) 
		\leq  
		\varphi(0) + \tau \, \varphi'(0)
		+ \tfrac{1}{2} \, \Lip(\varphi') \, \tau^2
	\end{align*}
	due to local Lipschitz continuity of $\varphi'$.
	For more details on the Armijo condition and on line search methods that guarantee it, we refer the interested reader to \cite[Chapter~3]{MR2244940}.
	\item The curve $Q_\infty(\tau)$ (or its proxy $Q_k(\tau)$) is ambient isotopic to $P(t)$. 
	Because $P(t)$ is an embedded polygonal line, such a step size $\tau >0$ must exist.
\end{enumerate}

The last condition is a bit involved. In principle, there are various strategies to ensure it.
What we do is the following: 
We consider the homotopy $F(u,\lambda) \ceq  (1-\lambda) \, P(t)(u) + \lambda \, Q_\infty(\tau)(u)$ (see \autoref{fig:IsotopyCheck}) and check the following:
\begin{itemize}
	\item Are the edge lengths of all the polygonal lines defined by $F(\cdot,\lambda)$, $\lambda \in [0,1]$ bounded away from $0$?
	\item Are all the turning angles of the polygonal lines $F(\cdot,\lambda)$, $\lambda \in [0,1]$ bounded away from $\pi$?
	\item Is the image surface of $F$ in $\R^3 \times [0, 1]$ free of self-intersections? (See \cite[Section~4.2]{doi:10.1111/1467-8659.t01-1-00587} for computational techniques to answer this question.)
\end{itemize}
If the answer to all these questions is affirmative, then $F$ is a level preserving $C^0$-isotopy.
In general, the existence of a level preserving $C^0$-isotopy $F$ does \emph{not} imply that $F(\cdot,0)$ and $F(\cdot,1)$ are ambient isotopic.
But the special structure of $F$  implies that it is a \emph{locally trivial isotopy} and thus a \emph{locally unknotted isotopy} in the sense of \cite[p.~72]{MR163318}. Thus \cite[Theorem~2]{MR163318} guarantees that $F(\cdot,0) = P(t)$ and $F(\cdot,1) = Q_\infty(\tau)$ are indeed ambient isotopic.

\begin{figure}
\begin{center}
        \newcommand{\myincludegraphics}[2]{\begin{tikzpicture}
            \node[inner sep=0pt] (fig) at (0,0) {\includegraphics[width=0.99\textwidth]{#1}};
            \node[below right= 0.125ex] at (fig.north west) {\footnotesize #2};    
            \end{tikzpicture}
        }%
\begin{minipage}{0.25\textwidth}
\begin{center}
	\myincludegraphics{IsotopyCheck0.png}{(a)}
\end{center}
\end{minipage}%
\begin{minipage}{0.25\textwidth}
\begin{center}
	\myincludegraphics{IsotopyCheckFailed.png}{(b)}
\end{center}
\end{minipage}%
\begin{minipage}{0.25\textwidth}
\begin{center}
	\myincludegraphics{Isotopy_MinimalCollisionStepSize.png}{(c)}
\end{center}
\end{minipage}%
\begin{minipage}{0.25\textwidth}
\begin{center}
	\myincludegraphics{IsotopyCheckPassed.png}{(d)}
\end{center}
\end{minipage}%
\label{fig:IsotopyCheck}
\caption{(a) 
The current polygonal line $\Polygon(t)$ (orange) and an update direction, represented by the vectors on the vertices.
(b)
For too large a step size $\tau$, the updated polygonal line $Q_\infty(\tau)$ (red) may lie outside the isotopy class of $P(t)$. The projection of the image of the homotopy $F$ to $\R^3$ (in blue) indicates that $F$ might not be a level preserving isotopy.
(c)
The updated polygonal line $Q_\infty(\tau)$ (red) for the smallest $\tau >0$ such that $F$ is not a level preserving isotopy.
(d) 
After further decreasing $\tau$, the updated curve $Q_\infty(\tau)$ (green) is guaranteed to be in the same isotopy class as $P(t)$.}
\end{center}
\end{figure}

\appendix

\section{Periodic  Sobolev-Slobodecki\texorpdfstring{\v{\i}}{j} spaces}
\label{section:Sobolev}
For fixed $\ell>0$, $s \in (0,1)$ and $\rho \in [1,\infty)$ define the
seminorm
\[
\label{eq:defSeminorm}
\numberthis
\halfnorm[{s,\rho}]{f}
:= \Big( \textstyle\int_{\R/\ell\Z} \int_{-{\ell}/2}^{{\ell}/2} 
\frac {\abs{f(u+w) - f(u)}^\rho} {\abs{w}^{1+s\rho}} \,\dd w \, \dd u
\Big)^{1/\rho}
\]
for $\ell$-periodic  functions $f\colon \R\to\R^n$ that are locally 
$\rho$-integrable on $\R$, i.e., for $f\in L^\rho(\R/\ell\Z,\R^n)$.
\begin{definition}\label{def:sob-slobo}
    For $k\in\N\cup\{0\}$ the function space
    \[
    \textstyle
    W^{k+s,\rho}(\R/\ell\Z,\R^n):=
    \{f\in W^{k,\rho}(\R/\ell\Z,\R^n):\|f\|_{W^{k+s,\rho}}<\infty\},
    \]
    where  $
        \norm[W^{k+s,\rho}(\R/\ell\Z,\R^n)]{f} := 
    \big( \norm[W^{k,\rho}((0,\ell),\R^n)]{f}^\rho + [{f^{(k)}}]^\rho 
    \big)^{1 /\rho}, $
        is called the \emph{periodic Sobolev-Slobodecki\v{\i} space}
    with
    fractional differentiability of order $k+s$ and integrability $\rho$.
    Here, $W^{k,\rho}(\R/\ell\Z,\R^n)\subset W^{k,\rho}_\textnormal{loc}
    (\R,\R^n)$
    denotes the usual Sobolev space of $\ell$-periodic functions whose weak
    derivatives $f^{(i)}$ of order $i\in\{0,\ldots,k\}$ 
    are locally $\rho$-integrable, where we have set $f^{(0)}:=f$.
\end{definition}
Since the restriction of any $\ell$-periodic Sobolev
function $f\in W^{k,\rho}
(\R/\ell\Z,\R^n)$ to its fundamental domain $(0,\ell)\subset\R$
is contained in the Sobolev space $W^{k,\rho}((0,\ell),\R^n)$ it suffices
to compare the seminorm \eqref{eq:defSeminorm} to the standard
Gagliardo seminorm (which we denote by $\llbracket\cdot\rrbracket $)
used in the literature on (non-periodic)
Sobolev-Slobodecki\v{\i} spaces, in order to transfer known results
to the periodic setting.
Indeed, for $w\in (-\ell/2,\ell/2)$ the absolute value $|w|$ in the integrand of \eqref{eq:defSeminorm}
equals the \emph{$\ell$-periodic distance}  $
|w|_{\R/\ell\Z}=\min_{k\in\Z}|w+k\ell |,$
so that we rewrite the seminorm 
\eqref{eq:defSeminorm} by substituting
$v(w):=u+w$, 
and estimate for 
$f \in L^\rho(\R/\ell\Z,\R^n)$ 
\begin{eqnarray}
\label{eq:classicVsPeriodicSeminorm}
\halfnorm[{s,\rho}]{f}^\rho
&= & \textstyle\int_{\R/\ell\Z} \int_{-{\ell}/2}^{{\ell}/2} 
\frac {\abs{f(u+w)-f(u)}^\rho} 
{\abs{w}^{1+s\rho}} \, \dd w \, \dd u
=\int_{\R/\ell\Z} \int_{-{\ell}/2}^{{\ell}/2} 
\frac {\abs{f(u+w)-f(u)}^\rho} 
{\abs{w}^{1+s\rho}_{\R/\ell\Z}} \, \dd w \, \dd u
\notag\\
&= &\textstyle\int_{\R/\ell\Z} \int_{u-{\ell}/2}^{u+{\ell}/2} 
\frac {\abs{f(v)-f(u)}^\rho} 
{\abs{v-u}^{1+s\rho}_{\R/\ell\Z}} \, \dd v \, \dd u
=\textstyle
\int_0^\ell \int_0^\ell 
\frac {\abs{f(v)-f(u)}^\rho} 
{\abs{v-u}_{\R/\ell\Z}^{1+s\rho}} \, \dd v \, \dd u \notag\\ 
&\geq  &\textstyle\int_0^\ell \int_0^\ell 
\frac {\abs{f(v)-f(u)}^\rho} 
{\abs{v-u}^{1+s\rho}} \, \dd v \, \dd u 
= \llbracket f \rrbracket_{s,\rho}^\rho, 
\end{eqnarray} 
where we used the $\ell$-periodicity of the integrand
to shift the domain  
of the inner integration by $-u+\ell/2$.

As a first application of inequality \eqref{eq:classicVsPeriodicSeminorm}
we show that the well-known Morrey embedding into classical H\"older
spaces transfers to the periodic setting. To be more precise,
the \emph{periodic H\"older seminorm} for an $\ell$-periodic function
$f\colon \R\to\R^n$ and $\alpha\in (0,1]$ is defined as
(cf. \cite[Def. 3.2.5]{grafakos_2008})
\begin{equation}\label{eq:hoelder-seminorm}
\hol_{\R/\ell\Z,\alpha}(f)
:=
\sup_{x,y\in [0,\ell]\atop x\not=y}\frac{
    \abs{f(x)-f(y)}}{\abs{x-y}^\alpha_{\R/\ell\Z}}=
\sup_{x\in [0,\ell]}\sup_{0<\abs{w}\le\ell/2}\frac{
    \abs{f(x+w)-f(x)}}{\abs{w}^\alpha},
\end{equation}
and the periodic H\"older space $C^{k,\alpha}(\R/\ell\Z,\R^n)$ 
for $k\in\N\cup
\{0\}$ consists of
those functions $f\in C^k(\R/\ell\Z,\R^n)$ whose  H\"older norm satisfies
$
\norm[C^{0,\alpha}(\R/\ell\Z,\R^n)]{f}:=\norm[C^0(\R/\ell\Z,\R^n)]{f}+
\hol_{\R/\ell\Z,\alpha}(f)<\infty.$
Notice that the second equality in \eqref{eq:hoelder-seminorm}
follows from the $\ell$-periodicity of $f$, since for any distinct $x,y\in
[0,\ell]$ with, say, $x<y$, we find $0\not= w\in [-\ell/2,\ell/2]$ such that $x+w=y$, or
$x+\ell=y+w$, namely $w:=y-x$ if $|y-x|\le \ell/2$, or $w:=\ell-(y-x)$ 
if
$\ell/2< |y-x|\le \ell$.
\begin{theorem}[Morrey embedding]\label{thm:morrey}
    Let $k\in\N\cup\{0\}$ and $\ell\in (0,\infty)$. If $\rho\in (1,\infty)$ and
    $s\in (1/\rho,1)$, then
    there is a positive constant $C_E=C_E(n, k,s,\rho,\ell)$ such that
    \begin{equation}\label{eq:morrey-embedding}
    \|f\|_{C^{k,s-1/{\rho}}(\R/\ell\Z,\R^n)}\le 
    C_E\|f\|_{W^{k+s,\rho}(\R/\ell\Z,\R^n)}\quad\textnormal{for all
        $f\in W^{k+s,\rho}(\R/\ell\Z,\R^n)$.}
    \end{equation}
\end{theorem}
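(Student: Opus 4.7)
\medskip
\noindent\textbf{Proof plan.}
The natural strategy is to reduce the periodic embedding to its classical (non-periodic) counterpart on a bounded interval, namely $W^{s,\rho}((a,b),\R^n)\hookrightarrow C^{0,s-1/\rho}([a,b],\R^n)$, and then account for the periodic distance in the target Hölder norm \eqref{eq:hoelder-seminorm}. I would first reduce to the base case $k=0$: for general $k\in\N$, the derivative $f^{(k)}$ lies in $W^{s,\rho}(\R/\ell\Z,\R^n)$ with $\|f^{(k)}\|_{W^{s,\rho}}\le\|f\|_{W^{k+s,\rho}}$, and an inductive argument (bootstrapping via integration and Poincaré on $(0,\ell)$) converts Hölder control of $f^{(k)}$ together with $L^\rho$ bounds on the lower-order derivatives into $C^{k,s-1/\rho}$ control of $f$. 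Hence the core task is to prove
\[
\|f\|_{C^{0,s-1/\rho}(\R/\ell\Z,\R^n)}\le C_E\,\|f\|_{W^{s,\rho}(\R/\ell\Z,\R^n)}\quad\text{for all } f\in W^{s,\rho}(\R/\ell\Z,\R^n).
\]

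For the base case I would extend $f$ periodically to $\R$ (which is already how $f$ is viewed) and consider its restriction to the enlarged interval $(-\ell/2,\,3\ell/2)$ of length $2\ell$. Periodicity together with the chain of identities in \eqref{eq:classicVsPeriodicSeminorm} gives
\[
\|f\|_{W^{s,\rho}((-\ell/2,3\ell/2),\R^n)}\le C(s,\rho,\ell)\,\|f\|_{W^{s,\rho}(\R/\ell\Z,\R^n)},
\]
because both the $L^\rho$-norm of $f$ over $(-\ell/2,3\ell/2)$ and the classical Gagliardo seminorm $\llbracket f\rrbracket_{s,\rho,(-\ell/2,3\ell/2)}$ are, up to a bounded factor, controlled by the corresponding $\ell$-periodic quantities. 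Applying the standard Morrey embedding on this fixed interval (see e.g.\ \cite[Thm.~8.2]{MR2424078}) yields
\[
\|f\|_{C^0([-\ell/2,3\ell/2],\R^n)}+\sup_{x\ne y\in[-\ell/2,3\ell/2]}\frac{|f(x)-f(y)|}{|x-y|^{s-1/\rho}}\le C\,\|f\|_{W^{s,\rho}((-\ell/2,3\ell/2),\R^n)}.
\]

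To convert the Euclidean Hölder seminorm on the enlarged interval into the periodic Hölder seminorm on $\R/\ell\Z$, I would observe the following: for any two distinct points $u,v\in[0,\ell]$ the periodic distance $|u-v|_{\R/\ell\Z}$ equals either $|u-v|$ (if $|u-v|\le\ell/2$) or $\ell-|u-v|$ (otherwise). In the first case the Morrey estimate on $(-\ell/2,3\ell/2)$ is applied directly. In the second case one replaces, say, $v$ by $v-\ell\in[-\ell/2,0]$; then $|u-(v-\ell)|=|u-v|_{\R/\ell\Z}\le\ell/2$, both points still lie in $[-\ell/2,3\ell/2]$, and $f(v-\ell)=f(v)$ by periodicity. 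In either case the Morrey estimate applied to the chosen representatives produces exactly the periodic Hölder bound. Combining this with the elementary $C^0$-estimate finishes the $k=0$ case.

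The routine inductive step and the explicit tracking of the constant $C_E=C_E(n,k,s,\rho,\ell)$ are purely bookkeeping. The only genuinely non-trivial issue is the last one—bridging the classical Hölder seminorm on a bounded interval and the periodic Hölder seminorm—and this is handled cleanly by the two-cases translation argument above. No deep new estimate beyond the classical Morrey embedding and the seminorm comparison in \eqref{eq:classicVsPeriodicSeminorm} is required.
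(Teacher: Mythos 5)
Your strategy — restrict to a fixed doubled interval, apply the classical Morrey embedding there, then shift representatives to recover the periodic H\"older seminorm — is a valid alternative to the paper's argument, which instead applies the classical embedding on a \emph{single} period $[0,\ell]$ to each translate $f\circ\tau_{x-\ell/2}$ and invokes translation-invariance of the periodic $W^{s,\rho}$-norm. The paper's route has the advantage that its single seminorm comparison, \eqref{eq:classicVsPeriodicSeminorm}, goes in exactly the direction it needs: $\llbracket g\rrbracket_{s,\rho,(0,\ell)}\le [g]_{s,\rho}$ for any $\ell$-periodic $g$.

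Here is the gap in your argument. You assert that \eqref{eq:classicVsPeriodicSeminorm} ``gives''
\[
\|f\|_{W^{s,\rho}((-\ell/2,3\ell/2),\R^n)}\le C\,\|f\|_{W^{s,\rho}(\R/\ell\Z,\R^n)},
\]
but \eqref{eq:classicVsPeriodicSeminorm} establishes only the one-sided inequality $[f]_{s,\rho}\ge\llbracket f\rrbracket_{s,\rho,(0,\ell)}$ on a single period, not an upper bound for the Gagliardo seminorm over the enlarged interval of length $2\ell$. On $(-\ell/2,3\ell/2)$ the Euclidean distance $|u-v|$ ranges up to $2\ell$, and the periodic distance appearing in $[f]_{s,\rho}$ never exceeds $\ell/2$; the two quantities integrate over genuinely different domains. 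Your estimate is nevertheless true, but proving it requires an additional (short) argument: split the double integral into $|u-v|\le\ell/2$, where periodicity lets you fold back into the single-period integral at the cost of a bounded multiplicity, and $|u-v|>\ell/2$, where the kernel $|u-v|^{-1-s\rho}$ is bounded away from the singularity and this tail is controlled by the $L^\rho$-norm of $f$ alone. Without this step the reduction to the classical Morrey embedding is not justified.

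The two-cases argument that converts the Euclidean H\"older seminorm on $[-\ell/2,3\ell/2]$ into the periodic H\"older seminorm on $\R/\ell\Z$, and the inductive reduction to $k=0$, are fine as sketched.
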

\begin{proof}
    It suffices to treat the case $k=0$, the full statement follows by induction.
    Moreover, $\norm[C^0(\R/\ell\Z,\R^n)]{f}=\norm[{C^0([0,\ell],\R^n)}]{f}$
    for any $\ell$-periodic function $f\colon \R\to\R^n$,
    so that we can focus on the periodic
    H\"older seminorm.
    Rewrite the numerator in \eqref{eq:hoelder-seminorm} as 
    $|{f\!\circ\!\tau_{x-\frac{\ell}2}(w+\frac{\ell}2)\!-\!f\!\circ\!\tau_{x-\frac{\ell}2}
        (\frac{\ell}2)}|$ where we used the 
    shift $\tau_a(x):=x+a$ for some fixed $a\in\R$.
    Hence  the right-hand side of \eqref{eq:hoelder-seminorm}
    for $\alpha:=s-(1/\rho)\in (0,1)$ may be rephrased by setting $z:=w+\ell/2$,
    and estimated from above as
    \begin{equation}\label{eq:hoelder-shift}
    \sup_{x\in [0,\ell]}\sup_{z\in [0,\ell]\atop z\not=\ell/2}
    \frac{ |{f\circ\tau_{x-{\ell}/2}(z)-f\circ\tau_{x-{\ell}/2}
            ({\ell}/2)}|  }{\abs{z-(\ell/2)}^{s-1/\rho} }
    \le\sup_{x\in [0,\ell]}\hol_{[0,\ell],s-1/\rho}
    (f\circ\tau_{x-{\ell}/2}).
    \end{equation}
    By virtue of the non-periodic Morrey embedding
    \cite[Theorem 8.2]{DiNezzaPalatucciValdinoci:2012:HitchhikersGuide}
    and \eqref{eq:classicVsPeriodicSeminorm}
    the right-hand side may be bounded from above by  the expression
    $
    \sup_{x\in [0,\ell]}C\|{f\circ
        \tau_{x-{\ell}/2}}\|_{W^{s,\rho}([0,\ell],\R^n)}$ $
   \le$ $
    \sup_{x\in [0,\ell]}C\|{f\circ
        \tau_{x-{\ell}/2}}\|_{W^{s,\rho}(\R/\ell\Z,\R^n)},
    $
    for some constant $C=C(n,s,\rho,\ell)$. Changing variables via translations
    by
    $x-(\ell/2)$ and using the $\ell$-periodicity of the 
    respective integrands in the $W^{s,\rho}$-norm one can easily
    check that $\|{f\circ
        \tau_{x-{\ell}/2}}\|_{W^{s,\rho}(\R/\ell\Z,\R^n)}=
	\|{f
    }\|_{W^{s,\rho}(\R/\ell\Z,\R^n)} $ for each $x\in \R$, which concludes the proof.
    {}
\end{proof}

We frequently make use of the following Poincar\'e inequality.
\begin{proposition}[Poincaré inequality]
    \label{proposition:PoincareInequality}
    For every $\ell\in (0,\infty)$,  $s\in (0,1)$, 
    and $\rho \in [1, \infty)$
    there is a constant $C_P=C_P(s,\rho,\ell)$ such that
    \begin{equation}\label{eq:poincare} 
    \|{f}\|_{L^\varrho(\R/\ell\Z,\R^n)}\le  
     C_P [f]_{s,\varrho}
    \quad\textnormal{for all $f\in W^{1+s,\rho}(\R/\ell\Z,\R^n)$ with
    $\textstyle\int_{\R/\ell\Z}f(t)\,\dd t=0.$}
    \end{equation} 
\end{proposition}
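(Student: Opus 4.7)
The plan is to prove the Poincaré inequality by a direct, elementary argument that exploits the zero-mean condition together with the boundedness of the $\ell$-periodic distance $\abs{\cdot}_{\R/\ell\Z}$ by $\ell/2$. No compactness argument is needed; everything reduces to Jensen's inequality plus the identification of the double integral from the identity \eqref{eq:classicVsPeriodicSeminorm}. The main (minor) obstacle is a bookkeeping step: recognizing that after a change of variables the resulting double integral coincides with the seminorm $[f]_{s,\rho}^\rho$.

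First, I would use the vanishing integral mean to write, for each $u\in\R/\ell\Z$,
\[
f(u)=\tfrac{1}{\ell}\int_{\R/\ell\Z}\bigl(f(u)-f(v)\bigr)\,\dd v,
\]
and apply Jensen's inequality (valid since $\rho\geq 1$) to obtain the pointwise bound
\[
\abs{f(u)}^{\rho}\leq \tfrac{1}{\ell}\int_{\R/\ell\Z}\abs{f(u)-f(v)}^{\rho}\,\dd v.
\]

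Next, I would insert a weight artificially, writing
\[
\abs{f(u)-f(v)}^{\rho}=\frac{\abs{f(u)-f(v)}^{\rho}}{\abs{u-v}_{\R/\ell\Z}^{1+s\rho}}\,\abs{u-v}_{\R/\ell\Z}^{1+s\rho},
\]
and use $\abs{u-v}_{\R/\ell\Z}\leq \ell/2$ to dominate the second factor by $(\ell/2)^{1+s\rho}$. Integration over $u\in\R/\ell\Z$ then yields
\[
\norm[L^{\rho}(\R/\ell\Z,\R^{n})]{f}^{\rho}\leq \frac{(\ell/2)^{1+s\rho}}{\ell}\int_{\R/\ell\Z}\int_{\R/\ell\Z}\frac{\abs{f(u)-f(v)}^{\rho}}{\abs{u-v}_{\R/\ell\Z}^{1+s\rho}}\,\dd v\,\dd u.
\]

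Finally, the computation \eqref{eq:classicVsPeriodicSeminorm} in the appendix shows that the double integral on the right-hand side coincides with $[f]_{s,\rho}^{\rho}$ after the substitution $w:=v-u$ (so that the inner integral runs over $(-\ell/2,\ell/2)$ and the periodic distance becomes $\abs{w}$). Taking $\rho$-th roots, this gives the asserted Poincaré inequality with explicit constant
\[
C_{P}=C_{P}(s,\rho,\ell)=\Bigl(\tfrac{(\ell/2)^{1+s\rho}}{\ell}\Bigr)^{1/\rho}.
\]
Note that the additional regularity $f\in W^{1+s,\rho}$ in the hypothesis is not needed in the argument; the estimate is a statement about $f\in W^{s,\rho}$, and the stronger assumption only ensures that $f$ is an admissible test function in the settings in which the inequality is applied in the main text (typically to a tangent $f=\gamma'$ of a curve $\gamma\in W^{1+s,\rho}$, which automatically has vanishing mean by periodicity of $\gamma$).
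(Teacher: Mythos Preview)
Your proof is correct and follows essentially the same route as the paper's own argument: use the zero mean and Jensen's inequality, then insert the weight $\abs{u-v}_{\R/\ell\Z}^{-(1+s\rho)}$ and bound the compensating factor by $(\ell/2)^{1+s\rho}$, finally identifying the resulting double integral with $[f]_{s,\rho}^\rho$ via \eqref{eq:classicVsPeriodicSeminorm}. Your explicit constant $C_P=((\ell/2)^{1+s\rho}/\ell)^{1/\rho}$ matches the paper's, and your remark that only $f\in W^{s,\rho}$ is actually needed is a correct and worthwhile observation.
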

\begin{proof}
By Jensen's inequality, we have
\begin{align*}
\textstyle
	\int_{\R/\ell\Z} \nabs{f(u)}^\varrho \, \dd u
	&=
	\textstyle	
	\int_{\R/\ell\Z} \nabs{f(u) - \fint_{\R/\ell\Z} f(t) \, \dd t}^\varrho 
	\, \dd u
	\leq
	\textstyle	
	\int_{\R/\ell\Z} \fint_{\R/\ell\Z}  \nabs{f(u) - f(t) }^\varrho 
	\, \dd t \, \dd u\\
	&\leq
	\textstyle	
	\frac{1}{\nabs{{\R/\ell\Z}}}
	(\nabs{{\R/\ell\Z}}/2)^{1+s \varrho }
	\int_{\R/\ell\Z} \int_{\R/\ell\Z}  
	\frac{\nabs{f(u) - f(t) }^\varrho}{|u-t|_{\R/\ell\Z}^{1+s 
	\varrho }} 
	\, \dd t \, \dd u.
\end{align*}
{}
\end{proof}

As a first application we deal with the composition of Sobolev-Slobodecki\v{\i}
functions.
\begin{lemma}
    \label{lemma:compositionSobolevSlobodeckijFunctions}
    Let  $L\in (0,\infty)$, $\rho\in (1,\infty)$, 
    $s \in (\frac 1 \rho,1)$, 
    and $g \in C^1([0,L])$ with  $|g'|>0$ on $[0,L]$, and $\ell:=
    g(L)-g(0)$, and $f \in W^{1+s,\rho}(\R/\ell\Z,\R^n)$.
    Then   $g$ can be extended to a  function $ G\in C^1(\R)$ 
    such that $f\circ  G\in C^1(\R/L\Z,\R^n)$ and $G'\in C^0(\R/L\Z),$ satisfying
    \begin{equation}
    \label{eq:composition2}	
    \left[(f \circ G)'\,\right]_{s,\rho}
    \leq C
    \big(\halfnorm[s,\rho]{G'}^\rho+v_g^{-1}\norm[C^0(\R/L\Z)]{G'}^{\rho(1+s)}\big)^{1/\rho}
    \halfnorm[s,\rho]{f'} 
    \end{equation}	
    for some constant $C=C(n,s,\rho,\ell)$,
    where $v_g:=\min_{[0,L]}|g'(\cdot)|>0.$ 
\end{lemma}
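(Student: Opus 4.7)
The plan is to apply the chain rule to obtain $(f \circ G)'(u) = f'(G(u))\,G'(u)$ (which is continuous since $f' \in C^0$ by the Morrey embedding \thref{thm:morrey} and $G'$ is continuous), and then to estimate the fractional seminorm via the product-rule splitting
\begin{align*}
f'(G(u+w))G'(u+w) - f'(G(u))G'(u)
  = &\ [f'(G(u+w)) - f'(G(u))]\,G'(u+w) \\
    &+ f'(G(u))\,[G'(u+w) - G'(u)].
\end{align*}
The extension $G$ is obtained by $L$-periodic extension of $g'$; note that $\int_0^L g'\,dt = \ell$ forces $G(u+L) = G(u)+\ell$, so that $\ell$-periodicity of $f$ yields $L$-periodicity of $f\circ G$, and $v_g = \min_{\R}|G'|$.

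For the second summand we pull out the $C^0$-norm of $f'$, which bounds the corresponding piece of $[(f\circ G)']_{s,\rho}^\rho$ by $\|f'\|_{C^0}^\rho\,\halfnorm[s,\rho]{G'}^\rho$. To control $\|f'\|_{C^0}$ in terms of $\halfnorm[s,\rho]{f'}$ alone, I use that $\ell$-periodicity of $f$ forces $\int_{\R/\ell\Z} f'\,dt = f(\ell)-f(0) = 0$, so the Poincaré inequality \eqref{eq:poincare} applied to $f'$ combined with the Morrey embedding \eqref{eq:morrey-embedding} gives $\|f'\|_{C^0} \leq C(n,s,\rho,\ell)\,\halfnorm[s,\rho]{f'}$. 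This produces the contribution $\halfnorm[s,\rho]{G'}^\rho\,\halfnorm[s,\rho]{f'}^\rho$ in \eqref{eq:composition2}.

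For the first summand I perform, for each fixed $u$, the substitution $y := G(u+w) - G(u)$ in the inner integral. Since $|G(u+w) - G(u)| \leq \|G'\|_{C^0}|w|$ and $dy = G'(u+w)\,dw$ with $G'(u+w) \geq v_g > 0$, the change of variables absorbs $|G'(u+w)|^\rho\,dw = |G'(u+w)|^{\rho-1}\,dy \leq \|G'\|_{C^0}^{\rho-1}\,dy$ and replaces $|w|^{-(1+s\rho)}$ by at most $\|G'\|_{C^0}^{1+s\rho}|y|^{-(1+s\rho)}$. Applying Fubini, the outer integration in $u$ is then transformed via $z := G(u)$ into an integration over $\R/\ell\Z$ (using $G(u+L) = G(u)+\ell$ together with $\ell$-periodicity of the integrand) with Jacobian bounded by $1/v_g$, producing the prefactor $v_g^{-1}\|G'\|_{C^0}^{\rho(1+s)}$.

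The main obstacle is that the resulting range of $y$ is contained in $[-M,M]$ with $M := \|G'\|_{C^0}L/2$, which typically exceeds $[-\ell/2,\ell/2]$, so that the standard seminorm of $f'$ cannot be read off directly. I handle this by splitting the $y$-integral: on $\{|y|\leq \ell/2\}$ the resulting double integral is exactly $\halfnorm[s,\rho]{f'}^\rho$, whereas on $\{\ell/2 < |y| \leq M\}$ I use the brute bound $|f'(z+y)-f'(z)| \leq 2\|f'\|_{C^0} \leq C\,\halfnorm[s,\rho]{f'}$ and integrate against $|y|^{-(1+s\rho)}$ over $|y|\geq \ell/2$, which converges to a constant depending only on $\ell,s,\rho$. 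Assembling both contributions yields
\[
[(f\circ G)']_{s,\rho}^\rho \leq C^\rho\big(\halfnorm[s,\rho]{G'}^\rho + v_g^{-1}\|G'\|_{C^0(\R/L\Z)}^{\rho(1+s)}\big)\halfnorm[s,\rho]{f'}^\rho,
\]
and taking $\rho$-th roots gives the claimed inequality \eqref{eq:composition2}.
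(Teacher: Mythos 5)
Your proof is correct, but it takes a genuinely different route from the paper at the key step. After the common $2^{\rho-1}$-splitting of $(f\circ G)'(x+w)-(f\circ G)'(x)$, the paper handles the first summand by first rewriting $[(f\circ G)']_{s,\rho}^\rho$ in the equivalent "periodic-distance" form \eqref{eq:classicVsPeriodicSeminorm} as a double integral over $(\R/L\Z)^2$, and then transforms that entire double integral at once under the diffeomorphism $\Phi(u,x)=(G(u),G(x))$ of tori; because $G$ satisfies $G(\cdot+L)=G(\cdot)+\ell$, the periodic distance transforms cleanly and the seminorm $\halfnorm[s,\rho]{f'}$ appears directly, with the factor $v_g^{-1}\|G'\|_{C^0}^{\rho(1+s)}$ coming from the Jacobian and the Lipschitz bound on $G$. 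You instead substitute only in the inner variable via $y=G(u+w)-G(u)$ and then in the outer variable via $z=G(u)$, paying the price that the $y$-window $[-M,M]$ with $M=\|G'\|_{C^0}L/2$ need not fit inside $[-\ell/2,\ell/2]$; your remedy --- splitting $|y|\le\ell/2$ (giving exactly $\halfnorm[s,\rho]{f'}^\rho$) from the tail $\ell/2<|y|\le M$, and controlling the tail by $\int_{\ell/2}^\infty |y|^{-1-s\rho}\,\dd y<\infty$ together with $\|f'\|_{C^0}\le C_EC_P\halfnorm[s,\rho]{f'}$ --- is sound and produces a constant with the same dependence $C=C(n,s,\rho,\ell)$. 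The paper's version is slightly slicker (no tail estimate is needed and the seminorm appears exactly), but your version is more elementary in that it avoids the periodic-distance reformulation. One small imprecision you could tighten: the range of $y$ depends on $u$, so before applying Fubini you should first extend the inner integration to the fixed window $[-M,M]$ (legitimate because the integrand is nonnegative), and only then swap the integrals; also the contribution on $\{|y|\le\ell/2\}$ is then $\le\halfnorm[s,\rho]{f'}^\rho$ rather than equal to it, which is all that is needed.
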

Notice that both sides in  
\eqref{eq:composition2} may be infinite as we do not assume that $g\in
W^{1+s,\rho}((0,L))$. We should mention also that we have suppressed 
the respective domains $[0,L]$, or $\R/L\Z$, or $\R/\ell\Z$, in our
notation for the seminorms of  $G$ and $f\circ G$,
or of  $f$ in these inequalities.
\begin{proof}
    In order to prove \eqref{eq:composition2} we 
    extend $g$ onto all of $\R$, first by
    setting $G(x):=g(x)$ for $x\in (0,L]$ and
    $G(x):=g(x-L)+\ell$ consecutively
    for $x\in (L,2L],
    (2L,3L],\ldots,$ and then by $G(x):=g(x+L)-\ell$
    consecutively for $x\in (-L,0], (-2L,-L],
    (-3L,-2L],\ldots,$  to  find
    $
    G(x+L)=G(x)+\ell$ $G'(x+L)=G'(x)$ for all $ x\in\R.
    $
    We  then bound the numerator in
    the integrand
    of the seminorm $[(f\circ G)'\,]_{s,\rho}$ for $x\in\R$ and 
    $|w|\le \frac{L}2$ from above as
    $
    \abs{(f\circ G)'(x+w)-(f\circ G)'(x)}^\rho 
    \leq 2^{\rho-1} \bigl(\abs{G'(x+w)}^\rho
    \abs{f'(G(x+w))-f'(G(x))}^\rho
    \abs{f'(G(x))}^\rho \abs{G'(x+w)-G'(x)}^\rho \bigr),$
    which  leads to 
    \begin{align}
    [(f\circ G)'\,]_{s,\rho}^\rho  
    &\leq 2^{\rho-1}\textstyle \int_{\R/L\Z} 
    \int_{- {L}/ 2}^{ {L}/ 2} 
    \abs{w}^{-\alpha} 
    \abs{G'(x+w)}^\rho\abs{f'(G(x+w))-f'(G(x))}^\rho \, \dd w \, \dd x\notag\\
    &\quad + 2^{\rho-1} 
    \norm[C^0(\R/\ell\Z,\R^n)]{f'}^\rho 
    \halfnorm[s,\rho]{G'\,}^\rho,\label{eq:compo-est1}
    \end{align}
    where we have set $\alpha:=1+\rho s$. Note that $f$ is of class 
    $C^1(\R/\ell\Z,\R^n)$ due to \thref{thm:morrey}.
    From now on we may assume without loss of generality that the seminorm
    $\halfnorm[s,\rho]{G'}$ is finite, otherwise there is nothing to prove
    for \eqref{eq:composition2}.
    Substituting $u(w):=x+w$ and using periodicity 
    we can rewrite the double integral as
    \begin{equation}\label{eq:transform1}
    \textstyle\int_{\R/L\Z} \int_{- {L}/ 2}^{ {L}/ 2} 
    \abs{u-x}_{\R/L\Z}^{-\alpha} 
    \abs{G'(u)}^\rho \abs{f'(G(u))-f'(G(x))}^\rho \, \dd u \, \dd x.
    \end{equation}	
    With
    $
    \abs{G(u)-G(x)}_{\R/\ell\Z} \leq \abs{u-x}_{\R/L\Z}
    \norm[C^0(\R/L\Z)]{G'}
    $
    we obtain for the inverse function $G^{-1}:\R\to\R$  for all $x,u\in\R$
    \begin{equation}\label{eq:inverse-est}
    \abs{G^{-1}(G(u)) - G^{-1}(G(x))}_{\R/L\Z} 
    = \abs{u-x}_{\R/L\Z}
    \geq  \norm[C^0
        (\R/L\Z)]{G'}^{-1} \abs{G(u)-G(x)}_{\R/\ell\Z}.
    \end{equation}	
    Now we use the transformation
   $ 
    \Phi\colon (\R/L\Z)^2 \to (\R/\ell\Z)^2, 
    (u,x) \mapsto (\tilde u,\tilde x):=(G(u),G(x))
   $
    with $\abs{\det(D\Phi(u,x))}=G'(u)G'(x)\ge v_g^2>0$ for all $x,v\in\R$
    due to periodicity of $G'$ and $G'|_{[0,L]}=g', $
    to rewrite and estimate \eqref{eq:transform1} by means of
    \eqref{eq:inverse-est} as
    \begin{align}
    &\textstyle\iint_{(\R/\ell\Z)^2} 
    \abs{G^{-1}(\tilde u)-G^{-1}(\tilde x)}^{-\alpha}_{\R/L\Z} 
     {\abs{G'(G^{-1}(\tilde u))}^{\rho-1}} 
    \abs{G'(G^{-1}(\tilde x))}^{-1} 
    \abs{f'(\tilde u)-f'(\tilde x)}^\rho \,\dd\tilde u \, \dd\tilde  x \notag\\
    & \phantom{xxxxxxxxxxxxxx}\overset{\eqref{eq:inverse-est}}{\leq}
    v_g^{-1} {\norm[C^0(\R/L\Z)]{G'}^{\rho -1 +\alpha}}  
    \halfnorm[s,\rho]{f'}^\rho
    = v_g^{-1} {\norm[C^0(\R/L\Z)]{G'}^{\rho(1+ s)}} 
    \halfnorm[s,\rho]{f'}^\rho.\label{eq:second-int-est}
    \end{align}
    The $C^0$-norm of $f'$ in \eqref{eq:compo-est1} may be bounded
    by $C_E C_P[f']_{s,\rho} $ 
    according to the Morrey embedding, \thref{thm:morrey}, and
    the Poincar\'e inequality, 
    \thref{proposition:PoincareInequality}, and combining this with
    \eqref{eq:second-int-est} leads to \eqref{eq:composition2} with 
    the constant $C:= (2^{\rho-1}((C_EC_P)^\rho +1))^{1/\rho}$.
    {}
\end{proof}

The following result provides an estimate 
for the Sobolev-Slobodecki\v{\i} norms
of products of functions of fractional Sobolev regularity.
\begin{proposition}[Bounds on inner product]
    \label{proposition:WsrhoClosedunderPointwiseMultiplication}
    Let $k\in\N\cup\{0\}$ and $\ell\in (0,\infty)$, $\rho \in (1,\infty)$, 
    and $s\in (1/\rho, 1)$. 
    Then, there is  a constant $C=C(n, k ,s,\rho,\ell) \in (0,\infty)$ 
    such that 
    \[
    \norm[W^{k+s,\rho}(\R/\ell\Z)]{\langle f(\cdot),g(\cdot)\rangle_{\R^n}} 
    \!\leq\! C \norm[W^{k+s,\rho}(\R/\ell\Z,\R^n)]{f}\!
    \norm[W^{k+s,\rho}(\R/\ell\Z,\R^n)]{g}
    \forall f,g \in W^{k+s,\rho}(\R/\ell\Z,\R^n).
    \] 
\end{proposition}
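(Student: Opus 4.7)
The plan is to argue by induction on $k$, with all the substantive content concentrated in the base case. Since $\langle f,g\rangle = \sum_{i=1}^n f_i g_i$, I will first reduce to the scalar multiplier estimate
\[
\|fg\|_{W^{k+s,\rho}(\R/\ell\Z)} \leq C \|f\|_{W^{k+s,\rho}(\R/\ell\Z)} \|g\|_{W^{k+s,\rho}(\R/\ell\Z)} \quad \textnormal{for all $f,g \in W^{k+s,\rho}(\R/\ell\Z)$,}
\]
absorbing an extra factor of $n$ (and Cauchy--Schwarz) into the constant in passing to the vector-valued statement.

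For the base case $k=0$, I will exploit $s>1/\rho$ to invoke the Morrey embedding \thref{thm:morrey}, which yields $W^{s,\rho}(\R/\ell\Z) \hookrightarrow C^0(\R/\ell\Z)$ with some embedding constant $C_E$. This immediately gives $\|fg\|_{L^\rho} \leq \|f\|_{C^0}\|g\|_{L^\rho} \leq C_E \|f\|_{W^{s,\rho}}\|g\|_{W^{s,\rho}}$. For the Gagliardo seminorm I would use the standard splitting
\[
f(u+w)g(u+w) - f(u)g(u) = (f(u+w)-f(u))\,g(u+w) + f(u)\,(g(u+w)-g(u)),
\]
apply $(a+b)^\rho \leq 2^{\rho-1}(a^\rho + b^\rho)$, dominate $|g(u+w)|$ and $|f(u)|$ by $C_E\|g\|_{W^{s,\rho}}$ and $C_E\|f\|_{W^{s,\rho}}$ respectively via Morrey's embedding, and then integrate against $|w|^{-1-s\rho}\,\dd w\,\dd u$ over $\R/\ell\Z \times (-\ell/2,\ell/2)$ to obtain
$[fg]_{s,\rho} \leq 2^{(\rho-1)/\rho}C_E\bigl([f]_{s,\rho}\|g\|_{W^{s,\rho}} + \|f\|_{W^{s,\rho}}[g]_{s,\rho}\bigr).$
Combined with the $L^\rho$-bound this closes the base case.

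For the inductive step $k-1 \Rightarrow k$ with $k \geq 1$, I will use that $f,g \in W^{k+s,\rho}$ implies $f',g' \in W^{(k-1)+s,\rho}$ with $\|f'\|_{W^{(k-1)+s,\rho}} \leq \|f\|_{W^{k+s,\rho}}$ (and analogously for $g'$), and that the Leibniz identity $(fg)' = f'g + fg'$ holds distributionally, hence in $W^{(k-1)+s,\rho}$, by a routine mollification argument. Applying the inductive hypothesis to the three pairs $(f,g)$, $(f',g)$, and $(f,g')$ viewed inside $W^{(k-1)+s,\rho}$ yields
\[
\|fg\|_{W^{(k-1)+s,\rho}},\ \|f'g\|_{W^{(k-1)+s,\rho}},\ \|fg'\|_{W^{(k-1)+s,\rho}} \leq C_{k-1}\|f\|_{W^{k+s,\rho}}\|g\|_{W^{k+s,\rho}}.
\]
The latter two bounds together control $\|(fg)'\|_{W^{(k-1)+s,\rho}}$, which, combined with the first estimate, is equivalent to the desired $W^{k+s,\rho}$-bound on $fg$ and yields the result with a constant of order $C_k \sim 3 C_{k-1}$.

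The sole genuine obstacle is the base case $k=0$, and within it the verification that the Gagliardo double integral of $fg$ is controlled by the individual seminorms and the $C^0$-bounds. The assumption $s>1/\rho$ is exactly what makes Morrey's embedding available to pull one factor out of the singular integral as a uniform bound; without it this approach would fail. Once the base case is established, everything else is standard bookkeeping around the Leibniz rule, so no further difficulties are anticipated in the induction.
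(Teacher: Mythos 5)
Your proof is correct, and it takes a genuinely different route from the paper's. The paper works directly with the full Leibniz expansion $\langle f,g\rangle^{(i)} = \sum_{j=0}^i \binom{i}{j}\langle f^{(j)},g^{(i-j)}\rangle$ for all $i\le k$ at once, estimating both the $W^{k,\rho}$-norm and the top Gagliardo seminorm $[\langle f,g\rangle^{(k)}]_{s,\rho}$ using the same splitting trick and Morrey embedding you use in your base case, plus a side observation that the lower-order seminorms $[g^{(i)}]_{s,\rho}$ for $i<k$ can be absorbed into $\|g^{(i+1)}\|_{L^\infty}$ because $\rho-1-s\rho>-1$. You instead isolate the analytic content in the $k=0$ case and run an induction via $(fg)'=f'g+fg'$, using that $\|h\|_{W^{k+s,\rho}}$ is controlled by $\|h\|_{W^{(k-1)+s,\rho}}+\|h'\|_{W^{(k-1)+s,\rho}}$ and that $\|f'\|_{W^{(k-1)+s,\rho}}\le\|f\|_{W^{k+s,\rho}}$. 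Your reduction to the scalar case via $\langle f,g\rangle=\sum_i f_ig_i$ is also legitimate since the component-wise norms are dominated by the vector norms. The inductive version is cleaner conceptually and avoids the combinatorics of the binomial sum, at the cost of a constant growing like $3^k$ (versus the paper's explicit $2^{(2\rho-1)(k+1)/\rho}C_E(k+1)^{1/\rho}[\dots]^{1/\rho}$); since the proposition allows the constant to depend on $k$, this is harmless. One small remark: for $k\ge1$ the factors are already $C^1$ by Morrey, so the Leibniz rule holds classically and you do not actually need a mollification argument.
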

\begin{proof}
    For brevity, we write $\langle \xi,\zeta \rangle := 
    \langle \xi,\zeta \rangle_{\R^n}$ for $\xi,\zeta \in \R^n$, and then
    $\langle F,G\rangle := \langle F(\cdot),G(\cdot)\rangle$ for any two
    functions
    $F,G\colon \R\to\R^n$.
    By virtue of the  Morrey embedding, \thref{thm:morrey}, the
    functions $f,g $ are both of class $C^{k}$.
    Then, according to the Leibniz rule, we have for every $0\le i\le k$
    \begin{equation}\label{eq:leibniz}	
    \textstyle\langle	 f,g\rangle^{(i)} = 
    \sum_{j=0}^i \binom{i}{j} \langle f^{(j)} ,g^{(i-j)}\rangle,
    \end{equation}	
    of which the $L^\rho$-norm can be estimated as 
    \begin{align*}
    \|{\langle f,g\rangle^{(i)}}\|^\rho_{L^\rho(\R/\ell\Z)}
    &=\textstyle \int_{\R/\ell\Z} \big|{\sum_{j=0}^i \binom{i}{j} 
        \langle f^{(j)}(x), g^{(i-j)}(x)\rangle}\big|^\rho \,dd x\\
    & \leq 2^{(2\rho-1)i }
    \norm[W^{i,\rho}(\R/\ell\Z,\R^n)]{f}^\rho \norm[C^i(\R/\ell\Z,
    \R^n)]{g}^\rho
    \Foa 0\le i\le k,
    \end{align*}
    which implies by means of the Morrey embedding \eqref{eq:morrey-embedding}
    that the Sobolev norm
    $\|\langle f,g\rangle\|_{W^{k,\rho}}^\rho$ may be bounded from above by
    \begin{align}
    & C_E^{\rho} (k+1)2^{(2\rho-1)k}\|f\|^\rho_{W^{k,\rho}(\R/\ell\Z,\R^n)}
    \|g\|_{W^{k+s,\rho}(\R/\ell\Z,\R^n)}^{\rho}. \label{eq:skp-sobolev-bound}
    \end{align}
    
    The numerator of the integrand in the
    seminorm $[{\langle f,g \rangle^{(k)}}]_{s,\varrho}$
    can 
    be estimated from above by means of \eqref{eq:leibniz} similarly as before
    by
    \begin{align*}
    &
    \textstyle
    2^{(2\rho-1)k}\sum_{j=0}^k\left|\langle f^{(j)}, g^{(k-j)}\rangle(x+w)-
    \langle f^{(j)}, g^{(k-j)}\rangle(x)\right|^\rho
    \\
    & \le \textstyle 2^{(2\rho-1)(k+1)}\sum_{j=0}^k\big\{ \,\left|\langle f^{(j)}(x+w),g^{(k-j)}(x+w)
    -g^{(k-j)}(x)\rangle\right|^\rho\\
    & \qquad +\textstyle
    \left|\langle f^{(j)}(x+w)-f^{(j)}(x),g^{(k-j)}(x)\rangle\right|^\rho
    \big\}\quad\Fo x\in\R, |w|\le{\ell}/2,
    \end{align*}
    which may be bounded by 
    \begin{align*}
    &\textstyle 
     2^{(2\rho-1)(k+1)}\sum_{j=0}^k
    \big\{\|f\|^\rho_{C^k(\R/\ell\Z,\R^n)}\left|
    g^{(j)}(x+w)
    -g^{(j)}(x)\right|^\rho 
    \\
    & \qquad +\textstyle 
    \|g\|^\rho_{C^k(\R/\ell\Z,\R^n)}\left|f^{(j)}(x+w)
    -f^{(j)}(x)\right|^\rho\big\} \Fo x\in\R,  |w|\le{\ell}/2.
    \end{align*}
    Integrating this  against $|w|^{-(1+s\rho)}$
    and using the Morrey embedding inequality
    \eqref{eq:morrey-embedding} we find 
    \begin{align*}
    &[{\langle f,g\rangle^{(k)}}]_{s,\varrho}^\rho \le 
    \textstyle 2^{(2\rho-1)(k+1)}\big\{
    \|f\|^\rho_{C^k(\R/\ell\Z,\R^n)}\sum_{j=0}^k\halfnorm[s,\rho]{g^{(j)}}^\rho
    +
    \|g\|^\rho_{C^k(\R/\ell\Z,\R^n)}\sum_{j=0}^k\halfnorm[s,\rho]{f^{(j)}}^\rho
    \big\}\\
    &\overset{\eqref{eq:morrey-embedding}}{\le} 
    \textstyle 2^{(2\rho-1)(k+1)} \textstyle C_E^\rho 
    \big\{
    \|f\|^\rho_{W^{k+s,\rho}(\R/\ell\Z,\R^n)}
    \sum_{j=0}^k\halfnorm[s,\rho]{g^{(j)}}^\rho
    +
    \|g\|^\rho_{W^{k+s,\rho}(\R/\ell\Z,\R^n)}
    \sum_{j=0}^k\halfnorm[s,\rho]{f^{(j)}}^\rho
    \big\}.
    \end{align*}
    For $i <k$ we observe that 
    \begin{align*}
    [{g^{(i)}}]_{s,\varrho}^\rho
    &= \textstyle\int_{\R/\ell\Z} \int_{- \ell/ 2} ^{ \ell/ 2} 
    {\abs{g^{(i)}(u+w) - g^{(i)}(u)}^\rho} {\abs{w}^{-(1+s\rho)}} \, \dd w
    \, \dd u
    \\
    &\leq \|{g^{(i+1)}}\|_{L^\infty(\R/\ell\Z,\R^n)}^\rho
    \textstyle\int_{- \ell/
        2}^{ \ell/ 2} \abs{w}^{\rho - 1 - s\rho} \, \dd w
    =: \|{g^{(i+1)}}\|_{L^\infty(\R/\ell\Z,\R^n)}^\rho c(s,\rho,\ell)\\
    &
    \overset{\eqref{eq:morrey-embedding}}{\leq} C_E^\rho 
    \norm[W^{k+s,\rho}(\R/\ell \Z,\R^n)]{g}^\rho c(s,\rho,\ell),
    \end{align*}
    since $\rho - 1 - s\rho > \rho -2 > -1$, and by Morrey's embedding
    \eqref{eq:morrey-embedding}.
    Analogous estimates hold for the seminorms of $f^{(j)}$, which combine with
    \eqref{eq:skp-sobolev-bound} to the inequality as claimed
    with constant  
    $
    C(n, k,s,\rho,\ell)
    :=2^{(2\rho-1)(k+1)/\rho} C_E(k+1)^{1/\rho}\left[1+2c(s,\rho,\ell)C_E^\rho
    \right]^{1/\rho}.
    $
    Notice that
    target dimension $n$
    enters this constant 
    through the Morrey embedding constant
    $C_E$; cf. \thref{thm:morrey}.
    {}
\end{proof}

We conclude this section with an estimate on the fractional Sobolev norm of the 
multiplicative inverse of a non-vanishing 
Sobolev-Slobodecki\v{\i}  function.
\begin{lemma}
    \label{lemma:multiplicativeInverseSobolevFunction}
    Let $\ell\in (0,\infty)$, $\rho\in (1,\infty)$, $ s\in (\frac 1 \rho,1)$, $c>0$ and $h \in W^{s,\rho}(\R/\ell\Z)$ with $\abs{h(x)}\geq c$ for almost all $x \in \R$.
    Then the multiplicative inverse, $1/h\colon \R \to \R, x \mapsto  1/ {h(x)}$ is in 
    $W^{s,\rho}(\R/\ell\Z)$, and we have 
    $
    \norm[W^{s,\rho}(\R/\ell\Z)]{1/ h} \leq  \norm[W^{s,\rho}(\R/\ell\Z)]{h}/c^2.
    $
\end{lemma}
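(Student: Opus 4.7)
The plan is to verify the two ingredients of the $W^{s,\rho}$-norm, namely the $L^\rho$-norm and the Gagliardo seminorm, separately by pointwise estimates exploiting the uniform lower bound $\abs{h(x)}\ge c$. The key observation is that the hypothesis $\abs{h(x)}\ge c$ allows us to bound $1/\abs{h(x)}$ not just by the crude $1/c$, but rather by $\abs{h(x)}/c^2$, which is essential for obtaining the stated factor $1/c^2$ rather than something like $1/c+1/c^2$.

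For the $L^\rho$-part, I would simply note the pointwise bound
\[
\abs{1/h(x)}\;=\;\frac{\abs{h(x)}}{\abs{h(x)}^2}\;\le\;\frac{\abs{h(x)}}{c^2},
\]
and raise this to the $\rho$-th power and integrate to get $\norm[L^\rho(\R/\ell\Z)]{1/h}\le \norm[L^\rho(\R/\ell\Z)]{h}/c^2$. For the seminorm, I would exploit the algebraic identity
\[
\frac1{h(x+w)}-\frac1{h(x)}=\frac{h(x)-h(x+w)}{h(x)\,h(x+w)},
\]
whose modulus is bounded by $\abs{h(x+w)-h(x)}/c^2$. Inserting this into the definition \eqref{eq:defSeminorm} of the seminorm and pulling the factor $1/c^{2\rho}$ out of the double integral yields $[1/h]_{s,\rho}\le [h]_{s,\rho}/c^2$.

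Combining both estimates via the definition of the $W^{s,\rho}$-norm in \thref{def:sob-slobo} gives
\[
\norm[W^{s,\rho}]{1/h}^\rho\;=\;\norm[L^\rho]{1/h}^\rho+[1/h]_{s,\rho}^\rho\;\le\;\frac1{c^{2\rho}}\bigl(\norm[L^\rho]{h}^\rho+[h]_{s,\rho}^\rho\bigr)\;=\;\frac{\norm[W^{s,\rho}]{h}^\rho}{c^{2\rho}},
\]
from which the claimed inequality follows after taking $\rho$-th roots. There is essentially no obstacle here: everything reduces to the two elementary pointwise inequalities above, and the hypothesis $|h|\ge c$ a.e. ensures that $1/h$ is well-defined and measurable so that the integral expressions make sense.
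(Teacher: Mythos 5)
Your proof is correct, and the seminorm part is identical to the paper's. The one place where you take a (slightly) different route is the $L^\rho$-estimate: you use the pointwise bound $\abs{1/h}\le\abs{h}/c^2$, which yields $\norm[L^\rho]{1/h}\le\norm[L^\rho]{h}/c^2$ directly, so that the two ingredients combine cleanly without any extra comparison. The paper instead bounds $\norm[L^\rho(\R/\ell\Z)]{1/h}^\rho\le\ell\norm[L^\infty]{1/h}^\rho\le\ell/c^\rho$ (a sharper pointwise bound, since $1/c\le\abs{h}/c^2$) and then separately records $\norm[L^\rho]{h}^\rho\ge c^\rho\ell$ so that $\ell/c^\rho\le\norm[L^\rho]{h}^\rho/c^{2\rho}$, arriving at the same inequality after one more step. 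Your version is a bit more streamlined; the paper's version establishes the sharper intermediate bound $\norm[L^\rho]{1/h}^\rho\le\ell/c^\rho$, which however is not needed for the stated conclusion. Both are perfectly valid, and there is no gap in your argument.
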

\begin{proof}
    The estimate
    $
    \norm[L^\rho(\R/\ell\Z)]{1/ h}^\rho \leq \ell
    \norm[L^\infty(\R/\ell\Z)]{1/ h}^\rho
    \leq \ell/ c^\rho$
    follows immediately from the pointwise lower bound on $h$.
    The integrand of the seminorm $\halfnorm[s,\rho]{1/h}$ may be estimated as
    $
    \abs{w}^{-1-s\rho}\abs{{h(x+w)}}^{-\rho} \abs{{h(x)}}^{-1\rho} \abs{h(x+w)-h(x)}^\rho 
    \!\leq\!  {c^{-2\rho}}{|w|^{-1-s\rho}}{\abs{h(x+w)-h(x)}^\rho},
    $
    which implies $\halfnorm[s,\rho]{1/h}^\rho\le
    c^{-2\rho}\halfnorm[s,\rho]{h}^\rho,$
    and together with $\norm[L^\rho(\R/\ell\Z)]{h}^\rho \geq c^\rho
    \ell $ we obtain
    the desired estimate for $\norm[W^{s,\rho}(\R/\ell\Z)]{1/h}$.
    {}
\end{proof}
\section{Arc length and bilipschitz constants}
\label{section:Arclength}
Whenever we speak about  \emph{the arc length parametrization} of a curve 
$\g\in W^{1,1}(\R/\ell\Z,\R^n)$ with $|\g'|>0$ a.e.\ on $\R$ and with length
$L:=\LL(\g)\in (0,\infty)$ we refer to the mapping $\Gamma:=\g\circ \LL(\g,\cdot)^{-1}
\in C^{0,1}(\R/L\Z,\R^n)$ obtained from the inverse $\LL(\g,\cdot)^{-1}$
of the strictly increasing \emph{arc length function $\LL(\g,\cdot)$} of class $W^{1,1}_\textnormal{loc}(\R)$ defined as
\begin{equation}\label{eq:arclength-function1}
\LL(\g,x):= \textstyle\int_0^{x} \abs{\gamma'(y)} \, \dd y \quad\Fo x\in\R.
\end{equation}
Notice that $\LL(\g,x+\ell)=\LL(x)+L$ and  $(\LL(\g,x+\ell))'=
(\LL(\g,x))'$ for all $x\in\R$, as well as 
$\LL(\g,z+L)^{-1}=
\LL(\g,z)^{-1}+\ell$ for all $z\in\R$, and therefore the composition $\Gamma=\g\circ
\LL(\g,\cdot)^{-1}$ is $ L$-periodic and $1$-Lipschitz because of 
\begin{equation}\label{eq:unit-speed}
\textstyle\left|\frac{d}{ds}\Gamma(s)\right|=\big|\g'\left(\LL(\g,s)^{-1}\right)
\frac{1}{|\g'(x)|}_{x=\LL(\g,s)^{-1}}\big|=1\quad\textnormal{for a.e. $s\in [0,L].$}
\end{equation}

Recall from the introduction the definition of the bilipschitz constant
\eqref{eq:bilip-def} and
the notation 
$W^{1,1}_{\ir}(\R/\ell\Z,\R^n)$ for the subset of all regular $W^{1,1}$-curves such
that their restriction to the fundamental interval $[0,\ell) $ is injective. 
\begin{lemma}
    \label{lemma:arclengthParametrizationAndBiLipschitzConstants}
    Let $\ell\in (0,\infty)$ and  $\gamma \in W_{\ir}^{1,1}(\R/\ell\Z,\R^n)$ with length
    $L:=\LL(\g)\in (0,\infty)$ and with arc length parametrization 
    $ \Gamma \in W^{1,\infty}(\R/L\Z,\R^n)$. Then the bilipschitz
    constants of $\g$ and $\Gamma$ are related by
    \begin{equation}\label{eq:bilip-comparison}
    \BiLip(\g)\in [\,v_\g\cdot\BiLip(\Gamma),\,v_\g\,],
    \end{equation}
    where $
    v_\g =\essinf_{[0,\ell]}|\g'(\cdot)|\ge 0.$
\end{lemma}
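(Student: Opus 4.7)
The plan is to establish the two sides of \eqref{eq:bilip-comparison} separately, using in both cases the change of variables relating $\g$ to its arc length reparametrization $\Gamma$.

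For the upper bound $\BiLip(\g)\le v_\g$, since $\BiLip(\g)$ is an infimum over all distinct pairs, it suffices to produce sequences of test pairs whose ratio approaches $v_\g$ from above. Since $\abs{\g'}\in L^1$, the Lebesgue differentiation theorem gives that for a.e.\ $u\in\R$ the limit $\lim_{h\to 0^+}h^{-1}\int_u^{u+h}\abs{\g'(y)}\,\dd y = \abs{\g'(u)}$ holds, while by absolute continuity $\abs{\g(u+h)-\g(u)}\le\int_u^{u+h}\abs{\g'(y)}\,\dd y$.  Dividing and taking $h\to 0^+$ yields $\BiLip(\g)\le \abs{\g'(u)}$ at every such Lebesgue point, hence $\BiLip(\g)\le\essinf_{\R/\ell\Z}\abs{\g'}=v_\g$.

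For the lower bound $\BiLip(\g)\ge v_\g\BiLip(\Gamma)$, fix distinct $u_1,u_2\in\R/\ell\Z$. By replacing $u_1$ with an appropriate periodic representative we may assume $u_2 < u_1\le u_2+\ell/2$, so that $\abs{u_1-u_2}_{\R/\ell\Z}=u_1-u_2$. Setting $s_i\ceq\LL(\g,u_i)$ we have $\g(u_i)=\Gamma(s_i)$, and hence
\[
\abs{\g(u_1)-\g(u_2)} = \abs{\Gamma(s_1)-\Gamma(s_2)}\ge \BiLip(\Gamma)\cdot\abs{s_1-s_2}_{\R/L\Z}.
\]
The key step is to show $\abs{s_1-s_2}_{\R/L\Z}\ge v_\g\,(u_1-u_2)$, which I expect to be the main (albeit mild) obstacle because the arc-length shift $s_1-s_2=\int_{u_2}^{u_1}\abs{\g'}\,\dd y\in[v_\g(u_1-u_2),L]$ can exceed $L/2$. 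Split into two cases: if $s_1-s_2\le L/2$, then $\abs{s_1-s_2}_{\R/L\Z}=s_1-s_2\ge v_\g(u_1-u_2)$ immediately; if $s_1-s_2>L/2$, then
\[
\abs{s_1-s_2}_{\R/L\Z} = L-(s_1-s_2) = \textstyle\int_{u_1}^{u_2+\ell}\abs{\g'(y)}\,\dd y \ge v_\g\bigl(\ell-(u_1-u_2)\bigr)\ge v_\g(u_1-u_2),
\]
where the last inequality uses $u_1-u_2\le\ell/2$. Combining these bounds and taking the infimum over admissible pairs $(u_1,u_2)$ yields $\BiLip(\g)\ge v_\g\BiLip(\Gamma)$, completing the proof of \eqref{eq:bilip-comparison}.
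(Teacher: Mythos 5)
Your proof is correct and follows essentially the same route as the paper's: the upper bound comes from comparing difference quotients to the derivative, and the lower bound reduces to showing $\abs{\LL(\g,u_1)-\LL(\g,u_2)}_{\R/L\Z}\ge v_\g\abs{u_1-u_2}_{\R/\ell\Z}$ via a case split on which branch of the periodic distance in $\R/L\Z$ is realized. The only cosmetic differences are that you normalize $u_1-u_2\le\ell/2$ at the outset (the paper normalizes $0\le y\le x<\ell$ and absorbs the possible $x-y>\ell/2$ into the inequalities) and you invoke the Lebesgue differentiation theorem for $\abs{\g'}$ rather than pointwise differentiability of $\g$, both of which are equivalent in substance.
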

\begin{proof}
    The upper bound on $\BiLip(\g)$ in \eqref{eq:bilip-comparison}
    follows immediately from the estimate
    $
    |\g'(x)|=\lim_{h\to 0}{\abs{h}_{\R/\ell\Z}}^{-1}{\abs{\g(x+h)-\g(x)}}
    \ge\BiLip(\g)
    $
    at points $x\in\R/\ell\Z$, where $\g'(u)$ exists. Taking the infimum over all
    such $x\in\R/\ell\Z$ establishes the upper bound in
    \eqref{eq:bilip-comparison}.
    For the proof of the lower bound in \eqref{eq:bilip-comparison}
    assume w.l.o.g. that $0\le y\le  x<\ell$, 
    so that by monotonicity $0\le\LL(\g,y)\le
    \LL(\g,x)<L$.  Then $|\g(x)-\g(y)|$ 
    can be written as and estimated from below by
    \begin{equation}\label{eq:bilip-comp1}
    |\Gamma\circ \LL(\g,x)-\Gamma\circ\LL(\g,y)|\ge \BiLip(\Gamma)
    |\LL(\g,x)-\LL(\g,y)|_{\R/L\Z}.
    \end{equation}
    If $|\LL(\g,x)-\LL(\g,y)|_{\R/L\Z}=|\LL(\g,x)-\LL(\g,y)|
    =(\LL(\g,x)-\LL(\g,y))$, then  the
    distance on 
    the right-hand side of \eqref{eq:bilip-comp1} equals
$ \int_{y}^{x}|\g'(\tau)|\,d\tau  \ge v_\g(x-y)=
    v_\g |x-y|
    \ge v_\g|x-y|_{\R/\ell\Z},$
    which proves the claim in this case.
    If, on the other hand, $|\LL(\g,x)-\LL(\g,y)|_{\R/L\Z}=
    L-|\LL(\g,x)-\LL(\g,y)|$ the latter can be written as
    \begin{align*}
    L-
    (\LL(\g,x)-\LL(\g,y))
    & =L-\textstyle\int_{y}^{x}|\g'(\tau)|\,\dd\tau=\int_0^{y}|\g'(\tau)|\,\dd\tau
    +\int_{x}^\ell|\g'(\tau)|\,\dd\tau,
    \end{align*}
    which can be bounded from below by $
     v_\g(y+\ell-x)=v_\g(\ell-|x-y|)
    \ge v_\g|x-y|_{\R/\ell\Z},$ and this
    again can be inserted into \eqref{eq:bilip-comp1} to yield the
    desired inequality \eqref{eq:bilip-comparison}.
    {}
\end{proof}

\begin{lemma}[Seminorm of arc length]
    \label{lemma:estimateHalfnormLL}
    Let $s \! \in \! (0,1)$, $\rho \!\in\! (1,\infty)$, and  $\gamma \in W^{1,\rho}(\R/\ell\Z,\R^n)$.
    Then, 
   $ 
    [ (\LL(\gamma,\cdot))']_{s,\rho}\le \halfnorm[s,\rho]{\g'}.
   $
\end{lemma}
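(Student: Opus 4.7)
The plan is very short because the statement reduces to a pointwise inequality under the integrand of the seminorm.

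First, I would recall that by the definition of the arc length function in \eqref{eq:arclength-function1}, its distributional derivative is
\[
(\LL(\gamma,\cdot))'(x) = |\gamma'(x)| \quad \text{for a.e. } x \in \R,
\]
which is an element of $L^\rho(\R/\ell\Z)$ since $\gamma \in W^{1,\rho}(\R/\ell\Z,\R^n)$.

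Next, I would apply the reverse triangle inequality in $\R^n$, which gives pointwise
\[
\bigl||\gamma'(u+w)| - |\gamma'(u)|\bigr|^\rho \le |\gamma'(u+w) - \gamma'(u)|^\rho
\]
for almost every $u \in \R/\ell\Z$ and $w \in (-\ell/2, \ell/2)$. Dividing by the positive weight $|w|^{1+s\rho}$ preserves the inequality, and integrating over $\R/\ell\Z \times (-\ell/2, \ell/2)$ according to the defining formula \eqref{eq:defSeminorm} of the seminorm yields
\[
\bigl[(\LL(\gamma,\cdot))'\bigr]_{s,\rho}^\rho
= \int_{\R/\ell\Z}\!\int_{-\ell/2}^{\ell/2} \frac{\bigl||\gamma'(u+w)| - |\gamma'(u)|\bigr|^\rho}{|w|^{1+s\rho}}\, dw\, du
\le \halfnorm[s,\rho]{\gamma'}^\rho.
\]
Taking $\rho$-th roots finishes the proof.

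There is no real obstacle here: the entire argument is the reverse triangle inequality combined with the monotonicity of the integral. The only subtlety worth mentioning explicitly is that $(\LL(\gamma,\cdot))'$ is well-defined as an $L^\rho$-function (so that both sides make sense) and is $\ell$-periodic up to the increment $L = \LL(\gamma,\ell)$, but the derivative itself is genuinely $\ell$-periodic, so the seminorm on $\R/\ell\Z$ is meaningful.
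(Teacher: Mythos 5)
Your proof is correct and takes essentially the same route as the paper: identify $(\LL(\gamma,\cdot))' = |\gamma'|$, apply the reverse triangle inequality pointwise in the numerator of the Gagliardo seminorm, and integrate.
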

Note that the right-hand side of this estimate might be infinite, since we 
did not assume that $\g$ is of class $W^{1+s,\rho}(\R/\ell\Z,\R^n)$.
\begin{proof}
If the seminorm on the right-hand side is infinite there is nothing to prove. 
    If, on the other hand $\g\in W^{1+s,\rho}(\R/\ell\Z,\R^n)$ then we
    estimate
    \begin{align*}
    \halfnorm[s,\rho]{(\LL(\gamma,\cdot))'}^\rho
    &=\textstyle\int_{\R/\ell\Z}\int_{-{\ell}/2}^{{\ell}/2} 
    \abs{w}^{-1-s\rho}{\big|{ \abs{\gamma'(x+w)} - \abs{\gamma'(x)} }\big|^\rho} \,\dd w \, \dd x\\
    &\leq\textstyle \int_{\R/\ell\Z}\int_{-{\ell}/2}^{{\ell}/2} 
    \abs{w}^{-1-s\rho}{ \abs{\gamma'(x+w) - \gamma'(x) }^\rho} \dd w \, \dd x
    =\halfnorm[s,\rho]{\gamma'}^\rho.
    \end{align*}
    \vspace{-1cm}
    {}
\end{proof}

The bilipschitz constant $\BiLip(\g)$, defined in
\eqref{eq:bilip-def} of the introduction,
of a closed embedded, i.e., immersed and injective   
$C^1$-curve is strictly positive, since under this regularity 
assumption the quotient in
\eqref{eq:bilip-def} converges to $|\g'(u_2)|\ge v_\g =\min_{[0,\ell]}|\g'|>0$
as $u_1\to u_2$. This means that this
positive quotient possesses a continuous extension onto all of $\R\times\R$,
which is bounded by a 
positive constant from below by periodicity and continuity.
We now quantify a neighbourhood of such a curve $\g$, in which the bilipschitz
constant remains under control.
\begin{lemma}\label{lem:tangents-close}
    Let $\ell\in (0,\infty)$ and let $\g\in C^1(\R/\ell\Z,\R^n)$ be injective on $[0,\ell)$
    with $|\g'|>0$ on $\R.$ Then
    \begin{equation}\label{eq:bilip1}
    \BiLip(\eta)\ge  \BiLip(\g)/2>0
    \end{equation}
    for all $\eta\in C^1(\R/\ell\Z,\R^n)$ with $
    \|\eta'-\g'\|_{C^0(\R/\ell\Z,\R^n)}\le  \BiLip(\g)/2.$
\end{lemma}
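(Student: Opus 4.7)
The plan is a direct reverse triangle inequality argument: write $\eta = \g + (\eta-\g)$, bound $|\g(u_1)-\g(u_2)|$ below by the bilipschitz constant of $\g$, and bound $|(\eta-\g)(u_1)-(\eta-\g)(u_2)|$ above by the $C^0$-bound on $\eta'-\g'$ via the mean value theorem. First, observe that $\BiLip(\g)>0$: indeed $\g$ is closed, $C^1$, injective on $[0,\ell)$, and regular, so the quotient $|\g(u_1)-\g(u_2)|/|u_1-u_2|_{\R/\ell\Z}$ extends continuously to the compact set $(\R/\ell\Z)\times(\R/\ell\Z)$ with positive values (the diagonal values being $|\g'(u)|\ge v_\g>0$). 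This is exactly the argument sketched in the paragraph preceding the lemma, so the hypothesis $\BiLip(\g)>0$ of the conclusion is automatic, and the inequality $\BiLip(\g)/2>0$ is then free.

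Next I would pick arbitrary $u_1,u_2 \in \R/\ell\Z$ with $u_1\neq u_2$ and select lifts in $\R$ such that $|u_1-u_2|=|u_1-u_2|_{\R/\ell\Z}\le \ell/2$; this is the only place where the periodic distance enters. On that straight segment between the chosen lifts I apply the mean value inequality to the $C^1$-difference $\eta-\g$:
\begin{equation*}
\bigl|(\eta-\g)(u_1)-(\eta-\g)(u_2)\bigr|\le \|\eta'-\g'\|_{C^0(\R/\ell\Z,\R^n)}\,|u_1-u_2|
\le \tfrac{\BiLip(\g)}{2}\,|u_1-u_2|_{\R/\ell\Z}.
\end{equation*}
Combining with the trivial lower bound $|\g(u_1)-\g(u_2)|\ge \BiLip(\g)\,|u_1-u_2|_{\R/\ell\Z}$ via the reverse triangle inequality yields
\begin{equation*}
|\eta(u_1)-\eta(u_2)|\ge \BiLip(\g)\,|u_1-u_2|_{\R/\ell\Z}-\tfrac{\BiLip(\g)}{2}|u_1-u_2|_{\R/\ell\Z}=\tfrac{\BiLip(\g)}{2}|u_1-u_2|_{\R/\ell\Z}.
\end{equation*}
Taking the infimum over all admissible pairs $(u_1,u_2)$ gives \eqref{eq:bilip1}.

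There is no real obstacle here; the only point that requires minimal care is the choice of lifts so that Euclidean and periodic distances coincide, which is why the factor $1/2$ on the assumed $C^0$-closeness is exactly what is needed to absorb half of the bilipschitz lower bound of $\g$. The argument is self-contained and does not require any of the Sobolev machinery from the appendix.
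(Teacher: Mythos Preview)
Your proof is correct and follows essentially the same approach as the paper: reverse triangle inequality, the bilipschitz lower bound for $\gamma$, and the mean value inequality for $\eta-\gamma$. The only cosmetic difference is that the paper fixes representatives in $[0,\ell)$ and then splits into the two cases $|x-y|_{\R/\ell\Z}=x-y$ and $|x-y|_{\R/\ell\Z}=\ell-(x-y)$ (handling the wrap-around by shifting one endpoint by $\ell$), whereas you achieve the same thing in one stroke by choosing lifts that realize the periodic distance.
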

\begin{proof}
    It suffices to prove the first inequality in \eqref{eq:bilip1}, and for that assume 
    w.l.o.g. that
    $0\le y\le x<\ell.$ We have
    \begin{equation}\label{eq:eins}
    |\eta(x)-\eta(y)|  \ge |\g(x)-\g(y)|-
    |(\eta-\g)(x)-(\eta-\g)(y)|.
    \end{equation}
    If $|x-y|_{\R/\ell\Z}=|x-y|=(x-y)$, 
    then we can estimate the right-hand side from below
    by 
    $
    \BiLip(\g)|x-y|_{\R/\ell\Z}-\textstyle\int_{y}^{x}
    \left|(\eta-\g)'(\tau)\right|\,\dd\tau
    \ge\BiLip(\g)|x-y|_{\R/\ell\Z}-\|\eta'-\g'\|_{C^0(\R/\ell\Z,\R^n)}
    |x-y|_{\R/\ell\Z}$ which is bounded from below by $
\BiLip(\g)|x-y|_{\R/\ell\Z}/2.$
    If, on the other hand $|x-y|_{\R/\ell\Z}=\ell-|x-y|
    =\ell-(x-y)$ then we use the
    $\ell$-periodicity to replace the term $(\eta-\g)(y)$ in \eqref{eq:eins}
    by $(\eta-\g)(\ell+y)$
    to estimate the right-hand side of \eqref{eq:eins} from below by
    $\BiLip(\g)|x-y|_{\R/\ell\Z}-\textstyle\int_{x}^{\ell+y}|
    (\eta-\g)'(\tau)|\,\dd\tau $, which can be bounded from below by $
 \BiLip(\g)|x-y|_{\R/\ell\Z}-
    \|\eta'-\g'\|_{C^0(\R/\ell\Z,\R^n)}|x-y|_{\R/\ell\Z}
    \ge  \BiLip(\g)|x-y|_{\R/\ell\Z}/2,$
    which concludes the proof.
    {}
\end{proof}
The following corollary quantifies the fact that the set $W^{1+s,\rho}_\textnormal{ir}(\R/\ell\Z,\R^n)$
of all regular embedded Sobolev-Slobodecki\v{i} loops is an open subset of
that fractional Sobolev space if that spaces embeds into
$C^1(\R/\ell\Z,\R^n)$.
\begin{corollary}\label{cor:bilip2}
    Let $\ell\!\in\!(0,\infty)$, $\rho\!\in\!(1,\infty)$, and $s\!\in\!(1/\rho,1)$, and suppose
    $\g\in W^{1+s,\rho}_\textnormal{ir}(\R/\ell\Z,\R^n)$. Then $\BiLip(\g)>0$, and
    one has  for  the bilipschitz constant $\BiLip(\eta)$ and the
    minimal velocity $v_\eta:=\min_{[0,\ell]}|\eta'|$
    \begin{equation}\label{eq:bilip2}
    \BiLip(\eta)\ge \BiLip(\g)/2  \quad\AND\quad v_\eta \ge  v_\gamma/2
    \end{equation}
    for all $\eta\in W^{1+s,\rho}(\R/\ell\Z,\R^n)$ satisfying  $
    \halfnorm[s,\rho]{\eta'-\g'}
    \le ({2C_EC_P})^{-1}\BiLip(\g),$
    where $C_E=C_E(n, s,\rho,\ell)$ denotes the constant 
    in \eqref{eq:morrey-embedding} of
    the Morrey embedding, \thref{thm:morrey}, 
    and $C_P=C_P(n, s,\rho,\ell) $ is the constant in
    the Poincar\'e inequality \eqref{eq:poincare}. 
\end{corollary}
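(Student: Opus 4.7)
The plan is to assemble the corollary from three ingredients already available: Morrey's embedding (\thref{thm:morrey}), the Poincaré inequality (\thref{proposition:PoincareInequality}), and the $C^1$-stability statement \thref{lem:tangents-close}. Since $s>1/\rho$, Morrey gives $\g\in C^1(\R/\ell\Z,\R^n)$, and because $\g$ is regular (i.e.\ $|\g'|>0$ a.e.) its derivative is continuous, so by compactness of $\R/\ell\Z$ we get $v_\g>0$. Combined with the injectivity of $\g|_{[0,\ell)}$, the continuity argument sketched just before \thref{lem:tangents-close} (the quotient defining $\BiLip(\g)$ extends continuously to the diagonal with value $|\g'|\ge v_\g$) yields $\BiLip(\g)>0$.

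Next I would translate the seminorm smallness hypothesis into a $C^0$-smallness statement for $\eta'-\g'$. The key observation is that $\eta'$ and $\g'$ both have vanishing integral mean over $\R/\ell\Z$ by periodicity, hence so does $\eta'-\g'$. Applying Poincaré (\thref{proposition:PoincareInequality}) we control $\|\eta'-\g'\|_{L^\rho}$ by $C_P\halfnorm[s,\rho]{\eta'-\g'}$, and feeding the resulting bound on the full Sobolev--Slobodecki\v{\i} norm into Morrey's embedding \eqref{eq:morrey-embedding} gives
\begin{equation*}
\|\eta'-\g'\|_{C^0(\R/\ell\Z,\R^n)}\le C_EC_P\halfnorm[s,\rho]{\eta'-\g'}.
\end{equation*}
This is exactly the same combined estimate used by the authors in \eqref{eq:first-order-est} at the beginning of Section~\ref{sec:2}. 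Under the assumption $\halfnorm[s,\rho]{\eta'-\g'}\le \BiLip(\g)/(2C_EC_P)$ we conclude $\|\eta'-\g'\|_{C^0}\le \BiLip(\g)/2$.

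The first bound in \eqref{eq:bilip2} now follows immediately from \thref{lem:tangents-close}, which was precisely crafted to promote such a $C^0$-closeness of tangents into a lower bound $\BiLip(\eta)\ge\BiLip(\g)/2$. For the velocity bound I would estimate pointwise $|\eta'(x)|\ge |\g'(x)|-\|\eta'-\g'\|_{C^0}\ge v_\g-\BiLip(\g)/2$ and then invoke the elementary inequality $\BiLip(\g)\le v_\g$, which is the upper bound in \eqref{eq:bilip-comparison} of \thref{lemma:arclengthParametrizationAndBiLipschitzConstants}. This gives $v_\eta\ge v_\g-v_\g/2 = v_\g/2$.

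No serious obstacle arises: the argument is a short assembly of prior lemmas. The only point that deserves care is bookkeeping of the mean-zero hypothesis, which is needed to use Poincaré on $\eta'-\g'$ and is automatic from the periodicity built into the function spaces. Everything else is essentially dictated by the $C^0$-argument in \thref{lem:tangents-close}.
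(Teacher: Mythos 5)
Your proof is correct and follows essentially the same route as the paper: Morrey plus Poincaré convert the Gagliardo-seminorm smallness of $\eta'-\g'$ into a $C^0$-smallness, then \thref{lem:tangents-close} gives the bilipschitz bound, and the velocity bound follows pointwise using $\BiLip(\g)\le v_\g$ from \thref{lemma:arclengthParametrizationAndBiLipschitzConstants}. The remark about $\eta'-\g'$ having mean zero (needed to invoke Poincaré) is a correct and worthwhile piece of bookkeeping that the paper leaves implicit.
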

Notice that neither in this corollary nor in the preceding 
\thref{lem:tangents-close}
anything is required about the $C^0$-distance between the curves, so in principle
$\g$ and $\eta$ could be very far apart in $\R^n$.
\begin{proof}
    By the Morrey embedding, \thref{thm:morrey}, the curve
    $\g$ is of class $C^1$, so that
    $\BiLip(\g)>0$. Moreover, again by \thref{thm:morrey},
    in combination
    with the Poincar\'e inequality, \thref{proposition:PoincareInequality}, we estimate using our assumption on $\eta$
    \begin{align*}
    \|\eta'-\g'\|_{C^0(\R/\ell\Z,\R^n)} & \overset{\eqref{eq:morrey-embedding}}{\le}
    C_E\|\eta'-\g'\|_{W^{s,\rho}(\R/\ell\Z,\R^n)}          
     \overset{\eqref{eq:poincare}}{\le} C_EC_P\halfnorm[s,\rho]{\eta'-\g'}
    {\le}  \BiLip(\g)/2,
    \end{align*}
    so that we can apply  \thref{lem:tangents-close} to finish the proof.
Similarly, again by 
     Morrey's embedding \eqref{eq:morrey-embedding} and Poincar\'e's inequality
         \eqref{eq:poincare} one finds
	         $v_{\eta}  \ge  v_\g-\|\eta'-\g'\|_{C^0(\R/\Z,\R^n)}
		     {\ge} v_\g-C_EC_P
		         \halfnorm[\frac32 p-3,2]{\eta'-\g'}
			      {>}   
    v_\g- \BiLip(\g)/2\ge  v_\g/2>0.
$
    {}
\end{proof}

{\small
 \section*{Acknowledgments}
D. Steenebr\"ugge is partially supported
 by the 
 Graduiertenkolleg \emph{Energy, Entropy, and Dissipative Dynamics (EDDy)} of the Deutsche Forschungsgemeinschaft (DFG) – project no. 
 320021702/GRK2326. H. Schumacher and H. von der Mosel
 gratefully acknowledge support 
 from DFG-project 282535003: {\it Geometric curvature functionals: 
 energy landscape and discrete methods.}
This work is also partially funded by the
Excellence Initiative of the German federal and state
			                governments.
Parts of Section \ref{sec:2} are contained in J. Knappmann's Ph.D.
 thesis \cite{knappmann_2020},
  and some of the content  of Sections
   \ref{section:ShortTimeExistence}--\ref{sec:6}
    will appear
     in D. Steenebr\"ugge's Ph.D. thesis
      \cite{steenebruegge_2020}.

\renewcommand{\bibname}{References}
\phantomsection\addcontentsline{toc}{section}{\bibname}
\markboth{\itshape\bibname}{\itshape\bibname}
\bibliographystyle{acm}
\bibliography{Paper.bib}
\markright{\itshape\bibname}
}
\end{document}